\documentclass[11pt, a4paper]{amsart}
\usepackage[shortlabels]{enumitem}
\usepackage{amsmath, amsfonts, amscd, mathrsfs, mathtools,graphicx}
\usepackage{latexsym,amssymb,amsthm}
\usepackage{color}
\usepackage{dsfont}
\usepackage{fancyhdr}
\usepackage{tikz}
\usepackage{hyperref}
\usepackage{float}
\usepackage{enumitem}
\usepackage[margin=2.4cm]{geometry}
\usepackage{multicol}
\usepackage{etoolbox}

\AfterEndEnvironment{proof}{  \par\addvspace{12pt}}

\AfterEndEnvironment{rk}{  \par\addvspace{12pt}}

\AfterEndEnvironment{rks}{  \par\addvspace{12pt}}

\newcommand\C{\mathcal{C}}

\newcommand\fix{\mathrm{fix}}

\newcommand\lcm{\mathrm{lcm}}

\newcommand{\comment}[1]{}

 \newcommand{\widesim}{
  \mathrel{{\scalebox{1.5}[1]{$\sim$}}}
}
\newcommand{\reducesize}[2]{  \mathbin{    \ooalign{      \raisebox       {.4ex}          {$#1\widesim$}      \cr       \hidewidth      \raisebox        {-.6ex}        {\scalebox          {.75}          {$#1#2$}        }      \hidewidth    }  }}

\newcommand{\nreducesize}[2]{  \mathbin{    \ooalign{      \raisebox       {.4ex}          {$#1\not\widesim$}      \cr       \hidewidth      \raisebox        {-.6ex}        {\scalebox          {.75}          {$#1#2$}        }      \hidewidth    }  }}

\newcommand{\stb}[1]{\mathpalette\reducesize{#1}}
\newcommand{\nstb}[1]{\mathpalette\nreducesize{#1}}
\newcommand{\magma}{{\sc Magma}}
\newcommand{\gap}{{\sf GAP}}
\def\cn{\mathord{{\!\:{:}\:\!}}}
\usepackage{array}   \newcolumntype{L}{>{$}l<{$}}
\let\oldsection\section
\newcommand\boldsection[1]{\oldsection{\bf #1}}
\newcommand\starsection[1]{\oldsection*{\bf #1}}
\makeatletter
\renewcommand\section{\@ifstar\starsection\boldsection}
\makeatother

\newtheoremstyle{algorithm}
  {12pt}		    {12pt}    {\tt}    {\parindent}       {\bf}    {. }      {\newline}      {}     \theoremstyle{algorithm}

\newtheoremstyle{theorem}
  {12pt}		    {12pt}    {\sl}    {}       {\bf}    {. }      { }      {}     \theoremstyle{theorem}
\newtheorem{thm}{Theorem}[section]  \newtheorem{lemma}[thm]{Lemma}     \newtheorem{cor}[thm]{Corollary}

\newtheorem{prop}[thm]{Proposition}

\newtheoremstyle{definition}
  {12pt}		    {12pt}    {}    {}       {\bf}    {. }      { }      {}     \theoremstyle{definition}

\newcommand\rk{\noindent{\sc Remark.} }

\renewcommand{\proofname}{Proof}

\makeatletter
\renewenvironment{proof}[1][\proofname]{\par
  \pushQED{\qed}  \normalfont \partopsep=\z@skip \topsep=\z@skip
  \trivlist
  \item[\hskip\labelsep
        \scshape
    #1\@addpunct{.}]\ignorespaces
}{  \popQED\endtrivlist\@endpefalse
}
\makeatother

\usepackage{graphicx}

\begin{document}
\title[Binary sporadic groups]{The binary actions of\\ sporadic groups}
\author{Coen del Valle \& Nick Gill}
\date{\today}
\thanks{This research is supported by the Engineering and Physical Sciences Research Council (EPSRC) [grant number EP/Z534742/1].}
\maketitle
\vspace{-0.5cm}\begin{abstract} An action of a group is \emph{binary} if it induces the group of automorphisms of some homogeneous edge-coloured directed graph. In this paper we study the quasisimple groups $G$ with $G/Z(G)$ a sporadic simple group---we produce a complete classification of their binary actions.\end{abstract}

\section{Introduction}
Let $G\leq\mathrm{Sym}(\Omega)$ be a permutation group. Informally, the \emph{relational complexity} of $G$ is the minimum integer $k\geq 2$ such that the orbits of $G$ in its induced action on $\Omega^k$ determine the orbits of $G$ on $\Omega^n$ for every $n\geq k$---this notion will be defined formally in Section~\ref{sec:machine}. Relational complexity has its origins in model theory where it is connected to Lachlan's theory of finite homogeneous structures. Specifically, Cherlin observed that the relational complexity of $G$ is the least integer $k\geq 2$ such that $G$ acts naturally as the automorphism group of some homogenous relational structure whose relations are $k$-ary \cite{cherlin2000}.

In this paper, we are interested in the case that the relational complexity of $G$ is 2; we call such permutation groups (and their associated action) \emph{binary}. In this case, the permutation group $G$ can be represented naturally as the automorphism group of a finite homogeneous edge-coloured directed graph. When restricting to the uncoloured case, such graphs were classified by Lachlan in 1982~\cite{Lach82}, but the coloured case is vastly more complicated. Indeed, to date there is no classification of such graphs, even when restricting to a fixed number $c\geq 2$ of colours---thanks to the above observation of Cherlin, a classification of the finite binary permutation groups would all but resolve this problem.

Binary groups have already been classified in the case that $G$ is primitive~\cite{GLS22}, resolving a long-standing conjecture of Cherlin~\cite{cherlin2000}. Recently, a project has been initiated with the aim of classifying \emph{all} binary permutation groups; the natural starting point being the quasisimple groups. There has already been promising progress: the binary actions of alternating groups have been classified~\cite{GG23}, and there are partial classifications in the case that $G$ is either a simple group of Lie type in characteristic 2~\cite{GGL25} or a simple group with a unique conjugacy class of involutions~\cite{GG25}. Moreover, a forthcoming paper of the authors with Liebeck~\cite{dVGL26} classifies the transitive binary actions in the case that $G$ is a quasisimple group of Lie type of (untwisted) rank at most 2.

We prove a complete classification of the binary actions of the quasisimple sporadic groups. Note that we include the Tits group ${}^2F_4(2)'$ in our consideration of sporadic groups. First we state the classification in the case where $G$ is transitive.

\begin{thm}\label{thm:maintrans}
    Let $G$ be a quasisimple group with $G/Z(G)$ a sporadic simple group and suppose that $G$ acts transitively and faithfully on a set with point stabiliser $H$. The action of $G$ is binary if and only if either $H=1$ or $G$ appears in Table~\ref{tbl:transall} with $H$ cyclic of prime order generated by an element from one of the given conjugacy classes.
\end{thm}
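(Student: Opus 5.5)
We prove Theorem~\ref{thm:maintrans} by reducing it to a finite but large computation. The computation rests on standard necessary conditions for binarity together with one sufficient test.

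\textbf{The case $H=1$.} The regular action is always binary. Indeed, if $(\alpha_1,\dots,\alpha_n)$ and $(\beta_1,\dots,\beta_n)$ are $2$-subtuple complete, then the unique $g$ with $\alpha_1^g=\beta_1$ satisfies $\alpha_i^g=\beta_i$ for every $i$. This holds because the pair $(\alpha_1,\alpha_i)$ is mapped to $(\beta_1,\beta_i)$ by some element, and regularity forces that element to be $g$.

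\textbf{Non-trivial $H$: the elimination machinery.} We use two reduction tools.

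\begin{enumerate}[(i)]
\item If $G$ is binary on $\Omega$, then the action of $H=G_\omega$ on each of its orbits is binary, up to the appropriate notion on suborbits.
\item More usefully, if $M$ is a subgroup with $H<M<G$, then binarity of $G$ on $G/H$ forces binarity of $G$ on $G/M$ and of $M$ on $M/H$.
\end{enumerate}

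Combined with the classification of primitive binary actions \cite{GLS22}, which contains no almost simple sporadic examples apart from regular ones, tool (ii) shows that every maximal overgroup $M$ of $H$ yields a non-binary action $G/M$. This gives the obstruction we need.

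The main test for non-binarity is the existence of a ``beautiful'' or strongly non-binary witness. For instance, suppose $G$ contains a subgroup $K$ and $\Lambda\subseteq\Omega$ is a $K$-invariant set such that $K^\Lambda$ is a group on $|\Lambda|\ge 3$ points that is non-binary in a way which lifts. Typical examples are $\mathrm{Alt}(\Lambda)$ or $\mathrm{Sym}(\Lambda)$ acting with the $2$-point stabiliser fixing nothing else, or $\Lambda$ being a set of size $|\Lambda|$ on which the setwise stabiliser acts as $\mathrm{Sym}$ while $G_{(\Lambda)}$ acts compatibly. Such a configuration shows that $G$ is not binary. Concretely, we would use the lemma that if some $g\in G$ of prime order $p$ fixes a point and moves a set in a way that produces $2$-subtuple complete but non-$G$-equivalent tuples, then the action is not binary.

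\textbf{Handling large $H$ versus prime-order $H$.} For $|H|$ not prime, we show non-binarity by choosing $h\in H$ of prime order and passing to the subgroup $\langle h\rangle$. More precisely, we would show the following: if $H$ contains a proper non-trivial subgroup $L$ with $G$ non-binary on $G/L$, we cannot immediately conclude. So instead we work directly. We use that whenever $H$ has a nontrivial element $h$ whose fixed-point set in $\Omega$ is too small or too large relative to a counting bound, this gives $2$-subtuple complete but non-equivalent pairs of tuples. In practice this is done by computer, in \magma{} or \gap{}, over the subgroup lattice where it is available, namely for the smaller sporadics. For the large groups, and especially the Monster and Baby Monster, we reduce via maximal overgroups $M$ of $H$: non-binarity of $M$ on $M/H$ propagates upward.

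The key inductive lemma is the following. If $H<M$ and $M$ acting on $M/H$ is not binary with a witness consisting of tuples contained in a single $M$-orbit, then $G$ on $G/H$ is not binary. Iterating this down the maximal subgroup chains reduces the problem to subgroups $H$ of prime order. For these, we check each conjugacy class of elements of prime order. Here binarity is equivalent to a condition on the fixed points and centraliser structure: roughly, that every pair of points is determined by its $G$-orbit and all relevant $2$-point stabilisers are trivial or equal to a conjugate of $H$. This condition can be computed from character-table data, namely permutation character values and class fusions. This yields Table~\ref{tbl:transall}.

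\textbf{Main obstacle.} We expect the main difficulty to be the large groups ($\mathbb{M}$, $\mathbb{B}$, $Fi_{24}'$, and others), where explicit permutation representations on $G/H$ are infeasible. For these we would argue entirely through maximal subgroups and known fusion data. We would use the descent lemma to reduce to almost simple or smaller maximal subgroups $M$ with $H\le M$, for which the relevant actions $M/H$ are computable, and then handle the residual prime-order cases by fixed-point counting.
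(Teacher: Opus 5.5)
Your tool (ii) is false in its first half. Binarity of $G$ on $(G:H)$ does not pass up to $(G:M)$ for $H<M<G$. Take $H=1$: the regular action is binary, as you show yourself, but by \cite{GLS22} no action of a sporadic simple group on the cosets of a maximal subgroup is binary.

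Your ``obstruction'' paragraph therefore proves too much. Every proper $H$ lies in a maximal subgroup, so it would show that no transitive action of a sporadic group is binary. That contradicts your own $H=1$ paragraph and the entries of Table~\ref{tbl:transall}, e.g.\ $\mathrm{Co}_2$ on the cosets of a $\mathrm{2A}$ involution.

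Only the downward half is valid (Lemma~\ref{lem:sub}), and your ``descent lemma'' is just its contrapositive. It transfers non-binarity from $(M:H)$ to $(G:H)$, but it gives no way to bound $|H|$. To apply it you must already know that $M$ is non-binary on $(M:H)$, which is the same problem for another group. So ``iterating down maximal chains reduces to prime order'' is unsupported.

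The idea that makes the classification feasible is missing. The paper uses the Gill--Guillot graph $\Gamma(\mathcal{C})$. By Lemma~\ref{lem:conjconn}, if $g\in H$ has prime order, then either $\Delta(g,\mathcal{C})\le H$ or some $h$ of strictly larger fixity has $\Delta(h)\le H$. The classification of component groups in \cite{CDP} then forbids many primes from dividing $|H|$ (Proposition~\ref{prop:binarydiv}). This is combined with:
\begin{itemize}
\item maximal $p$-fixity arguments;
\item the subgroups $F_\infty(g,h)\le H$ of Lemma~\ref{lem:Finf};
\item Lemma~\ref{lem:normalsub};
\item Frobenius suborbits (Lemmas~\ref{lem:frob} and~\ref{lem:suborb}) for the metacyclic $p{:}q$ candidates in the large groups;
\item computer searches over subgroups of restricted order using the witness test of Lemma~\ref{lem:efficienttest}.
\end{itemize}
Your proposal has nothing playing this role.

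Your treatment of prime-order $H$ is also wrong. If $|H|=p$, every two-point stabiliser $H\cap H^x$ is automatically $1$ or $H$, so your condition is vacuous. Permutation character values cannot detect binarity here either.

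The correct criterion (Lemma~\ref{lem:basiccriteriaTI}, specialised in Lemma~\ref{lem:cycbintest}) is that $H_1\cap(H_2\cdot H_3)=1$ for distinct conjugates. Equivalently, the only factorisations $g=hr$ with $h,r$ in the rational class of $g$ are the $p-2$ trivial ones $(g^a,g^{1-a})$. This is the class multiplication coefficient condition $\sum_{i,j}n_G(\mathcal{C}_i,\mathcal{C}_j,\mathcal{C}_1)=p-2$. It is exactly what produces Table~\ref{tbl:transall} and what proves the ``if'' direction (Proposition~\ref{prop:allbins}), which your proposal asserts without argument.
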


\begin{table}[h]
\begin{tabular}{|ll|ll|ll|ll|}
\hline
$G$                & $\C$ & $G$                  & $\C$   & $G$                  & $\C$  & $G$                  & $\C$  \\ \hline
$\mathrm{Co}_2$    & 2A   & $\mathrm{McL}$       & 3A     & $2.\mathrm{Co}_1$    & 3a    & $6.\mathrm{Fi}_{22}$ & 2b, 2c \\ \hline
$\mathrm{Fi}_{22}$ & 2A   & $\mathrm{Ly}$        & 3A     & $2.\mathbb{B}$       & 2b    &                      &       \\ \hline
$\mathrm{Fi}_{23}$ & 2A   & $2.\mathrm{J}_2$     & 2c, 3a  & $3.\mathrm{McL}$     & 3c, 3d &                      &       \\ \hline
$\mathbb{B}$       & 2A   & $2.\mathrm{Fi}_{22}$ & 2b, 2c & $3.\mathrm{Fi}_{22}$ & 2a    &                      &       \\ \hline
\end{tabular}
\caption{Pairs $(G,\C)$ where $G$ is a sporadic quasisimple group and $\C$ is a conjugacy class such that the transitive action of $G$ with stabiliser $\langle g\rangle$ is binary whenever $g\in \C$. Classes in simple groups are labelled in $\mathbb{ATLAS}$ notation~\cite{atlas}, and classes in the covers are labelled as in the \gap~Character Table Library~\cite{CTblLib1.3.11}.}
\label{tbl:transall}
\end{table}

When it comes to classifying intransitive binary actions, the key fact is this: If the action of a group $G$ on $\Omega$ is binary, then the action of $G$ on each of its orbits is also binary. Given Theorem~\ref{thm:maintrans}, our job is, then, to deduce, for each quasisimple sporadic group, which combinations of transitive binary actions can form orbits in a putative intransitive binary action. The coming theorem is the first result in the literature completely classifying intransitive binary actions in a `non-trivial' setting---this turns out to require a remarkably delicate analysis. To this end, we build up a significant collection of tools in Section~\ref{sec:intrans}, the utility of which should extend to intransitive actions well beyond sporadic groups.

Before stating the result we must introduce some notation. Given a $G$-set, $\Omega$, define $\mathcal{S}(\Omega)$ to be the set of all conjugacy classes of stabilisers of points of $\Omega$. As an abuse of notation, we will write the elements of $\mathcal{S}(\Omega)$ as conjugacy class representatives here, rather than the conjugacy classes themselves---thus two sets may appear different depending on the choice of representative, but are meant to be taken under element-wise equivalence. Our main result is the following.
\begin{thm}\label{thm:main}
Let $G$ be a quasisimple group with $G/Z(G)$ a sporadic simple group. The action of $G$ on the set $\Omega$ is binary if and only if one of the following holds
\begin{enumerate}[(i)]
    \item $\mathcal{S}(\Omega)\subseteq\{1,C, G \mid C\leq Z(G)\}$;
    \item $G\in \{\mathrm{Co}_2,\mathrm{Fi}_{22},\mathrm{Fi}_{23},\mathbb{B}\}$ and $\mathcal{S}(\Omega)\subseteq\{ 1 ,\langle g\rangle , G \}$ for some $g\in\mathrm{2A}$;
    \item $G\in\{\mathrm{McL},\mathrm{Ly}\}$ and $\mathcal{S}(\Omega)\subseteq\{ 1 ,\langle h\rangle , G \}$ for some $h\in \mathrm{3A}$; or
    \item $G$ appears in Table~\ref{tbl:intransall} and $\mathcal{S}(\Omega)\setminus\{1,G\} $ is a subset of one of the relevant maximal binary stabiliser sets displayed.
\end{enumerate}
\end{thm}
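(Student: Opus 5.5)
We will deduce Theorem~\ref{thm:main} from Theorem~\ref{thm:maintrans} together with general tools for intransitive actions. First, the action of $G$ on $\Omega$ is binary only if its action on each orbit is binary. Moreover, an orbit on which $G$ acts trivially (stabiliser $G$) can be adjoined or removed without affecting binarity. Hence, by Theorem~\ref{thm:maintrans}, any binary $\Omega$ has $\mathcal{S}(\Omega)\setminus\{G\}$ contained in the set consisting of $1$, the subgroups $C\le Z(G)$ (these arise from the non-faithful orbits, where the kernel is central), and the cyclic groups $\langle g\rangle$ with $g$ in a class of Table~\ref{tbl:transall}. It remains to decide exactly which combinations are binary.

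For sufficiency we use a few reduction lemmas. The first is that adding a regular orbit preserves binarity. Indeed, if $\omega$ lies in a regular orbit, then any two $n$-tuples containing $\omega$ in the same position that are $2$-equivalent are related by the unique element carrying $\omega$ to its image. The second is that orbits whose stabilisers are central are harmless. Such a stabiliser $C$ is normal and acts trivially on its orbit, so pair-orbit information involving such points pins down the coset of $C$, and one reduces to a regular-type argument for $G/C$. Together these give (i).

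For (ii) and (iii) we must show that any number of copies of the orbit $G/\langle g\rangle$ (with $g\in$ 2A or 3A, for varying conjugates $g$) can be combined. A union of orbits is binary exactly when, for every pair of $2$-equivalent tuples, the conjugating elements can be matched. For prime-order point stabilisers this reduces to a condition on intersections of conjugates and their normalisers, which we check from the fusion and centraliser data in the $\mathbb{ATLAS}$. The covers in Table~\ref{tbl:intransall} will be treated the same way, by checking pairs and triples of classes.

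For necessity we will produce explicit witnesses to non-binarity. These are pairs of tuples, with entries from two orbits, that are $2$-equivalent but not $G$-equivalent. They typically come from elements $x\in G$ that fix one point of a $\langle g\rangle$-orbit and move the pair structure in a second orbit $\langle g'\rangle$ with $g'$ in a different class, or from failures of $\langle g\rangle\cap\langle g'\rangle^x$ to behave well. We will check these computationally (in \magma\ or \gap) using permutation representations of modest degree where they exist, and otherwise via structure constants.

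The main obstacle will be the intransitive analysis for the large groups, namely $\mathbb{B}$, $2.\mathbb{B}$, $\mathrm{Ly}$ and $2.\mathrm{Co}_1$. Here the actions cannot be built directly, so both the positive statements and the witnesses must be derived from character-theoretic and class-fusion information. For these we will first try a general criterion phrased in terms of class multiplication coefficients and centralisers of elements of order $2$ or $3$.
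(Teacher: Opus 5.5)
\textbf{The reduction is wrong.} Your opening step misidentifies the candidate stabilisers. A non-faithful orbit of the quasisimple group $G$ has kernel some $C\le Z(G)$, and on that orbit $G$ acts as a \emph{faithful} action of the sporadic quasisimple group $G/C$. Applying Theorem~\ref{thm:maintrans} to $G/C$, the stabiliser may therefore be the full preimage $C\langle g\rangle$ of a non-trivial binary stabiliser of $G/C$, not only $C$ itself. These preimages are the groups in Table~\ref{tbl:intransall} such as $\langle g,z\rangle$ for $2.\mathbb{B}$, $3.\mathrm{McL}$ and the covers of $\mathrm{Fi}_{22}$, and $\langle g,z^3\rangle$, $\langle gz\rangle$, $\langle gz^2\rangle$ for $6.\mathrm{Fi}_{22}$. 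Your candidate list omits all of them. So the necessity statement you derive is false: for example, $2.\mathbb{B}$ acting on the cosets of $\langle g,z\rangle$ with $g\in\mathrm{2b}$ is binary, being the action of $\mathbb{B}$ on the cosets of a $\mathrm{2A}$-subgroup. Your sufficiency argument also never treats these orbits. The correct first step is $\mathcal{S}(\Omega)\subseteq\mathcal{S}(G)$, where $\mathcal{S}(G)$ is the set of all transitive binary stabilisers, faithful or not.

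\textbf{Central orbits are not harmless.} This principle is false in general, and its failure is exactly what splits (iv) into separate maximal sets. In $2.\mathbb{B}$ with $g\in\mathrm{2b}$ we have $g\sim gz$. Hence $H_1=\langle g\rangle$, $H_2=\langle gz\rangle$ and $H_3=Z(G)$ are all point stabilisers of $(G:Z(G))\sqcup(G:\langle g\rangle)$. Here $H_1\cap(H_2H_3)=H_1$ but $H_1\cap((H_1\cap H_2)H_3)=1$, so this union is not binary by Lemma~\ref{lem:basiccriteria}.

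Even for (i), what makes central orbits combine is the distributivity of the subgroup lattice of the cyclic group $Z(G)$ (Lemma~\ref{lem:centralstabs}). Your coset-pinning sketch never uses cyclicity. Without it the conclusion fails: in a group with a central $2^2$, the orbits whose stabilisers are its three subgroups of order $2$ form a non-binary union.

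\textbf{Triple checks do not prove sufficiency in (iv).} ``Checking pairs and triples of classes'' only rules out failures on $3$-tuples. To prove that a union of orbits is binary for tuples of every length, you need an inductive criterion such as Lemma~\ref{lem:intransconditions2}. Its conditions on products of intersections are where the real work lies; the $6.\mathrm{Fi}_{22}$ sets are built up one orbit at a time in this way. Necessity comes from explicit stabiliser triples like the one above (Lemmas~\ref{lem:cycstabs} and~\ref{lem:cent6forbidden}), not from computer searches.

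\textbf{The difficulty is misplaced.} A simple sporadic group has at most one non-trivial transitive binary action. So (ii) and (iii) follow immediately from Lemmas~\ref{GGlem2.6} and~\ref{GGlem2.7}, with no fusion or centraliser data, and $\mathbb{B}$ and $\mathrm{Ly}$ are trivial cases. The group $2.\mathrm{Co}_1$ needs only two structure constants via Corollary~\ref{cor:intranscharcalc}. The genuinely delicate case is $|Z(G)|=6$.
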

\begin{rks}
    Case (i) accounts for the vast majority of binary actions of sporadic quasisimple groups. Moreover, all sporadic simple groups are covered by cases (i)--(iii).
\end{rks}

\begin{table}[h]
\resizebox{0.98\textwidth}{!}{\begin{tabular}{|l|l|l|l|}\hline
$G$               & $\mathcal{S}(\Omega)\setminus\{1,G\}$                             & $G$                  & $\mathcal{S}(\Omega)\setminus\{1,G\}$                                                                                                               \\\hline
$2.\mathrm{J}_2$  & $\{ Z(G) ,\langle g\rangle  ,\langle h\rangle \}$; $g\in\mathrm{2c}$, $h\in\mathrm{3a}$          & $2.\mathrm{Fi}_{22}$ & $\{  Z(G) ,\langle g\rangle ,\langle g,z\rangle \}$,                                                                                   \\\cline{1-2}
$2.\mathrm{Co}_1$ & $\{ Z(G) ,\langle h\rangle \}$; $h\in\mathrm{3a}$                              &                      & $\{  Z(G) ,\langle gz\rangle ,\langle g,z\rangle  \}$, or                                                                                \\\cline{1-2}
$2.\mathbb{B}$    & $\{  Z(G) ,\langle g,z\rangle  \}$, or                    &                      & $\{\langle g\rangle ,\langle gz\rangle \}$; $g\in\mathrm{2b}$                                                                                                  \\\cline{3-4}
                  & $\{ \langle g\rangle \}$; $g\in \mathrm{2b}$                                     & $3.\mathrm{Fi}_{22}$ & $\{Z(G),\langle g\rangle,\langle g,z\rangle\}$; $g\in\mathrm{2a}$                                                                                                                                               \\\hline
$3.\mathrm{McL}$  & $\{ Z(G) ,\langle h\rangle ,\langle h,z\rangle  \}$, or & $6.\mathrm{Fi}_{22}$ & $\{\langle g,z\rangle,\langle g,z^3\rangle,\langle g\rangle,\langle gz^2\rangle,\langle z\rangle,\langle z^2\rangle,\langle z^3\rangle\}$,     \\
                  & $\{\langle hz\rangle  \}$;  $h\in \mathrm{3c}$                                 &                      & $\{\langle g,z\rangle,\langle g,z^3\rangle,\langle gz\rangle,\langle gz^3\rangle,\langle z\rangle,\langle z^2\rangle,\langle z^3\rangle\}$, or \\                  &                                                              &                      & $\{\langle g\rangle,\langle gz\rangle,\langle gz^2\rangle,\langle gz^3\rangle,\langle z^2\rangle\}$; $g\in\mathrm{2b}$                                          \\\hline                                                                                                                                              
\end{tabular}}
\caption{Sporadic quasisimple groups $G$ and their maximal binary stabiliser sets. An action of $G$ on $\Omega$ is binary if and only if $\mathcal{S}(\Omega)\setminus\{1,G\}$ is a subset of at least one of the relevant sets. We use \gap~Character Table Library~\cite{CTblLib1.3.11} class labels, and $z$ is a generator of $Z(G)$.}
\label{tbl:intransall}
\end{table}

Theorem~\ref{thm:main} is proved using a combination of computational and theoretical techniques, and crucially relies on a reduction proved in work of the first author with Craven and Parker~\cite{CDP}. As part of this work, general use \magma~\cite{magma} programs have been developed, which can be accessed from the GitHub repository~\cite{git}, along with explanations and worked examples found in the supplementary document~\cite{comps}.

The structure of this paper is straightforward. In Section~\ref{sec:machine} we collect general structural results about transitive binary actions, before applying these results in Section~\ref{sec:class}, which is dedicated to proving Theorem~\ref{thm:maintrans}. Section~\ref{sec:intrans} tackles the intransitive case, and is split into three subsections; in the first, we develop the general theory of intransitive binary actions, in the second we restrict our attention to the case $Z(G)$ has order 6, and in the final we prove Theorem~\ref{thm:main}.

\section{General machinery}\label{sec:machine}

In this section we develop the theory and key tools which power the proof of the main result. We begin by formally defining relational complexity. 

Suppose that $G$ acts on a finite set $\Omega$, and let $I=(\alpha_i)_{1\leq i\leq n}$ and $J=(\beta_i)_{1\leq i\leq n}$ be $n$-tuples of points of $\Omega$. For an integer $k\leq n$, we say that $I\stb{k}J$ if for every $S\subseteq\{1,2,\dots,n\}$ with $|S|=k$ there exists some $g\in G$ such that $(\alpha_i)_{i\in S}^g=(\beta_i)_{i\in S}$. It is easy to see that $\stb{k}$ defines an equivalence relation. Given such $G$ and $\Omega$, the \emph{relational complexity}, written $\mathrm{RC}(G,\Omega)$, is the least integer $k\geq 2$ such that for any $n\geq k$ and $I,J\in\Omega^n$ if $I\stb{k}J$, then $I\stb{n}J$. When $\mathrm{RC}(G,\Omega)=2$ we say that this action is \emph{binary}. It is straightforward to see that every non-trivial finite group admits at least two transitive binary actions---the regular action, and the action on a single point---we refer to these two actions as the \emph{trivial binary actions}.

For the remainder, $G$ is a finite group. We proceed by reproducing some earlier results which have recently become standard in the study of binary actions---the first is little more than an observation.

\begin{lemma}\label{lem:sub}
    Let $H<M<G$ and suppose that the action of $G$ on the coset space $(G:H)$ is binary. Then the action of $M$ on $(M:H)$ is binary.
\end{lemma}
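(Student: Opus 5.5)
The plan is to realise the $M$-action on $(M:H)$ inside the $G$-action on $(G:H)$ and transfer tuples back and forth. Identify $(M:H)$ with the subset $\Delta=\{Hm : m\in M\}\subseteq(G:H)$. This is the $M$-orbit of the point $H$ under right multiplication, so the action of $M$ on $(M:H)$ is permutation isomorphic to the action of $M$ on $\Delta$. Since $M\le G$, this is also the restriction to $\Delta$ of the action of the subgroup $M$.

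The key observation is that elements of $G$ moving a point of $\Delta$ into $\Delta$ lie in $M$. Suppose $g\in G$ and $Hm_1 g = Hm_2$ with $m_1,m_2\in M$. Then $m_1 g m_2^{-1}\in H\le M$, so $g\in M$. Consequently, for tuples in $\Delta$ of length at least $1$, being in the same $G$-orbit is equivalent to being in the same $M$-orbit. In particular, whenever some $g\in G$ maps a nonempty subtuple of points of $\Delta$ onto another tuple of points of $\Delta$, that $g$ already lies in $M$.

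Now take $n\ge 2$ and $I,J\in\Delta^n$ with $I\stb{2}J$ with respect to $M$.
\begin{enumerate}[(1)]
\item Since $M\le G$, we immediately get $I\stb{2}J$ with respect to $G$.
\item Because $G$ is binary on $(G:H)$, there is $g\in G$ with $I^g=J$.
\item The entries of $I$ and $J$ lie in $\Delta$, so the observation gives $g\in M$.
\end{enumerate}
Hence $I\stb{n}J$ with respect to $M$. As $n\ge 2$ and the tuples were arbitrary, $\mathrm{RC}(M,(M:H))=2$, that is, the action of $M$ on $(M:H)$ is binary.

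I do not anticipate any real obstacle. The only point needing care is the coset computation showing $g\in M$, which uses $H\le M$; the strict inclusions in the hypothesis are not needed for this argument.
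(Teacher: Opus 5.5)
Your argument is correct and complete. The key point is that $\Delta=\{Hm : m\in M\}$ is a block for $G$ on $(G:H)$ whose setwise stabiliser is exactly $M$, so any $g\in G$ sending a point of $\Delta$ into $\Delta$ lies in $M$; this is what lets the witness $g\in G$ with $I^g=J$ be pulled back into $M$. The paper gives no proof of its own: it calls the lemma little more than an observation and cites it as a special case of \cite[Lemma 1.7.2]{GLS22}, so your self-contained verification supplies that observation directly.
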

\begin{rk}   Lemma~\ref{lem:sub} is a special case of~\cite[Lemma 1.7.2]{GLS22}. 
\end{rk}

The next result gives some equivalent conditions useful for identifying binary
actions.
\begin{lemma}[{\cite[Lemma 2.4]{GG23}}]\label{lem:basiccriteria}
    Let $G$ act on $\Omega$. The following are equivalent:\begin{enumerate}
        \item[1.] There exist $I,J\in \Omega^3$ such that $I\stb{2} J$ but $I\nstb{3} J$.
        \item[2.] There exist elements $h_1,h_2,h_3$ of point stabilisers $H_1,H_2,H_3$, respectively, where\begin{enumerate}
            \item $h_1h_2h_3=1$; and
            \item there do not exist $h_2'\in H_1\cap H_2$ and $h_3'\in H_1\cap H_3$ with $h_1h_2'h_3'=1$.
        \end{enumerate}
        \item[3.] There exist point stabilisers $H_1,H_2,H_3$, such that $H_1\cap (H_2\cdot H_3)\not\subseteq H_1\cap((H_1\cap H_2)\cdot H_3).$
    \end{enumerate}
\end{lemma}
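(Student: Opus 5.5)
We prove the three equivalences by cycling through them: (1)$\Rightarrow$(2)$\Rightarrow$(3)$\Rightarrow$(1). Throughout, the basic translation is this. For a pair of points $(\alpha,\beta)$ and a pair $(\alpha',\beta')$, we have $(\alpha,\beta)^g=(\alpha',\beta')$ for some $g$ exactly when the pairs lie in the same $G$-orbit on $\Omega^2$. We normalise using transitivity of $G$ on the relevant orbits, so that $I$ and $J$ share as many coordinates as possible. We then read off the group elements $h_i$ as "transition" elements lying in the point stabilisers.

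\textbf{(1)$\Rightarrow$(2).} Suppose $I=(\alpha_1,\alpha_2,\alpha_3)\stb{2}J=(\beta_1,\beta_2,\beta_3)$ with $I\nstb{3}J$.

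1. Replacing $J$ by a $G$-translate, we may assume $\beta_2=\alpha_2$ and $\beta_3=\alpha_3$. This uses the pair $\{2,3\}$ of the relation $\stb{2}$.
2. The pairs $\{1,2\}$ and $\{1,3\}$ then give elements $g_3\in G_{\alpha_2}$ with $\alpha_1^{g_3}=\beta_1$, and $g_2\in G_{\alpha_3}$ with $\alpha_1^{g_2}=\beta_1$.
3. Set $H_1=G_{\alpha_1}$, $H_2=G_{\alpha_2}$, $H_3=G_{\alpha_3}$. Then $h_1:=g_3g_2^{-1}\in H_1$. Hence, with $h_2=g_3^{-1}\in H_2$ and $h_3=g_2\in H_3$, we get $h_1h_2h_3=1$ after a suitable rearrangement of the ordering. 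The exact ordering, and which stabiliser each factor lies in, is bookkeeping that I would fix carefully when writing this out.
4. If $h_2'\in H_1\cap H_2$ and $h_3'\in H_1\cap H_3$ satisfied $h_1h_2'h_3'=1$, we could modify $g_3$ by an element of $H_1\cap H_2$ so that it also fixes $\alpha_3$. That element would map $I$ to $J$, contradicting $I\nstb{3}J$.

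\textbf{(2)$\Rightarrow$(3).} From $h_1h_2h_3=1$ we get $h_1^{-1}=h_2h_3\in H_1\cap(H_2H_3)$. Condition (b) says precisely that $h_1^{-1}\notin (H_1\cap H_2)(H_1\cap H_3)$. Note that $H_1\cap((H_1\cap H_2)H_3)=(H_1\cap H_2)(H_1\cap H_3)$ by Dedekind's modular law, since $H_1\cap H_2\le H_1$. So (3) holds.

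\textbf{(3)$\Rightarrow$(1).} Take $x\in H_1\cap H_2H_3$ outside $(H_1\cap H_2)(H_1\cap H_3)$, and write $x=ab$ with $a\in H_2$ and $b\in H_3$. Let $H_i=G_{\alpha_i}$.

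1. Put $I=(\alpha_1,\alpha_2,\alpha_3)$ and $J=(\alpha_1^{a^{-1}},\alpha_2,\alpha_3)$. Then $a^{-1}\in H_2$ maps $(\alpha_1,\alpha_2)$ to $(\alpha_1^{a^{-1}},\alpha_2)$.
2. We have $\alpha_1^{a^{-1}}=\alpha_1^{b x^{-1}}$. Here $bx^{-1}$ does not directly fix $\alpha_3$, so I would adjust the choice of $J$, or else use $x^{-1}$ composed appropriately, so that an element of $H_3$ matches the pair $(\alpha_1,\alpha_3)$. I expect $J=(\alpha_1^{b^{-1}}\ldots)$-type adjustments to settle this, again as bookkeeping.
3. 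The identity map matches the pair $(\alpha_2,\alpha_3)$, so $I\stb{2}J$.
4. If some $g$ mapped $I$ to $J$, then $g\in H_2\cap H_3$ and $\alpha_1^g=\alpha_1^{a^{-1}}$. This gives $ga\in H_1$, and unwinding yields $x\in(H_1\cap H_2)(H_1\cap H_3)$, a contradiction. Hence $I\nstb{3}J$.

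The main obstacle is the index and ordering bookkeeping, making each product land in the right stabiliser. The substance of the argument is just the Dedekind law together with the orbit translation above.
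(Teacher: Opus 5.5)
\textbf{Verdict: right approach, but not yet a proof.} The paper gives no proof of this lemma; it quotes it from \cite{GG23}. Your strategy is the standard argument, and your (2)$\Rightarrow$(3) step is complete: condition (b) says exactly that $h_1^{-1}\notin (H_1\cap H_2)(H_1\cap H_3)$, and Dedekind's law identifies this set with $H_1\cap((H_1\cap H_2)H_3)$. However, the two steps you defer as ``bookkeeping'' are the entire content of the lemma. As written, both are wrong.

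\textbf{Gap in (1)$\Rightarrow$(2).} With your choices, $h_1h_2h_3=g_3g_2^{-1}g_3^{-1}g_2$, which is a commutator and need not be $1$. What actually holds is $h_1h_3h_2=1$, so the middle and last factors lie in the wrong stabilisers for (a). The fix uses the fact that $g_2g_3^{-1}$ also fixes $\alpha_1$. Take $h_1=g_2g_3^{-1}\in H_1$, $h_2=g_3\in H_2$ and $h_3=g_2^{-1}\in H_3$. Your step 4 then becomes a one-line check, which you should write out. Suppose $h_1h_2'h_3'=1$ with $h_2'\in H_1\cap H_2$ and $h_3'\in H_1\cap H_3$. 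Then $w:=h_2'^{-1}g_3=h_3'g_2$ lies in $H_2\cap H_3$ and sends $\alpha_1$ to $\alpha_1^{g_3}=\beta_1$. Hence $I^w=J$, a contradiction.

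\textbf{Gap in (3)$\Rightarrow$(1).} Your choice $J=(\alpha_1^{a^{-1}},\alpha_2,\alpha_3)$ does not satisfy $I\stb{2}J$ in general, because nothing matches the pair in positions $1,3$. The correct choice uses $x=ab\in H_1$, which gives $\alpha_1^{a}=\alpha_1^{b^{-1}}$; your guess $\alpha_1^{b^{-1}}$ was right. With $J=(\alpha_1^{a},\alpha_2,\alpha_3)$, the elements $a\in H_2$, $b^{-1}\in H_3$ and $1$ witness the three pairs. Step 4 must then be redone, since your ``$ga\in H_1$'' came from the wrong $J$. If $g$ maps $I$ to $J$, then $g\in H_2\cap H_3$ and $c:=ag^{-1}\in H_1\cap H_2$. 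Also $gb=c^{-1}x\in H_1\cap H_3$. So $x=c\cdot gb\in(H_1\cap H_2)(H_1\cap H_3)$, contradicting the choice of $x$.

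With these two repairs the argument is complete.
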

A subgroup $H<G$ is a \emph{TI-subgroup} if $H\cap H^g=1$ for all $g\in G\setminus N_G(H)$.

\begin{lemma}[{\cite[Lemma 2.3]{GG25}}]\label{lem:basiccriteriaTI}
    The action of $G$ on the cosets of a TI-subgroup $H$ is binary if and only if ${H_1\cap (H_2 \cdot H_3)= \{1\}}$ for every triple $(H_1,H_2,H_3)$ of distinct conjugates of $H$.
\end{lemma}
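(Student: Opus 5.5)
Here the point stabilisers are precisely the conjugates of $H$, so I would deduce the statement from criterion 3 of Lemma~\ref{lem:basiccriteria}, specialised to the TI hypothesis. The action on $(G:H)$ is binary exactly when there is no triple $I,J\in\Omega^3$ with $I\stb{2}J$ but $I\nstb{3}J$. (Reducing to $n=3$ is part of the content of Lemma~\ref{lem:basiccriteria} as used in \cite{GG23}.) By Lemma~\ref{lem:basiccriteria}, this is equivalent to
\[
H_1\cap(H_2\cdot H_3)\subseteq H_1\cap\bigl((H_1\cap H_2)\cdot H_3\bigr)
\]
for every triple of conjugates $H_1,H_2,H_3$ of $H$. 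The plan is to check this inclusion in each possible configuration of the triple and show that it reduces to the stated condition when the $H_i$ are distinct.

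First, suppose two of the $H_i$ coincide. If $H_1=H_2$, then $H_1\cap H_2=H_1$, so the right-hand side is $H_1\cap H_1H_3\supseteq H_1\cap H_2H_3$ and the inclusion holds. If $H_1=H_3$, take $x=h_2h_3\in H_1$ with $h_3\in H_1$. Then $h_2=xh_3^{-1}\in H_1\cap H_2$, so $x\in(H_1\cap H_2)H_3$. If $H_2=H_3$ with $H_1\neq H_2$, then $H_1\cap H_2=1$ by the TI property, since distinct conjugates of $H$ meet trivially. The right-hand side is then $H_1\cap H_3=1$, while the left-hand side is $H_1\cap H_2=1$, so again the inclusion holds. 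Hence only triples of pairwise distinct conjugates can violate the criterion.

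For pairwise distinct conjugates, the TI property gives $H_1\cap H_2=1=H_1\cap H_3$. The right-hand side therefore collapses to $H_1\cap H_3=\{1\}$, and the criterion becomes exactly $H_1\cap(H_2\cdot H_3)\subseteq\{1\}$, that is, $H_1\cap(H_2\cdot H_3)=\{1\}$ (it always contains $1$). Combining this with the degenerate cases gives both directions of the equivalence.

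The only point needing care is that distinct conjugates really do intersect trivially. If $H^g\neq H^{g'}$, then $g'g^{-1}\notin N_G(H)$, so $H\cap H^{g'g^{-1}}=1$, and conjugating by $g$ gives $H^g\cap H^{g'}=1$. I do not expect any step to be a real obstacle; the main task is to carefully enumerate the coincidence cases among $H_1,H_2,H_3$.
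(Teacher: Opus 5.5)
The paper gives no proof of this lemma; it only quotes \cite[Lemma 2.3]{GG25}. Your proposal therefore has to stand on its own. It correctly proves the forward implication but has a gap in the converse.

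\textbf{What works.} Your case analysis of triples is correct. For a TI-subgroup, criterion 3 of Lemma~\ref{lem:basiccriteria} fails for every triple of conjugates exactly when $H_1\cap(H_2\cdot H_3)=\{1\}$ for all triples of distinct conjugates. Since a binary action has no bad triple, this gives binary $\Rightarrow$ condition.

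\textbf{The gap.} Your opening claim is that the action is binary exactly when there is no triple with $I\stb{2}J$ but $I\nstb{3}J$. Lemma~\ref{lem:basiccriteria} does not say this: it only asserts that three statements about triples are equivalent. The reduction from $n$-tuples to $n=3$ is false in general. For example, let $V=\mathbb{F}_2^3$, let $f_1,f_2,f_3$ be a basis of $V^*$, set $f_4=f_1+f_2+f_3$, and let $V$ act on the disjoint union of the spaces $(V:\ker f_i)$ for $1\le i\le 4$. The example can be made transitive by adjoining an automorphism of $V$ of order $4$ whose dual permutes the $f_i$ cyclically. Any three of the $f_i$ are linearly independent, so there is no bad triple. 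Yet if $\alpha_i$ lies in the $i$th orbit and $f_4(x)=1$, the tuples $I=(\alpha_1,\dots,\alpha_4)$ and $J=(\alpha_1,\alpha_2,\alpha_3,\alpha_4^x)$ satisfy $I\stb{3}J$ but $I\nstb{4}J$. These stabilisers are not TI, and that is exactly the point: the TI hypothesis is needed to pass from triples to arbitrary tuples, and your argument never uses it there. As written, the converse (condition $\Rightarrow$ binary) is unproved.

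\textbf{How to close it.} Let $I=(\alpha_1,\dots,\alpha_n)\stb{2}J=(\beta_1,\dots,\beta_n)$.
\begin{enumerate}
\item If two entries of $I$ have equal stabilisers, Lemma~\ref{lem:distincttups} lets you delete one of them, and the shortened tuples remain $2$-equivalent. By induction you may assume the $G_{\alpha_i}$ are pairwise distinct, hence pairwise trivially intersecting by your last paragraph. The case $n\le 2$ is immediate.
\item Since $G_{\alpha_1}\cap G_{\alpha_2}=1$, there is a unique $g\in G$ with $(\alpha_1,\alpha_2)^g=(\beta_1,\beta_2)$.
\item For each $i\geq 3$, your triple analysis together with Lemma~\ref{lem:basiccriteria} gives $(\alpha_1,\alpha_2,\alpha_i)\stb{3}(\beta_1,\beta_2,\beta_i)$. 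By uniqueness, the element realising this must be $g$.
\item Hence $\alpha_i^g=\beta_i$ for every $i$, that is, $I^g=J$.
\end{enumerate}
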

Sometimes we will not be lucky enough to have a TI-subgroup $H$ to test, but provided that $H$ is small (and so has many conjugates with which it intersects trivially), the following lemma gives another useful and efficient test.
\begin{lemma}\label{lem:efficienttest}
    Let $G$ act on the coset space $(G:H)$ for some $H<G$. If there exist conjugates $H_2,H_3$ of $H$ and elements $1\ne h_1\in H$, $h_2\in H_2$ and $h_3\in H_3$ such that $H\cap H_2=H\cap H_3=1$ and $h_1h_2h_3=1$, then the action of $G$ on $(G:H)$ is not binary.
\end{lemma}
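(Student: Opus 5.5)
We will show that the triple $(H_1,H_2,H_3)=(H,H_2,H_3)$ witnesses condition 3 of Lemma~\ref{lem:basiccriteria}; by the equivalence of conditions 1 and 3, the action of $G$ on $(G:H)$ is then not binary. Recall that condition 3 asks for point stabilisers $H_1,H_2,H_3$ with
\[
H_1\cap (H_2\cdot H_3)\not\subseteq H_1\cap\bigl((H_1\cap H_2)\cdot H_3\bigr).
\]
First note that $H$, $H_2$ and $H_3$ are genuinely point stabilisers in the action on $(G:H)$, since the stabiliser of the coset $Hx$ is $H^x$ and every conjugate arises this way.

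For the left-hand side, use $h_1h_2h_3=1$ to get $h_1=h_3^{-1}h_2^{-1}$, so $h_1^{-1}=h_2h_3\in H_2\cdot H_3$. Since $h_1^{-1}\in H$ and $h_1\neq 1$, the element $h_1^{-1}$ is a non-identity element of $H\cap(H_2\cdot H_3)$.

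For the right-hand side, the hypothesis $H\cap H_2=1$ gives $(H\cap H_2)\cdot H_3=H_3$. Hence
\[
H\cap\bigl((H\cap H_2)\cdot H_3\bigr)=H\cap H_3=1,
\]
using the second hypothesis $H\cap H_3=1$. So the non-identity element $h_1^{-1}$ lies in the left-hand side but not in the right-hand side, and the containment required for binarity fails.

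Equivalently, one can argue via condition 2 of Lemma~\ref{lem:basiccriteria}. The relation $h_1h_2'h_3'=1$ with $h_2'\in H\cap H_2=1$ and $h_3'\in H\cap H_3=1$ would force $h_1=1$, a contradiction. There is no real obstacle here. The only point needing care is the ordering and inversion convention in condition 3, that is, whether it is $h_1$ or $h_1^{-1}$ that lies in $H_2H_3$; this is harmless because $H$ is a subgroup and so contains both.
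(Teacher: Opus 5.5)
Your proof is correct and matches the paper's. The paper argues exactly as in your final paragraph, via Statement 2 of Lemma~\ref{lem:basiccriteria}: any $h_2'\in H\cap H_2$ and $h_3'\in H\cap H_3$ are trivial, so $h_1h_2'h_3'=h_1\neq 1$. Your main route through Statement 3 is the same computation rephrased, with $h_1^{-1}=h_2h_3$ lying in $H\cap(H_2\cdot H_3)$ but not in $H\cap\bigl((H\cap H_2)\cdot H_3\bigr)=H\cap H_3=1$.
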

\begin{proof}
    We use Statement 2 of Lemma~\ref{lem:basiccriteria}: if $h_2'\in H\cap H_2$ and $h_3'\in H\cap H_3$, then $h_2'=h_3'=1$, by assumption. In particular, $h_1h_2'h_3'=h_1\ne 1$, so the conditions of Statement 2 are satisfied, as needed.
\end{proof}
  
\begin{lemma}[{\cite[Lemma 1.8.1]{GLS22}}]\label{lem:frob}
    Let $G$ be a Frobenius permutation group on $\Omega$ with Frobenius complement of order $m>2$. Then the action of $G$ is non-binary.
\end{lemma}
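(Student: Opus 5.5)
Let $G$ be Frobenius on $\Omega$ with kernel $K$ and complement $H=G_\alpha$, with $|H|=m>2$. We use Lemma~\ref{lem:basiccriteria}, Statement~2. Since $G$ is Frobenius, distinct point stabilisers meet trivially. So whenever $H_1,H_2,H_3$ are stabilisers of three distinct points, both $H_1\cap H_2$ and $H_1\cap H_3$ are trivial. In that case condition (b) only asks that $h_1\neq 1$. It therefore suffices to find distinct points $\alpha,\beta,\gamma$ and elements $h_1\in G_\alpha$, $h_2\in G_\beta$, $h_3\in G_\gamma$ with $h_1h_2h_3=1$ and $h_1\ne 1$. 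This is exactly the shape of Lemma~\ref{lem:efficienttest}.

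We build the triple explicitly. Take $H=G_\alpha$ and conjugates $H^{k}=G_{\alpha^k}$ for $k\in K$; distinct $k$ give distinct points, since $K$ acts regularly. Elements of $H^k$ have the form $k^{-1}hk$ with $h\in H$. We want $h_1\in H\setminus\{1\}$, $h_2=k^{-1}h'k$ and $h_3=l^{-1}h''l$ with $1,k,l$ distinct elements of $K$, such that $h_1h_2h_3=1$. Project modulo $K$, using $G/K\cong H$. This forces $h_1h'h''=1$ in $H$. Now choose $h_1,h'\in H\setminus\{1\}$ with $h_1h'\neq 1$; this is possible because $m>2$. Given these, $h''=(h_1h')^{-1}$ is also nontrivial. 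It remains to solve the equation $h_1k^{-1}h'k\,l^{-1}h''l=1$ for $k,l\in K$.

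Rewrite the equation using the conjugation action of $H$ on $K$. Write $x^h=h^{-1}xh$. Then $k^{-1}h'k=h'\,(k^{-1})^{h'}k$. Hence $h_1k^{-1}h'k=h_1h'\cdot c$ with $c=(k^{-1})^{h'}k\in K$. The equation becomes
\[
c=(l^{-1}h''l)^{-1}(h_1h')^{-1}=l^{-1}(h'')^{-1}l\,h''=l^{-1}\,l^{h''^{\,-1}}\cdot\text{(suitably)},
\]
so it reads $c=d$ for an element $d\in K$ of the analogous form in $l$. Since $H$ acts fixed-point-freely on $K$, for any nontrivial $h\in H$ the map $k\mapsto (k^{-1})^{h}k$ is injective on $K$, hence bijective. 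So each side ranges over all of $K$ as $k$ (respectively $l$) varies. Choose $k\neq 1$ arbitrary; the resulting $c$ is nontrivial. Solve for the unique $l$ with the matching value.

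The main obstacle is to check that $l\notin\{1,k\}$, so that the three points are distinct. First, $l=1$ gives $d=1$, whereas $c\neq1$; so $l\neq 1$. If $l=k$, then some triple product is trivial with $\beta=\gamma$. Here $m>2$ gives the freedom to escape: vary $k$ over the $|K|-1\ge 2$ nontrivial choices, or vary the choice of $h'$. Only a single bad $k$ can produce $l=k$ for a given setup, so some choice works. One must still argue this carefully in the degenerate case where $|K|$ is small. If it is needed, a cleaner fallback is to use Lemma~\ref{lem:sub}: restrict to $K\rtimes\langle h\rangle$ with $h$ of prime order $p>2$, or to a subgroup of $H$ of order $4$, and do the computation there.
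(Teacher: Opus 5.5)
The paper gives no proof of this lemma; it quotes it from \cite[Lemma 1.8.1]{GLS22}, so your argument has to stand on its own. Its core is sound, but it has a gap at the step you yourself flag, plus an algebra slip.

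The sound parts are these. Distinct point stabilisers of a Frobenius group meet trivially, so Lemma~\ref{lem:efficienttest} reduces the problem to finding $1\ne h_1\in H$, $h_2\in H^k$, $h_3\in H^l$ with $h_1h_2h_3=1$ and $\alpha,\alpha^k,\alpha^l$ distinct. Choosing $h_1,h',h''\in H\setminus\{1\}$ with $h_1h'h''=1$ is exactly where $m>2$ is used. The fact that $C_K(h)=1$ for $1\ne h\in H$ then makes the equation in $k,l$ solvable.

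The slip is that your displayed rearrangement has the factors in the wrong order. From $h_1h'c\cdot l^{-1}h''l=1$ one gets $c=(h_1h')^{-1}(l^{-1}h''l)^{-1}=h''l^{-1}(h'')^{-1}l$, that is,
\[
(k^{-1})^{h'}k=(l^{-1})^{(h'')^{-1}}\,l .
\]
The right-hand side is injective, hence bijective, in $l\in K$ because $C_K(h'')=1$. So for each $k$ there is a unique $l$, and your conclusion survives once ``(suitably)'' is replaced by this identity.

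The gap is that you never prove $l\ne k$. The claim that only a single $k$ can be bad is unsupported, and the proposed fallback is invalid. Lemma~\ref{lem:sub} only passes binarity to an intermediate group $M$ containing the point stabiliser, acting on $(M:H)$. Restricting the action on $\Omega$ to a subgroup such as $K\rtimes\langle h\rangle$ with $\langle h\rangle<H$ is not covered by it. Moreover, non-binarity of a subgroup does not transfer to the overgroup: $A_n$ is non-binary in its natural action while $S_n$ is binary.

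No escape is actually needed. If $l=k\ne 1$, then $h_2,h_3\in H^k$, so $h_1=(h_2h_3)^{-1}\in H\cap H^k=1$, contradicting $h_1\ne 1$. Combined with your (correct) observation that $l\ne 1$, every $k\in K\setminus\{1\}$ gives three distinct points $\alpha,\alpha^k,\alpha^l$. Lemma~\ref{lem:efficienttest} then finishes the proof, with no case analysis on $|K|$.
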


Of course, for the sporadic groups we will never find ourselves in the situation of Lemma~\ref{lem:frob}, however ~\cite[Lemma 1.7.1]{GLS22} allows us to instead study the actions of point stabilisers. We reproduce (a special case of) the result here.
\begin{lemma}\label{lem:suborb}
    Let $G$ be a transitive permutation group with point stabiliser $H$. If there exists an $H$-orbit on which the action of $H$ is non-binary, then $G$ is non-binary.
\end{lemma}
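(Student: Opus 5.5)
The plan is to prove the contrapositive directly from the definition: an $H$-orbit with non-binary action produces a witness of non-binarity for $G$ itself. Let $\Omega$ be the set on which $G$ acts, $\alpha\in\Omega$ with $G_\alpha=H$, and $\Delta\subseteq\Omega\setminus\{\alpha\}$ (or possibly $\{\alpha\}$, which is trivially binary) an $H$-orbit on which $H$ is non-binary.

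First, non-binarity of $H$ on $\Delta$ gives some $n\geq 3$ and tuples $I=(\beta_1,\dots,\beta_n)$, $J=(\gamma_1,\dots,\gamma_n)$ in $\Delta^n$ with $I\stb{2}J$ under $H$ but $I\nstb{n}J$ under $H$. We may take $n$ minimal, so that $I\stb{n-1}J$ under $H$.

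Next, prepend $\alpha$ to obtain the $(n+1)$-tuples $I'=(\alpha,\beta_1,\dots,\beta_n)$ and $J'=(\alpha,\gamma_1,\dots,\gamma_n)$ in $\Omega^{n+1}$. We check that $I'\stb{2}J'$ under $G$. A pair of positions not involving the first coordinate is handled by an element of $H\leq G$. A pair $(\alpha,\beta_i)$ versus $(\alpha,\gamma_i)$ is handled because $\beta_i,\gamma_i$ lie in the same $H$-orbit $\Delta$, so some $h\in H$ fixes $\alpha$ and maps $\beta_i\mapsto\gamma_i$.

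However, $I'\nstb{n+1}J'$ under $G$. Any $g\in G$ mapping $I'$ to $J'$ fixes $\alpha$, so $g\in H$, and it maps $I$ to $J$, contradicting $I\nstb{n}J$ under $H$. Hence $\mathrm{RC}(G,\Omega)>2$, as required.

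The only point needing care is that binarity is defined via all $n\geq k$. The argument above handles arbitrary $n$ uniformly, so it avoids needing Lemma~\ref{lem:basiccriteria}, whose equivalence is stated for triples only. I therefore do not expect a real obstacle: the key observation is simply that fixing the base point $\alpha$ as an extra coordinate converts $H$-equivalence on $\Delta$ into $G$-equivalence, while $2$-equivalence with the new coordinate comes for free from $\Delta$ being a single $H$-orbit.
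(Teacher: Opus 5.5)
Your proof is correct: prepending the base point $\alpha$ forces any element of $G$ carrying $I'$ to $J'$ into $G_\alpha=H$. Meanwhile, $2$-equivalence at the pairs involving the new coordinate comes for free, because each pair $\beta_i,\gamma_i$ lies in the single $H$-orbit $\Delta$. The minimality of $n$ and the transitivity of $G$ are never used and can be dropped.

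The paper gives no proof of its own; it simply quotes the statement as a special case of \cite[Lemma 1.7.1]{GLS22}. Your argument is the natural direct proof of that reduction and makes the lemma self-contained.
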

Before our next key definition, recall that for $x\in G$ of order $p$, the \emph{rational class} of $x$ is the set $\{(x^i)^g\mid 1\leq i\leq p-1,g\in G\}$ of all conjugates of non-trivial powers of $x$. 

We turn next to a graph which was first defined in~\cite{GG23}. This graph---dubbed the \emph{Gill--Guillot graph} in~\cite{CDP}---turns out to be very useful for investigating binary actions. We define it as follows: given a conjugacy class or rational class $\mathcal{C}\subseteq G$, the graph $\Gamma(\mathcal{C})$ has vertex set $\mathcal{C}$, where $x,y\in\mathcal{C}$ are adjacent exactly when $x$ and $y$ commute and $\{yx^{-1},xy^{-1}\}\cap\mathcal{C}\neq\emptyset$. Now, given an element $g\in\mathcal{C}$, we define the \emph{component group} of $g$ in $\mathcal{C}$, to be the group $\Delta(g,\mathcal{C})$ generated by the connected component of $\Gamma(\mathcal{C})$ containing $g$. When clear from context we shall write $\Delta(g)=\Delta(g,\mathcal{C})$. 

The component groups of the Gill--Guillot graphs $\Gamma(\mathcal{C})$ are classified for all conjugacy classes of prime order elements of sporadic quasisimple groups in~\cite{CDP}. Our classification of binary actions relies heavily on these results, with the connection to binary actions given by the next result.

 \begin{lemma}[{\cite[Corollary 2.13]{GG23}}]\label{lem:conjconn}
     Let $G$ act on $\Omega:=(G:H)$ for some $H<G$, and assume that the action is binary. Suppose that $g\in H$ is an element of prime order $p$ and $\C$ is the conjugacy class (resp. rational class) of $g$. Then either $\Delta(g,\C)\leq H$, or there exists an element $h$ of order $p$ such that $\mathrm{fix}(g)\subsetneq\fix(h)$ and $\Delta(h,\mathcal{D})\leq H$ where $\mathcal{D}$ is the conjugacy class (resp. rational class) of $h$.
 \end{lemma}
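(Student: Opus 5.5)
Since this lemma is quoted from \cite[Corollary 2.13]{GG23}, the proof I would give reconstructs that argument. The plan is to reduce everything to a local statement about a single edge of the Gill--Guillot graph, and then spread it along connected components.

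\textbf{Local claim.} Suppose the action on $\Omega=(G:H)$ is binary. Let $x$ be an element of prime order $p$ and $\mathcal{C}$ its conjugacy (resp. rational) class. Assume $\fix(x)$ is maximal under inclusion among fixed-point sets of elements of order $p$. Then every neighbour $y$ of $x$ in $\Gamma(\mathcal{C})$ satisfies $\fix(y)=\fix(x)$.

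To attack this, write $c$ for whichever of $yx^{-1}$, $xy^{-1}$ lies in $\mathcal{C}$. Then $x,y,c$ pairwise commute, generate a group of exponent $p$, and satisfy a relation of the shape $x\cdot y^{-1}\cdot c^{\pm1}=1$ (up to the obvious rearrangement). Fix $\alpha\in\fix(x)$, $\beta\in\fix(y)$ and $\gamma\in\fix(c)$. Applying Statement~2 of Lemma~\ref{lem:basiccriteria} to $G_\alpha,G_\beta,G_\gamma$ gives $h_2'\in G_\alpha\cap G_\beta$ and $h_3'\in G_\alpha\cap G_\gamma$ with $x h_2' h_3'=1$.

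The point is to exploit the freedom in choosing $\beta$ and $\gamma$. Since $x,y,c$ commute, $\langle x,y\rangle$ permutes $\fix(y)$ and $\fix(c)$, so $\beta,\gamma$ can be moved around inside these sets. Using this, I would show that $\fix(x)\cap\fix(y)$ cannot be a proper subset of $\fix(x)$. Otherwise one can choose $\alpha$ so that the resulting factorisation $x=(h_3')^{-1}(h_2')^{-1}$ is incompatible with the orders involved. One can then pass to $p$-parts, replacing $h_2',h_3'$ by suitable powers in the abelian $p$-group they generate together with $x$. This produces an element of order $p$ fixing $\fix(x)\cup\{\beta\}$ with $\beta\notin\fix(x)$, contradicting maximality. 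Hence $\fix(x)\subseteq\fix(y)$, and equality follows because $x$ and $y$ are conjugate, so the two fixed sets have the same size. I expect this local step to be the main obstacle, since it is where binarity is actually used and the choice of points must be made carefully. If the direct choice fails, I would first try taking $\beta=\alpha^{y'}$ for suitable powers $y'$ of $y$ and working inside the abelian group $\langle x,y\rangle$.

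\textbf{Propagation.} Conjugate elements have fixed sets of equal size. So if $\fix(x)$ is inclusion-maximal and $\fix(y)=\fix(x)$, then $\fix(y)$ is also maximal; otherwise a strictly larger fixed set would contain $\fix(y)=\fix(x)$, contradicting the maximality of $\fix(x)$. Inducting along paths in $\Gamma(\mathcal{C})$, every vertex $y$ in the component of $x$ has $\fix(y)=\fix(x)$.

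\textbf{Conclusion.} Let $g\in H$ have prime order $p$, so $g$ fixes the point $\omega$ corresponding to the coset $H$. Among elements $h$ of order $p$ with $\fix(g)\subseteq\fix(h)$, choose one with $\fix(h)$ inclusion-maximal among all order-$p$ fixed sets; this is possible since $\Omega$ is finite. Let $\mathcal{D}$ be the conjugacy (resp. rational) class of $h$. By the propagation step, every element of the component of $h$ in $\Gamma(\mathcal{D})$ fixes $\fix(h)\ni\omega$, so it lies in $G_\omega=H$, and therefore $\Delta(h,\mathcal{D})\le H$. If $\fix(h)=\fix(g)$, then $\fix(g)$ is itself maximal, and the same argument applied to $g$ gives $\Delta(g,\mathcal{C})\le H$. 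Otherwise $\fix(g)\subsetneq\fix(h)$, which is the second alternative of the lemma.
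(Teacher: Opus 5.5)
Your overall architecture is sound. Granted the local claim, propagating along the component and choosing $h$ of order $p$ with $\fix(h)\supseteq\fix(g)$ inclusion-maximal does give the lemma. The paper reaches the same conclusion by a different route: it takes an edge of the component leaving $H$ and enlarges the fixed set step by step.

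The gap is the local claim, which carries all the content, and the route you sketch for it does not work. Statement~2 of Lemma~\ref{lem:basiccriteria} only encodes $\stb{2}\Rightarrow\stb{3}$ for a single triple of points. It returns $h_2'$ known only to fix $\alpha$ and $\beta$, and $h_3'$ known only to fix $\alpha$ and $\gamma$; in effect, $(h_2')^{-1}$ fixes $\alpha$ and $\beta$ and sends $\gamma$ to $\gamma^{x}$. Nothing forces these elements to fix the rest of $\fix(x)$, to commute with $x$ or with each other, or to be $p$-elements. So there is no abelian $p$-group $\langle x,h_2',h_3'\rangle$ to pass to, and no order contradiction in $x=(h_3')^{-1}(h_2')^{-1}$. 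Nor can you extract an element of order $p$ fixing $\fix(x)\cup\{\beta\}$. Your fallback is empty as well, since $\alpha^{y'}\in\fix(y)$ only if $\alpha\in\fix(y)$.

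What is needed is binarity applied to a long tuple, as in the paper's proof.

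\emph{Setup.} Let $c\in\{xy^{-1},yx^{-1}\}\cap\mathcal{C}$, and put $\Lambda=\fix(x)\cup\fix(y)\cup\fix(c)$ and $\Lambda_0=\Lambda\setminus(\fix(x)\cup\fix(y))$.

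\emph{$\Lambda_0$ is nonempty.} Any two of $x,y,c$ generate $K=\langle x,y\rangle$, so all pairwise intersections of these three equal-sized sets are $\fix(K)$. Hence $|\Lambda_0|=|\fix(x)|-|\fix(K)|$, which is positive whenever $\fix(x)\neq\fix(y)$.

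\emph{The permutation $\tau$.} On $\Lambda_0\subseteq\fix(c)$, the elements $x$ and $y$ induce the same fixed-point-free permutation $\tau_0$. Let $\tau$ be $\tau_0$ on $\Lambda_0$ and the identity on $\fix(x)\cup\fix(y)$.

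\emph{2-equivalence.} Every pair of points of $\Lambda$ lies in $\fix(x)\cup\Lambda_0$ (where $\tau$ agrees with $x$), in $\fix(y)\cup\Lambda_0$ (where it agrees with $y$), or in $\fix(x)\cup\fix(y)$ (where it is trivial). So a tuple $I$ listing $\Lambda$ satisfies $I\stb{2}I^{\tau}$.

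\emph{Conclusion.} Binarity gives $r\in G$ with $r|_{\Lambda}=\tau$. Since $\tau$ has order $p$, $p$ divides $|r|$, and $r^{|r|/p}$ has order $p$ and fixes $\fix(x)\cup\fix(y)\supsetneq\fix(x)$. This contradicts maximality.

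With this in place of your local step, the rest of your argument goes through.
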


\begin{proof}
Suppose that $\Delta(g)\not\leq H$. Then there exists $g_1\in \mathcal{C}\cap H$ and $g_2\in\mathcal{C}\setminus H$ such that $g_1$ and $g_2$ are adjacent in $\Gamma(\mathcal{C})$. Write $H=G_\alpha$ for some $\alpha\in\Omega$.

Set $\Lambda=\{\lambda_1,\dots,\lambda_k\}=\fix(g_1)\cup\fix(g_2)\cup\fix(g_1g_2^{-1})$. Since $g_1$ and $g_2$ commute, both preserve $\Lambda$, and so $K:=\langle g_1,g_2\rangle$ acts on $\Lambda$. Next,  let $\Lambda_0=\Lambda\setminus(\fix(g_1)\cup\fix(g_2))$, and observe that $\Lambda_0$ is non-empty: Indeed, $|\fix(g_1)|=|\fix(g_2)|=|\fix(g_1g_2^{-1})|$, and $$\fix(g_1)\cap\fix(g_2)=\fix(K)=\fix(g_1)\cap\fix(g_1g_2^{-1})=\fix(g_2)\cap\fix(g_1g_2^{-1}),$$ whence $$|\Lambda_0|=|\fix(g_1g_2^{-1})|-|\fix(K)|=|\fix(g_1)|-|\fix(K)|.$$ Now, $\fix(K)\subseteq \fix(g_1)$, but $\alpha\in \fix(g_1)$ whereas $\alpha\not\in\fix(K)$ (since $g_2\not\in H$), thus $|\Lambda_0|>0$. 

The group $K$ acts on $\Lambda_0$, with $g_1$ and $g_2$ inducing the same non-trivial permutation $\tau_0\in\mathrm{Sym}(\Lambda_0)$. Write $\tau\in\mathrm{Sym}(\Lambda)$ so that $\tau|_{\Lambda_0}=\tau_0$ and $\lambda_i^\tau=\lambda_i$ whenever $\lambda_i\in\Lambda\setminus\Lambda_0$, and define $I=(\lambda_1,\dots,\lambda_k)$ and $J=(\lambda_1^\tau,\dots,\lambda_k^\tau)$. Clearly, $I\stb{2} J$, so since the action of $G$ is binary there is some $r\in G$ of order $p^m$ for some $m\geq 1$ (replacing $r$ with one of its powers, if necessary) such that $r|_\Lambda=\tau$. In particular, $\fix(g_1)\subsetneq\fix(r)$, and so by conjugating and taking powers if necessary we get an element $r'\in G$ of order $p$ such that $\fix(g)\subsetneq\fix(r')$.

If $\Delta(r')\leq H$, then we are done. If not we repeat the argument with $r'$ in place of $g$. Since the size of the set of fixed points strictly increases, and $\Omega$ is finite, the process eventually terminates with $h\in G$ satisfying $\Delta(h)\leq H$, as desired.
\end{proof}

Suppose that $G$ is acting on a set $\Omega$ and that $p$ is a prime dividing $|G|$. We say that $g\in G$ is of \emph{maximal $p$-fixity} if $g$ has order $p$ and $\mathrm{fix}(g)$ is not properly contained in $\mathrm{fix}(h)$ for any $h\in G$ of order $p$.

The utility of Lemma~\ref{lem:conjconn} can be nicely demonstrated in the following situation: suppose $\mathcal{C}$ is a conjugacy class of elements of $G$ of order $p$ such that $\Gamma(\mathcal{C})$ is connected and such that $H\cap\mathcal{C}\ne\emptyset$. If the elements of $\mathcal{C}$ are of maximal $p$-fixity in the binary action of $G$ on $\Omega=(G:H)$, then Lemma~\ref{lem:conjconn} implies that $\langle\mathcal{C}\rangle$ is a non-trivial normal subgroup of $G$ which is also a subgroup of $H$. In particular the action of $G$ on $\Omega$ is not faithful.

\begin{lemma}[{\cite[Lemma 2.6]{GGL25}}]\label{lem:maxfixdelta}
    Let $H<G$ be such that the action of $G$ on $\Omega:=(G:H)$ is binary, and let $g\in H$ be an element of maximal $p$-fixity. Then every element of $\Delta(g)$ of order $p$ is an element of maximal $p$-fixity.
\end{lemma}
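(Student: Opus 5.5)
It suffices to treat a single edge of $\Gamma(\mathcal{C})$ and then induct along paths in the connected component of $g$, since $\Delta(g)$ is generated by that component. Recall from the proof of Lemma~\ref{lem:conjconn} that adjacent vertices $g_1,g_2$ of $\Gamma(\mathcal{C})$ satisfy $|\fix(g_1)|=|\fix(g_2)|=|\fix(g_1g_2^{-1})|$, because they, together with $g_1g_2^{-1}$ or its inverse, lie in the conjugacy (or rational) class $\mathcal{C}$. Conjugate elements have fixed-point sets of equal size. Moreover, if $\fix(x)\subsetneq\fix(y)$ with $y$ of order $p$, then $\fix(x^c)\subsetneq\fix(y^c)$. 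Hence maximal $p$-fixity is a property of the whole class $\mathcal{C}$: either every element of $\mathcal{C}$ has maximal $p$-fixity or none does. Since $g$ has maximal $p$-fixity, every vertex of $\Gamma(\mathcal{C})$, and in particular every generator of $\Delta(g)$, has maximal $p$-fixity.

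The substance of the lemma is therefore the elements of order $p$ in $\Delta(g)$ that are not vertices, for instance products $xy$ of commuting adjacent vertices. The plan is to rerun the argument of Lemma~\ref{lem:conjconn}. Take adjacent $g_1,g_2\in\mathcal{C}$ and suppose, for a contradiction, that they do not fix the same points. As in that proof, form $\Lambda=\fix(g_1)\cup\fix(g_2)\cup\fix(g_1g_2^{-1})$ and the nonempty set $\Lambda_0$, and use binarity to produce an element $r'$ of order $p$ with $\fix(g_1)\subsetneq\fix(r')$. This contradicts the maximal $p$-fixity of $g_1$. So adjacent vertices have equal fixed-point sets, and by connectivity every element of the component of $g$ fixes exactly $\fix(g)$.

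Next, $\Delta(g)$ is generated by elements all fixing $\fix(g)$ pointwise, so $\Delta(g)$ fixes $\fix(g)$ pointwise. Consequently every element $x\in\Delta(g)$ of order $p$ satisfies $\fix(g)\subseteq\fix(x)$. By hypothesis $\fix(g)$ is not properly contained in $\fix(x)$, so $\fix(x)=\fix(g)$. If some $y$ of order $p$ had $\fix(x)\subsetneq\fix(y)$, then $\fix(g)\subsetneq\fix(y)$, contradicting the maximal $p$-fixity of $g$. Hence $x$ has maximal $p$-fixity.

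I expect the main obstacle to be the step that adjacent vertices fix the same points. One has to check that the transplanted argument of Lemma~\ref{lem:conjconn} applies with $\alpha$ any point of $\fix(g_1)\setminus\fix(g_2)$ in place of the coset $H$. It must also yield an element of order $p$ rather than $p^m$, obtained by taking the appropriate power; the fixed-point containment survives this because $\tau$ has order $p$ on $\Lambda_0$.
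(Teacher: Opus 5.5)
Your proof is correct and is essentially the paper's argument. The paper applies Lemma~\ref{lem:conjconn} to $G_\alpha$ for each $\alpha\in\fix(g)$; maximal $p$-fixity of $g$ rules out the second alternative, so $\Delta(g)\leq G_\alpha$, and the proof then finishes exactly as your third paragraph does. Your edge-by-edge rerun of that lemma's proof, together with the class-invariance of maximal $p$-fixity, simply unpacks this citation, so it is valid but could be replaced by the one-line appeal to Lemma~\ref{lem:conjconn}.
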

\begin{proof}
    Fix $h\in\Delta(g)$ with order $p$ and let $\alpha\in \fix(g)$. Then $\Delta(g)\leq G_\alpha$ by Lemma~\ref{lem:conjconn}, and so $h\in G_\alpha$. Therefore, since $\alpha$ was arbitrary, $\fix(g)\subseteq\fix(h)$, whence $h$ is an element of maximal $p$-fixity, as desired.
\end{proof}

When $G$ has multiple classes of $p$-elements, the next lemma is often useful for restricting the structure of the stabiliser in a transitive binary action. Before stating it we define a family of groups. Consider the action of $G$ on the cosets of some subgroup $H<G$. Given $g,h\in H$, we define $F_0(g,h)=\langle g\rangle$, and for $i\in\mathbb{Z}_{\geq 0}$ we define $$F_{i+1}(g,h)=\left\langle h^c \mid c \in G, g^c\in F_j(g,h)\text{ for some } j\in\{0,\dots,i\}\right\rangle.$$ Finally, we define $F_\infty(g,h)=\left\langle\bigcup_{i\geq0} F_i(g,h)\right\rangle.$

\begin{lemma}\label{lem:Finf}
    Let $H<G$ and let $G$ act on $\Omega=(G:H)$. Suppose that $g,h\in H$ are such that $\mathrm{fix}(g)\subseteq \mathrm{fix}(h)$. Then $F_\infty(g,h)\leq H.$
\end{lemma}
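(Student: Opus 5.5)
The plan is to prove by induction on $i$ that $F_i(g,h)\le H$ for every $i\geq 0$. Since $F_\infty(g,h)$ is generated by the union of these subgroups, this gives $F_\infty(g,h)\le H$. We regard $H$ as the stabiliser $G_\alpha$ of the point $\alpha=H\in\Omega$. The base case is immediate: $F_0(g,h)=\langle g\rangle\le H$ because $g\in H$.

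The key observation is how fixed point sets transform under conjugation. For $c\in G$ we have $\fix(x^c)=\fix(x)^c$. Hence the hypothesis $\fix(g)\subseteq\fix(h)$ gives $\fix(g^c)\subseteq\fix(h^c)$ for every $c\in G$.

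For the inductive step, assume $F_j(g,h)\le H$ for all $j\le i$. Let $c\in G$ be such that $g^c\in F_j(g,h)$ for some $j\le i$. Then $g^c\in H=G_\alpha$, so $\alpha\in\fix(g^c)\subseteq\fix(h^c)$, and therefore $h^c\in G_\alpha=H$. Every generator of $F_{i+1}(g,h)$ therefore lies in $H$, so $F_{i+1}(g,h)\le H$.

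There is no real obstacle; the proof is just this unwinding of the definitions. The only point needing care is that the condition defining $F_{i+1}(g,h)$ refers to all earlier $F_j(g,h)$, not only $F_i(g,h)$. This is why the induction is a strong induction, assuming the containment for every $j\le i$ at once.
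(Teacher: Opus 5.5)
Your proposal is correct and follows the paper's own argument: a strong induction showing $F_i(g,h)\le H$, where for $g^c\in G_\alpha$ you use $\fix(g^c)=\fix(g)^c\subseteq\fix(h)^c=\fix(h^c)$ to conclude $h^c\in G_\alpha$. The paper phrases the same step as $\alpha^{c^{-1}}\in\fix(g)\subseteq\fix(h)$, which is equivalent.
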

\begin{proof}
We show by strong induction that $F_i(g,h)\leq H$ for each $i$, from which the result follows. It is clear that $F_0(g,h)\leq H$, so assume that $i\geq 0$ is such that $F_j(g,h)\leq H$ for all $j\in\{0,\dots,i\}$. Let $c\in G$ such that $g^c\in F_j(g,h)$ for some $j\leq i$, and let $\alpha\in\Omega$ such that $G_\alpha=H$. Then by the inductive hypothesis $g^c\in H=G_{\alpha}$, so $g\in G_{\alpha^{c^{-1}}}$---that is, $\alpha^{c^{-1}}\in\mathrm{fix}(g)\subseteq\mathrm{fix}(h)$, whence $h\in G_{\alpha^{c^{-1}}}$, and so $h^c\in G_\alpha=H$, as was to be shown.
\end{proof}
\begin{rk}
    Note, in particular, that $\langle h^c \mid c\in N_G(\langle g\rangle)\rangle\leq F_\infty(g,h).$
\end{rk}
  
\begin{lemma}\label{lem:normalsub}
 Let $G$ be a finite quasisimple group, and let $H<G$ with $Z(G)\cap H=1$. Suppose that the action of $G$ on $\Omega=(G:H)$ is binary, and let $p\mid |H|$ be prime. Suppose that $G$ has a unique rational class $\mathcal{C}$ of non-central elements of order $p$ such that $\Gamma(\mathcal{C})$ is disconnected.  Then for every element $g$ of order $p$ in $H\setminus\mathcal{C}$ there is a pair $(K,L)$ of subgroups of $G$ such that $K$ is a subgroup of $H$ intersecting $\mathcal{C}$ non-trivially, and satisfying $\langle g\rangle<K\trianglelefteq L$, and $N_G(\langle g\rangle)\leq L$. Thus, if $N_G(\langle g\rangle)$ is maximal in $G$, then $H$ contains a normal subgroup of $N_G(\langle g\rangle)$ properly containing $\langle g\rangle$.
\end{lemma}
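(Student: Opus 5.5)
Let $g\in H\setminus\mathcal{C}$ have order $p$, and let $\mathcal{D}$ be its rational class. The plan is to apply Lemma~\ref{lem:conjconn} to $g$ and to exploit the hypothesis that $\mathcal{C}$ is the only rational class of non-central order-$p$ elements with disconnected Gill--Guillot graph.

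First, suppose $\Delta(g,\mathcal{D})\le H$. Then $\mathcal{D}\neq\mathcal{C}$ and $g$ is non-central: $g\notin Z(G)$ because $Z(G)\cap H=1$. So $\Gamma(\mathcal{D})$ is connected, and $\Delta(g,\mathcal{D})=\langle\mathcal{D}\rangle$ is a non-trivial normal subgroup of the quasisimple group $G$ contained in $H$. Since $G$ is quasisimple, every proper normal subgroup is central, so $\langle\mathcal{D}\rangle=G$ would give $H=G$, contradicting $H<G$, while a central $\langle\mathcal{D}\rangle$ contradicts $Z(G)\cap H=1$. This case is therefore impossible.

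Hence, by Lemma~\ref{lem:conjconn}, there is an element $h$ of order $p$ with $\fix(g)\subsetneq\fix(h)$ and $\Delta(h,\mathcal{E})\le H$, where $\mathcal{E}$ is the rational class of $h$. The element $h$ fixes the point whose stabiliser is $H$, so $h\in H$ and $h$ is non-central. Running the same argument as above with $h$ in place of $g$ forces $\Gamma(\mathcal{E})$ to be disconnected, so $\mathcal{E}=\mathcal{C}$ and $h\in\mathcal{C}$.

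Now take $K=F_\infty(g,h)$. By Lemma~\ref{lem:Finf}, $K\le H$, and $K$ contains $g$ and $h$, so it meets $\mathcal{C}$ non-trivially. Since $h\in\mathcal{C}$ and $g\notin\mathcal{C}$, we have $h\notin\langle g\rangle$: every non-trivial element of $\langle g\rangle$ lies in the rational class $\mathcal{D}\neq\mathcal{C}$. Thus $\langle g\rangle<K$.

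For $L$ we take $N_G(K)$. To get $N_G(\langle g\rangle)\le L$, I expect the key observation to be that $F_\infty(g,h)$ is invariant under conjugation by $N_G(\langle g\rangle)$. Let $n\in N_G(\langle g\rangle)$. Then $F_0^n=\langle g\rangle$, and conjugating the generating set of $F_{i+1}$ by $n$ gives generators $h^{cn}$ with $g^{cn}\in F_j^n$. By induction on $i$ we have $F_j^n\le F_\infty$, and this inductive step is where the care lies. The cleaner route is to show directly that $F_\infty$ is the smallest subgroup $X\ge\langle g\rangle$ that is closed under the rule ``$g^c\in X\Rightarrow h^c\in X$''. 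The property of being such a subgroup is preserved by conjugation by $n$, because $(g^c)^n=g^{cn}$ and $\langle g\rangle^n=\langle g\rangle$. So $F_\infty^n$ is also the smallest such subgroup, which gives $F_\infty^n=F_\infty$. Hence $N_G(\langle g\rangle)\le N_G(K)=L$, and $K\trianglelefteq L$.

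Finally, suppose $N_G(\langle g\rangle)$ is maximal in $G$. Then $L$ is either $N_G(\langle g\rangle)$ or $G$. If $L=G$, then $K$ is a normal subgroup of $G$ contained in $H$, and it is non-central because it contains $g\notin Z(G)$. This is the contradiction already ruled out in the first case. So $L=N_G(\langle g\rangle)$, and $K$ is a normal subgroup of $N_G(\langle g\rangle)$ properly containing $\langle g\rangle$ and lying in $H$.

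The main obstacle I anticipate is the normality of $K$ in $L$, that is, the $N_G(\langle g\rangle)$-invariance of $F_\infty$; the minimal-closed-subgroup characterisation is the first thing I would try to settle it.
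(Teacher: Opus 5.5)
Your overall structure matches the paper's. Lemma~\ref{lem:conjconn} together with quasisimplicity gives $h\in H\cap\mathcal{C}$ with $\mathrm{fix}(g)\subseteq\mathrm{fix}(h)$. Lemma~\ref{lem:Finf} then puts a subgroup built from $g$ and $h$ inside $H$, and you set $L=N_G(K)$. Your arguments for $\langle g\rangle<K$ and for the final maximality clause are correct.

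The gap is exactly at the step you flagged. Your ``cleaner route'' asserts that $F_\infty(g,h)$ is the smallest subgroup $X\geq\langle g\rangle$ closed under the rule $g^c\in X\Rightarrow h^c\in X$. But the definition only adds $h^c$ when $g^c$ lies in a single $F_j$, not in the subgroup generated by their union, and the claim is false in general. For example, in $\mathrm{Sym}(6)$ take $g=(1\,2)$ and $h=(1\,2)(3\,4)$.
\begin{itemize}
\item $F_1$ consists of the elements of $\langle g\rangle\times\mathrm{Sym}(\{3,4,5,6\})$ whose restrictions to $\{1,2\}$ and to $\{3,4,5,6\}$ have equal sign.
\item $F_1$ contains no transposition, so $F_i=F_1$ for all $i\geq1$, and $F_\infty(g,h)=\langle g\rangle\times\mathrm{Sym}(\{3,4,5,6\})$.
\item With $c=(1\,3\,5)(2\,4)$ we get $g^c=(3\,4)\in F_\infty(g,h)$, but $h^c=(2\,5)(3\,4)\notin F_\infty(g,h)$.
\end{itemize}
Your first attempt fails for the same reason: the hypothesis $F_j^n\leq F_\infty$ does not let you conclude $h^{cn}\in F_\infty$.

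The repair is easy, and there are three ways to do it.
\begin{itemize}
\item Induct with the stronger hypothesis that $F_j^n=F_j$ for all $j\leq i$ and all $n\in N_G(\langle g\rangle)$. The base case is $\langle g\rangle^n=\langle g\rangle$. If $g^c\in F_j$ then $g^{cn}=(g^c)^n\in F_j$, so $n$ permutes the defining generators of $F_{i+1}$. Hence $F_{i+1}^n=F_{i+1}$, and so $F_\infty(g,h)^n=F_\infty(g,h)$.
\item Take $K$ to be the minimal closed subgroup $X$ itself. Your invariance argument is valid for $X$, and $X\leq H$ because $H$ is closed under the rule; this is precisely the argument that proves Lemma~\ref{lem:Finf}.
\item Do what the paper does, which is the most economical: take $K=\langle g,h^c\mid c\in N_G(\langle g\rangle)\rangle$. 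This lies in $F_0\cup F_1$-generated part of $F_\infty(g,h)$, hence in $H$. Its generating set is visibly permuted by $N_G(\langle g\rangle)$, so no induction is needed.
\end{itemize}
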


\begin{proof}
Let $g$ be as in the statement. By Lemma~\ref{lem:conjconn}, since $G$ is quasisimple there must be some $h\in H\cap \mathcal{C}$ such that $\fix(g)\subseteq\fix(h)$. Now, by Lemma~\ref{lem:Finf}, $$K:=\langle g,h^c \mid c\in N_G(\langle g\rangle)\rangle\leq H.$$
Clearly $N_G(\langle g\rangle)$ permutes the above generators for $K$, in particular $N_G(\langle g\rangle)$ normalises $K$. Therefore, $N_G(\langle g\rangle)\leq N_G(K)$; setting $L=N_G(K)$ yields the desired result.\end{proof}

It turns out that character theory gives us powerful tools for both understanding the graph $\Gamma(\mathcal{C})$ (see~\cite{CDP} for more discussion) and for detecting binary actions: Given a group $G$ with conjugacy classes $\mathcal{C}_1,\mathcal{C}_2,\mathcal{C}_3$, the \emph{class multiplication coefficient} $n_G(\mathcal{C}_2,\mathcal{C}_3,\mathcal{C}_1)$ is the number of ways a fixed element $t\in \mathcal{C}_1$ can be written as a product $xy$ where $x\in\mathcal{C}_2$ and $y\in \mathcal{C}_3$. It is a standard exercise in introductory character theory (see e.g.~\cite[Problems (3.9)]{I94}) that $$n_G({\mathcal{C}_2,\mathcal{C}_3,\mathcal{C}_1})=\frac{|\mathcal{C}_2||\mathcal{C}_3|}{|G|}\sum_{\chi\in\mathrm{Irr}(G)}\frac{\chi(g_2)\chi(g_3)\overline{\chi(g_1)}}{\chi(1)},$$ where $g_1,$ $g_2,$ and $g_3$ are representatives of the classes $\mathcal{C}_1$, $\mathcal{C}_2$, and $\mathcal{C}_3$, respectively. In particular, when the character table of $G$ is known (as is the case for the sporadic groups), the quantities $n_G(\mathcal{C}_2,\mathcal{C}_3,\mathcal{C}_1)$ can easily be computed.

In this paper, we primarily use the class multiplication coefficients as a way to recognise transitive binary actions with stabiliser of prime order. We write $n_G(\mathcal{C}):=n_G(\mathcal{C},\mathcal{C},\mathcal{C})$.
\begin{lemma}\label{lem:cycbintest}
    Let $G$ be a finite group, let $\mathcal{C}$ be a rational class of elements of order $p$ written as a union $\bigsqcup_{i=1}^m\C_i$ of conjugacy classes. Let $g\in\mathcal{C}$. If the action of $G$ on $G/\langle g\rangle$ is binary then $\sum_{(i,j)\in[m]^2}n_G(\mathcal{C}_i,\mathcal{C}_j,\mathcal{C}_1)=p-2$. On the other hand, if $\sum_{(i,j)\in[m]^2}n_G(\mathcal{C}_i,\mathcal{C}_j,\mathcal{C}_k)=p-2$ for all $k\in[m]$ then the action of $G$ on $G/\langle g\rangle$ is binary.
\end{lemma}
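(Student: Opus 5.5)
The plan is to reinterpret the sum $\sum_{(i,j)}n_G(\C_i,\C_j,\C_k)$ as a count of factorisations, and then to compare this count with Statement 2 of Lemma~\ref{lem:basiccriteria}.

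Fix $t\in\C_k$. Since the $\C_i$ partition the rational class $\C$, the sum $\sum_{(i,j)\in[m]^2}n_G(\C_i,\C_j,\C_k)$ is the number of pairs $(x,y)\in\C\times\C$ with $xy=t$. Some of these pairs always exist, namely those lying inside $\langle t\rangle$: these are $x=t^a$, $y=t^b$ with $a+b\equiv 1\pmod p$ and $a,b\not\equiv 0$. There are exactly $p-2$ such pairs, since we need $a\notin\{0,1\}$. So the sum is always at least $p-2$. Moreover, equality holds if and only if every factorisation of $t$ as a product of two elements of $\C$ has both factors in $\langle t\rangle$. The point stabilisers for the action on $G/\langle g\rangle$ are exactly the subgroups $\langle x\rangle$ with $x\in\C$, because $\C$ is the rational class of $g$. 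Any two distinct stabilisers intersect trivially, since they have prime order $p$.

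\emph{Binary implies the equality for $k=1$.} Suppose $g=xy$ with $x,y\in\C$, and suppose not both factors lie in $\langle g\rangle$. Set $h_1=g^{-1}\in H_1=\langle g\rangle$, $h_2=x\in H_2=\langle x\rangle$ and $h_3=y\in H_3=\langle y\rangle$, so that $h_1h_2h_3=1$. We check that no $h_2'\in H_1\cap H_2$, $h_3'\in H_1\cap H_3$ satisfy $h_1h_2'h_3'=1$.

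If $x\notin\langle g\rangle$, then $H_1\cap H_2=1$, which forces $h_2'=1$ and hence $h_3'=g$. This requires $g\in\langle y\rangle$, so $\langle y\rangle=\langle g\rangle$, and then $x=gy^{-1}\in\langle g\rangle$, a contradiction. The case $y\notin\langle g\rangle$ is symmetric.

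So Statement 2 of Lemma~\ref{lem:basiccriteria} holds, and the action is non-binary. By contraposition, if the action is binary then the count for $t=g\in\C_1$ equals $p-2$.

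\emph{The equalities for all $k$ imply binary.} Suppose instead that the action is non-binary, and take $h_1,h_2,h_3$ as in Statement 2 of Lemma~\ref{lem:basiccriteria}. We rule out the degenerate cases first.
\begin{itemize}
\item If $h_1=1$, then $h_2'=h_3'=1$ works.
\item If $h_2=1$, then $h_3=h_1^{-1}\in H_1\cap H_3$, and $(h_2',h_3')=(1,h_3)$ works.
\item If $h_3=1$, the argument is symmetric to the case $h_2=1$.
\end{itemize}
Hence all three $h_i$ are non-trivial elements of conjugates of $\langle g\rangle$, so they lie in $\C$. Now $h_1^{-1}=h_2h_3$, and $h_1^{-1}$ lies in some $\C_{k}$. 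By the hypothesis for this $k$ and the counting reformulation above, $h_2,h_3\in\langle h_1\rangle=H_1$. Then $h_2'=h_2$ and $h_3'=h_3$ contradict condition (b). This is why the hypothesis must hold for every $k$: the class of $h_1^{-1}$ is not under our control.

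The only delicate step is the counting reformulation. One must check that the trivial solutions number exactly $p-2$, including the degenerate case $p=2$ where there are none. One must also check that the identification of stabilisers with $\{\langle x\rangle : x\in\C\}$ uses that $\C$ is the full rational class.
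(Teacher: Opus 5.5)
Your counting reformulation and your forward direction are fine: Statement~2 of Lemma~\ref{lem:basiccriteria} gives Statement~1, which is already a failure of binarity on triples. The gap is the first sentence of the converse, ``suppose the action is non-binary, and take $h_1,h_2,h_3$ as in Statement~2 of Lemma~\ref{lem:basiccriteria}.'' That lemma only says its three statements are equivalent to one another. It does not say that every non-binary action satisfies them.

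Binarity requires $2$-equivalence to imply $n$-equivalence for \emph{every} $n$, and in general the first failure can occur only at $n\geq 4$. For example, let $G=\mathbb{F}_2^3$ act on the disjoint union of the four two-point orbits $(G:\ker f)$, where $f\in\{e_1^*,e_2^*,e_3^*,e_1^*+e_2^*+e_3^*\}$.
\begin{itemize}
\item Any three of these functionals are linearly independent. Hence triples with entries in three distinct orbits all lie in one $G$-orbit. Triples with two entries in the same orbit reduce to pairs, since those two points have the same stabiliser. So every pair of $2$-equivalent triples is $3$-equivalent, and Statement~2 never holds.
\item Take a $4$-tuple with one point in each orbit, and move only the fourth coordinate by some $w$ with $(e_1^*+e_2^*+e_3^*)(w)=1$. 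The resulting pair of $4$-tuples is $3$-equivalent but not $4$-equivalent, so the action is not binary.
\end{itemize}

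So the reduction to triples is a genuine extra ingredient. In your setting it is supplied by Lemma~\ref{lem:basiccriteriaTI}. Since $\langle g\rangle$ has prime order, it is a TI-subgroup. So if the action is not binary, there are distinct conjugates $H_1,H_2,H_3$ of $\langle g\rangle$ and some $1\neq x\in H_1\cap(H_2\cdot H_3)$. Your argument then finishes the proof:
\begin{itemize}
\item Write $x=h_2h_3$ with $h_2\in H_2$ and $h_3\in H_3$.
\item Neither factor is trivial, because distinct conjugates meet trivially. Hence $h_2,h_3\in\C$.
\item The hypothesis for the class of $x$ forces $h_2\in\langle x\rangle=H_1$, contradicting $H_1\neq H_2$.
\end{itemize}
With that citation inserted, your proof is essentially the paper's, which invokes Lemma~\ref{lem:basiccriteriaTI} in both directions.
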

\begin{proof}
        Without loss of generality we may assume that $g\in\C_1$. Let $$S=\{(g^2,g^{p-1}),(g^3,g^{p-2}),\dots,(g^{p-1},g^2)\}.$$ It is clear that $\sum_{(i,j)\in[m]^2}n_G(\mathcal{C}_i,\mathcal{C}_j,\mathcal{C}_1)\geq |S|=p-2$. Suppose that $$\sum_{(i,j)\in[m]^2}n_G(\mathcal{C}_i,\mathcal{C}_j,\mathcal{C}_1)>(p-2).$$ Then there exist $(h,r)\in (\mathcal{C}\times \mathcal{C})\setminus S$ such that $hr=g$, thus $g\in\langle g\rangle\cap(\langle h\rangle\cdot\langle r\rangle)$ so the action is not binary by Lemma~\ref{lem:basiccriteriaTI}. 

 On the other hand, suppose that $\sum_{(i,j)\in[m]^2}n_G(\mathcal{C}_i,\mathcal{C}_j,\mathcal{C}_k)=p-2$ for all $k\in[m]$. Then $S$ is the set of all pairs $(h,r)\in\mathcal{C}\times\mathcal{C}$ such that $hr=g$. It follows that $\langle g\rangle\cap(\langle h\rangle\cdot\langle r\rangle)=\{1\}$ unless $h,r$ are powers of $g$, thus the action is binary by Lemma~\ref{lem:basiccriteriaTI}, as desired.
\end{proof}
In~\cite{CDP}, when it comes to triple covers, the authors only consider the graphs on conjugacy classes rather than rational classes. For the cases for which we need to examine connectivity of the graphs on elements of order three, we will be able to use the next result. 

\begin{lemma}\label{lem:conn3cov}
Suppose that $G$ is a triple cover of a simple group such that $\mathcal{C}$ is a conjugacy class of $G/Z(G)$ lifting to three $G$-conjugacy classes $\mathcal R$, $\mathcal N$, and $\mathcal{N}'=\mathcal N^{-1}$ of elements of order 3, where $\mathcal{R}$ is real, and the other two are not. If $\Gamma(\mathcal{R})$ is connected then $\Gamma(\mathcal{N}\cup\mathcal{N}')$ is connected.
\end{lemma}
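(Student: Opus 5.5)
The approach is to compare $\Gamma(\mathcal{N}\cup\mathcal{N}')$ directly with $\Gamma(\mathcal{R})$ using translation by $Z(G)=\langle z\rangle$.

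\emph{Setup.} Since the three lifts of $\mathcal{C}$ are the cosets of a lift by $Z(G)$, we may relabel so that $\mathcal{N}=z\mathcal{R}$ and $\mathcal{N}'=z^2\mathcal{R}$. This is consistent with $\mathcal{N}'=\mathcal{N}^{-1}$: since $\mathcal{R}$ is real, $(zx)^{-1}=z^2x^{-1}\in z^2\mathcal{R}$ for $x\in\mathcal R$. So every vertex of $\Gamma(\mathcal{N}\cup\mathcal{N}')$ has the form $z^ix$ with $x\in\mathcal{R}$ and $i\in\{1,2\}$.

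\emph{Lifting edges.} Let $x,y\in\mathcal{R}$ be adjacent in $\Gamma(\mathcal{R})$, so they commute and, using reality of $\mathcal R$, $yx^{-1}\in\mathcal{R}$. Then $zx$ and $z^2y$ commute and $(z^2y)(zx)^{-1}=zyx^{-1}\in z\mathcal{R}=\mathcal{N}$, so $zx\sim z^2y$. Symmetrically $(zy)(z^2x)^{-1}=z^2yx^{-1}\in\mathcal{N}'$, so $z^2x\sim zy$. Hence $\Gamma(\mathcal{N}\cup\mathcal{N}')$ contains the bipartite double cover of $\Gamma(\mathcal{R})$ on $\mathcal{R}\times\{1,2\}$, where $(x,i)\leftrightarrow z^ix$. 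Since $\Gamma(\mathcal{R})$ is connected, this double cover is connected as soon as $\Gamma(\mathcal{R})$ contains an odd cycle. It is also connected if some extra edge joins the two copies of a single vertex's class, i.e.\ a same-layer edge. Such an edge is $zx\sim zy$ with $x,y\in\mathcal R$ commuting and $yx^{-1}\in\mathcal{N}\cup\mathcal{N}'$; it lies in $\Gamma(\mathcal{N}\cup\mathcal{N}')$ but has no counterpart in the double cover. Either way, $z x$ and $z^2x$ end up in the same component, and then connectivity of $\Gamma(\mathcal{R})$ gives connectivity of the whole graph.

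\emph{Producing an odd cycle or a same-layer edge.} Since $\mathcal{R}$ is a non-central class and $\Gamma(\mathcal{R})$ is connected, $\Gamma(\mathcal R)$ has an edge $x\sim y$; set $w=yx^{-1}\in\mathcal{R}$. The group $E=\langle x,y\rangle$ is elementary abelian of order $9$. It meets $Z(G)$ trivially, since otherwise $y\in x^{\pm1}Z(G)\setminus\{x^{\pm1}\}\subseteq\mathcal{N}\cup\mathcal{N}'$, a contradiction. The non-identity elements of $E$ are $x^{\pm1},y^{\pm1},w^{\pm1}\in\mathcal{R}$ together with $(xy)^{\pm1}$, and we split on the class of $xy$.
\begin{itemize}
\item If $xy\in\mathcal{R}$, then $x,y,w^{-1}$ form a triangle in $\Gamma(\mathcal{R})$, because $xw=y\in\mathcal R$ and $w^{-1}y^{-1}=(xy)^{-1}\in\mathcal{R}$.
\item If $xy\in\mathcal{N}\cup\mathcal{N}'$, then $x$ and $y^{-1}$ commute and $y^{-1}x^{-1}=(xy)^{-1}\in\mathcal{N}\cup\mathcal N'$, so $zx\sim zy^{-1}$ is a same-layer edge.
\item If $xy$ lies in some other class, no conclusion can be drawn from $E$ alone.
\end{itemize}

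\emph{Main obstacle.} The third case is the one I expect to be hardest. The first thing to try is to vary the edge: $\Gamma(\mathcal R)$ is connected and $\mathcal{R}$ is large, so it suffices to find one edge $x\sim y$ with $xy\in\mathcal{R}\cup\mathcal{N}\cup\mathcal{N}'$. Equivalently, we want some elementary abelian $3^2$, avoiding $Z(G)$, whose non-identity elements all lie in $\mathcal{R}\cup\mathcal N\cup\mathcal N'$. Its image in $G/Z(G)$ would then be a pure $\mathcal{C}$-subgroup of order $9$. Such a subgroup can be detected with class multiplication coefficients, namely by checking that $n_G(\mathcal{R},\mathcal{R},\mathcal{R})$ is positive for commuting solutions. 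Failing that, the natural fallback is to show directly that $\Gamma(\mathcal{R})$ is not bipartite, by exhibiting an odd closed walk using elements of $C_G(x)$.
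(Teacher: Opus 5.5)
Your edge-lifting step is the same as the paper's. From $x\sim y$ in $\Gamma(\mathcal{R})$ you get $zx\sim z^2y$, so connectivity of $\Gamma(\mathcal{R})$ leaves at most two components. The gap is in forcing a single component.

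Your third case is left open, so as it stands nothing shows that $zx$ and $z^2x$ are connected. Your second case is also wrong. Since $y\sim y^{-1}$ in $\Gamma(\mathcal{R})$ (because $y^{-1}y^{-1}=y\in\mathcal{R}$), your double cover already contains the path $zx\sim z^2y\sim zy^{-1}$. So the extra edge $zx\sim zy^{-1}$ joins two vertices already in the same component and merges nothing. In general, a same-layer edge $zx\sim zy$ merges the two halves of the double cover only when $x$ and $y$ are at odd distance in $\Gamma(\mathcal{R})$, which you have no way to certify. Moreover, whenever $xy\notin\mathcal{R}$, the subgraph of $\Gamma(\mathcal{R})$ induced on $E\cap\mathcal{R}$ is $K_{3,3}$, so no analysis inside $E$ can produce an odd cycle. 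The computational fallbacks you suggest would at best be group-by-group checks, not a proof of the lemma as stated. Class multiplication coefficients also do not record whether solutions commute.

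The missing idea is a symmetry argument using that $G$ is quasisimple.
\begin{enumerate}
\item Conjugation by $G$ fixes $z$ and preserves $\mathcal{N}$, $\mathcal{N}'$ and adjacency, so $G$ permutes the components of $\Gamma(\mathcal{N}\cup\mathcal{N}')$.
\item Every component meets $\mathcal{N}$, because each vertex $z^2x$ of $\mathcal{N}'$ is adjacent to $zx^{-1}$ (lift the edge $x\sim x^{-1}$). Since $G$ is transitive on $\mathcal{N}$, it permutes the at most two components transitively.
\item Hence a component stabiliser has index at most $2$. But $G$ is perfect, so it has no subgroup of index $2$, and every component is $G$-invariant.
\item Each component meets $\mathcal{N}$, so it contains all of $\mathcal{N}$. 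Therefore there is only one component.
\end{enumerate}
This is exactly how the paper finishes.

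The same argument also rescues your route. Applied to the bipartition of a connected bipartite $\Gamma(\mathcal{R})$, it would make one part $G$-invariant and hence all of $\mathcal{R}$, which contradicts $x\sim x^{-1}$. So $\Gamma(\mathcal{R})$ is never bipartite here, which gives the odd cycle you were looking for.
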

\begin{proof}
    Write $Z(G)=\langle z\rangle$ and assume without loss of generality that $\mathcal{N}=\mathcal{R}z$ and $\mathcal{N}'=\mathcal{R}z^2$. Suppose $\{x,y\}$ is an edge in $\Gamma(\mathcal{R})$. Then $[x,y]=1$ and $x^{-1}y\in\mathcal{R}$. Thus $[xz,yz^2]=1$ and $(xz)^{-1}(yz^2)=(x^{-1}y)z\in\mathcal{N}$, and so $xz$ and $yz^2$ are adjacent in $\Gamma(\mathcal{N}\cup\mathcal{N}').$ Consequently, for any $s,t\in\mathcal{R}$, there is a path in $\Gamma(\mathcal{N}\cup\mathcal{N}')$ between $sz$ and either $tz$ or $tz^2$. Therefore, $\Gamma(\mathcal{N}\cup\mathcal{N}')$ has     at most two connected components. Moreover, since every element of $\mathcal{N}$ has neighbours in $\mathcal{N}'$ and vice versa, it follows that $G$ acts transitively on the components. Therefore, by the orbit-stabiliser lemma the stabiliser of a component of $\Gamma(\mathcal{N}\cup\mathcal{N}')$ has index in $G$ at most 2. Therefore, since $G/Z(G)$ is simple, the stabiliser of a component of $\Gamma(\mathcal{N}\cup\mathcal{N}')$ is $G$, and so $\Gamma(\mathcal{N}\cup\mathcal{N}')$ is connected by ~\cite[Lemma 2.2]{CDP}.
\end{proof}

Finally, although not related to binary actions, the following lemma will be useful.
\begin{lemma}\label{lem:SZappl}
    Suppose that $G$ is a finite split extension $H\cn K$ with $H$ a $p$-group. Let $L$ be a $p'$-subgroup of $G$. Then $L$ is conjugate in $G$ to some $M\leq K$.
\end{lemma}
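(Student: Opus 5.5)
The plan is to pass to the quotient by the normal Sylow $p$-subgroup and then invoke the Schur--Zassenhaus theorem. Write $G=H\cn K$ with $H\trianglelefteq G$ a $p$-group and $K\cap H=1$.

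First, observe that $L\cap H=1$. Indeed, $L\cap H$ is both a $p$-group (being a subgroup of $H$) and a $p'$-group (being a subgroup of $L$), so it is trivial.

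Next, consider the subgroup $HL$ of $G$. Since $H\trianglelefteq G$, the product $HL$ is a subgroup, and $H$ is normal in $HL$. Moreover, $|HL:H|=|L|$ is coprime to $|H|$, so $H$ is a normal Hall subgroup of $HL$ and $L$ is a complement to it.

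Now consider $K_0:=HL\cap K$. By the Dedekind modular law, $HL=HL\cap HK=H(HL\cap K)=HK_0$, and $H\cap K_0=1$. Hence $K_0$ is a second complement to $H$ in $HL$.

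Finally, apply the conjugacy part of the Schur--Zassenhaus theorem. It says that any two complements of a normal Hall subgroup are conjugate; since $H$ is a $p$-group, and hence solvable, this holds without invoking Feit--Thompson. So there is $x\in HL\le G$ with $L^x=K_0\le K$, and taking $M:=K_0$ gives the claim.

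The only nontrivial input is the conjugacy part of Schur--Zassenhaus. The one point needing care is that we apply it inside $HL$, not inside $G$, because $L$ need not be a full complement to $H$ in $G$. The Dedekind step is what produces a complement lying inside $K$ against which $L$ can be compared.
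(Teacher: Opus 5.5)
Your proposal is correct and matches the paper's proof essentially step for step: show $L\cap H=1$, view $L$ as a complement to the normal Hall subgroup $H$ in $HL$, use the Dedekind law to see that $HL\cap K$ is another complement, and conclude with the conjugacy part of Schur--Zassenhaus. Your remark that solvability of $H$ avoids any appeal to Feit--Thompson is a reasonable addition that the paper leaves implicit.
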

\begin{proof}
    Since $L$ is $p'$ and $H$ is a $p$-group we deduce that $L\cap H=1$. Moreover, $H\trianglelefteq G$ so $J:=HL\leq G$. Thus, using that $L\cap H=1$ we see that $J/H\cong L$ which has order coprime to $|H|$, whence $H$ is a normal subgroup of $J$ with complement $L$.

    Set $M=J\cap K$. Then $HM=H(J\cap K)=J\cap HK=J$, and $M\cap H=J\cap K\cap H=1$. That is, $M$ a complement to $H$ in $J$ and so $M$ and $L$ are conjugate by Schur--Zassenhaus, as was to be shown.
\end{proof}
\section{The classification}\label{sec:class}

From here onward, $G$ is a quasisimple sporadic group acting faithfully on $(G:H)$ for some $H<G$ (so $H\cap Z(G) =1$). We already have enough information to find our complete list of transitive binary actions; the remainder of the section will be devoted towards showing that the list is complete. Throughout, we use $\Bbb{ATLAS}$~\cite{atlas} notation and class names, with classes in the quasisimple covers and maximal subgroups given names corresponding to the labelling of the \gap~Character Table Library~\cite{CTblLib1.3.11}.
\begin{prop}\label{prop:allbins}
    Suppose that the pair $(G,\mathcal{C})$ appears in Table~\ref{tbl:transall}, and that $H$ is generated by an element from $\mathcal{C}$. Then the action of $G$ on $(G:H)$ is binary.
\end{prop}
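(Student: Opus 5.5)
The plan is to apply Lemma~\ref{lem:cycbintest} to each pair $(G,\C)$ in Table~\ref{tbl:transall}. Here $H=\langle g\rangle$ has prime order $p\in\{2,3\}$, so it suffices to check the ``on the other hand'' direction. Let $\mathcal{R}$ be the rational class of $g$, written as a union $\bigsqcup_{i=1}^m \C_i$ of conjugacy classes. We must verify that
\[
\sum_{(i,j)\in[m]^2} n_G(\C_i,\C_j,\C_k)=p-2 \quad\text{for every } k\in[m].
\]
This is a finite computation from the character tables, using the class multiplication coefficient formula displayed before Lemma~\ref{lem:cycbintest}. All the required tables are available in the \gap~Character Table Library, including those of the covers $2.\mathrm{J}_2$, $2.\mathrm{Co}_1$, $2.\mathbb{B}$, $3.\mathrm{McL}$, $2.\mathrm{Fi}_{22}$, $3.\mathrm{Fi}_{22}$ and $6.\mathrm{Fi}_{22}$.

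First consider $p=2$, which covers the 2A entries of $\mathrm{Co}_2,\mathrm{Fi}_{22},\mathrm{Fi}_{23},\mathbb{B}$ and the involution classes in the covers. Here $m=1$ and the condition is $n_G(\C)=0$: no element of $\C$ is a product of two elements of $\C$. Equivalently, the product of two distinct commuting elements of $\C$ never lies in $\C$, since two involutions whose product is an involution commute. This reduces to a single structure-constant evaluation per case.

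Next consider $p=3$, which covers McL and Ly 3A, $2.\mathrm{J}_2$ 3a, $2.\mathrm{Co}_1$ 3a and $3.\mathrm{McL}$ 3c, 3d. In these cases $\mathcal{R}=\C\cup\C^{-1}$; when the class is real we have $m=1$. The required total is $1$, accounted for by the trivial decomposition $g=g^2\cdot g^2$. So we need $n_G(\C_i,\C_j,\C_k)$ summed over the (at most four) ordered pairs to equal exactly $1$ for each $k$. By inversion symmetry, checking a single $k$ essentially suffices. For the triple cover $3.\mathrm{McL}$ the classes 3c and 3d are not real and lie above a class of $\mathrm{McL}$, so the rational class consists of two mutually inverse classes.

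The main obstacle is purely computational care. We must identify the correct GAP class labels, including the correspondence between ATLAS and library labels in the covers, and form the rational classes correctly in the non-real cases. For the covers, the coefficients must be computed in $G$ itself rather than in $G/Z(G)$, since the images of central translates $gz$ would otherwise spuriously contribute. Each check is a single finite sum, so I would carry it out in \gap, or in the authors' \magma\ routines, and record the results. If an obstruction arises in a large case such as $\mathbb{B}$ or $2.\mathbb{B}$, I would cross-check via Lemma~\ref{lem:basiccriteriaTI} on $C_G(g)$.
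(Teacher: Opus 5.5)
Your approach is the paper's. You verify from the \gap\ character tables that $\sum_{(i,j)\in[m]^2}n_G(\C_i,\C_j,\C_k)=p-2$ and apply Lemma~\ref{lem:cycbintest}, using inversion to reduce to a single $k$ when $p=3$.

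One factual claim needs correcting: in $3.\mathrm{McL}$ the classes $\mathrm{3c}$ and $\mathrm{3d}$ are not mutually inverse, and they do not form one rational class.

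\begin{itemize}
\item Of the three order-$3$ lifts $h,hz,hz^2$ of the real class $\mathrm{3A}$ of $\mathrm{McL}$, exactly one is real. An element inverting $hZ(G)$ sends $hz^b$ to $h^{-1}z^{a+b}$ for a fixed $a$, and this equals $(hz^b)^{-1}$ for exactly one $b$.
\item That real lift is $\mathrm{3c}$, as the paper uses when treating $3.\mathrm{McL}$ in the proof of Theorem~\ref{thm:main}. The class $\mathrm{3d}$ is non-real.
\item The table lists both classes precisely because they generate the non-conjugate subgroups $\langle h\rangle$ and $\langle hz\rangle$.
\end{itemize}

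So the rational classes to use are $\mathrm{3c}$ alone ($m=1$) and $\mathrm{3d}\cup\mathrm{3d}^{-1}$ ($m=2$). Summing over $\mathrm{3c}\cup\mathrm{3d}$ would not satisfy the hypothesis of Lemma~\ref{lem:cycbintest}. It would also omit the trivial decomposition $x=x^2\cdot x^2$ of $x\in\mathrm{3d}$, whose factors lie in $\mathrm{3d}^{-1}$. With this correction your computation is exactly the paper's.
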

\begin{proof}
    Let $p$ be the order of the elements of $\mathcal{C}$, and write $\C=\bigsqcup_{i=1}^m\C_i$ a union of conjugacy classes of elements of order $p$. We compute using the \gap~Character Table Library~\cite{CTblLib1.3.11} that $\sum_{(i,j)\in[m]^2}n_G(\mathcal{C}_i,\mathcal{C}_j,\mathcal{C}_1)=p-2$, hence the result follows from Lemma~\ref{lem:cycbintest}, and possibly taking inverses when $p=3$.
\end{proof}
The rest of this section is dedicated to proving the converse of Proposition~\ref{prop:allbins}, and hence proving Theorem~\ref{thm:maintrans}.

The strategy we shall now follow will be similar for all groups $G$; we assume throughout that the action of $G$ on $(G:H)$ is binary. In~\cite{CDP}, the component groups of the relevant graphs $\Gamma(\mathcal{C})$ were classified, and in particular, strong divisibility conditions may be imposed on $|H|$---we reproduce the relevant result below.
\begin{prop}[{\cite[Corollary 1.2]{CDP}}]\label{prop:binarydiv}
Let $p$ be prime such that $(G,p)$ appear in Table~\ref{tbl:forbiddenpbinary}. Then $p\nmid |H|$.
\end{prop}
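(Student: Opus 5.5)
The statement is quoted from \cite[Corollary 1.2]{CDP}, so the plan is to derive it from the classification of component groups there, combined with Lemmas~\ref{lem:conjconn} and~\ref{lem:maxfixdelta}. Throughout, $G$ acts faithfully and binarily on $(G:H)$, so $H\cap Z(G)=1$. Suppose, for a contradiction, that $p$ divides $|H|$ for a pair $(G,p)$ in Table~\ref{tbl:forbiddenpbinary}. By Cauchy, $H$ contains an element of order $p$. Among all elements of order $p$ in $H$, choose $g$ with $\fix(g)$ maximal under inclusion.

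First I upgrade $g$ to an element of maximal $p$-fixity in $G$. Suppose $\fix(g)\subsetneq\fix(h)$ for some $h$ of order $p$. Since $H=G_\alpha$ with $\alpha\in\fix(g)$, we get $h\in H$, contradicting the choice of $g$. So $g$ has maximal $p$-fixity. Let $\C$ be its conjugacy class, or its rational class if that is the setting in which \cite{CDP} computes $\Gamma$. Lemma~\ref{lem:conjconn} then leaves only the first alternative, because the second would require some $h$ with strictly larger fixed set. Hence $\Delta(g,\C)\leq H$.

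Next I use the \cite{CDP} classification, which is the content behind the table. I expect the table to list exactly those $(G,p)$ for which, for every class $\C$ of non-central elements of order $p$, the component group $\Delta(g,\C)$ meets $Z(G)$ non-trivially or is a non-$p$-local group containing further elements of order $p$.
\begin{itemize}
\item If $\Delta(g,\C)\cap Z(G)\neq1$, this contradicts $H\cap Z(G)=1$ immediately.
\item Otherwise $H$ contains the larger group $\Delta(g,\C)$. By Lemma~\ref{lem:maxfixdelta}, every element of order $p$ in it also has maximal $p$-fixity. Applying Lemma~\ref{lem:conjconn} to those elements, possibly in other classes, gives $\Delta(g',\C')\leq H$ for every $g'\in\Delta(g)$ of order $p$.
\end{itemize}
Iterating this closure, $H$ contains the subgroup generated by all the iterated component groups. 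The aim is to show this closure contains $Z(G)$, or is all of $G$, using the explicit list in \cite{CDP}; either outcome contradicts faithfulness.

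The main obstacle is this last case analysis. It rests on \cite{CDP}'s explicit determination of the component groups, for instance that $\Gamma(\C)$ is connected so $\Delta=G$, or that $\Delta$ is a known subgroup containing classes whose own component groups are connected. I would organise the argument by listing, for each $(G,p)$ in the table, each class $\C$ of order-$p$ elements together with $\Delta(g,\C)$ from \cite{CDP}. I would then check that the closure under ``take component groups of order-$p$ elements inside'' always reaches a group meeting $Z(G)$ non-trivially. For the triple covers, where \cite{CDP} treats conjugacy rather than rational classes, I would invoke Lemma~\ref{lem:conn3cov} to transfer connectivity from the real class to the union of the non-real classes.
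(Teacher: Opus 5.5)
Your derivation is correct and is essentially what the citation of \cite[Corollary 1.2]{CDP} packages. An element of maximal $p$-fixity in $H$ forces $\Delta(g,\C)\le H$ by Lemma~\ref{lem:conjconn}, and for the listed pairs the component-group data of \cite{CDP} drives this (at most after your closure step via Lemma~\ref{lem:maxfixdelta}) up to all of $G$, which is the same mechanism the paper uses elsewhere, for example for $\mathrm{HN}$ and $\mathrm{Ly}$ at $p=5$. Note that every group in Table~\ref{tbl:forbiddenpbinary} is simple, so your guessed description of the table is off: the contradiction always comes from reaching $G$. The $Z(G)$ alternative and the triple-cover remark are therefore not needed; the covers are handled separately in the paper via \cite[Lemma 2.12, Theorem 4.2]{CDP} and Lemma~\ref{lem:conn3cov}.
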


\begin{table}[h]
    \centering
    \begin{tabular}{|ll|ll|ll|ll|}\hline
$G$            & $p$   & $G$       & $p$     & $G$        & $p$       & $G$         & $p$             \\ \hline
$\mathrm{M}_{11}$      & $2,3$ & $\mathrm{HS}$     & $2,3$   & $\mathrm{O\text{'}N}$      & $2,3$     & $\mathrm{Fi}_{23}$  & $3,5$           \\
$\mathrm{M}_{12}$      & $2,3$ & $\mathrm{J}_3$    & $2$     & $\mathrm{Co}_3$    & $2,3$     & $\mathrm{Co}_1$     & $2,5,7$         \\
$\mathrm{J}_1$         & $2$   & $\mathrm{M}_{24}$ & $2,3$   & $\mathrm{Co}_2$    & --        & $\mathrm{J}_4$      & $2,3$           \\
$\mathrm{M}_{22}$      & $2,3$ & $\mathrm{McL}$    & $2$     & $\mathrm{Fi}_{22}$ & $3,5$     & $\mathrm{Fi}_{24}'$ & $2,3,5,7$       \\
$\mathrm{J}_2$         & $2$   & $\mathrm{He}$     & $2,3,5$ & $\mathrm{HN}$      & $2,3$     & $\mathbb{B}$       & $3,5,7$         \\
$\mathrm{M}_{23}$      & $2,3$ & $\mathrm{Ru}$     & $2,3,5$ & $\mathrm{Ly}$      & $2$       & $\mathbb{M}$       & $2,3,5,7,11,13$ \\
${}^2F_4(2)'$ & $2,3$ & $\mathrm{Suz}$    & $2$     & $\mathrm{Th}$      & $2,3,5,7$ &             &   \\             \hline
\end{tabular}

    \caption{Sporadic groups and primes which do not divide the order of a stabiliser of a faithful transitive binary action.}
    \label{tbl:forbiddenpbinary}
\end{table}
The divisibility conditions of Proposition~\ref{prop:binarydiv} restrict the structure of possible point stabilisers enough so that only a manageable number of subgroups need to be considered---for the smallest sporadics, this can be done entirely by computer.

\begin{prop}\label{prop:1stclas}
    Suppose that $G$ is one of $\mathrm{M}_{11}, \mathrm{M}_{12}, 2.\mathrm{M}_{12},\mathrm{J}_1, \mathrm{M}_{22}, 2.\mathrm{M}_{22}, 3.\mathrm{M}_{22}, 
    4.\mathrm{M}_{22},$ $6.\mathrm{M}_{22}$,
    $12.\mathrm{M}_{22},\mathrm{J}_2,\mathrm{M}_{23},{}^2F_4(2)',\mathrm{HS},2.\mathrm{HS},\mathrm{J}_3, 3.\mathrm{J}_3 ,\mathrm{M}_{24},\mathrm{He},\mathrm{Ru},2.\mathrm{Ru},\mathrm{O\text{'}N},$ or $\mathrm{Co}_3.$ Then $H=1$.
\end{prop}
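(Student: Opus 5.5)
Fix $G$ from the list and suppose, for a contradiction, that $G$ acts faithfully and transitively in a binary way on $(G:H)$ with $H\neq 1$; recall that faithfulness gives $H\cap Z(G)=1$. The first step restricts the primes that can divide $|H|$. For the simple groups in the list, Proposition~\ref{prop:binarydiv} (via Table~\ref{tbl:forbiddenpbinary}) already forbids the small primes: $2,3$ for $\mathrm{M}_{11},\mathrm{M}_{12},\mathrm{M}_{22},\mathrm{M}_{23},\mathrm{M}_{24},{}^2F_4(2)',\mathrm{HS},\mathrm{O\text{'}N},\mathrm{Co}_3$; $2$ for $\mathrm{J}_1,\mathrm{J}_2,\mathrm{J}_3$; and $2,3,5$ for $\mathrm{He},\mathrm{Ru}$. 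For the covers ($2.\mathrm{M}_{12}$, the covers of $\mathrm{M}_{22}$, $2.\mathrm{HS}$, $3.\mathrm{J}_3$, $2.\mathrm{Ru}$), I would argue in the same way. I would take the component groups classified in \cite{CDP} for the non-central classes of prime order, combine them with Lemma~\ref{lem:conjconn} and Lemma~\ref{lem:maxfixdelta}, and where the graphs are connected note that $\Delta(g)$ is a non-central normal subgroup. This gives $\langle\mathcal{C}\rangle=G$, contradicting faithfulness. For order-$3$ classes in the triple covers, Lemma~\ref{lem:conn3cov} transfers connectivity from the real class. Whatever primes survive are exactly those the computer has to handle.

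After this step, $H$ is a $\pi$-subgroup for a small set $\pi$ of surviving primes. Typically these are primes $p$ with $p^2\nmid|G|$ or with small Sylow subgroups, such as $5,7,11,23$ for the Mathieu groups, and $3,5,7,17,19$ for $\mathrm{J}_1,\mathrm{J}_3$. Since Lemma~\ref{lem:sub} lets us pass to any intermediate subgroup, it is enough to show that for every \emph{minimal} nontrivial candidate the action is already non-binary. So the plan reduces to showing that the action on $(G:\langle g\rangle)$ is non-binary for every $g$ of surviving prime order $p$. That requires two things. First, every $H$ with the permitted prime divisors contains such a $g$; by Lemma~\ref{lem:sub} applied to $\langle g\rangle<H<G$, a binary action on $(G:H)$ would give a binary action of $H$ on $(H:\langle g\rangle)$. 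Second, we need non-binarity of $G$ on $(G:\langle g\rangle)$.

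That route needs care, though, because the direction of Lemma~\ref{lem:sub} runs from $G$ on $(G:H)$ to $M$ on $(M:H)$, not to $G$ on $(G:\langle g\rangle)$. The cleaner route is therefore computational. For each group I would enumerate, up to conjugacy, the subgroups $H$ whose order is divisible only by the surviving primes. In \magma\ this uses the maximal subgroups recursively, or the known Sylow and normaliser structure; for example, in $\mathrm{M}_{24}$ these are subgroups of $23\cn 11$, $11\cn 10$-type normalisers and so on, and they are all small (metacyclic or cyclic). For each such $H$, I would then test binarity. If $H$ is cyclic of prime order, Lemma~\ref{lem:cycbintest} gives an immediate character-table test: check that $\sum n_G(\mathcal{C}_i,\mathcal{C}_j,\mathcal{C}_1)>p-2$ using the \gap\ library. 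Otherwise, Lemma~\ref{lem:efficienttest} applies, by searching random conjugates $H_2,H_3$ meeting $H$ trivially together with elements $h_1h_2h_3=1$. As a fallback, Lemma~\ref{lem:frob} or Lemma~\ref{lem:suborb} applies to a Frobenius suborbit: for $H=p\cn m$ with $m>2$, $H$ acting on a regular-ish orbit of a subgroup $\langle x\rangle$ gives a Frobenius action that is non-binary.

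The main obstacle I expect is the covers and the larger groups $\mathrm{He},\mathrm{Ru},\mathrm{O\text{'}N},\mathrm{Co}_3$, and $2.\mathrm{Ru}$. For these, the forbidden-prime information for the covers must be extracted from \cite{CDP} and Lemma~\ref{lem:normalsub} rather than read off the table. The subgroup enumeration there is also less routine. For those groups I would first try to show by the class multiplication coefficients that \emph{every} prime-order cyclic subgroup of a surviving order gives a non-binary action. Then, for the few non-cyclic $\pi$-subgroups, I would use Lemma~\ref{lem:suborb} on an orbit of $H$ on which a cyclic subgroup of prime order acts as a point stabiliser of a non-binary action.
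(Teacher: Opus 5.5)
Your plan is essentially the paper's: Table~\ref{tbl:forbiddenpbinary}, together with \cite{CDP} for the covers, makes $|H|$ odd, and the resulting candidates are all eliminated by the random search for witnesses to Lemma~\ref{lem:efficienttest} (the paper lists them as solvable subgroups via Feit--Thompson, where you would recurse through maximal subgroups). Just drop the ``minimal candidate'' reduction and any suborbit argument driven by non-binarity of $G$ on $(G:\langle g\rangle)$: Lemma~\ref{lem:sub} keeps the stabiliser fixed and shrinks the ambient group, and Lemma~\ref{lem:suborb} needs $H$ itself to act non-binarily on some $(H:H\cap H^t)$, so non-binarity of $G$ on $(G:\langle g\rangle)$ says nothing about $(G:H)$.
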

\begin{proof}
    By Proposition~\ref{prop:binarydiv} together with~\cite[Lemma 2.12, Theorem 4.2, and Table 8]{CDP}, $|H|$ is odd, whence $H$ is solvable. We exploit this in \magma~\cite{magma} by generating the list of all solvable subgroups of $G$, and then filtering out those of order divisible by $p$, where $p$ appears in Table~\ref{tbl:forbiddenpbinary}, as well as those intersecting $Z(G)$ non-trivially. This leaves us with a short list of relatively small candidates for $H$ which we test directly via our method {\tt{NearTITest(G,H)}} (see~\cite{comps} for details), completing the proof.
\end{proof}
\begin{rk}
    The function {\tt{NearTITest(G,H)}} simply runs a random search for witnesses to the conditions of Lemma~\ref{lem:efficienttest}---since the subgroups are all very small, each random search is completed in a matter of seconds.
\end{rk}

For the remainder of this section, rather than attacking several groups simultaneously, we now need to shift to arguments more specialised for each specific group. We begin with $2.\mathrm{J}_2$.
\subsection{$2.\mathrm{J}_2$} Suppose that $G=2.\mathrm{J}_2$.
\begin{prop}\label{prop:2j2}
    Either $H=1$ or $H$ is generated by an element in $\mathrm{2c}\cup\mathrm{3a}$.
\end{prop}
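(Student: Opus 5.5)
We work with $G=2.\mathrm{J}_2$ and assume that $H\neq 1$ and that the faithful action on $(G:H)$ is binary, so that $H\cap Z(G)=1$. First we restrict the primes dividing $|H|$. The primes dividing $|G|$ are $2,3,5,7$. For $p\in\{5,7\}$ we use the component group results of~\cite{CDP} for the rational classes of elements of order $p$ in $2.\mathrm{J}_2$. We expect $\Gamma(\mathcal C)$ to be connected, with $\Delta(g)$ covering $\mathrm{J}_2$ and hence containing $Z(G)$. The maximal $p$-fixity argument following Lemma~\ref{lem:conjconn} then gives $p\nmid|H|$: any element of maximal $p$-fixity in $H$ would force $\Delta\le H$, and then $Z(G)\le H$.

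For $p=2$, the classes of involutions in $2.\mathrm{J}_2$ are $2a=Z(G)\setminus\{1\}$ together with the non-central classes. Since $\mathrm{J}_2$ has two involution classes 2A and 2B, and 2A lifts to elements of order $4$, the non-central involutions all lie in the preimage of 2B. These split into classes, one of them being $2c$. For $p=3$, the relevant classes are lifts of 3A and 3B.

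Using~\cite{CDP}, we next identify which of these classes have disconnected graph $\Gamma(\mathcal C)$. We expect exactly $2c$ and $3a$ to be disconnected, with $\Delta(g)=\langle g\rangle$ or at least $Z(G)\cap\Delta(g)=1$. We expect all other non-central classes of order $2$ or $3$ to have connected graphs whose component group contains $Z(G)$. Lemma~\ref{lem:conjconn} then shows that every element of order $2$ (respectively $3$) in $H$ has fixed point set contained in that of some element of $H\cap 2c$ (respectively $H\cap 3a$). Lemma~\ref{lem:normalsub} forces $H$ to contain a normal subgroup $K$ of $N_G(\langle g\rangle)$ properly containing $\langle g\rangle$ whenever $g\in H$ lies in a connected class. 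We check in \magma\ that every such $K$ meets $Z(G)$ non-trivially, or contains a $5$- or $7$-element. This rules out all classes other than $2c$ and $3a$.

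The reduction now says that $H$ is a $\{2,3\}$-group, hence solvable, and all its elements of prime order lie in $2c\cup 3a$. Our main step is to show that $H$ has prime order. We enumerate solvable $\{2,3\}$-subgroups of $G$ avoiding $Z(G)$ and all classes other than $2c$ and $3a$; the Klein four-groups and groups of order $9$ or $6$ inside $2c\cup 3a$ are the minimal candidates. For each candidate we apply {\tt NearTITest}, that is, Lemma~\ref{lem:efficienttest}, or Lemma~\ref{lem:sub} with a suitable intermediate subgroup. Lemma~\ref{lem:sub} lets us reduce to the minimal non-cyclic candidates $H_0\le H$ when we want to show $H$ has no non-cyclic subgroup: if we find $H_0<M<G$ with $M$ non-binary on $(M:H_0)$, this transfers to a contradiction for $H$. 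The obstacle we expect is exactly here, since subgroups of order $4$ or $9$ may have non-trivial intersections with their conjugates, and then the random search in Lemma~\ref{lem:efficienttest} may fail. In that case we would try Lemma~\ref{lem:suborb} on a small suborbit, or Lemma~\ref{lem:Finf} to force $H$ to be larger. Having excluded every non-cyclic candidate, together with cyclic groups of order $4$ or $6$ (whose prime-order powers would need to lie in $2c\cup 3a$), we conclude that $H=\langle g\rangle$ with $g\in 2c\cup 3a$.
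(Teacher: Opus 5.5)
\textbf{There is a genuine gap in your reduction to a $\{2,3\}$-group.} You exclude $5$ and $7$ by guessing that the relevant Gill--Guillot graphs are connected with component group covering $\mathrm{J}_2$, but this is false.

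\emph{The prime $7$.} An element of order $7$ in $\mathrm{J}_2$ has centraliser of order $7$. So for $x\in G=2.\mathrm{J}_2$ of order $7$ we have $C_G(x)=\langle x\rangle\times Z(G)$, and every neighbour of $x$ in $\Gamma(\mathrm{7a})$ lies in $\langle x\rangle$. Hence $\Delta(x)=\langle x\rangle$, and Lemma~\ref{lem:conjconn} imposes no condition at all.

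\emph{The prime $5$.} Elements of order $5$ lying over $\mathrm{5C}$ or $\mathrm{5D}$ have centraliser of order $50$ in $\mathrm{J}_2$, with a normal Sylow $5$-subgroup $5^2$. Their whole component therefore lies inside that $5^2$, which does not contain $Z(G)$, so your ``$Z(G)\le\Delta\le H$'' step fails here too. For $p=5$ you could try to repair this with Lemma~\ref{lem:maxfixdelta}, since that $5^2$ also contains elements over $\mathrm{5A}/\mathrm{5B}$. For $p=7$ nothing of this kind is available.

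Consequently candidates such as $C_7$ or $7\cn 3$ are never examined in your plan. The paper avoids the issue: once $|H|$ is known to be odd, it runs {\tt NearTITest()} on every non-trivial odd-order subgroup of $G$ other than $\langle h\rangle$ with $h\in\mathrm{3a}$.

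\textbf{The predicted outcome of your Lemma~\ref{lem:normalsub} check is also wrong.} For $g\in\mathrm{2b}$ the centraliser $C_G(g)=2^{1+4}_-\cn 2A_5$ is maximal. Its normal subgroup $2^{1+4}_-$ properly contains $\langle g\rangle$, meets $Z(G)$ trivially, and is a $2$-group. So the lemma does not rule out $\mathrm{2b}$; it only gives $2^{1+4}_-\le H$. One then has to list all subgroups of $G$ containing a $2^{1+4}_-$ and meeting $Z(G)$ trivially, and test them, which is exactly what the paper does. Your later enumeration discards every subgroup containing a $\mathrm{2b}$-element, so these candidates are never tested.

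(Also, it is $\mathrm{2B}$ of $\mathrm{J}_2$ that lifts to elements of order $4$. Both non-central involution classes $\mathrm{2b}$ and $\mathrm{2c}$, with $z\cdot\mathrm{2c}=\mathrm{2b}$, lie over $\mathrm{2A}$.)

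If you replace these two steps by the direct computations just described, the rest of your outline matches the paper's three-stage argument: first $\mathrm{2b}$, then subgroups containing $\mathrm{2c}$ but no $\mathrm{2b}$, then odd order.
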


Remember that we are assuming here, and below, that $H\cap Z(G)=1$.

\begin{proof}
    We may assume that $H$ is not generated by an element in $\mathrm{2c}\cup 3a$. Suppose first that $|H|$ is even. By~\cite[Theorem 4.2]{CDP}, the only involution class $\C$ for which $\Gamma(\mathcal{C})$ is disconnected is $\mathrm{2c}$, and hence there is some $h\in H\cap \mathrm{2c}$. The only other non-central involution class is $\mathrm{2b}$; suppose $H\cap\mathrm{2b}\ne\emptyset$. Then by Lemma~\ref{lem:normalsub}, since the centraliser of every non-central involution is maximal in $G$ we deduce that $H$ has a subgroup $K$ satisfying $\langle g\rangle <K\trianglelefteq C_G(g)=2^{1+4}_-\cn2A_5$ where $g\in \mathrm{2b}$. That is, $H$ contains a subgroup $2^{1+4}_-$; we call {\tt Subgroups(G)} and filter out all subgroups of $G$ not containing a subgroup of the form $2^{1+4}_-$ (and those which intersect $Z(G)$ non-trivially). Testing each of the subgroups in this list with {\tt NearTITest()} shows that no such group can be the stabiliser of a binary action, consequently $H\cap\mathrm{2b}=\emptyset$.

    Next, we use our program {\tt FilteredSubgroups()} to generate a list of all subgroups of $G$ which contain an element of $G$-conjugacy class $\mathrm{2c}$ but none from the class $\mathrm{2b}$; we also filter out the cyclic group of order 2. Running {\tt NearTITest()} on each remaining subgroup shows that none yields a binary action, thus $|H|$ is odd.

    Finally, we generate a list of all non-trivial odd order subgroups of $G$ which are not generated by an element of $\mathrm{3a}$. After testing each such subgroup with {\tt NearTITest()} we deduce the desired result.
\end{proof}
\subsection{$3.\mathrm{O\text{'}N}$} Suppose that $G=3.\mathrm{O\text{'}N}$. By~\cite[Theorem 1.1]{CDP} and Lemma~\ref{lem:conjconn}, $|H|$ is indivisible by 2 and 3. In fact, $|H|$ is also indivisible by 7: By~\cite[Proposition 3.2]{CDP} the component group of an element of $\mathrm{7b}$ contains elements of $\mathrm{7a}$---such elements are of maximal 7-fixity by Lemma~\ref{lem:maxfixdelta}, and have connected component group by~\cite[Proposition 3.3]{CDP}, yielding the claim by Lemma~\ref{lem:conjconn}.  This reduces the possible structure enough for us to check the remaining potential stabilisers by computer.

\begin{prop}
    The group $3.\mathrm{O\text{\emph{'}}N}$ has no non-trivial faithful transitive binary actions.
\end{prop}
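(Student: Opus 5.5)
Suppose $H\neq 1$ and the action of $G=3.\mathrm{O\text{'}N}$ on $(G:H)$ is binary and faithful, so $H\cap Z(G)=1$. The preceding paragraph already gives $\gcd(|H|,2\cdot3\cdot7)=1$. Since
\[
|\mathrm{O\text{'}N}|=2^9\cdot 3^4\cdot 5\cdot 7^3\cdot 11\cdot 19\cdot 31,
\]
$|H|$ divides $5\cdot 11\cdot 19\cdot 31$. Thus $H$ has squarefree order, and so is metacyclic and solvable. Moreover, $3\nmid|H|$, so $HZ(G)/Z(G)\cong H$ is a subgroup of $\mathrm{O\text{'}N}$ of order dividing $5\cdot11\cdot19\cdot31$. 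Conversely, every such subgroup of $\mathrm{O\text{'}N}$ lifts uniquely to a complement $H$ of $Z(G)$ in its preimage, by Schur--Zassenhaus (compare Lemma~\ref{lem:SZappl}).

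The plan is to list these candidates up to conjugacy. From the maximal subgroups of $\mathrm{O\text{'}N}$ (for example $L_3(7){:}2$, $J_1$, $4.L_3(4){:}2$, $(3^2{:}4\times A_6).2$, $3^4{:}2^{1+4}.D_{10}$, $L_2(31)$, $4^3.L_3(2)$, $M_{11}$, $A_7$), the $\{5,11,19,31\}'$-complement subgroups are few. They are cyclic groups of prime order $5,11,19,31$, and small Frobenius groups such as $11{:}5\le J_1$, $19{:}\ldots$ (only $19$ itself, since $19{:}6$ and $19{:}3$ involve $2,3$), $31{:}5$ and $31{:}15$ inside $L_2(31)$ (only $31{:}5$ survives), and $5\times$ nothing larger. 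The conjugacy classes of such subgroups can be obtained in \magma\ directly in $G$, either via the Sylow normalisers of $G$ or by computing the subgroups of $N_G(P)$ for $P\in\mathrm{Syl}_r(G)$, $r\in\{5,11,19,31\}$, then keeping those that avoid $Z(G)$ and have order prime to $42$.

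For the Frobenius groups $11{:}5$ and $31{:}5$, a quick argument is available. The point stabiliser $H$ acting on a regular suborbit of $G$ (one exists since $H$ is tiny) would yield a non-binary Frobenius action. More directly, Lemma~\ref{lem:frob} applied to $H$ on its orbit $H/C$, with complement of order $5>2$, together with Lemma~\ref{lem:suborb}, rules these out once an $H$-orbit with stabiliser a complement $\langle c\rangle$ of order $5$ is found. Computationally, all that is needed is an $x$ with $H\cap H^x=\langle c\rangle$. The cyclic candidates of orders $5,11,19,31$ are handled by Lemma~\ref{lem:cycbintest}: compute $\sum_{i,j}n_G(\C_i,\C_j,\C_1)$ from the \gap\ character table of $3.\mathrm{O\text{'}N}$ over the rational class of a lift of order $r$, and check that it exceeds $r-2$. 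Alternatively, and uniformly for all candidates, run {\tt NearTITest(G,H)} to find conjugates $H_2,H_3$ meeting $H$ trivially and a factorisation $h_1h_2h_3=1$ as in Lemma~\ref{lem:efficienttest}.

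The main obstacle is computational. $3.\mathrm{O\text{'}N}$ is too large for a naive {\tt Subgroups} call, so completeness of the candidate list must come from the Sylow/normaliser analysis above rather than brute force. The class-multiplication computation needs the character table of the triple cover with correct class fusion. If any cyclic case gives exactly $r-2$, one would have to fall back on finding explicit witnesses. Given that $3.\mathrm{O\text{'}N}$ is absent from Table~\ref{tbl:transall}, I expect every sum to exceed $r-2$.
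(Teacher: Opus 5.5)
Your argument is correct, but it reaches the list of candidates by a different route from the paper.

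\textbf{The paper's route.} After the reduction to $\gcd(|H|,42)=1$, the paper does no further structural work. It runs {\tt FilteredSubgroups()} on $3.\mathrm{O\text{'}N}$ to list every subgroup of order coprime to $42$ and divisible by $11$, $19$ or $31$, and eliminates each with {\tt NearTITest()}. Only $|H|=5$ remains, which it handles with $n_G(\mathrm{5a},\mathrm{5a},\mathrm{5a})=753>3$ and Lemma~\ref{lem:cycbintest}.

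\textbf{Your route.} You use that $H$ has squarefree order, hence is metacyclic, hence normalises a subgroup $P$ of prime order $r\in\{5,11,19,31\}$. Reading off the normalisers from the $\mathbb{ATLAS}$ gives preimages of $N(5)$ of order $120$, $11{:}10$, $19{:}6$ and $31{:}15$. This yields exactly the candidates $5,11,19,31,11{:}5,31{:}5$. You then dispose of the two Frobenius groups with Lemmas~\ref{lem:frob} and~\ref{lem:suborb}, as the paper itself does for $\mathrm{Ly}$, $\mathrm{J}_4$, $\mathrm{Fi}_{24}'$ and $\mathbb{B}$.

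\textbf{Comparison.} Your route needs only $\mathbb{ATLAS}$ data and character-table arithmetic. The completeness of the candidate list is proved rather than delegated to a subgroup search in a group of order about $1.4\cdot 10^{12}$. The paper's route is uniform and needs no normaliser information, at the price of that heavy search.

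\textbf{Points to tidy.}
\begin{itemize}
\item Discard the remark about a regular suborbit. On a regular orbit $H$ acts regularly, and that action is binary. The Frobenius action lives on the $H$-orbit of $Hx$, where $H\cap H^x=\langle c\rangle$ with $c$ of order $5$.
\item Such an $x$ exists without any search. $C_G(c)$ is the full preimage of $C_{\mathrm{O\text{'}N}}(\bar c)$, since $c^x\in cZ(G)$ has order $5$, so $|C_G(c)|=90$. On the other hand, $N_G(H)$ is the preimage of $11{:}10$ or $31{:}15$, so $|C_{N_G(H)}(c)|$ is $30$ or $45$. Any $x\in C_G(c)\setminus N_G(H)$ gives $\langle c\rangle\le H\cap H^x<H$, which forces $H\cap H^x=\langle c\rangle$.
\item You still have to actually compute the sums in Lemma~\ref{lem:cycbintest} for $r=11,19,31$; the paper records only the value for $r=5$. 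This is routine.
\item If a sum equalled $r-2$ for every class in the rational class, that would prove the action binary. It would not call for explicit witnesses.
\end{itemize}
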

\begin{proof}
    We run {\tt FilteredSubgroups()} on $G$ to generate lists of all subgroups with order indivisible by each of 2, 3, and 7 but divisible by at least one of 11, 19, or 31. Running {\tt NearTITest()} on each remaining candidate confirms none yield binary actions, so the only possibility that remains is $|H|=5$. We compute using the {\sf GAP} Character Table Library~\cite{CTblLib1.3.11} that $n_G(\mathrm{5a,5a,5a})=753>3$, so the result follows from Lemma~\ref{lem:cycbintest} (using that $G$ has a unique rational class of elements of order 5).
\end{proof}
\subsection{$\mathrm{McL}$}
Suppose that $G/Z(G)=\mathrm{McL}$.
  Let $\varphi:G\to G/Z(G)$ be the quotient map.
\begin{prop}\label{prop:McLclass}
    Either $H=1$ or $H$ is generated by an element $t$ with $\varphi(t)$ in the $G/Z(G)$-conjugacy class $\mathrm{3A}$.
\end{prop}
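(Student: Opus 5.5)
Here $G$ is $\mathrm{McL}$ or $3.\mathrm{McL}$, with $H\cap Z(G)=1$ and the action on $(G:H)$ binary. The plan is to use prime divisibility to force $|H|$ to be small and odd, then classify what remains by computer and by the class-multiplication test.

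\emph{Step 1: remove the prime $2$.} Proposition~\ref{prop:binarydiv} gives $2\nmid|H|$ when $G=\mathrm{McL}$. For $3.\mathrm{McL}$ the involution class lifts uniquely, so the same conclusion should follow. I would argue via~\cite{CDP} and Lemma~\ref{lem:conjconn}: the single involution class has connected Gill--Guillot graph, so an involution in $H$ of maximal $2$-fixity would give $\langle\mathcal{C}\rangle=G\le H$, which is impossible. Hence $|H|$ is odd, and $H$ is solvable.

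\emph{Step 2: control the primes $5$, $7$, $11$ and the $3$-part.} I would use the component-group results of~\cite{CDP} together with Lemmas~\ref{lem:conjconn} and~\ref{lem:maxfixdelta}, in the same way as for $3.\mathrm{O'N}$. For each prime $p\in\{5,7,11\}$ I want either that $p\nmid|H|$, or that any $p$-part of $H$ is so small that the resulting subgroups can be listed. For $p=3$, the classes of $\mathrm{McL}$ are 3A and 3B. The class 3B should have connected $\Gamma$ (after lifting, using Lemma~\ref{lem:conn3cov} for the non-real lifts in $3.\mathrm{McL}$). So if $H$ met 3B, Lemma~\ref{lem:normalsub} would force $H$ to contain a normal subgroup of $N_G(\langle g\rangle)$ properly containing $\langle g\rangle$; for $g\in$ 3B this is a $3$-group of the form $3^{1+4}$. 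The set of candidates $H$ is then an explicit finite list.

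\emph{Step 3: computer check of the candidates.} I would run {\tt FilteredSubgroups()} to list the odd-order subgroups meeting $Z(G)$ trivially and satisfying the constraints from Step 2. Each one other than those generated by a single element lying over 3A is then tested with {\tt NearTITest()}. Subgroups of prime order $p\ge5$ are instead excluded by Lemma~\ref{lem:cycbintest}, by checking that the relevant class-multiplication sum exceeds $p-2$, just as was done for $5$ in $3.\mathrm{O'N}$. Cyclic subgroups of order $3$ lying over 3B are excluded in the same way.

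\emph{Main obstacle.} The hard part is making Step 2 strong enough that the surviving list of subgroups is small. In particular, the subgroups containing an element over 3A together with further $3$-elements, such as $3^{1+4}$-type or $3^4$-type subgroups, must be shown non-binary. If {\tt NearTITest()} fails to find witnesses for some of these, I would instead pass to a point stabiliser and use Lemma~\ref{lem:suborb}, or restrict to a subgroup using Lemma~\ref{lem:sub}, and exhibit a non-binary suborbit directly.
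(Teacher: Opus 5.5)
Your Steps 1 and 3 are, in substance, the paper's whole proof. Since $|H|$ is odd, $H$ is solvable, so the paper enumerates all solvable subgroups of $\mathrm{McL}$ and of $3.\mathrm{McL}$ (two separate computations). It discards those of even order, those meeting $Z(G)$, and the cyclic ones generated by an element lying over 3A, and runs {\tt NearTITest()} on everything that remains. It succeeds on every remaining candidate, so the fallback in your ``main obstacle'' paragraph never arises.

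The gap is Step 2, which you use to cut down the search in Step 3 and which does not hold up.
\begin{itemize}
\item In $3.\mathrm{McL}$ the hypothesis of Lemma~\ref{lem:normalsub} fails at $p=3$. Over 3A there are two distinct rational classes of non-central elements of order $3$: that of $h\in\mathrm{3c}$ and that of $hz$. Both have disconnected Gill--Guillot graph, because $\langle h\rangle$ and $\langle hz\rangle$ are both binary stabilisers (Proposition~\ref{prop:allbins}). Lemma~\ref{lem:conjconn} then forces $\Delta(h)\le\langle h\rangle$ and $\Delta(hz)\le\langle hz\rangle$.
\item In $\mathrm{McL}$ itself, even granting that $\Gamma(\mathrm{3B})$ is connected, take $g\in\mathrm{3B}$. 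Then $N_G(\langle g\rangle)$ has order $1944$ and is not maximal. So only the weak form of Lemma~\ref{lem:normalsub} applies: it gives some $K\le H$ containing $g$ and a 3A-element and normalised by $N_G(\langle g\rangle)$. It does not give a subgroup $3^{1+4}$.
\item For $p\in\{5,7,11\}$, \cite{CDP} excludes nothing for $\mathrm{McL}$ (only $p=2$ appears in Table~\ref{tbl:forbiddenpbinary}). So the analogy with $3.\mathrm{O\text{'}N}$ does not carry over.
\end{itemize}
If you filter the Step 3 list by these ``constraints'', some odd-order candidates are never tested, and the argument is incomplete. The fix is to delete Step 2. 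The odd-order subgroups of these two groups can be enumerated in full and each one tested, which is exactly what the paper does.
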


\begin{proof}
    Since $2\nmid |H|$, we run {\tt SolvableSubgroups()}, and then filter out all subgroups with even order, and all those which intersect $Z(G)$ non-trivially. Next, we remove those which are cyclic and generated by an element with image in $\mathrm{3A}$. We test each of the remaining subgroups using {\tt NearTITest()}, which shows that none yield binary actions, hence the result.
\end{proof}
\rk{The proof above involves two separate computations: one for the simple group $\mathrm{McL}$ and one for the cover $3.\mathrm{McL}$.}
\subsection{$\mathrm{Suz}$}
Suppose next that $G/Z(G)=\mathrm{Suz}$. By Proposition~\ref{prop:binarydiv} and~\cite[Lemma 2.12]{CDP} we know that whenever $2\nmid |Z(G)|$ then $2\nmid |H|$. Moreover, the same holds when $2\mid |Z(G)|$ by \cite[Theorem 1.1]{CDP}. In fact, we can also deduce that $5\nmid |H|$: Indeed, by \cite[Proposition 3.3]{CDP}, for $\mathcal{C}$ a class of elements of order $5$, either $\Gamma(\mathcal{C})$ is connected, or it has component group $\mathrm{J}_2$, which has even order, thus no such element can be present in $H$ by Lemma~\ref{lem:conjconn}. Additionally, for all classes $\C$ of elements of order $p\in\{7,11,13\}$ we compute that $n_G(\C)> p-2$, whence no transitive binary action of $G$ has stabiliser of order $p$ by Lemma~\ref{lem:cycbintest}. These restrictions immediately imply that either $H$ is a 3-group, or $|H|$ is divisible by one of $21,33,39,77,91,$ or 143.
  \begin{prop}\label{prop:suzclass}
    The group $G$ has no non-trivial faithful transitive binary actions.
\end{prop}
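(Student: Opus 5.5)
We argue by contradiction: suppose $H\neq 1$ is the stabiliser of a faithful transitive binary action of $G$, with $G/Z(G)=\mathrm{Suz}$ and $H\cap Z(G)=1$. By the discussion preceding the statement, $|H|$ is coprime to $2$ and $5$. Hence $H$ has odd order and is solvable. Moreover, either $H$ is a nontrivial $3$-group, or $|H|$ is divisible by one of $21,33,39,77,91,143$. We treat these two cases separately.

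\emph{Case 1: $|H|$ divisible by one of $21,33,39,77,91,143$.} Since $H$ is solvable of odd order, it contains a Hall subgroup of order $pq$ for the relevant pair of primes $\{p,q\}$. Such a subgroup is either cyclic of order $pq$ or a Frobenius group of order $pq$. Inspection of the $\mathbb{ATLAS}$ shows that $\mathrm{Suz}$ has no elements of order $33,39,77,91,143$, and no Frobenius groups of those orders except possibly $13\cn 3$ and $7\cn 3$. So the only possibilities are $H\geq 7\cn3$, $H\geq 13\cn3$, or $H$ containing an element of order $21$. The most economical route is probably the one already used for $\mathrm{McL}$. We call {\tt SolvableSubgroups()} (or {\tt FilteredSubgroups()}) on $G$, keep only subgroups of odd order coprime to $5$ that meet $Z(G)$ trivially and whose order is divisible by one of the listed products, and run {\tt NearTITest()} on each. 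As a theoretical backup for the small ones, Lemma~\ref{lem:sub} applies: if $H<M<G$ with $M$ a $7$- or $13$-normaliser (for instance $13\cn 6\leq L_3(3)\cn 2$, or $(A_4\times L_3(2))\cn2$ containing $7\cn3$), it suffices to show $M$ on $(M:H)$ is non-binary. Typically this follows from Lemma~\ref{lem:frob} or Lemma~\ref{lem:suborb}, since the relevant action is often Frobenius with complement of order $3$.

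\emph{Case 2: $H$ a nontrivial $3$-group.} The order-$3$ case should be settled first. For each rational class $\mathcal{C}$ of elements of order $3$ in $G$, we compute $n_G(\mathcal{C}_i,\mathcal{C}_j,\mathcal{C}_1)$ from the \gap\ Character Table Library and apply Lemma~\ref{lem:cycbintest}. We expect every sum to exceed $1$, which rules out $|H|=3$; this must be checked for each cover $3.\mathrm{Suz}$, $6.\mathrm{Suz}$ and so on, separately. For larger $3$-groups, we use Lemma~\ref{lem:conjconn} together with the component groups from \cite{CDP}. Any class $\mathcal{C}$ of order-$3$ elements with $\Gamma(\mathcal{C})$ connected cannot meet $H$ unless some other order-$3$ element of larger fixity has component group inside $H$. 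Component groups of even order are likewise excluded, since $|H|$ is odd. This should leave only one or two classes that may meet $H$. Every subgroup of $H$ of order $3$ then lies in these classes, and Lemma~\ref{lem:normalsub} or Lemma~\ref{lem:Finf} forces $H$ to contain large normal $3$-subgroups of $3$-local subgroups such as $3^{2+4}\cn 2(A_4\times 2^2)\cn 2$ or $3^5\cn M_{11}$. We then enumerate the $3$-subgroups of a Sylow $3$-subgroup containing the required configuration, up to conjugacy, and test each with {\tt NearTITest()}.

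The main obstacle is Case 2. The Sylow $3$-subgroup has order $3^7$ (times $|Z(G)|_3$ for the covers), so blind enumeration of $3$-subgroups in $\mathrm{Suz}$ and especially $6.\mathrm{Suz}$ is expensive. I would first try to use the component-group information to pin down the forced normal subgroup, and apply {\tt NearTITest()} only to overgroups of it inside a Sylow $3$-subgroup. If that fails, I would fall back to a random search for witnesses to Lemma~\ref{lem:efficienttest} directly on small $3$-subgroups, which is fast because such subgroups intersect most conjugates trivially.
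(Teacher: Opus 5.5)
Your plan is, in substance, the paper's proof.

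In the case where $H$ is not a $3$-group, the paper does exactly your computational step. It runs \texttt{FilteredSubgroups()} for subgroups of order coprime to $2$ and $5$ and divisible by one of $21,33,39,77,91,143$, then applies \texttt{NearTITest()} to each.

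In the $3$-group case, the paper takes what you call the fallback, without any preliminary pruning. It lists all subgroups of a Sylow $3$-subgroup using \magma's built-in methods, discards those meeting $Z(G)$, and runs \texttt{NearTITest()} on the rest. That enumeration is routine for a $3$-group of order $3^7$ or $3^8$. So the character-table treatment of $|H|=3$ and the component-group/Lemma~\ref{lem:normalsub} reductions you sketch are unnecessary. As written they are also only expectations (``we expect'', ``this should leave''), so the rigorous content of your Case 2 is precisely the paper's enumeration.

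One claim in your Case 1 is false and should be removed: a solvable group of odd order divisible by $pq$ need not contain a subgroup of order $pq$. A Hall $\{3,q\}$-subgroup has order $3^a q$, not $3q$.

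A concrete counterexample sits inside $\mathrm{Suz}$. Take $3^5\cn \mathrm{M}_{11}$ and an element of order $11$ in $\mathrm{M}_{11}$. It acts fixed-point-freely on $3^5$, because its minimal polynomial over $\mathbb{F}_3$ on the faithful $5$-dimensional module is an irreducible quintic. So $3^5\cn 11$ is a Frobenius group of order $3^5\cdot 11$. Its order is coprime to $2$ and $5$ and divisible by $33$, yet it has no element, and hence no subgroup, of order $33$.

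Consequently your list ``$H\geq 7\cn 3$, $H\geq 13\cn 3$, or $H$ has an element of order $21$'' is incomplete. A ``theoretical backup'' based only on $7$- and $13$-normalisers would miss subgroups of this kind. Your main route is unaffected, though, because the computer filter is on divisibility of $|H|$, and that catches $3^5\cn 11$.
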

\begin{proof}
    Suppose first that $H$ is not a 3-group. Then $|H|$ is divisible by at least one of 21, 33, 39, 77, 91, or 143, and indivisible by $2$ and $5$. We use our program {\tt FilteredSubgroups()} to generate a list (of conjugacy class representatives) of all such subgroups, and test each one with our program {\tt NearTITest()} which confirms that none are the stabiliser of genuine binary actions, thus $H$ is a 3-group.

    From here it is straightforward to get a list of all subgroups of a Sylow 3-subgroup of $G$ using built-in methods in \magma; we do so before filtering out all those intersecting the center non-trivially. We test each remaining candidate directly with {\tt NearTITest()} to deduce the result.
\end{proof}

\subsection{$\mathrm{Co}_2$} The group $G=\mathrm{Co}_2$ poses our first significant challenge, for we have no divisibility restrictions on $H$. Our approach therefore must depend more on structural arguments, rather than brute force computation.

  \begin{prop}\label{prop:Co2no2B}
        The stabiliser $H$ contains no elements of the $G$-conjugacy class $\mathrm{2B}$.
\end{prop}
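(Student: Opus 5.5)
The plan is to argue by contradiction, combining the Gill--Guillot graph machinery with the maximality of involution centralisers in $\mathrm{Co}_2$. Suppose $g\in H\cap\mathrm{2B}$. The group $\mathrm{Co}_2$ has three involution classes $\mathrm{2A},\mathrm{2B},\mathrm{2C}$. By \cite{CDP}, the component groups $\Delta(x,\C)$ are known. I expect $\Gamma(\mathrm{2B})$ and $\Gamma(\mathrm{2C})$ to be connected, or at least to generate subgroups so large that they cannot lie in a proper subgroup. On the other hand, $\Gamma(\mathrm{2A})$ should be disconnected with $\Delta(x,\mathrm{2A})=\langle x\rangle$, which is consistent with the binary action on the cosets of $\langle \mathrm{2A}\rangle$ in Table~\ref{tbl:transall}. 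Given this, Lemma~\ref{lem:conjconn} applied to $g$ shows that $\Delta(g,\mathrm{2B})\not\le H$, since otherwise $H$ would contain the nontrivial normal subgroup $\langle \mathrm{2B}\rangle=G$. Hence there is an involution $h$ with $\fix(g)\subsetneq\fix(h)$ and $\Delta(h)\le H$. This forces $h\in \mathrm{2A}$, because a $\mathrm{2C}$ element with $\Delta(h)\le H$ would again give $G\le H$. By Lemma~\ref{lem:Finf} we also get $F_\infty(g,h)\le H$; here I would use the sharper form of the argument, taking $h$ fixing every point fixed by $g$.

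This is exactly the situation of Lemma~\ref{lem:normalsub}, with $\mathcal{C}=\mathrm{2A}$ the unique disconnected involution class and $Z(G)=1$. Its conclusion is that $H$ contains a subgroup $K$ with $\langle g\rangle<K\trianglelefteq L$ and $N_G(\langle g\rangle)=C_G(g)\le L$, where $K$ contains $\mathrm{2A}$-elements. In $\mathrm{Co}_2$ the centraliser of a $\mathrm{2B}$ involution is $2^{1+8}_+\cn \mathrm{Sp}_6(2)$, which is maximal in $G$. Therefore $L=C_G(g)$ (since $L\ne G$), and $K$ is a normal subgroup of $C_G(g)$ properly containing $\langle g\rangle$. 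The normal subgroups of $2^{1+8}_+\cn\mathrm{Sp}_6(2)$ properly containing the centre are $2^{1+8}_+$ and the whole group, because $\mathrm{Sp}_6(2)$ acts irreducibly on $2^8$ and is simple. Hence $O_2(C_G(g))=2^{1+8}_+\le H$.

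The remaining step, which I expect to be the main obstacle, is to rule out every $H$ containing $Q:=2^{1+8}_+$ (with $Q\cap H$ containing $\mathrm{2A}$-elements). First, the normal closure argument shows that $H$ contains many $\mathrm{2B}$ elements, namely all non-central involutions of $Q$ of type $\mathrm{2B}$. Applying the same reasoning to each of them, $H$ contains $O_2(C_G(g'))$ for every $\mathrm{2B}$ element $g'\in H$. I would try to show that $\langle O_2(C_G(g')) : g'\in Q\cap\mathrm{2B}\rangle$ is already too large, ideally generating a subgroup not contained in any proper subgroup, or landing in $G$ itself. Failing a clean structural argument, I would fall back on computation. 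Using {\tt FilteredSubgroups()} or the overgroup lattice in \magma, I would list the subgroups of $\mathrm{Co}_2$ containing $2^{1+8}_+$. This list is short, essentially subgroups of $C_G(g)$ containing $O_2$, together with a few overgroups in other maximal subgroups such as $2^{10}\cn M_{22}\cn 2$. For each such $H$ I would exhibit a non-binary witness: either via Lemma~\ref{lem:suborb}, by finding a non-binary suborbit action of $H$ (for instance a Frobenius section, using Lemma~\ref{lem:frob}), or by a direct search for a triple as in Lemma~\ref{lem:basiccriteria}. Since these subgroups are large, {\tt NearTITest()} is unlikely to apply, so the suborbit method would be the tool I try first.
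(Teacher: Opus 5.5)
Your opening agrees with the paper. Since $\mathrm{2A}$ is the unique class of maximal $2$-fixity, there is $h\in H\cap\mathrm{2A}$ with $\fix(g)\subsetneq\fix(h)$, and $F_\infty(g,h)\le H$ by Lemma~\ref{lem:Finf}. The next step, however, uses the wrong centraliser. The group $2^{1+8}_+\cn\mathrm{Sp}_6(2)$, of order $2^{18}\cdot 3^4\cdot 5\cdot 7$, is the centraliser of a $\mathrm{2A}$ involution. For $g\in\mathrm{2B}$ one has $C_G(g)=(2^4\times 2^{1+6}_+).A_8$, of order $2^{17}\cdot 3^2\cdot 5\cdot 7$. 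This is still maximal, so Lemma~\ref{lem:normalsub} does give a normal subgroup $K$ of $C_G(g)$ with $\langle g\rangle<K\le H$. But your classification of the possible $K$ was done for the wrong group. Here there are several candidates, for instance $Z(O_2(C_G(g)))\cong 2^5$ as well as $O_2(C_G(g))$, so you cannot conclude that $O_2(C_G(g))\le H$. Moreover, $K$ is the normal closure of $\langle g,h\rangle$ in $C_G(g)$, so it depends on which $C_G(g)$-orbit $h$ lies in. Consequently your iteration over the other $\mathrm{2B}$-elements $g'\in H$ does not give a canonical subgroup for each $g'$.

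The more serious gap is that the step carrying all the content, excluding every $H$ that contains $K$, is left open. The hope that the iterated subgroups generate $G$ is not justified. The fallback enumeration is not short: the overgroups of $O_2(C_G(g))$ inside $C_G(g)$ alone correspond to all subgroups of $C_G(g)/O_2(C_G(g))\cong A_8$, and there are further overgroups in other maximal subgroups. You also give no workable way to certify non-binarity for each of these large candidates. The missing idea is to push Lemma~\ref{lem:Finf}, which you already invoked, all the way. The closure $F_\infty(g,h)$ keeps adding $h^c$ whenever $g^c$ is already present, and in particular it contains the whole orbit $h^{C_G(g)}$. The paper takes a representative $h$ of each $C_G(g)$-orbit on $\mathrm{2A}$ and computes in \magma{} a subgroup of $F_\infty(g,h)$. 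In every case this subgroup is $G$, which contradicts $H<G$. That argument needs neither the normal structure of $C_G(g)$ nor any enumeration of overgroups. Your iteration over $\mathrm{2B}$-elements is a weaker, non-canonical version of this same closure.
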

\begin{proof}
    Suppose there is some $g\in \mathrm{2B}\cap H$. By~\cite[Propositions 3.1, 3.2, and 3.3]{CDP} and Lemma~\ref{lem:conjconn}, the class $\mathrm{2A}$ is the unique class of maximal 2-fixity. In particular, there exists some $h\in\mathrm{2A}\cap H$ with $\fix(g)\subsetneq \fix(h)$. By Lemma~\ref{lem:Finf}, $H$ contains an orbit of $C_G( g)$ on the $G$-conjugacy class $\mathrm{2A}$; we compute each such orbit in \magma~\cite{magma}. We then use our \magma~ function {\tt{Finf()}}~\cite{comps} for each possible orbit representative $h$---the function {\tt{Finf()}} computes a subgroup of $F_\infty(g,h)\leq H$---in each case the output is $G$, contradicting that $H<G$.
\end{proof}

\begin{prop}\label{prop:Co2no3B}
        The stabiliser $H$ contains no elements of the $G$-conjugacy class $\mathrm{3B}$
\end{prop}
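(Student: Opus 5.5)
The plan mirrors the proof of Proposition~\ref{prop:Co2no2B}. Suppose for a contradiction that some $g\in\mathrm{3B}\cap H$. The group $\mathrm{Co}_2$ has exactly two classes of elements of order $3$, namely $\mathrm{3A}$ and $\mathrm{3B}$, and both are rational. I will use the results of \cite[Propositions 3.1, 3.2, and 3.3]{CDP} on the component groups $\Delta(x,\mathcal{C})$ for these two classes. I expect that $\Gamma(\mathrm{3B})$ is connected, or at least that its component group is not contained in any proper subgroup avoiding $\mathrm{3A}$. Combining this with Lemma~\ref{lem:conjconn} and Lemma~\ref{lem:maxfixdelta} should show that elements of $\mathrm{3B}$ cannot be of maximal $3$-fixity in a faithful binary action, since otherwise $\Delta(g)=\langle \mathrm{3B}\rangle=G$ would lie in $H$. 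Hence Lemma~\ref{lem:conjconn} supplies some $h\in\mathrm{3A}\cap H$ with $\fix(g)\subsetneq\fix(h)$. Here $h$ is a conjugate of an element of order 3 whose component group lies in $H$, and after conjugating we may take it to lie in $H$.

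Next I apply Lemma~\ref{lem:Finf}. Since $\fix(g)\subseteq\fix(h)$, the remark after that lemma gives $\langle h^c\mid c\in N_G(\langle g\rangle)\rangle\le F_\infty(g,h)\le H$. Thus $H$ contains a whole $N_G(\langle g\rangle)$-orbit, and in particular a $C_G(g)$-orbit, on the class $\mathrm{3A}$. I cannot pin down which orbit, so I will compute in \magma\ the orbits of $N_G(\langle g\rangle)$ on $\mathrm{3A}$. We only need orbits of elements $h$ that commute with $g$ or are otherwise compatible with the fixed-point inclusion, but to be safe I will take all orbits. For each orbit representative $h$, I then run {\tt Finf()}, which builds an increasing chain of subgroups inside $F_\infty(g,h)$.

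The expected outcome is that {\tt Finf()} returns $G$ for every representative, contradicting $H<G$. The main obstacle is that for some orbit representatives the iteration may stabilise at a proper subgroup, for example a subgroup inside the $3$-local $3^{1+4}\cn 2^{1+4}.S_5$ or inside $U_6(2)$. If that happens, I would take the resulting subgroup $K\le H$ as a forced subgroup of the stabiliser and argue further in one of two ways. The first option is to enumerate the overgroups of $K$ in $G$ and test each with {\tt NearTITest()}, or with Lemma~\ref{lem:suborb} on a suitable suborbit. The second option is to use Lemma~\ref{lem:normalsub}-style reasoning: $N_G(\langle g\rangle)$ normalises $K$, and one can push $H$ up to contain a larger normal subgroup of a maximal $3$-local, which then leads to generating $G$. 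This stalled case is where I expect the real work to lie, if it arises.
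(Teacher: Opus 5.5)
Your setup matches the paper's. Since $\Gamma(\mathrm{3B})$ is connected by \cite{CDP}, Lemma~\ref{lem:conjconn} gives $h\in\mathrm{3A}\cap H$ with $\fix(g)\subsetneq\fix(h)$. Lemma~\ref{lem:Finf} then forces $H$ to contain a full $N_G(\langle g\rangle)$-orbit $O$ on $\mathrm{3A}$; indeed $\langle O\rangle=F_1(g,h)\le H$. One small correction: no conjugation of $h$ is needed, since $h\in\Delta(h)\le H$ already. Conjugating would in fact destroy the inclusion $\fix(g)\subseteq\fix(h)$ that Lemma~\ref{lem:Finf} requires.

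The gap is in how you close the argument. Your contradiction rests on the expectation that {\tt Finf()} returns $G$ for every orbit representative, and nothing in the proposal establishes this. If it did, your proof would be complete. Your fallback for the stalled case is not a workable argument as stated: enumerating overgroups of a forced subgroup inside $\mathrm{Co}_2$ and running {\tt NearTITest()}, or an unspecified push-up through normal subgroups of $3$-locals.

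The missing idea is to use Proposition~\ref{prop:Co2no2B}, proved just before: $H$ contains no element of $\mathrm{2B}$. It therefore suffices to check that, for each $N_G(\langle g\rangle)$-orbit $O$ on $\mathrm{3A}$, the group $\langle O\rangle$ contains a $\mathrm{2B}$-element, and this is exactly what the paper verifies. This needs only the first step $F_1(g,h)$ of the chain rather than the full iteration. That matters here, because even computing these orbits directly is memory-intensive; the paper resorts to {\tt OrbReps()}. It also makes your feared stalled case harmless: any forced subgroup of $H$ that meets $\mathrm{2B}$ is already a contradiction.
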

\begin{proof}
    Suppose there is some $g\in \mathrm{3B}\cap H$. By~\cite[Proposition 3.1, 3.2, and 3.3]{CDP}, $\Gamma(\mathrm{3B})$ is connected, whence $\mathrm{3A}$ is the unique class of maximal 3-fixity. In particular, there exists some $h\in\mathrm{3A}\cap H$ with $\fix(g)\subsetneq \fix(h)$. By Lemma~\ref{lem:Finf}, $H$ contains an orbit of $N_G(\langle g\rangle)$ on the $G$-conjugacy class $\mathrm{3A}$; we compute each such orbit in \magma~\cite{magma}---this is a memory intensive computation if done directly, we use our function {\tt OrbReps()} to compute orbit representatives in an effort to conserve memory at the cost of time; see~\cite{comps} for details. Each such orbit generates a group containing an element from class $\mathrm{2B}$, contradicting Proposition~\ref{prop:Co2no2B}.
\end{proof}
We can generate a list of all subgroups of $G$ of order $27$, finding that each such subgroup has an element of $G$-conjugacy class $\mathrm{3B}$. Similarly, we generate the list of all subgroups of $G$ of order $9$, and find that the only such subgroup with no $\mathrm{3B}$ is cyclic of order $9$. Consequently we deduce the following.

\begin{cor}\label{cor:Co2cycsyl}
    Let $S$ be a Sylow 3-subgroup of $H$. Then $S$ is cyclic of order at most 9.
\end{cor}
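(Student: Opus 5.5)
The corollary follows from Proposition~\ref{prop:Co2no3B} and the two subgroup computations recorded just before the statement. By Proposition~\ref{prop:Co2no3B}, $H$ contains no element of $\mathrm{3B}$, so neither does any subgroup of $H$. Let $S$ be a Sylow $3$-subgroup of $H$. Then $S$ is a $3$-subgroup of $G=\mathrm{Co}_2$ all of whose non-trivial elements of order $3$ lie in $\mathrm{3A}$.

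First we show $|S|\le 9$. Suppose instead that $|S|\ge 27$. Every $3$-group of order at least $27$ has a subgroup of order exactly $27$ (groups of prime power order have subgroups of every dividing order). Let $T\le S$ be such a subgroup. Then $T$ is a subgroup of $G$ of order $27$. By the computation over all subgroups of $G$ of order $27$, $T$ contains an element of $\mathrm{3B}$. Hence $H$ does too, which contradicts Proposition~\ref{prop:Co2no3B}. So $|S|\le 9$.

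Now we show $S$ is cyclic. If $|S|\le 3$ this is automatic. If $|S|=9$, then $S$ is a subgroup of $G$ of order $9$ with no $\mathrm{3B}$-element. The computation over all order-$9$ subgroups of $G$ says the only such subgroup, up to conjugacy, is cyclic of order $9$. So $S$ is cyclic, with $|S|\in\{1,3,9\}$.

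There is no real obstacle here: the argument is just combining the \magma{} enumerations with Proposition~\ref{prop:Co2no3B}. The one point needing care is the reduction from $|S|\geq 27$ to an order-$27$ subgroup, which is exactly what lets the single order-$27$ computation cover every larger case.
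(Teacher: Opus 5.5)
Your proposal is correct and matches the paper's argument. The paper likewise combines Proposition~\ref{prop:Co2no3B} with the \magma{} enumerations showing that every order-$27$ subgroup of $\mathrm{Co}_2$ meets $\mathrm{3B}$ and that the only $\mathrm{3B}$-free order-$9$ subgroup is cyclic; your remark that a $3$-group of order at least $27$ contains a subgroup of order $27$ simply spells out a step the paper leaves implicit.
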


Examining class multiplication coefficients, we observe that the product of any two $\mathrm{2A}$ involutions in $G$ is in $\mathrm{2B}\cup\mathrm{2C}\cup\mathrm{3B}\cup\mathrm{4C}$.  This gives us our first major reduction in the structure of $H$.
\begin{cor}\label{cor:Co2cent2A}
    The stabiliser $H$ contains no element of $G$-conjugacy class $\mathrm{2C}$, and $|H\cap \mathrm{2A}|\leq 1$.     In particular, either $|H|$ is odd, or $H$ has a unique involution. In the latter case this involution is from class $\mathrm{2A}$ and is central in $H$.
\end{cor}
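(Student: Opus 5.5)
The plan is to combine Proposition~\ref{prop:Co2no2B} with the observation just made about class multiplication coefficients: the product of any two $\mathrm{2A}$ involutions of $G=\mathrm{Co}_2$ lies in $\mathrm{2B}\cup\mathrm{2C}\cup\mathrm{3B}\cup\mathrm{4C}$. The claim that $H$ has at most one $\mathrm{2A}$ element is then immediate. If $x\neq y$ both lie in $H\cap\mathrm{2A}$, then $xy\in H$ is non-trivial and lies in one of these four classes.
\begin{itemize}
\item $xy\in\mathrm{2B}$ contradicts Proposition~\ref{prop:Co2no2B}.
\item $xy\in\mathrm{3B}$ contradicts Proposition~\ref{prop:Co2no3B}.
\item $xy\in\mathrm{4C}$ is excluded because the square of a $\mathrm{4C}$ element lies in $H$. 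From the $\mathbb{ATLAS}$ power maps of $\mathrm{Co}_2$ I expect this square to lie in $\mathrm{2B}$ (or at least not in $\mathrm{2A}$), and if it is in $\mathrm{2B}$ this again contradicts Proposition~\ref{prop:Co2no2B}. This power-map fact must be checked; if it fails, the $\mathrm{4C}$ case needs a separate argument.
\item $xy\in\mathrm{2C}$ is excluded once we have shown that $H$ meets no $\mathrm{2C}$ element.
\end{itemize}
So the whole statement reduces to showing $H\cap\mathrm{2C}=\emptyset$.

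For $\mathrm{2C}$ I would argue exactly as in Proposition~\ref{prop:Co2no2B}. By \cite[Propositions 3.1--3.3]{CDP} and Lemma~\ref{lem:conjconn}, $\mathrm{2A}$ is the unique class of maximal $2$-fixity, and this is the same input already used in Proposition~\ref{prop:Co2no2B}. So if $g\in\mathrm{2C}\cap H$, there is some $h\in\mathrm{2A}\cap H$ with $\fix(g)\subsetneq\fix(h)$. Lemma~\ref{lem:Finf} then forces $F_\infty(g,h)\le H$, and in particular, by the remark after that lemma, $\langle h^c\mid c\in C_G(g)\rangle\le H$.

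The next step is to check that the $C_G(g)$-orbit of $h$ contains two distinct commuting or non-commuting $\mathrm{2A}$ elements. For this I use that $C_G(g)$ is a large $2$-local subgroup ($2^{10}\cn\mathrm{M}_{22}\cn2$), so its orbits on $\mathrm{2A}$ have size greater than $1$ unless $h$ is central in $C_G(g)$. Two distinct $\mathrm{2A}$ elements of $H$ have product in $\mathrm{2B}\cup\mathrm{2C}\cup\mathrm{3B}\cup\mathrm{4C}$, but this alone does not finish the argument, since $\mathrm{2C}$ is still possible. So instead I would compute directly, as in Proposition~\ref{prop:Co2no2B}, running {\tt Finf()} on each $C_G(g)$-orbit representative $h\in\mathrm{2A}$ and checking that the generated subgroup contains a $\mathrm{2B}$ element, or is all of $G$. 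Either outcome is a contradiction. This computation is the main obstacle; if it proves heavy, I would first try to show that the orbit already generates a group containing $\mathrm{2B}$, which is a cheap random check.

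For the final assertions, take $S$ a Sylow $2$-subgroup of $H$. All its involutions lie in $\mathrm{2A}$, since $\mathrm{2B}$ and $\mathrm{2C}$ are excluded, and by the above there is at most one of them. Hence either $|H|$ is odd, or $H$ has a unique involution $t\in\mathrm{2A}$. A unique involution is fixed by every conjugation in $H$, so $t$ is central in $H$.
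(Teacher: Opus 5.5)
There is a genuine gap. Your reduction puts all the weight on $H\cap\mathrm{2C}=\emptyset$, and the computation you propose for that step cannot succeed.

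Take $x,y\in\mathrm{2A}$ with $g:=xy\in\mathrm{2C}$. Such four-groups exist; they are exactly the configuration the paper has to treat. In the monomial model, with $\mathrm{Co}_2$ the stabiliser of $(4,4,0^{22})$, take $x=\varepsilon_{\Omega\setminus O_1}$ and $y=\varepsilon_{\Omega\setminus O_2}$ for octads with $O_1\cap O_2=\{1,2\}$. Then $g$ is the sign change on the dodecad $O_1+O_2$.

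Here $C_G(g)=2^{10}\cn\mathrm{Aut}(A_6)$ is monomial. Also $O_1,O_2$ are the only octads through $\{1,2\}$ meeting $O_1+O_2$ in six points, so $x^{C_G(g)}=\{x,y\}$. Hence $F_1(g,x)=\langle x,y\rangle$. Since $g$ is the only $\mathrm{2C}$ element of $\langle x,y\rangle$, the chain stabilises and $F_\infty(g,x)=\langle x,y\rangle\cong 2^2$.

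This group has no $\mathrm{2B}$ or $\mathrm{3B}$ element and is not $G$. So neither {\tt Finf()} nor your fallback random check can rule out $H\geq\langle x,y\rangle$. (Incidentally, $2^{10}\cn\mathrm{M}_{22}\cn 2$ has trivial centre, so it is not $C_G(g)$.)

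What is missing is a direct treatment of $K=\langle x,y\rangle$, which is why the paper argues in the opposite order to you:
\begin{itemize}
\item If $x\neq y$ lie in $H\cap\mathrm{2A}$, your three exclusions force $xy\in\mathrm{2C}$, so the $\mathrm{2A}$ elements of $H$ pairwise commute.
\item A computation shows that no $z\in\mathrm{2A}\cap C_G(x,y)$ has $xz,yz\in\mathrm{2C}$. Hence $H\cap\mathrm{2A}=\{x,y\}$ and $H\leq N_G(K)$.
\item The overgroups of $K$ in $N_G(K)$ avoiding $\mathrm{2B}\cup\mathrm{3B}$ form six classes, of order at most $40$. These are eliminated by {\tt NearTITest()}, giving $|H\cap\mathrm{2A}|\leq 1$.
\end{itemize}
Only at this point does your orbit idea close the argument. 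For $g\in H\cap\mathrm{2C}$, Lemma~\ref{lem:Finf} puts a whole $C_G(g)$-orbit on $\mathrm{2A}$ into $H$. That orbit has length at least $2$ because $C_G(g)$ has no central $\mathrm{2A}$ involution, a contradiction.

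Your handling of the $\mathrm{2B}$, $\mathrm{3B}$ and $\mathrm{4C}$ cases is fine; the paper confirms that $\mathrm{4C}$ powers to $\mathrm{2B}$. Your deduction of the final assertions is also fine.
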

\begin{proof}
    Suppose $x,y\in\mathrm{2A}\cap H$ are distinct. Then $xy\not\in \mathrm{2B}$ by Proposition~\ref{prop:Co2no2B} and $xy\not\in \mathrm{3B}$ by Proposition~\ref{prop:Co2no3B}. Moreover, the $G$-conjugacy class $\mathrm{4C}$ powers to $\mathrm{2B}$~\cite{atlas}, and thus $xy\not\in \mathrm{4C}$. Consequently $xy\in\mathrm{2C}$, and so every pair of $\mathrm{2A}$ elements in $H$ commute. Next, we compute using \magma~\cite{magma} that there is no $z\in \mathrm{2A}\cap C_G(x,y)$ such that $\{xz,yz\}\subseteq \mathrm{2C}$ and hence $H$ has at most two elements of class $2\mathrm{A}$.

    Now, $H$ necessarily normalises $K:=\langle x,y\rangle$. We compute in \magma~the normaliser $N_G(K)$, and generate a list of all subgroups $S$ with $K\leq S\leq N_G(K)$ which contain no $\mathrm{2B}$ or $\mathrm{3B}$ using our program {\tt FilteredSubgroups()}. There are a total of six (conjugacy classes of) such subgroups, the largest of which have order 40. We test these directly with {\tt{NearTITest()}} finding that none yields a binary action. Consequently, $|H\cap \mathrm{2A}|\leq 1$.

    Finally, if there is some $ z\in\mathrm{2C}\cap H$, then by Lemma~\ref{lem:Finf}, $H$ contains an orbit of $C_G(z)$ on the class $\mathrm{2A}$. Since $C_G(z)$ has no central $\mathrm{2A}$-involution (as can be verified in \magma), each such orbit has length greater than one, contradicting that $|H\cap \mathrm{2A}|\leq 1$. Therefore, there is no such $z$, as desired.
    \end{proof}

\begin{prop}\label{prop:Co2bigorderreduction}
    If $2\mid|H|$ then $|H|\mid 2^{4}\cdot3$.
\end{prop}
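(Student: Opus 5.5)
By Corollary~\ref{cor:Co2cent2A}, if $2\mid |H|$ then $H$ has a unique involution $x$. This involution lies in $\mathrm{2A}$ and is central in $H$, so $H\leq C_G(x)$. In $\mathrm{Co}_2$ the centraliser of a $\mathrm{2A}$-involution is the maximal subgroup $2^{1+8}_+\cn\mathrm{Sp}_6(2)$. So the plan is to work entirely inside $C:=C_G(x)=Q\cn K$, where $Q=2^{1+8}_+$ and $K\cong\mathrm{Sp}_6(2)$, and to bound the $2$-part and the $2'$-part of $|H|$ separately.

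\textbf{The $2$-part.} A Sylow $2$-subgroup $P$ of $H$ is a $2$-group with a unique involution, so it is cyclic or generalised quaternion. Its order is bounded by the exponent and quaternion structure available in $C$. Any element of order $4$ in $P$ squares to $x\in\mathrm{2A}$, so only classes of elements of order $4$ powering to $\mathrm{2A}$ may occur. Elements of order $8$ or $16$ similarly have their involution power equal to $x$, but their other powers must avoid $\mathrm{4C}$ (which powers to $\mathrm{2B}$) and any forbidden classes. I would check from the $\mathbb{ATLAS}$ power maps which elements of $C$ of $2$-power order have all involution powers in $\mathrm{2A}$. I would then enumerate in \magma\ the cyclic and generalised quaternion subgroups of a Sylow $2$-subgroup of $C$ containing $x$ and meeting no $\mathrm{2B}$ or $\mathrm{2C}$ elements. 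I expect this to give $|P|\le 2^4$. If larger candidates survive, I would test them directly with {\tt NearTITest()}, or apply Lemma~\ref{lem:sub} to the overgroups involved.

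\textbf{The $2'$-part.} By Corollary~\ref{cor:Co2cycsyl}, the Sylow $3$-subgroup of $H$ is cyclic of order at most $9$, and every element of order $3$ in $H$ lies in $\mathrm{3A}$. For a prime $p\geq 5$ dividing $|H|$ we have $p\in\{5,7\}$, since $|C|=2^{18}\cdot3^4\cdot5\cdot7$. I would rule these out as follows. Let $y\in H$ have order $p$. Lemma~\ref{lem:SZappl} lets us conjugate $y$ into the complement $K$. Then $y$ acts fixed-point-freely on $Q/\langle x\rangle$, since $C_Q(y)$ is small. The group $\langle x,y\rangle\le H$ then forces, via Lemma~\ref{lem:Finf} (taking $g=y$ and $h=x$, with $\fix(y)\subseteq\fix(x)$ because $x$ is the unique involution of $H$ commuting with $y$), that $H$ contains the conjugates of $x$ under $N_G(\langle y\rangle)$. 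This contradicts $|H\cap\mathrm{2A}|\le1$ unless $N_G(\langle y\rangle)\le C_G(x)$, which I would check fails from the structure of $N_G(\langle y\rangle)$. The same argument should exclude $9$, leaving $3$-part at most $3$. Any residual small candidates, for instance $C_9\times 2$ or a group $2\cdot D$ with odd quotient, would be listed in \magma\ as subgroups of $C$ containing $x$ with unique involution $x$ and no $\mathrm{2B}$, $\mathrm{2C}$ or $\mathrm{3B}$ elements, and each would be tested with {\tt NearTITest()}.

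\textbf{Main obstacle.} I expect the hardest part to be the exclusion of the primes $5,7$ and of the case $3^2$. Here the structural $F_\infty$ argument must really produce a second $\mathrm{2A}$-element, and this depends on $N_G(\langle y\rangle)$ moving $x$. If that fails, I would fall back on enumerating subgroups of $C$ of order $2p$ or $18$ with unique involution $x$, up to conjugacy, and killing each one with {\tt NearTITest()}.
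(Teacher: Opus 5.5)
\textbf{There is a genuine gap in your treatment of the $2'$-part.} Your use of Lemma~\ref{lem:Finf} with $g=y$ and $h=x$ requires $\mathrm{fix}(y)\subseteq\mathrm{fix}(x)$. The fact that $x$ is the unique involution of $H$ does not give this. If $y\in H^c$, then the unique involution of $H^c$ is $x^c$, which commutes with $y$ but need not equal $x$.

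In fact the hypothesis cannot hold in the situation you want to refute. Write $H=G_\alpha$. Every $c\in N_G(\langle y\rangle)$ satisfies $y^{c^{-1}}\in\langle y\rangle\leq H$, so $\alpha^c\in\mathrm{fix}(y)$. But $\alpha^c\in\mathrm{fix}(x)$ means $x^{c^{-1}}\in H$, i.e.\ $x^{c^{-1}}=x$, i.e.\ $c\in C_G(x)$. So $\mathrm{fix}(y)\subseteq\mathrm{fix}(x)$ already forces $N_G(\langle y\rangle)\leq C_G(x)$. Checking that $N_G(\langle y\rangle)$ moves $x$ therefore only shows that Lemma~\ref{lem:Finf} does not apply; it gives no contradiction.

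None of the available tools links fixed-point sets of elements of different prime orders. Lemma~\ref{lem:conjconn} only compares elements of the same order $p$. So the same objection defeats your proposed exclusion of $3^2$.

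\textbf{How the paper handles these primes.} The prime $5$ can be removed structurally, but at the prime $5$ itself. Every element of order $5$ in $C$ lies in $\mathrm{5B}$. Any $h$ of order $5$ with $\mathrm{fix}(g)\subsetneq\mathrm{fix}(h)$ also lies in $H\leq C$, hence is also in $\mathrm{5B}$. So Lemma~\ref{lem:conjconn} gives $\mathrm{HS}=\Delta(\cdot)\leq H$, which is impossible.

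For $7$ and for $3^2$ the paper has no structural shortcut. It enumerates \emph{all} subgroups of $C$ of order divisible by $14$ that avoid $\mathrm{2B}\cup\mathrm{2C}\cup\mathrm{3B}$; there are three, of orders $14$, $28$ and $56$. It also enumerates all subgroups of order divisible by $18$ but not by $27$ that contain an element of order $9$; these have orders $18$ and $72$. Each is eliminated with {\tt NearTITest()}.

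Your fallback of testing only subgroups of order exactly $2p$ or $18$ would not be enough. Non-binarity of $(G:L)$ for some $L<H$ says nothing about $(G:H)$. Lemma~\ref{lem:sub} only passes binarity from $G$ to an intermediate group $M$ with $H<M<G$ acting on $(M:H)$. So every candidate $H$, such as those of orders $28$, $56$ and $72$, must be tested on its own.

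Your bound on the $2$-part is fine: it amounts to the paper's check that every subgroup of $C$ of order $32$ meets $\mathrm{2B}\cup\mathrm{2C}$.
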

\begin{proof}
    Assuming $2\mid |H|$, Corollary~\ref{cor:Co2cent2A} implies that $H$ has a central involution of class $\mathrm{2A}$. We deduce that $H<M$, where $M$ is the centraliser of a $\mathrm{2A}$ involution. Thus, $|H|\mid 2^{18}\cdot 3^2\cdot5\cdot7$ by Corollary~\ref{cor:Co2cycsyl}. A Sylow 5-subgroup of $M$ is generated by an element of the $G$-conjugacy class $\mathrm{5B}$, whence by Lemma~\ref{lem:conjconn}, if there is some $g\in H\cap \mathrm{5B}$ then $\Delta(g)\leq H$. But $\Delta(g)=\mathrm{HS}$ by~\cite[Proposition 3.3]{CDP}, contradicting that $|H|\mid 2^{18}\cdot 3^2\cdot5\cdot7$. Therefore, $H$ has no element of order $5$.

    Next, we eliminate the possibility of an element of order 7. We again use {\tt{FilteredSubgroups()}}, to generate a list of all subgroups of $M$ of order divisible by 14 which intersect $\mathrm{2B}\cup\mathrm{2C}\cup\mathrm{3B}$ trivially. We are left with exactly three (conjugacy classes of) such subgroups, of orders 14, 28, and 56. By running {\tt{NearTITest()}} on each we find that none yields a binary action, whence $7\nmid |H|$.

    Now, we generate a list of all subgroups of $M$ of order $32$, finding that each such subgroup has an element of $G$-conjugacy class one of $\mathrm{2B},$ or $\mathrm{2C},$ and so $|H|\mid 2^4\cdot 3^2$. It remains now only to exclude the possibility $3^2\mid |H|$.   By Corollary~\ref{cor:Co2cycsyl}, a Sylow 3-subgroup of $H$ is cyclic, so we use {\tt{FilteredSubgroups()}} to find all subgroups of $M$ of order divisible by 18 but not by 27 which have an element of order 9. We are left with one group of order 18, and one of order 72. Running {\tt{NearTITest()}} on both groups completes the proof.
\end{proof}
\begin{prop}\label{prop:co2oddred}
    Either $H$ is cyclic generated by an element of class $\mathrm{2A}$, or $|H|$ is odd.
\end{prop}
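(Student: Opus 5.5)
We argue by cases, assuming $|H|$ is even and $H\neq\langle t\rangle$. By Corollary~\ref{cor:Co2cent2A}, $H$ has a unique involution $t$. This involution lies in $\mathrm{2A}$ and is central in $H$. Proposition~\ref{prop:Co2bigorderreduction} then gives $|H|\mid 2^4\cdot 3$.

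A $2$-group with a unique involution is cyclic or generalised quaternion. So a Sylow $2$-subgroup $P$ of $H$ is cyclic of order $2,4,8,16$ or generalised quaternion of order $8$ or $16$. Moreover $H\leq M:=C_G(t)\cong 2^{1+8}_+\cn\mathrm{Sp}_6(2)$. Every element of order $4$ in $H$ squares to $t\in\mathrm{2A}$. Hence we first find, from the \magma\ power maps of the $\mathbb{ATLAS}$, which classes of elements of order $4$ square into $\mathrm{2A}$. We then discard any of these classes whose powers or relevant products with $t$ fall into $\mathrm{2B}\cup\mathrm{2C}\cup\mathrm{3B}$.

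We then split according to whether $3$ divides $|H|$.

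\emph{Case $3\mid|H|$.} A $3$-element $s$ of $H$ lies in $\mathrm{3A}$ by Proposition~\ref{prop:Co2no3B}. Since $t$ is central, $s$ commutes with $t$. The possibilities for $H$ are then $\langle t,s\rangle\cong C_6$, or an extension of this by a cyclic or quaternion $2$-group: $C_{12}$, $C_{24}$, $C_{48}$, $C_3\rtimes C_4$, $C_3\rtimes Q_8$, $\mathrm{SL}_2(3)$, $\mathrm{SL}_2(3).2$ type groups, and so on. For these we run {\tt FilteredSubgroups()} in $M$. We list all subgroups of $M$ of order dividing $48$ and divisible by $6$ with a unique involution, in class $\mathrm{2A}$, and with no $\mathrm{3B}$ elements. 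Each survivor is then tested with {\tt NearTITest()}.

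\emph{Case $|H|$ a $2$-power.} Here the analogous filtered list consists of the cyclic and quaternion subgroups of $M$ of order $4,8,16$ whose involution is $\mathrm{2A}$. We again test these with {\tt NearTITest()}. The subgroups are tiny, so random searches for witnesses to Lemma~\ref{lem:efficienttest} should succeed quickly. Note that $H\cap H^g=1$ for most conjugates, so the hypothesis $H\cap H_2=H\cap H_3=1$ is easy to satisfy.

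The only candidate that can survive is $\langle t\rangle$ itself. This case is genuinely binary by Proposition~\ref{prop:allbins}, giving the dichotomy in the statement.

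I expect the main obstacle to be purely computational: enumerating the subgroups of $M$ of order dividing $48$ efficiently, since $M$ has order $2^{18}\cdot|\mathrm{Sp}_6(2)|$. To avoid a full subgroup lattice I would first try a lighter route. Before enumeration, I would try to show theoretically, via Lemma~\ref{lem:Finf}, that any element $x\in H$ of order $4$ forces $F_\infty(x,t)$ to contain further $\mathrm{2A}$ involutions. That would contradict $|H\cap\mathrm{2A}|\le 1$ and kill all $P\neq\langle t\rangle$ at once. Concretely, whenever $\fix(x)\subseteq\fix(t)$, which holds because $\langle x\rangle\ni t$, Lemma~\ref{lem:Finf} gives $\langle t^c\mid c\in N_G(\langle x\rangle)\rangle\leq H$; this is trivially just $t$, so it yields nothing. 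The argument instead needs an element of $H$ with fixed points contained in those of some $\mathrm{2A}$ element different from $t$. Failing that, I fall back on the brute-force enumeration of the order-$4$ elements, and of the order-$3$ elements in $\mathrm{3A}$ centralising $t$, up to $M$-conjugacy. I then build the possible $H$ by hand from these and test each with {\tt NearTITest()}.
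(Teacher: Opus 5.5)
Your proposal is correct and is essentially the paper's proof: use Corollary~\ref{cor:Co2cent2A} and Proposition~\ref{prop:Co2bigorderreduction} to get $H\le M=C_G(t)$ with $|H|$ dividing $2^4\cdot 3$, then list the subgroups of $M$ of even order $4,6,8,12,16,24,48$ with no element of $\mathrm{2B}\cup\mathrm{2C}\cup\mathrm{3B}$, and rule each out with {\tt NearTITest()}. As you noticed yourself, the Lemma~\ref{lem:Finf} shortcut gives nothing and is not needed; also, a small slip: $|M|=2^9\cdot|\mathrm{Sp}_6(2)|=2^{18}\cdot 3^4\cdot 5\cdot 7$, not $2^{18}\cdot|\mathrm{Sp}_6(2)|$.
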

\begin{proof}
    Suppose that $|H|$ is even. By Corollary~\ref{cor:Co2cent2A} and Proposition~\ref{prop:Co2bigorderreduction}, $H<M$ where $M$ is the centraliser of a $\mathrm{2A}$-involution, and $|H|\mid 2^4\cdot 3$. We generate the list of all subgroups of $M$ of order $n\in\{4,8,16,6,12,24,48\}$ using the built-in \magma~\cite{magma} function {\nolinebreak\tt{Subgroups( M : OrderEqual := n )}}, filtering out those with an element of $\mathrm{2B}\cup\mathrm{2C}\cup\mathrm{3B}$. Finally, we run {\tt{NearTITest()}} on each of the remaining groups, finding that none yield binary actions, hence the result.
\end{proof}

\begin{prop}\label{prop:co2no1123}
    If $|H|$ is odd, then $|H|\mid 3^6\cdot5^3\cdot 7$
\end{prop}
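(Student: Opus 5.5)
The order of $\mathrm{Co}_2$ is $2^{18}\cdot3^6\cdot5^3\cdot7\cdot11\cdot23$. Since $|H|$ is odd, it suffices to show that $11\nmid|H|$ and $23\nmid|H|$; the remaining divisibility $|H|\mid 3^6\cdot5^3\cdot7$ is then automatic. Corollary~\ref{cor:Co2cycsyl} would even give $3^2$ in place of $3^6$, but the weaker bound is all that is claimed. The plan is to handle each of the two primes $p\in\{11,23\}$ in turn, using the fact that $\mathrm{Co}_2$ has a single class of elements of order $11$ and two (algebraically conjugate) classes $\mathrm{23A},\mathrm{23B}$ forming one rational class of order $23$.

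\textbf{Step 1: cyclic stabilisers.} First I would dispose of $|H|=p$ via Lemma~\ref{lem:cycbintest}. Using the \gap~Character Table Library, compute $n_G(\mathrm{11A})$ and $\sum_{i,j}n_G(\mathcal{C}_i,\mathcal{C}_j,\mathrm{23A})$ over $\mathcal{C}_i,\mathcal{C}_j\in\{\mathrm{23A},\mathrm{23B}\}$. I expect these to be far larger than $p-2$, since $\mathrm{Co}_2$ is large relative to these centralisers. That shows $H\neq\langle g\rangle$ for any $g$ of order $p$.

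\textbf{Step 2: exclude $p\mid |H|$ in general.} Suppose $g\in H$ has order $p$. By Lemma~\ref{lem:conjconn} we get either $\Delta(g)\le H$ or an element $h$ of order $p$ with strictly larger fixed-point set and $\Delta(h)\le H$. Since there is a unique rational class of elements of order $p$, we may argue directly with the component groups from~\cite[Proposition~3.3]{CDP}. If $\Gamma$ is connected, then $\Delta=G$, contradicting $H<G$. If $\Gamma$ is disconnected, I expect the Sylow $p$-subgroup to be self-centralising of order $p$, so $\Gamma$ has no edges and $\Delta(g)=\langle g\rangle$, which gives no restriction. In that case I would fall back on structure. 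The odd-order group $H$ is solvable, and the subgroups of $\mathrm{Co}_2$ containing elements of order $11$ or $23$ with odd order are very limited. The normalisers are $N_G(\langle g\rangle)\cong 23\cn11$ and $11\cn5$ (inside $\mathrm{M}_{23}$ and $\mathrm{HS}$ respectively). So the only odd-order candidates are essentially $23$, $23\cn11$, $11$, and $11\cn5$. I would enumerate these with {\tt FilteredSubgroups()}, restricting to odd-order subgroups of order divisible by $11$ or $23$, and then run {\tt NearTITest()} on each.

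\textbf{Main obstacle.} The Frobenius groups $23\cn11$ and $11\cn5$ are TI with many trivially-intersecting conjugates, so {\tt NearTITest()} should find witnesses quickly. Alternatively, the action of $H$ on a suborbit of size $|H|$, together with Lemma~\ref{lem:suborb} and Lemma~\ref{lem:frob}, should already kill them. The harder part is justifying that the candidate list is complete without enumerating all subgroups of $\mathrm{Co}_2$. For this I would argue that an odd-order solvable $H$ with $p\mid|H|$ has a minimal normal subgroup $E$, and split into two cases. If $E$ is a $p$-group, then $H\le N_G(E)$, which is one of the small normalisers above. Otherwise, a Hall-type argument places the Sylow $p$-subgroup normalising a $q$-group for $q\in\{3,5,7\}$. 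Such a configuration is ruled out by checking that no element of order $p$ normalises a nontrivial $\{3,5,7\}$-subgroup, which is a quick check from the orders of centralisers and normalisers in the $\mathbb{ATLAS}$. I expect this last verification to be the delicate step, and I would confirm it computationally if needed.
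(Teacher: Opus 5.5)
Your route differs from the paper's, but the completeness step you flagged as delicate does not go through as you justify it. Since $C_G(x)=\langle x\rangle$ for $x$ of order $p\in\{11,23\}$, an element $x$ normalising a minimal normal subgroup $E\cong q^k$ of $H$ with $q\neq p$ acts fixed-point-freely on $E$. Centraliser and normaliser orders give you nothing beyond $q^k\equiv 1\pmod p$.

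\begin{itemize}
\item For $p=23$ this does exclude every $q\in\{3,5,7,11\}$ within the bounds $3^6,5^3,7,11$.
\item For $p=11$ it does not. Since $3^5=243\equiv 1\pmod{11}$, the possibility $E\cong 3^5$ (so $3^5{:}11\le H$) survives the arithmetic.
\item Also for $p=11$, since $23\equiv 1\pmod{11}$, you must allow $q=23$. This case is harmless, since then $H\le N_G(E)=23{:}11$.
\end{itemize}

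Ruling out $3^5{:}11$ needs genuine information about subgroups of $\mathrm{Co}_2$. For instance, $3^5\cdot 11$ divides the order of no maximal subgroup other than $\mathrm{U}_6(2){:}2$ and $\mathrm{McL}$, and you would then need a further descent or a computation inside those. Alternatively, you would need the full subgroup enumeration you were hoping to avoid.

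That is essentially what the paper does, and it avoids solvability altogether. It places $H$ in a maximal subgroup of order divisible by $23$ (only $\mathrm{M}_{23}$) or by $11$. It passes to the odd-order part, using Lemma~\ref{lem:SZappl} for $2^{10}{:}\mathrm{M}_{22}{:}2$. It then uses Lemma~\ref{lem:sub} with Propositions~\ref{prop:1stclas} and~\ref{prop:McLclass} to discard $\mathrm{M}_{23}$, $\mathrm{M}_{22}$, $\mathrm{HS}$ and $\mathrm{McL}$. This leaves $\mathrm{U}_6(2)$, whose only odd-order subgroups of order divisible by $11$ have orders $11$ and $55$; both are killed by {\tt NearTITest()}. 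The paper's route gains by recycling the classification already proved for smaller sporadic groups instead of re-deriving subgroup structure.

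Once the candidate list $23$, $23{:}11$, $11$, $11{:}5$ is secured, the rest of your plan works, but several statements need correcting.

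\begin{itemize}
\item The normaliser of an element of order $11$ is $11{:}10$, because the class $\mathrm{11A}$ is rational; only its odd part is $11{:}5$.
\item Neither $23{:}11$ nor $11{:}5$ is TI. Each subgroup of order $11$ lies in exactly $23\cdot 110/253=10$ conjugates of $23{:}11$. For $H=11{:}5$ and $y\in H$ of order $5$, any $t\in C_G(y)\setminus N_G(H)$ gives $H\cap H^t=\langle y\rangle$.
\item These intersections are exactly what feeds Lemmas~\ref{lem:frob} and~\ref{lem:suborb}. The relevant suborbit is one whose point stabiliser in $H$ is the Frobenius complement, of length $23$ (respectively $11$). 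A suborbit of length $|H|$ is the wrong one: there $H$ acts regularly, hence binarily.
\end{itemize}
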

\begin{proof}
Suppose that $23\mid |H|$. Examining the maximal subgroups of $G$, as determined in~\cite{W83}, we see that only $\mathrm{M}_{23}$ has order divisible by 23, whence $H\leq \mathrm{M}_{23}$. Moreover, by Lemma~\ref{lem:sub}, the action of $\mathrm{M}_{23}$ on $(\mathrm{M}_{23}:H)$ is binary, a contradiction to Proposition~\ref{prop:1stclas}. 

Suppose now that $11\mid |H|$. From the list of maximal subgroups we deduce that $H$ is contained in one of $\mathrm{U}_6(2)\cn2$, $2^{10}\cn \mathrm{M}_{22}\cn 2$, $\mathrm{McL}$, $\mathrm{HS}\cn 2$, or $\mathrm{M}_{23}$. In fact, this can be refined: since $|H|$ is odd, $H$ is contained in some $S\in\{\mathrm{U}_6(2), \mathrm{M}_{22}, \mathrm{McL}, \mathrm{HS},\mathrm{M}_{23}\}$ (using Lemma~\ref{lem:SZappl} for $\mathrm{M}_{22}$). By Lemma~\ref{lem:sub}, the action of $S$ on $(S:H)$ is binary, and so $S=\mathrm{U}_{6}(2)$ by Propositions~\ref{prop:1stclas} and~\ref{prop:McLclass}. We run {\tt{FilteredSubgroups()}} to generate the list of all subgroups of $\mathrm{U}_6(2)$ which have order divisible by 11, finding that there are exactly two such groups of odd order: one of order 11, and one of order 55. Running {\tt{NearTITest()}}, we see that neither subgroup yields a binary action of $S$, a contradiction, hence the result.
\end{proof}

\begin{prop}\label{prop:co2class}
    Either $H=1$ or $H$ is generated by an element from the $G$-conjugacy class $\mathrm{2A}$.
\end{prop}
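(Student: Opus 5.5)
By Proposition~\ref{prop:co2oddred}, either $H=\langle g\rangle$ with $g\in\mathrm{2A}$, in which case we are done, or $|H|$ is odd. So assume $|H|$ is odd and $H\neq 1$. By Proposition~\ref{prop:co2no1123}, $|H|$ divides $3^6\cdot5^3\cdot7$. By Corollary~\ref{cor:Co2cycsyl}, a Sylow $3$-subgroup of $H$ is cyclic of order at most $9$, so in fact $|H|$ divides $3^2\cdot 5^3\cdot 7$. Moreover, by Proposition~\ref{prop:Co2no3B}, every element of order $3$ in $H$ lies in $\mathrm{3A}$.

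The plan is to remove the primes $5$ and $7$ first, and then deal with $3$-groups.

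\textbf{Eliminating $5$.} The group $\mathrm{Co}_2$ has classes $\mathrm{5A}$ and $\mathrm{5B}$. In the proof of Proposition~\ref{prop:Co2bigorderreduction} we saw that $\Delta(g)=\mathrm{HS}$ for $g\in\mathrm{5B}$. I will use~\cite[Proposition 3.3]{CDP} to check the component group of $\mathrm{5A}$ as well: I expect it to be either connected or to generate a subgroup of even order. Suppose $H$ contains an element of order $5$. By Lemma~\ref{lem:conjconn}, $H$ then contains $\Delta(h)$ for some element $h$ of order $5$ and maximal $5$-fixity. That component group has even order (or equals $G$), which contradicts $|H|$ being odd. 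Hence $5\nmid|H|$.

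\textbf{Eliminating $7$.} There is a unique class $\mathrm{7A}$ of elements of order $7$. If $\Gamma(\mathrm{7A})$ has a component group of even order, the same argument via Lemma~\ref{lem:conjconn} applies. Otherwise, I will argue as follows.

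1. Compute $n_G(\mathrm{7A})$ from the character table and check that it exceeds $5$. By Lemma~\ref{lem:cycbintest}, this rules out $|H|=7$.
2. The remaining odd orders are $21$ and $63$. I will list the subgroups of $G$ with order in $\{21,63\}$ whose elements of order $3$ all lie in $\mathrm{3A}$, using {\tt FilteredSubgroups()}.
3. Run {\tt NearTITest()} on each subgroup in this list.

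\textbf{The $3$-group case.} Now $H$ is cyclic of order $3$ or $9$, and its elements of order $3$ lie in $\mathrm{3A}$.

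1. For $|H|=3$, a computation of $\sum n_G(\mathrm{3A},\mathrm{3A},\mathrm{3A})$ over the rational class should exceed $1$. Lemma~\ref{lem:cycbintest} then excludes it.
2. For $|H|=9$, apply {\tt NearTITest()} to the unique class of cyclic subgroups of order $9$ with no $\mathrm{3B}$ elements, identified before Corollary~\ref{cor:Co2cycsyl}. Alternatively, apply Lemma~\ref{lem:suborb} to a suborbit.

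\textbf{Expected obstacle.} The main difficulty should be the prime $7$, in case $\Gamma(\mathrm{7A})$ is connected but $\langle \mathrm{7A}\rangle$ gives no immediate contradiction. I would first try to show via~\cite{CDP} that the component group is all of $G$, which would give the contradiction at once. The brute-force subgroup enumeration is the fallback.
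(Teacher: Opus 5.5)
\textbf{The gap is your elimination of the prime $5$.} You expect the component group of $\mathrm{5A}$ to be connected or of even order. This is false in $\mathrm{Co}_2$.

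A Sylow $5$-subgroup is $P\cong 5^{1+2}_+$. Since $N_G(\mathrm{5A})=5^{1+2}_+{:}4S_4$, the centre $Z(P)$ is generated by a $\mathrm{5A}$-element. Order considerations give $C_G(Z(P))=P{:}\mathrm{SL}_2(3)$, since no element of order $2$ or $3$ centralises $P$, and $\mathrm{SL}_2(3)\le \mathrm{Sp}_2(5)$. This $\mathrm{SL}_2(3)$ has no elements of order $5$, so it acts regularly on the $24$ non-zero vectors of $P/Z(P)$. Each coset $xZ(P)$ with $x\notin Z(P)$ is a single $P$-class. Hence the $120$ non-central elements of $P$ form one $G$-class, which must be $\mathrm{5B}$.

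It follows that two commuting $\mathrm{5A}$-elements lie in a common Sylow $5$-subgroup, both in its centre, so they generate the same cyclic group. Thus $\Gamma(\mathrm{5A})$ is a disjoint union of the $4$-cliques $\langle x\rangle\setminus\{1\}$, and $\Delta(x,\mathrm{5A})=\langle x\rangle$ has odd order $5$. Lemma~\ref{lem:conjconn} therefore gives no contradiction when $H$ contains a $\mathrm{5A}$-element, possibly together with $\mathrm{5B}$-elements of smaller fixity. Your proposal has no fallback for $5$; the fallback you describe is only for $7$.

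\textbf{Consequence and repair.} Your case list $|H|\in\{3,9,7,21,63\}$ omits $H=\langle x\rangle$ with $x\in\mathrm{5A}$. It also omits every odd-order subgroup of order divisible by $15$ or $25$, for instance $P$ itself.

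The case $H=\langle x\rangle$ is easy: $n_G(\mathrm{5A})\ge 1216>3$, so Lemma~\ref{lem:cycbintest} excludes it. The remaining cases need the computational sweep the paper performs. The paper first kills all prime orders using the class multiplication coefficients of $\mathrm{3A},\mathrm{3B},\mathrm{5A},\mathrm{5B},\mathrm{7A}$. It then uses {\tt FilteredSubgroups()} to list every odd-order subgroup whose order is divisible by $9$, $15$, $21$, $25$ or $35$, and eliminates all $33$ classes with {\tt NearTITest()}.

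Your extra restrictions can shrink that search: cyclic Sylow $3$-subgroup of order at most $9$, and all elements of order $3$ in $\mathrm{3A}$. Your handling of the primes $3$ and $7$ is otherwise consistent with the paper.
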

\begin{proof}
    By Propositions~\ref{prop:co2oddred} and~\ref{prop:co2no1123} we may assume that $|H|\mid 3^6\cdot 5^3\cdot 7$. We first observe that $H$ is not cyclic of prime order. Indeed, we use the \gap~Character Table Library~\cite{CTblLib1.3.11} to compute that $n_G(\mathcal{C})\geq1216$ for each $\mathcal{C}\in\{\mathrm{3A,3B,5A,5B,7A\}}$, so the claim follows from Lemma~\ref{lem:cycbintest}. Therefore, $|H|$ is divisible by one of $9$, $15$, $21$, $25$ or $35$.

    We now run {\tt{FilteredSubgroups()}} to generate the list of all subgroups of $G$ whose order is odd and divisible by one of $9$, $15$, $21$, $25$ or $35$. We are left with thirty-three conjugacy classes of such subgroups---the largest has order $1215$. We use {\tt{NearTITest()}} on each candidate, which shows that none of the thirty-three yield genuine binary actions, hence the result.
\end{proof}
\subsection{$\mathrm{Fi}_{22}$}
This section will involve our most intensive computations. It is recommended that the interested reader follow along with the supplementary document~\cite{comps} for insight in how the described computations are executed. 

We consider the quasisimple groups $G$ with $G/Z(G)=\mathrm{Fi}_{22}$. By Proposition~\ref{prop:binarydiv}, if  $Z(G)=1$ then $H$ has no elements of order 3 or 5. Moreover, the same holds when $|Z(G)|=3$ by Lemma~\ref{lem:conn3cov} and~\cite[Table 8]{CDP}. Finally, we have the same situation when $2\mid |Z(G)|$ by~\cite[Lemma 2.12]{CDP}.

Additionally, we compute using the \gap~Character Table Library~\cite{CTblLib1.3.11} that $n_G(\mathcal{C})>11$ for each class $\mathcal{C}$ of elements of order either $7$, $11$, or $13$. Consequently, since no maximal subgroup of $G$ has order divisible by $11\cdot 13$, we deduce the following, using Lemma~\ref{lem:cycbintest}.
\begin{prop}
    Either $H$ is a $2$-group or $|H|$ is divisible by at least one of $14$, $22$, $26$, $77$ or $91$.
\end{prop}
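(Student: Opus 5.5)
The plan is to combine the divisibility restrictions just established with the structure of $|G/Z(G)|=2^{17}\cdot3^9\cdot5^2\cdot7\cdot11\cdot13$. By the preceding paragraph, $3\nmid|H|$ and $5\nmid|H|$, so $|H|$ divides $2^{17}\cdot 7\cdot 11\cdot 13$. Hence the prime divisors of $|H|$ lie in $\{2,7,11,13\}$. If the only prime is $2$, then $H$ is a $2$-group and we are done. So we assume that some $p\in\{7,11,13\}$ divides $|H|$ and aim to show that $|H|$ is divisible by $2p$ or by $7\cdot 11$ or $7\cdot 13$.

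First, a Sylow $p$-subgroup of $G$ for $p\in\{7,11,13\}$ has order $p$ modulo the centre. This holds because $p^2\nmid|G/Z(G)|$ and $p\nmid |Z(G)|$, as $|Z(G)|$ divides $6$. Suppose $|H|=p$ exactly. Then $H$ is cyclic of prime order, generated by an element of some class $\mathcal{C}$. The computation $n_G(\mathcal{C})>11\geq p-2$ holds for every class of elements of order $p$, and this already gives $n_G(\mathcal{C},\mathcal{C},\mathcal{C})>p-2$. When $\mathcal{C}$ is not rational, the rational class is a union of classes, so the sum $\sum_{i,j} n_G(\mathcal{C}_i,\mathcal{C}_j,\mathcal{C}_1)$ is at least this one term. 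Lemma~\ref{lem:cycbintest} then shows the action is not binary, so $|H|\neq p$.

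Hence $|H|$ has at least two prime factors counted with multiplicity, one of which is $p$. Since $p^2\nmid|H|$, it has at least two distinct prime divisors from $\{2,7,11,13\}$, one of them equal to $p$. If $2\mid|H|$, then $2p\mid|H|$, giving $14$, $22$ or $26$. Otherwise $|H|$ is divisible by the product of two distinct primes from $\{7,11,13\}$, so by one of $77$, $91$ or $143$. The case $143$ is excluded because no maximal subgroup of $G$ has order divisible by $11\cdot13$; this is checked from the list of maximal subgroups of $\mathrm{Fi}_{22}$, whose preimages in covers behave the same. This leaves exactly the listed divisors.

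The only step needing care is the application of Lemma~\ref{lem:cycbintest} to non-rational classes, such as $11$-elements in $\mathrm{Fi}_{22}$ and $13$-elements in the covers, where several conjugacy classes make up the rational class. I expect this to be unproblematic, since the relevant sum only increases when further classes are included.
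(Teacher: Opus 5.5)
Your proposal is correct and follows essentially the same route as the paper: discard the primes $3$ and $5$, rule out $|H|=p$ for $p\in\{7,11,13\}$ using $n_G(\mathcal{C})>11\geq p-2$ together with Lemma~\ref{lem:cycbintest}, and exclude $11\cdot 13$ because no maximal subgroup has order divisible by it. Your remark that the sum over a non-rational class is bounded below by the single term $n_G(\mathcal{C}_1,\mathcal{C}_1,\mathcal{C}_1)$ correctly supplies a detail the paper leaves implicit.
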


\begin{cor}\label{cor:orderformFi22}
    If $H\ne 1$ then $|H|=2^m\cdot p$ for some $p\in\{1,7\}$ and $m\geq 1$.\end{cor}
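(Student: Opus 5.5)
Throughout, $H\neq 1$ and $H\cap Z(G)=1$; the corollary should then follow from the preceding proposition and the prime restrictions already recorded. The order of $\mathrm{Fi}_{22}$ is $2^{17}\cdot3^9\cdot5^2\cdot7\cdot11\cdot13$. We may assume $|Z(G)|\in\{1,2,3,6\}$, so $|G|$ has no further prime divisors, and $H$ maps injectively into $\mathrm{Fi}_{22}$. We have already seen that $H$ has no elements of order $3$ or $5$, so $3\nmid|H|$ and $5\nmid|H|$ by Cauchy's theorem. Thus $|H|=2^m7^a11^b13^c$ with $a,b,c\in\{0,1\}$, since the Sylow $p$-subgroups of $G$ for $p\in\{7,11,13\}$ have order $p$ (the centre contributes only to the $2$- and $3$-parts). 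The aim is to show $b=c=0$ and $m\geq 1$.

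By the preceding proposition, either $H$ is a $2$-group, which gives the claim with $p=1$ since $H\ne1$ forces $m\geq1$, or $|H|$ is divisible by one of $14,22,26,77,91$. First we rule out $11$ and $13$. Suppose $11\mid|H|$ or $13\mid|H|$. Let $\bar H$ be the image of $H$ in $\mathrm{Fi}_{22}$; it is a proper subgroup, because otherwise $H$ would be a complement to $Z(G)$ in a perfect central extension, which is impossible. The maximal subgroups of $\mathrm{Fi}_{22}$ divisible by $13$ are $\mathrm{O}_7(3)$ (two classes) and ${}^2F_4(2)$. None of these has order divisible by $11$ or $7\cdot13$; note also that $\mathrm{Fi}_{22}$ has no maximal subgroup of order divisible by $11\cdot13$, as already noted. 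So if $13\mid|H|$, then $|H|\mid 2^m\cdot13$. Similarly, the maximal subgroups divisible by $11$ are $2.\mathrm{U}_6(2)$, $2^{10}\cn\mathrm{M}_{22}$ and $\mathrm{Fi}_{22}$'s $\mathrm{O}_8^+(2)$-type or $\mathrm{M}_{22}$-related subgroups; none of these has order divisible by $77$. Hence $|H|=2^m\cdot p$ with $p\in\{1,7,11,13\}$, and if $p\in\{11,13\}$ then $m\geq1$, because $|H|$ must be divisible by $22$ or $26$.

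It remains to exclude $p\in\{11,13\}$ with $m\geq1$, and I expect this to be the main obstacle. Here $H$ has a normal Sylow $p$-subgroup or at least a nontrivial $p$-element $x$ together with an involution. I would argue as in Proposition~\ref{prop:co2no1123}: pass to a maximal overgroup $M$ containing $H$, or its image, and use Lemma~\ref{lem:sub} to get a binary action of $M$ (or a relevant section, e.g. ${}^2F_4(2)'$ or $\mathrm{M}_{22}$ via Lemma~\ref{lem:SZappl}). Then Proposition~\ref{prop:1stclas} shows that ${}^2F_4(2)'$ and $\mathrm{M}_{22}$ (and its covers) have no nontrivial binary actions, which forces $H$ into $\mathrm{O}_7(3)$ or $\mathrm{U}_6(2)$ cases. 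Those remaining cases are handled computationally: list the subgroups of order $2^m\cdot 11$ or $2^m\cdot13$ in those overgroups with {\tt FilteredSubgroups()}, and reject each with {\tt NearTITest()}. Alternatively, for $13$ one can use that $N_G(\langle x\rangle)$ is small, so any such $H$ is tiny and can be tested directly.

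Once $11$ and $13$ are excluded, the case $77$ or $91$ is impossible, and the only remaining cases are $H$ a $2$-group or $14\mid|H|$. Together with $a\le1$, this gives $|H|=2^m\cdot p$ with $p\in\{1,7\}$ and $m\geq1$.
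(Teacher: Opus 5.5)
\textbf{There is a genuine gap.} The plan (use the preceding proposition, rule out $11$ and $13$, finish with {\tt FilteredSubgroups()} and {\tt NearTITest()}) ends where the paper does. But the theoretical shortcuts you use to get there are wrong in several places.

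\textbf{The maximal-subgroup divisibility claims are false.} $|\mathrm{O}_7(3)|=2^9\cdot3^9\cdot5\cdot7\cdot13$ is divisible by $7\cdot13$. Both $2.\mathrm{U}_6(2)$ (order $2^{16}\cdot3^6\cdot5\cdot7\cdot11$) and $2^{10}\cn\mathrm{M}_{22}$ (order $2^{17}\cdot3^2\cdot5\cdot7\cdot11$) are divisible by $7\cdot 11$. The maximal subgroups of $\mathrm{Fi}_{22}$ of order divisible by $11$ are $2.\mathrm{U}_6(2)$, $2^{10}\cn\mathrm{M}_{22}$ and $\mathrm{M}_{12}$; no $\mathrm{O}_8^+(2)$-type subgroup qualifies. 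So neither ``$13\mid|H|\Rightarrow|H|\mid 2^m\cdot13$'' nor ``$77\nmid|H|$'' follows from what you wrote. Both conclusions are true, but need another reason. For example, $H$ embeds in $\mathrm{Fi}_{22}$ and $|H|$ is prime to $15$, so $H$ is solvable. Hence $H$ has a Hall $\{p,q\}$-subgroup for any two primes $p,q\in\{7,11,13\}$ dividing $|H|$. That subgroup is cyclic of order $pq$, but $\mathrm{Fi}_{22}$ has no elements of order $77$, $91$ or $143$.

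\textbf{The main obstacle you flag is not overcome by your reduction.} That step is excluding $|H|=2^m\cdot11$ or $2^m\cdot13$ with $m\ge1$.
\begin{itemize}
\item Lemma~\ref{lem:SZappl} only conjugates a $2'$-subgroup of $2^{10}\cn\mathrm{M}_{22}$ into the complement $\mathrm{M}_{22}$. In Proposition~\ref{prop:co2no1123} this worked because $|H|$ was odd. Here $H$ has even order and may meet $O_2$ nontrivially, so Proposition~\ref{prop:1stclas} cannot be invoked.
\item Likewise, nothing lets you pass from $2.\mathrm{U}_6(2)$ (the centraliser of a $\mathrm{2A}$ element, which $H$ may contain) to $\mathrm{U}_6(2)$.
\item For even $|Z(G)|$, the preimage of the maximal ${}^2F_4(2)'$ (the Tits group, not ${}^2F_4(2)$) is $Z(G)\times{}^2F_4(2)'$. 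An even-order $H$ need not lie in the simple factor.
\end{itemize}
So nothing confines $H$ to ``$\mathrm{O}_7(3)$ or $\mathrm{U}_6(2)$ cases''. Your computational check would have to be run in every relevant overgroup.

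\textbf{How the paper does it.} The paper avoids all this bookkeeping. It runs {\tt FilteredSubgroups()} once on $G$ itself, listing every subgroup whose order is divisible by one of $22,26,77,91$ and by neither $3$ nor $5$. It then eliminates each with {\tt NearTITest()}. This single computation treats the $77$/$91$ cases and the even-order $11$/$13$ cases uniformly. Combined with the preceding proposition, it gives the corollary directly.
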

\begin{proof}
   We run {\tt{FilteredSubgroups()}} on $G$ to generate the list of all subgroups whose order is divisible by an element of $\{22,26,77,91\}$, but indivisible by 3 and 5. We run {\tt{NearTITest()}} on each of the remaining groups to deduce the result.
\end{proof}
By~\cite[Propositions 3.1, 3.2, 3.3, and Theorem 4.2]{CDP} and Lemma~\ref{lem:conjconn}, any element $x\in H$ of maximal 2-fixity satisfies $\varphi(x)\in\mathrm{2A}$. Notably, $\mathrm{2A}$ is a class of $3$-transpositions~\cite{atlas}, that is, any pair of non-commuting $\mathrm{2A}$-involutions has product of order 3---in fact the product of any two commuting $\mathrm{2A}$-involutions is in class $\mathrm{2B}$. In particular, we deduce the following.
\begin{prop}\label{prop:Fi22exist2B}
    If $|\varphi(H)\cap\mathrm{2A}|\geq 2$, then $|\varphi(H)\cap\mathrm{2B}|\geq1$.
\end{prop}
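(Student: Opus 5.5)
The plan is to use the $3$-transposition property of $\mathrm{2A}$ in $\mathrm{Fi}_{22}$, as recalled just before the statement, and to do everything inside the image $\varphi(H)\leq G/Z(G)=\mathrm{Fi}_{22}$, which is a subgroup. Take distinct elements $a,b\in\varphi(H)\cap\mathrm{2A}$. Two cases arise, according to whether $a$ and $b$ commute.

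\emph{Commuting case.} If $[a,b]=1$, then, as recorded above from the class multiplication data of~\cite{atlas}, the product of two distinct commuting $\mathrm{2A}$-involutions lies in $\mathrm{2B}$. The element $ab$ lies in $\varphi(H)$ because $\varphi(H)$ is a group, so $ab\in\varphi(H)\cap\mathrm{2B}$, as required.

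\emph{Non-commuting case.} If $a$ and $b$ do not commute, then $ab$ has order $3$, so $\langle a,b\rangle\cong S_3$ and $ab$ is an element of order $3$ in $\varphi(H)$. Lifting to $H$, this gives an element of $H$ whose order is divisible by $3$. Indeed, $\varphi$ restricted to $H$ is injective because $H\cap Z(G)=1$, so $H\cong\varphi(H)$. This contradicts the standing conclusion, established at the start of this subsection via Proposition~\ref{prop:binarydiv}, Lemma~\ref{lem:conn3cov} and~\cite[Lemma 2.12]{CDP}, that $3\nmid|H|$. Equivalently, it contradicts Corollary~\ref{cor:orderformFi22}, which gives $|H|=2^m p$ with $p\in\{1,7\}$. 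Hence any two distinct $\mathrm{2A}$-involutions in $\varphi(H)$ commute, and we are in the first case.

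The only delicate point is the commuting case. There I would confirm from the $\mathbb{ATLAS}$ (or via class multiplication coefficients in the \gap~Character Table Library) that in $\mathrm{Fi}_{22}$ the product of two distinct commuting transpositions is $\mathrm{2B}$ and never $\mathrm{2A}$ or $\mathrm{2C}$; this is the fact asserted in the text just before the proposition. I would also note that distinctness of $a$ and $b$ ensures $ab\neq 1$.
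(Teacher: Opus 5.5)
Your proposal is correct and is essentially the paper's argument. The paper deduces the proposition directly from the $3$-transposition property of $\mathrm{2A}$: non-commuting pairs have product of order $3$, which is excluded since $3\nmid|H|$, and distinct commuting pairs have product in $\mathrm{2B}$.
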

We can also fairly easily show a strong converse.
\begin{prop}\label{prop:fi222B2C}
    For each $g\in H\cap\varphi^{-1}(\mathrm{2B})$, there exist $x,y\in H\cap\varphi^{-1}(\mathrm{2A})$ such that $xy=g$. Moreover, for each $g\in H\cap\varphi^{-1}(\mathrm{2C})$, there exist $x,y,z\in H\cap\varphi^{-1}(\mathrm{2A})$ such that $xyz=g$.
\end{prop}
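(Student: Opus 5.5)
Fix $g\in H$ with $\varphi(g)\in\mathrm{2B}$. I would use the 3-transposition structure of $\mathrm{Fi}_{22}$ together with Lemma~\ref{lem:conjconn} and Lemma~\ref{lem:Finf}. Since $g$ has order $2$ and $H\cap Z(G)=1$, Lemma~\ref{lem:conjconn} (with the remark before Proposition~\ref{prop:Fi22exist2B}) gives $x\in H$ of maximal 2-fixity with $\fix(g)\subseteq\fix(x)$ and $\varphi(x)\in\mathrm{2A}$. Lemma~\ref{lem:Finf} then puts $F_\infty(g,x)\le H$. In particular $H$ contains $x^c$ for every $c\in C_G(g)$, since $g^c=g\in F_0(g,x)$.

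The key step is to show that among the conjugates $x^c$, $c\in C_G(g)$, there is one with $g=x\cdot x^c$, after adjusting by $Z(G)$. Work first in $\overline G=\mathrm{Fi}_{22}$. A $\mathrm{2B}$ element $\bar g$ is a product $\bar a\bar b$ of two commuting transpositions. The set $\{\bar a,\bar b\}$ is determined by $\bar g$: in $\mathrm{Fi}_{22}$ a 2B involution is a product of transpositions in exactly one way, which I would check from the class multiplication coefficient $n(\mathrm{2A},\mathrm{2A},\mathrm{2B})$. Hence $C_{\overline G}(\bar g)$ preserves $\{\bar a,\bar b\}$ and, I expect, swaps them.

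Now $\bar x$ is some transposition commuting with $\bar g$, as $x$ lies in $C_G(g)$ up to choice. I would split into cases. If $\bar x\in\{\bar a,\bar b\}$, take $c\in C_G(g)$ swapping them; then $\varphi(x\cdot x^c)=\bar a\bar b=\bar g$. If $\bar x$ commutes with $\bar g$ but is not $\bar a$ or $\bar b$, the $C_{\overline G}(\bar g)$-orbit of $\bar x$ is large and generates more. I would instead observe that the subgroup generated by this orbit contains $\bar a$. This requires a computation in \magma{} over the few $C_G(g)$-orbits on $\mathrm{2A}\cap C_G(g)$, analogous to the proof of Proposition~\ref{prop:Co2no2B}. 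In every case I then get $\bar a,\bar b\in\varphi(H)$.

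Next I lift to $G$. Choose preimages $x,y\in H$ of $\bar a,\bar b$; then $xy=gz'$ with $z'\in Z(G)$. To fix the central discrepancy, replace $y$ by $y^c$ for suitable $c\in C_G(g)$ with $x^c=x$, or use that $xy$ and $g$ both lie in $H$. Since $H\cap Z(G)=1$ and $xy,g\in H$, the element $(xy)g^{-1}\in H\cap Z(G)$ is trivial. So $xy=g$ automatically once $x,y\in H$. This lifting step is essentially free.

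For $\mathrm{2C}$, a 2C element of $\mathrm{Fi}_{22}$ is a product of three pairwise commuting transpositions. I would repeat the argument: obtain one 2A element $x$ in $H$ lying over one of the three transpositions, and use $C_G(g)$ to move it onto the other two. The triangle $\{\bar a,\bar b,\bar c\}$ is again $C(\bar g)$-invariant, and I expect $C(\bar g)$ to act transitively on it, by a class multiplication coefficient or \magma{} check. The main obstacle is the case where the maximal-fixity element $\bar x$ is not among the canonical transpositions of $\bar g$. There I would rely on an explicit orbit computation under $C_G(g)$, as in Proposition~\ref{prop:Co2no2B}. The central correction is handled by $H\cap Z(G)=1$ exactly as before.
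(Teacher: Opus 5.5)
Your outline matches the paper's. Lemma~\ref{lem:conjconn} gives $x\in H$ with $\varphi(x)\in\mathrm{2A}$ and $\fix(g)\subseteq\fix(x)$, Lemma~\ref{lem:Finf} puts $x^{C_G(g)}$ inside $H$, and one then inspects $C_G(g)$-orbits. Your lifting step ($xyg^{-1}\in H\cap Z(G)=1$) is also correct. The gap is the claim that $\bar x$ commutes with $\bar g$ ``as $x$ lies in $C_G(g)$ up to choice''. Nothing supports this. The two lemmas only give $\fix(g)\subseteq\fix(x)$, which says nothing about $[x,g]$, and you have no freedom to choose $x$ inside $C_G(g)$. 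So the $C_G(g)$-orbits on $\varphi^{-1}(\mathrm{2A})$ made of elements that do not centralise $g$ are never treated, and a computation restricted to $\mathrm{2A}\cap C_G(g)$ would miss them.

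The paper computes \emph{all} $C_G(g)$-orbits on $\varphi^{-1}(\mathrm{2A})$. It discards those generating a group of order divisible by $3$ or meeting $Z(G)$. The hypothesis $3\nmid|H|$, established just before for every cover of $\mathrm{Fi}_{22}$, is indispensable there, and it never appears in your proposal.

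It also repairs your $\mathrm{2B}$ case cheaply, though not via $g$ itself: $3\nmid|H|$ only forces $[x,x^g]=1$, and $\langle x,g\rangle$ could a priori be dihedral of order $8$. Instead, take involution lifts $\hat a,\hat b$ of $\bar a,\bar b$. They commute, since distinct involution lifts of a transposition are not conjugate in these covers. As $g\in\hat a\hat b Z(G)$, this gives $\hat a\in C_G(g)$, so $x^{\hat a}\in H$. If $\bar x$ did not commute with $\bar a$, then $\langle\bar x,\bar x^{\bar a}\rangle=\langle\bar x,\bar a\rangle\cong S_3$ would put an element of order $3$ into $\varphi(H)$. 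The same holds for $\bar b$.

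For $\mathrm{2C}$ there is also a false structural claim: the three commuting transpositions are \emph{not} determined by $\bar g$, so there is no $C(\bar g)$-invariant triangle. The $2^5$ in the proof of Proposition~\ref{prop:Fi222local} is generated by its six $\mathrm{2A}$-elements $t_1,\dots,t_6$. The unique relation among them must be $t_1\cdots t_6=1$. A shorter relation would either identify elements of different classes among $\mathrm{2A},\mathrm{2B},\mathrm{2C}$ or contradict $n(\mathrm{2A},\mathrm{2A},\mathrm{2B})=2$. Hence $t_1t_2t_3=t_4t_5t_6$, and ``move $x$ onto the other two vertices of the triangle'' is not available. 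Which $\mathrm{2A}$-lifts are forced into $H$ must be read off from an explicit computation of the $C_G(g)$-orbits. The remaining orbits are again discarded using $3\nmid|H|$ and $H\cap Z(G)=1$, exactly as the paper does.
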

\begin{proof}
    Let $\mathcal{C}\in\{\mathrm{2B},\mathrm{2C}\}$, and suppose there is some $g\in H\cap \varphi^{-1}(\mathcal{C})$. By Lemma~\ref{lem:conjconn}, and~\cite[Theorems 1.1 and 4.2]{CDP}, there exists some $h\in\varphi^{-1}(\mathrm{2A})$ such that $\fix(g)\subseteq\fix(h)$. Thus, $h^{C_G(g)}\leq H$ by Lemma~\ref{lem:Finf}. We compute in \magma~all orbits of $C_G(g)$ on each conjugacy class in $\varphi^{-1}(\mathrm{2A})$, finding that each orbit either generates a group of order divisible by 3, generates a group intersecting the center non-trivially, or contains elements $x,y,z$ satisfying the claimed conditions. Since $3\nmid |H|$ we deduce the existence result.\end{proof}
\begin{cor}\label{cor:fi22unique2a}
    If $|\varphi(H) \cap \mathrm{2A}|=1$, then $|H|=2$.
\end{cor}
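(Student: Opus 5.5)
We work with $G/Z(G)=\mathrm{Fi}_{22}$ and assume $|\varphi(H)\cap\mathrm{2A}|=1$. By Corollary~\ref{cor:orderformFi22}, $|H|=2^m p$ with $p\in\{1,7\}$, and we aim to show that $H$ is generated by a single involution.

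The first step is to pin down the involutions of $H$. Let $x\in H$ be the unique element with $\varphi(x)\in\mathrm{2A}$; $x$ is unique in $H$ itself because $\varphi$ is injective on $H$, as $H\cap Z(G)=1$. If some $g\in H$ had $\varphi(g)\in\mathrm{2B}$, then Proposition~\ref{prop:fi222B2C} gives $x_1,x_2\in H\cap\varphi^{-1}(\mathrm{2A})$ with $x_1x_2=g$. Uniqueness forces $x_1=x_2=x$, so $g=x^2=1$, which is absurd. The same argument with three factors excludes $\varphi(g)\in\mathrm{2C}$: it gives $g=x^3=x$, contradicting $\varphi(g)\notin\mathrm{2A}$. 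Since $\mathrm{Fi}_{22}$ has only the involution classes $\mathrm{2A},\mathrm{2B},\mathrm{2C}$, and $\varphi$ maps involutions of $H$ to involutions, $x$ is the unique involution of $H$.

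In particular $x$ is central in $H$, since $H$ normalises the set $\{x\}$, so $H\le C_G(x)$. Moreover a Sylow $2$-subgroup $S$ of $H$ has a unique involution, so $S$ is cyclic or generalised quaternion.

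Next we bound $S$. Any element of order $4$ in $S$ squares to $x$, so we need the classes of $\mathrm{Fi}_{22}$ of elements of order $4$ powering to $\mathrm{2A}$, read off from the $\mathbb{ATLAS}$ power maps. We would check whether such classes exist at all. If none do, then $S=\langle x\rangle$ and $|H|\in\{2,14\}$. Otherwise, we use the fact that $\varphi(H)\le C_{\mathrm{Fi}_{22}}(\varphi(x))\cong 2.\mathrm{U}_6(2)$. In the quotient by $\langle\varphi(x)\rangle$, an element of order $4$ squaring to $x$ maps to an involution of $\mathrm{U}_6(2)$. We then compute in \magma, using {\tt FilteredSubgroups()}, all subgroups of $C_G(x)$ (one computation per cover $G$) that contain $x$ as their unique involution, meet $Z(G)$ trivially, and have order $2^m\cdot p$ with $m\ge 2$ or $p=7$.

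Finally, each surviving candidate is tested with {\tt NearTITest()}. This includes the order-$14$ group $\langle x\rangle\times C_7$, where the $C_7$ must lie in $C_G(x)$. We expect every candidate to fail, which leaves $|H|=2$.

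The main obstacle is the computation for the covers, particularly $6.\mathrm{Fi}_{22}$, where $C_G(x)$ is large. To keep it manageable, we would first enumerate cyclic and quaternion $2$-subgroups of $C_G(x)$ up to conjugacy. We would then extend each by an element of order $7$ centralising $x$, working inside the normaliser, rather than enumerating all subgroups of $C_G(x)$.
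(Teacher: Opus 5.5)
Your proposal is correct, and its first half is the paper's argument. Proposition~\ref{prop:fi222B2C} forces the lift $x$ of the unique $\mathrm{2A}$-element to be the unique, hence central, involution of $H$, so $H\le C_G(x)$.

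\textbf{Order-4 elements.} The paper does not hedge here. It reads off from the $\mathbb{ATLAS}$ power maps that no class of elements of order $4$ in $\mathrm{Fi}_{22}$ powers into $\mathrm{2A}$; the same fact is used in Proposition~\ref{prop:fi222C}. So $|H|\mid 14$ immediately, and your ``otherwise'' branch never arises. Neither does the enumeration of $2$-subgroups of $C_G(x)$ in $6.\mathrm{Fi}_{22}$ that you flag as the main obstacle.

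\textbf{The order-$14$ case.} Here you propose running {\tt NearTITest()} in $G$. The paper instead applies Lemma~\ref{lem:sub} with $M=C_G(x)=(Z(G)\times 2).\mathrm{U}_6(2)$. Since $\langle x\rangle\trianglelefteq C_G(x)$ and $\langle x\rangle\le H$, the binary action of $C_G(x)$ on $(C_G(x):H)$ is really an action of $K:=C_G(x)/\langle x\rangle\cong Z(G).\mathrm{U}_6(2)$ with point stabiliser of order $7$. The character-table computation $n_K(\mathcal{C})>5$ for every class $\mathcal{C}$ of $7$-elements then rules this out via Lemma~\ref{lem:cycbintest}.

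\textbf{Comparison.} The paper's route is deterministic and needs only the character tables of the relevant covers of $\mathrm{U}_6(2)$. It also treats all covers uniformly, with no representations of $6.\mathrm{Fi}_{22}$ or random searches. Your route works directly in $G$ but rests on a one-sided test. {\tt NearTITest()} certifies non-binarity only if it actually finds a witness with $H\cap H_2=H\cap H_3=1$. So ``we expect every candidate to fail'' is a prediction, not an argument, until the search is run. It very plausibly succeeds here, given that $n_G(\mathcal{C})>11$ for classes of $7$-elements in $G$.
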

\begin{proof}
            By Proposition~\ref{prop:fi222B2C}, $H$ has a unique involution $h\in\varphi^{-1}(\mathrm{2A})$. In fact, no $G/Z(G)$-conjugacy class of elements of order 4 powers into $\mathrm{2A}$~\cite{atlas}, so the Sylow 2-subgroups of $H$ have order 2, thus $|H|\mid14$ by Corollary~\ref{cor:orderformFi22}. Since $h$ is the unique involution in $H$, we deduce that $H\leq C_G(h)=(Z(G)\times2).\mathrm{U}_6(2)$~\cite{atlas}, and so the action of $C_G(h)$ on $(C_G(h) : H)$ is binary, and moreover, this is equivalent to a binary action of $K:=Z(G).\mathrm{U}_6(2)$ with stabiliser of order either 1 or 7. We compute that $n_K(\mathcal{C})>5$ for each $K$-conjugacy class of elements of order 7 thus $|H|=2$ by Lemma~\ref{lem:cycbintest}.
    \end{proof}

\begin{prop}\label{prop:fi222C}
    If $|\varphi(H) \cap \mathrm{2A}|>1$, then  $|\varphi(H) \cap \mathrm{2A}|\geq 3$ and moreover, for any $x,y,z\in H \cap \varphi^{-1}(\mathrm{2A})$ distinct, the product $xyz$ satisfies $\varphi(xyz)\in \mathrm{2C}$.
\end{prop}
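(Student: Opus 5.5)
The plan is to exploit the 3-transposition structure of $\mathrm{2A}$ in $\mathrm{Fi}_{22}$, together with $3\nmid|H|$ (established above) and Propositions~\ref{prop:Fi22exist2B} and~\ref{prop:fi222B2C}.

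\emph{Step 1: $\varphi(H)\cap\mathrm{2A}$ consists of pairwise commuting involutions.} Suppose $|\varphi(H)\cap\mathrm{2A}|>1$ and let $\bar x,\bar y$ be distinct elements of $\varphi(H)\cap\mathrm{2A}$, with preimages $x,y\in H$. If $\bar x,\bar y$ did not commute, then $\bar x\bar y$ would have order 3. Then $xy$ would have order divisible by 3, contradicting $3\nmid|H|$. Hence $\bar x\bar y\in\mathrm{2B}$.

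\emph{Step 2: $|\varphi(H)\cap\mathrm{2A}|\geq3$.} By Step 1 and Proposition~\ref{prop:Fi22exist2B}, $H$ contains some $g$ with $\varphi(g)\in\mathrm{2B}$. The natural approach is to apply Lemma~\ref{lem:Finf} to $g$ and an $h\in\varphi^{-1}(\mathrm{2A})\cap H$ with $\fix(g)\subseteq\fix(h)$. Such an $h$ exists by Lemma~\ref{lem:conjconn} and the CDP results, as in the proof of Proposition~\ref{prop:fi222B2C}. This gives $h^{C_G(g)}\subseteq H$. Since $\bar g=\bar x\bar y$ has centraliser $(2\times2^{1+8}\cn \mathrm{U}_4(2))\cn2$ in $\mathrm{Fi}_{22}$, which moves the pair of transpositions $\{\bar x,\bar y\}$, the orbit of $h$ should contain more than two images under $\varphi$. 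This is a finite check on the $C_G(g)$-orbits already computed in Proposition~\ref{prop:fi222B2C}. In those computations, orbits generating groups of order divisible by 3, or meeting $Z(G)$, are discarded. I expect every surviving orbit to have size at least 3 modulo $Z(G)$.

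\emph{Step 3: for distinct $x,y,z$, $\varphi(xyz)\in\mathrm{2C}$.} The images $\bar x,\bar y,\bar z$ are pairwise commuting distinct 3-transpositions, so $\bar x\bar y\bar z$ is an involution or trivial. It is nontrivial: if $\bar z=\bar x\bar y$, then $\bar z$ would lie in $\mathrm{2B}$, not $\mathrm{2A}$. In $\mathrm{Fi}_{22}$, the product of three pairwise commuting transpositions lies in $\mathrm{2C}$. This is the standard 3-transposition fact: the classes $\mathrm{2A},\mathrm{2B},\mathrm{2C}$ correspond to products of 1, 2, 3 commuting transpositions, and the class is determined by the size of a minimal expression. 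Failing a citation, I would confirm this with a class multiplication coefficient count, checking that $\mathrm{2B}\cdot\mathrm{2A}$ restricted to commuting pairs whose product is an involution lands in $\mathrm{2A}\cup\mathrm{2C}$. The $\mathrm{2A}$ case is exactly the excluded case $\bar z=\bar x\bar y$.

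The main obstacle is Step 2. It requires either the orbit data from the earlier \magma\ computation or a structural argument that $C_{\mathrm{Fi}_{22}}(\bar g)$ has no orbit of size at most 2 on transpositions commuting with $\bar g$ that is compatible with $3\nmid|H|$. If the cleaner route fails, I would fall back on that direct computation.
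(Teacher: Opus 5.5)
\textbf{Verdict: Step~2 has a genuine gap, and no orbit computation can repair it.}

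Your premise that $C_{\mathrm{Fi}_{22}}(\bar g)$ moves the pair $\{\bar x,\bar y\}$ is false. Since $n_{G/Z(G)}(\mathrm{2A},\mathrm{2A},\mathrm{2B})=2$, the only transpositions whose product is $\bar g=\bar x\bar y$ are $\bar x$ and $\bar y$. Hence $C(\bar g)=N(\langle\bar x,\bar y\rangle)$ stabilises $\{\bar x,\bar y\}$.

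Now take exactly the configuration you must exclude, $H\cap\varphi^{-1}(\mathrm{2A})=\{x,y\}$.
\begin{itemize}
\item The element $h$ supplied by Lemma~\ref{lem:conjconn} must be $x$ or $y$.
\item $\varphi(h^{C_G(g)})\subseteq\{\bar x,\bar y\}$, so the orbit has at most two images modulo $Z(G)$.
\item For $G=\mathrm{Fi}_{22}$, and more generally whenever $x$ and $y$ are $G$-conjugate, $h^{C_G(g)}=\{x,y\}$ exactly.
\item This orbit generates a Klein four-group of order prime to $3$ that meets $Z(G)$ trivially. It is precisely the surviving orbit witnessing $xy=g$ in the proof of Proposition~\ref{prop:fi222B2C}.
\end{itemize}
So Lemma~\ref{lem:Finf} is fully consistent with $H$ containing only two transpositions. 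Neither your structural expectation nor your fallback orbit computation can produce a contradiction.

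\textbf{What is missing.} The contradiction has to come from non-binarity of an explicitly determined stabiliser. This is how the paper argues in the two-transposition case.
\begin{itemize}
\item By Proposition~\ref{prop:fi222B2C}, the involutions of $H$ are exactly $x$, $y$, $xy$. There are no $\mathrm{2C}$-lifts, because any product of three elements of $\{x,y\}$ lies in $\{x,y\}$.
\item Hence $H\leq C_G(xy)=N_G(\langle x,y\rangle)$.
\item If $t\in H$ satisfies $t^2=xy$, then either $y^t=x$, in which case $xt$ is a fourth involution, or $t$ centralises $\langle x,y\rangle$. The paper rules out the latter by a \magma\ computation.
\item No class of elements of order $4$ powers into $\mathrm{2A}$, and $7\nmid|C_G(xy)|$. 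Together with Corollary~\ref{cor:orderformFi22}, this forces $H=\langle x,y\rangle$.
\item A direct test (\texttt{NearTITest()}, i.e.\ Lemma~\ref{lem:efficienttest}) then shows that the action on the cosets of this Klein four-group is not binary.
\end{itemize}

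\textbf{Step~3} is fine in outline. The paper simply checks all commuting triples in \magma. Your structure-constant check also works, because a product of two involutions that is itself an involution forces them to commute. Moreover, a transposition centralising $\bar x\bar y$ must centralise both $\bar x$ and $\bar y$. One correction: the $\mathrm{2A}$ outcome corresponds to $\bar z\in\{\bar x,\bar y\}$, again by $n(\mathrm{2A},\mathrm{2A},\mathrm{2B})=2$, not to $\bar z=\bar x\bar y$.
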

\begin{proof}
    Suppose for contradiction that $x,y\in H$ are the unique $\mathrm{2A}$-lifts in $H$. Then $\varphi(xy)\in\mathrm{2B}$, and by Proposition~\ref{prop:fi222B2C}, $xy$ is the unique $\mathrm{2B}$-lift in $H$ and moreover, $x$, $y$, and $xy$ are the only three involutions in $H$. Additionally, $n_{G/Z(G)}(\mathrm{2A,2A,2B})=2$, and so $H\leq C_G(xy)=N_G(\langle x,y\rangle)$. Thus, if $t\in H$ is such that $t^2=xy$, then either $t\in C_G(\langle x,y\rangle)$, or $y^t=x$. In the latter case, $$(xt)^2=xtxt=xx^{t^{-1}}t^2=(xy)(xy)=1,$$ so $xt\in H\setminus\{x,y,xy\}$ is an involution, contradicting that $H$ has exactly three involutions, whence $t$ centralises $\langle x,y\rangle$. We compute in \magma~that no such element can exist (see~\cite{comps} for full details), and thus since no class of elements of order 4 powers into $\mathrm{2A}$, we deduce that $H=\langle x,y\rangle$ ($C_G(xy)$ has order indivisible by 7~\cite{atlas}). Running {\tt NearTITest()} on $H$ we arrive at a contradiction, thus $|\varphi(H) \cap \mathrm{2A}|\geq 3$.

    For the final claim it suffices to work in $G/Z(G)$. Using \magma~we can easily compute the conjugacy class of $xyz$ for all (conjugacy representatives of) triples $(x,y,z)\in\mathrm{2A}^3$ with $x$, $y$, and $z$ distinct and mutually commuting, finding that $xyz\in\mathrm{2C}$ for all such $(x,y,z)$ hence the result.  
\end{proof}
To finish the proof we shall consider separately the cases $2\mid |Z(G)|$ and $2\nmid |Z(G)|$. We start with the latter case. Since $2\nmid |Z(G)|$, for ease of notation we may identify the involution classes of $3.\mathrm{Fi}_{22}$ with those of $\mathrm{Fi}_{22}$. This family requires the most in depth computation---we run through all computations in the supplementary file~\cite{comps}.

\begin{prop}\label{prop:Fi222local}
    If $|H\cap\mathrm{2C}|\geq 1$, then either $2^{5}\trianglelefteq H\leq 2^{5+8}\cn( S_3\times (Z(G).A_6))$ where all involutions in $H$ are contained in the normal subgroup $2^5$, or $2^{10}\trianglelefteq H\leq 2^{10}\cn(Z(G).\mathrm{M}_{22})$.
\end{prop}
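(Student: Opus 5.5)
We work with $G\in\{\mathrm{Fi}_{22},3.\mathrm{Fi}_{22}\}$ and identify involution classes with those of $\mathrm{Fi}_{22}$. The idea is to use the elements of $H$ that are forced in by fixity, via Lemma~\ref{lem:Finf}, to pin down $H$ inside a 2-local subgroup.

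Fix $g\in H\cap\mathrm{2C}$. By Proposition~\ref{prop:fi222B2C} and Proposition~\ref{prop:fi222C}, $H$ contains at least three $\mathrm{2A}$-involutions. By Corollary~\ref{cor:orderformFi22}, $H$ has no elements of order 3. Since $\mathrm{2A}$ is a class of 3-transpositions, any two $\mathrm{2A}$-involutions in $H$ must commute. So the set $T:=H\cap\mathrm{2A}$ generates an elementary abelian 2-group $E=\langle T\rangle\trianglelefteq H$. By Proposition~\ref{prop:fi222C}, every product of three distinct elements of $T$ lies in $\mathrm{2C}$, and every product of two lies in $\mathrm{2B}$.

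The first key step is to show that $E$ contains every involution of $H$. Proposition~\ref{prop:fi222B2C} says every $\mathrm{2B}$- or $\mathrm{2C}$-involution of $H$ is a product of two or three elements of $T$, so it lies in $E$. Hence $\Omega_1$ of $H$ is exactly $E$, and $H\le N_G(E)$.

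The second step is to classify the possible sets $T$ of pairwise commuting $\mathrm{2A}$-transpositions satisfying the product conditions. I will also use the fact that $T$ is closed under conjugation by $H$. Moreover, by Lemma~\ref{lem:Finf} applied with $g$ and a $\mathrm{2A}$ element $h$ satisfying $\fix(g)\subseteq\fix(h)$, $T$ contains a full $C_G(g)$-orbit on $\mathrm{2A}$. This is exactly the orbit computed in the proof of Proposition~\ref{prop:fi222B2C}. So $E$ contains the group generated by such an orbit, and that group is normalised by $C_G(g)$.

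A \magma\ computation over the $C_G(g)$-orbits on $\mathrm{2A}$ should then show that the admissible orbits generate either:
\begin{itemize}
\item a $2^5$, whose normaliser is $2^{5+8}\cn(S_3\times Z(G).A_6)$; or
\item a $2^{10}$, whose normaliser is the maximal subgroup $2^{10}\cn Z(G).\mathrm{M}_{22}$.
\end{itemize}
Any admissible orbit that generates a subgroup of order divisible by 3, or one meeting $Z(G)$ nontrivially, is discarded. Since $E$ contains one of these groups $A$ and consists of commuting transpositions, I then check that $E=A$. For this, either compute that no further $\mathrm{2A}$-element commuting with $A$ keeps the triple-product condition, or use maximality of the $2^{10}$ case. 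Then $H\le N_G(E)=N_G(A)$, with $A\trianglelefteq H$ containing all involutions of $H$, which gives the stated dichotomy.

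The main obstacle is this orbit computation together with the extension argument. The $C_G(g)$-orbits on $\mathrm{2A}$ (3510 points in $\mathrm{Fi}_{22}$, more in the cover) are large. Identifying the normalisers of the generated groups and ruling out enlargements needs care, especially in $3.\mathrm{Fi}_{22}$, where lifts must avoid $Z(G)$. I would first use orbit representatives (via {\tt OrbReps()}) and the 3-transposition geometry of $\mathrm{Fi}_{22}$ to reduce to the two known maximal sets of commuting transpositions of sizes 6 and 22, which generate $2^5$ and $2^{10}$ respectively.
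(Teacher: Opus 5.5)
\textbf{There is a genuine gap, in the step after the orbit computation.} Your set-up matches the paper. $E=\langle H\cap\mathrm{2A}\rangle$ is an elementary abelian normal subgroup of $H$ containing every involution of $H$ (via Proposition~\ref{prop:fi222B2C}), and Lemma~\ref{lem:Finf} puts a $C_G(g)$-orbit on $\mathrm{2A}$ inside $H$.

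No single $C_G(g)$-orbit generates a $2^{10}$. The paper finds exactly one admissible orbit: six transpositions generating an elementary abelian $L\cong 2^5$. This is forced anyway. $C_G(g)$ normalises the group generated by each of its orbits, but the $3$-part of $|C_G(g)|$ is $3^3\cdot|Z(G)|$, which does not divide $|2^{10}\cn(Z(G).\mathrm{M}_{22})|$.

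So you are always in your ``$2^5$ case''. There you propose to show $E=L$ by checking that no further $\mathrm{2A}$-element commuting with $L$ preserves the triple-product condition. That check cannot succeed, for two reasons:
\begin{itemize}
\item By Proposition~\ref{prop:fi222C}, the product of any three distinct commuting $\mathrm{2A}$-involutions lies in $\mathrm{2C}$, so the condition is automatic.
\item $L$ lies in a $2^{10}$ generated by $22$ pairwise commuting transpositions, so commuting transpositions outside $L$ do exist.
\end{itemize}
In particular the six-element set is not a maximal commuting set, and $E=L$ is false in general; this is exactly why the $2^{10}$ alternative appears in the statement. Your plan has no mechanism to exclude intermediate possibilities such as $E\cong 2^6$.

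\textbf{The missing idea} is to apply Lemma~\ref{lem:Finf} to every $\mathrm{2C}$-element of $H$, not just to $g$. Each $h\in H\cap\mathrm{2C}$ forces its own unique admissible orbit $\mathcal{O}(h)$ into $H$. The paper's argument then runs as follows:
\begin{itemize}
\item If $|H\cap\mathrm{2A}|\geq 7$, choose $x\in(H\cap\mathrm{2A})\setminus L$. The paper computes that $\langle L,x\rangle$ either has order divisible by $3$ (excluded) or is $2^6$.
\item The new $\mathrm{2C}$-elements of $\langle L,x\rangle$ (products of $x$ with two transpositions of $L$) pull their own orbits into $H$.
\item The group $\langle L,\mathcal{O}(h) : h\in H\cap\mathrm{2C}\rangle$ is found to contain a $2^{10}$ with normaliser $2^{10}\cn(Z(G).\mathrm{M}_{22})$.
\item Adjoining any further transposition produces an element of order divisible by $3$.
\end{itemize}
Inside the $2^{10}$ this is the Steiner system $S(3,6,22)$ on its $22$ transpositions. $L$ is a hexad, and $\mathcal{O}(h)$ is the hexad through the three transpositions whose product is $h$. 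Closing a hexad plus one extra point under this rule already gives all $22$ points. It is this bootstrapping over all $\mathrm{2C}$-elements of $H$, not an analysis of the single orbit of $g$, that yields the dichotomy: $E$ is either the $2^5$ or a $2^{10}$.
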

\begin{proof}
    Suppose $g\in H\cap \mathrm{2C}$. Since $\mathrm{2A}$ is the unique class $\mathcal{C}$ of $G$ of maximal 2-fixity, it follows from Lemma~\ref{lem:Finf} that $H$ contains an orbit of $C_G(g)$ on $\mathrm{2A}$. We compute these orbits in \magma~\cite{magma}, finding exactly one orbit which generates a $\{2,7\}$-group---the group $L$ generated is elementary abelian $2^5$ with normaliser $Z(G).(2^{5+8}\cn( S_3\times  A_6))<_{\max}G$, and contains exactly $6$ elements of class $\mathrm{2A}$. 
    
    Let $K:=\langle H\cap\mathrm{2A}\rangle$. Then $K$ is an elementary abelian normal subgroup of $H$. If $|H\cap \mathrm{2A}|=6$, then $K=L$, and we are done. Suppose that $|H\cap\mathrm{2A}|\geq 7$. Then $\langle L,x\rangle\leq K$ for some $x\in\mathrm{2A}\setminus L$. We use \magma~to compute $\langle L,x\rangle$ for each $x\in\mathrm{2A}\setminus L$, finding that it either has order divisible by 3, or is elementary abelian $2^6$---the former is not possible, so we suppose the latter holds. 
    
    By Lemma~\ref{lem:Finf} and the beginning of the proof, for each element $h\in H\cap\mathrm{2C}$, there is a unique $C_G(h)$-orbit $\mathcal{O}(h)$ on $\mathrm{2A}$ such that $\mathcal{O}(h)\subseteq H$. We compute in \magma~that $\langle L,\mathcal{O}(h) : h \in H\cap\mathrm{2C}\rangle$ contains an elementary abelian group of order $2^{10}$ with normaliser $2^{10}\cn(Z(G).\mathrm{M}_{22})$. We compute that if any additional element of $\mathrm{2A}$ is added to our generating set then we generate a group of order divisible by 3, hence the desired result. 
            \end{proof}
\begin{cor}\label{cor:Fi22no2C}
    If $2\nmid |Z(G)|$ then $H\cap \mathrm{2C}=\emptyset$.
\end{cor}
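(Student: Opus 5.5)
We argue by contradiction: suppose $2\nmid|Z(G)|$, so $G\in\{\mathrm{Fi}_{22},3.\mathrm{Fi}_{22}\}$, and that $H$ contains some $g\in\mathrm{2C}$. By Proposition~\ref{prop:Fi222local}, exactly one of two situations occurs. Either $2^5\trianglelefteq H\leq 2^{5+8}\cn(S_3\times(Z(G).A_6))$ with every involution of $H$ in this normal $2^5$, or $2^{10}\trianglelefteq H\leq 2^{10}\cn(Z(G).\mathrm{M}_{22})$. We treat the two cases separately. In each case the aim is to show that no subgroup in the given range is the stabiliser of a binary action.

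\emph{Second case.} Corollary~\ref{cor:orderformFi22} gives $|H|=2^m\cdot p$ with $p\in\{1,7\}$. Hence $H/2^{10}$ is a $\{2,7\}$-subgroup of $Z(G).\mathrm{M}_{22}$ meeting $Z(G)$ trivially, and its image in $\mathrm{M}_{22}$ is a $\{2,7\}$-subgroup. We will use \texttt{FilteredSubgroups()} in the maximal subgroup $2^{10}\cn(Z(G).\mathrm{M}_{22})$ to list all subgroups that contain the normal $2^{10}$, have order of the form $2^m$ or $2^m\cdot 7$, and meet $Z(G)$ trivially. Since this list is small, we then run \texttt{NearTITest()} on each candidate. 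Alternatively, and more cheaply, we can look for a triple of conjugates inside $2^{10}$ itself. Conjugates of a group containing $2^{10}$ intersect non-trivially, so Lemma~\ref{lem:efficienttest} may not apply directly. In that event we fall back on Statement~3 of Lemma~\ref{lem:basiccriteria}: we randomly search for conjugates $H_2,H_3$ with $H\cap(H_2H_3)\not\subseteq H\cap((H\cap H_2)H_3)$.

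\emph{First case.} Here all involutions of $H$ lie in $E:=2^5$, and $E$ contains exactly six $\mathrm{2A}$-elements. Any element of $H$ of order $4$ squares into $E$, and $H/E$ embeds in $2^8\cn(S_3\times A_6)$ as a $\{2,7\}$-group. However, $7\nmid|2^{5+8}\cn(S_3\times A_6)|$, so $H$ is a $2$-group. Moreover, since all involutions of $H$ lie in $E$, any subgroup of $H$ properly containing $E$ has all its involutions in $E$; this is a strong restriction. So we enumerate, in the $2$-local maximal subgroup, the $2$-subgroups containing $E$ whose involutions all lie in $E$. We expect this to reduce quickly to $H=E$ or to a few extensions of $E$ by cyclic or quaternion groups, and we then apply \texttt{NearTITest()} to each.

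The main obstacle will be certifying non-binarity for the candidates containing a large normal elementary abelian subgroup, especially $2^{10}$. These are not small groups, so Lemma~\ref{lem:efficienttest} (which needs $H\cap H_2=H\cap H_3=1$) may fail. In that case we would first try Lemma~\ref{lem:suborb}: we look for an $H$-orbit on $(G:H)$ on which $H$ acts non-binarily, for instance a suborbit where $H$ acts as a Frobenius group with complement of order greater than $2$, and conclude via Lemma~\ref{lem:frob}. Failing that, we would test Statement~2 of Lemma~\ref{lem:basiccriteria} directly by random search.
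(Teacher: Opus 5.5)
\textbf{The $2^5$ case.} This part is the paper's argument. Since $7\nmid|2^{5+8}\cn(S_3\times A_6)|$, Corollary~\ref{cor:orderformFi22} forces $H$ to be a $2$-group. One then enumerates the $2$-groups all of whose involutions lie in $K_2=2^5$ and runs \texttt{NearTITest()} on each.

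\textbf{The $2^{10}$ case.} Here there is a gap, located exactly at the step you flag as the main obstacle.

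You propose to enumerate every $\{2,7\}$-overgroup of $K_1=2^{10}$ in $M_1=2^{10}\cn(Z(G).\mathrm{M}_{22})$. That list includes $2$-groups up to a full Sylow $2$-subgroup of $G$, of order $2^{17}$. You have no guaranteed way to certify non-binarity for the large ones, and your fallbacks do not close this:
\begin{itemize}
\item Lemma~\ref{lem:frob} can never apply to a $2$-group, since a $p$-group is never a Frobenius group.
\item Searching for witnesses within $2^{10}$ alone does not transfer to overgroups, because binarity is not monotone in the stabiliser.
\end{itemize}

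The missing idea is a short reduction. By Lemma~\ref{lem:sub}, the action of $M_1$ on $(M_1:H)$ is binary. Since $K_1\trianglelefteq M_1$ and $K_1\leq H$, this action is equivalent to a faithful action of $M_1/K_1\cong Z(G).\mathrm{M}_{22}$ with point stabiliser $H/K_1$. Proposition~\ref{prop:1stclas} covers both $\mathrm{M}_{22}$ and $3.\mathrm{M}_{22}$, so $H/K_1=1$, i.e.\ $H=K_1$. A single \texttt{NearTITest()} on $K_1$ in $G$ then finishes the case.

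That test succeeds. Since Lemma~\ref{lem:efficienttest} requires conjugates meeting $K_1$ trivially, your assertion that conjugates of a group containing $2^{10}$ must intersect it non-trivially is false for $K_1$ itself.
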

\begin{proof}
    We assume the contrary, so that either $K_1:=2^{10}\trianglelefteq H\leq 2^{10}\cn(Z(G).\mathrm{M}_{22})$ or $K_2:=2^{5}\trianglelefteq H\leq 2^{5+8}\cn( S_3\times (Z(G). A_6))$ by Proposition~\ref{prop:Fi222local}. Suppose first that we are in the former case.

    By Lemma~\ref{lem:sub}, the action of $M_1:=2^{10}\cn(Z(G).\mathrm{M}_{22})$ on $(M_1:H)$ is binary. Since $K_1\trianglelefteq H$, this action is equivalent to a faithful action of $Z(G).\mathrm{M}_{22}$ with point stabiliser isomorphic to $H/K_1$. By Proposition~\ref{prop:1stclas}, $H/K_1=1$ and so $H=K_1$. Next, we run {\tt{NearTITest()}} on $K_1$ in $G$, finding that the action is not binary, therefore, $K_2\trianglelefteq H\leq2^{5+8}\cn( S_3\times  (Z(G).A_6))=:M_2$.

    Since $|M_2|$ is indivisible by $7$, we deduce from~\ref{cor:orderformFi22} that $H$ is a 2-group. Every involution in $H$ is contained in $K_2$, so we compute all 2-subgroups of $G$, all of whose involutions lie in $K_2$ (for details see~\cite{comps}). Testing each such subgroup using {\tt NearTITest()}, shows that none yield genuine binary actions, contradicting our assumption. Therefore, $H\cap \mathrm{2C}=\emptyset$ as desired.\end{proof}
\begin{prop}\label{prop:2fi22no2C}
    Suppose that $2\mid |Z(G)|$. Then $H\cap \varphi^{-1}(\mathrm{2C})=\emptyset$.  
\end{prop}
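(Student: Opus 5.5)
We argue by contradiction, following the approach of Corollary~\ref{cor:Fi22no2C} but working with $\varphi$-images. Suppose $g\in H\cap\varphi^{-1}(\mathrm{2C})$. The first step is to redo the argument of Proposition~\ref{prop:Fi222local} in $G=2.\mathrm{Fi}_{22}$ or $6.\mathrm{Fi}_{22}$. By Lemma~\ref{lem:conjconn} and \cite[Theorems 1.1 and 4.2]{CDP}, every element of $H$ of maximal $2$-fixity lies in $\varphi^{-1}(\mathrm{2A})$. Lemma~\ref{lem:Finf} then says that $H$ contains a full $C_G(g)$-orbit on some $G$-class contained in $\varphi^{-1}(\mathrm{2A})$. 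We compute these orbits in \magma, discarding any orbit that generates a group of order divisible by $3$ or $5$, or that meets $Z(G)$ non-trivially; the latter is allowed since $H\cap Z(G)=1$.

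We expect the surviving orbits to generate a lift $\widetilde{L}$ of the elementary abelian $2^5$ found before. There are two possibilities.
\begin{itemize}
\item $\widetilde{L}$ is isomorphic to $2^5$ and maps isomorphically onto its image in $\mathrm{Fi}_{22}$. Then $\varphi(H)$ contains $\varphi(\widetilde L)$, and we repeat the closure computation. We add further $\mathrm{2A}$-lifts and the orbits $\mathcal{O}(h)$ for $h\in H\cap\varphi^{-1}(\mathrm{2C})$. At each step we check that the group generated is a $\{2,7\}$-group avoiding $Z(G)$.
\item $\widetilde{L}$ contains $z^3$, the central involution. This case is excluded.
\end{itemize}
We aim to show that every surviving configuration falls into one of two cases: either $H$ lies in a preimage of $2^{5+8}\cn(S_3\times A_6)$ and has all involutions in a normal $2^5$, or $H$ contains a normal $2^{10}$ and lies in the preimage of $2^{10}\cn\mathrm{M}_{22}$.

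We then finish as in Corollary~\ref{cor:Fi22no2C}. In the $2^{10}$ case, Lemma~\ref{lem:sub} gives a binary action of the preimage $M_1$ of $2^{10}\cn\mathrm{M}_{22}$. Factoring out the normal $2^{10}\leq H$ turns this into a faithful action of the appropriate cover $Z(G).\mathrm{M}_{22}$. Since $2.\mathrm{M}_{22}$ and $6.\mathrm{M}_{22}$ appear in Proposition~\ref{prop:1stclas}, this forces $H=2^{10}$, and {\tt NearTITest()} in $G$ rules that out. In the $2^5$ case, the relevant maximal subgroup has order prime to $7$, so $H$ is a $2$-group by Corollary~\ref{cor:orderformFi22}. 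We then enumerate $2$-subgroups of that maximal subgroup whose involutions all lie in the lifted $2^5$ and meet $Z(G)$ trivially, and test each with {\tt NearTITest()}.

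The main obstacle is the orbit and closure computation inside the double and sextuple covers. Orbits of $C_G(g)$ on the $\varphi^{-1}(\mathrm{2A})$ classes are larger there. It is also not a priori clear that the lift of $2^5$ (or $2^{10}$) avoids $Z(G)$: if every lift meets $Z(G)$ the argument closes immediately, and otherwise the split lifts must be tracked. To keep this manageable, we first work in $2.\mathrm{Fi}_{22}$. For $6.\mathrm{Fi}_{22}$, we then note that $H$, being a $\{2,7\}$-group meeting $Z(G)$ trivially, projects isomorphically into $6.\mathrm{Fi}_{22}/\langle z^2\rangle\cong 2.\mathrm{Fi}_{22}$. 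We check that binarity of the action descends along this quotient, or else redo the orbit computation with {\tt OrbReps()} to save memory.
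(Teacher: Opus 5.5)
Your overall shape is the right kind of argument: redo the $C_G(g)$-orbit and closure computation inside the cover, pass to a normaliser, and test the survivors with {\tt NearTITest()}. The gap is that the structural outcome you predict, on which your whole finishing step rests, is false when $2\mid|Z(G)|$.

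In the cover, the $C_G(g)$-orbits on $\varphi^{-1}(\mathrm{2A})$ that generate groups of order prime to $3$ all have size $3$. The relevant orbits are those $O$ with $\langle g,O\rangle\cap Z(G)=1$; note it is $\langle g,O\rangle$, not $\langle O\rangle$, that must avoid $Z(G)$. There are two such orbits, and they generate elementary abelian groups $K_1(g),K_2(g)\cong 2^3$ containing $g$ and only three elements of $\varphi^{-1}(\mathrm{2A})$. Lemma~\ref{lem:Finf} forces only one of these into $H$, not a lift of the $2^5$ of Proposition~\ref{prop:Fi222local}.

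The correct closure condition is correspondingly weaker: for each $g'\in H\cap\varphi^{-1}(\mathrm{2C})$, at least one of $K_1(g'),K_2(g')$ lies in $H$. Carrying it out gives $J:=\langle H\cap\varphi^{-1}(\mathrm{2A})\rangle\in\{K_1(g),L_1,L_2,L_3\}$, with the $L_i$ elementary abelian of orders $16,32,64$. Adding further $\varphi^{-1}(\mathrm{2A})$-elements and re-closing either returns a conjugate of some $L_i$ or meets $Z(G)$. So your $2^{10}$ case, and the appeal to $Z(G).\mathrm{M}_{22}$, never arises. Meanwhile configurations outside your dichotomy do arise: for $J=K_1(g)$ or $L_1$, nothing places $H$ inside the preimage of $2^{5+8}\cn(S_3\times A_6)$.

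What is missing is an argument for the cases that actually occur.
\begin{enumerate}
\item Use Proposition~\ref{prop:fi222B2C}: each $\mathrm{2B}$- or $\mathrm{2C}$-lift in $H$ is a product of two or three $\mathrm{2A}$-lifts in $H$. Hence every involution of $H$ lies in $J$, and so $H\le N_G(J)$.
\item If $H$ is a $2$-group, enumerate the $2$-subgroups of $N_G(J)$ that contain $J$ and have exactly $|J|-1$ involutions, and test each one.
\item The case $J=L_3\cong 2^6$ is where your plan genuinely breaks. Here $7\mid|N_G(L_3)|$, so your step ``the relevant maximal subgroup has order prime to $7$, hence $H$ is a $2$-group'' is unavailable. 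You must separately enumerate and test the subgroups of $N_G(L_3)$ containing $L_3$ whose order is divisible by $7$. By Corollary~\ref{cor:orderformFi22}, these are the only non-$2$-group possibilities.
\end{enumerate}

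Separately, your proposed shortcut from $6.\mathrm{Fi}_{22}$ to $2.\mathrm{Fi}_{22}$ has no general justification. Binarity of the action on $(G:H)$ does not pass to the action on $(G:HN)$ for central $N$. For instance, $2.\mathrm{J}_2$ acting on the cosets of $\langle g\rangle$ with $g\in\mathrm{2c}$ is binary, yet $\mathrm{J}_2$ has no non-trivial binary action. So the computation has to be carried out in $6.\mathrm{Fi}_{22}$ directly.
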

\begin{proof}
    Assume the contrary and let $g\in H\cap\varphi^{-1}(\mathrm{2C})$. We compute in \magma~that there are exactly four orbits of $C_G(g)$ on $\varphi^{-1}(\mathrm{2A})$ which generate a group of order coprime to 3, each of size exactly three. By Lemma~\ref{lem:Finf}, $H$ contains one of these orbits. In fact, we verify that two orbits $O$ satisfy $g\in\langle O\rangle$, while the other two have $Z(G)\cap \langle g,O\rangle\ne 1$, and so we may assume that $g\in \langle O\rangle$. Moreover, both such orbits generate an elementary abelian group of order $2^3$ containing exactly three elements of $\varphi^{-1}(\mathrm{2A})$ Let $K_1(g)$ and $K_2(g)$ be these groups and observe that the pair $\{K_1(g),K_2(g)\}$ is uniquely defined by $g$, and at least one of the two is a subgroup of $H$ by Lemma~\ref{lem:Finf}. Suppose for the remainder that $K_1(g)\leq H$; the other case is identical.

    Suppose that there is some $h\in (H\cap\varphi^{-1}(\mathrm{2A}))\setminus K_1(g)$. We can compute in \magma~conjugacy representatives of each group of the form $\langle h,K_1(g)\rangle$, and then find the smallest groups $L$ intersecting $Z(G)$ trivially and containing $\langle h,K_1(g)\rangle$ such that for all $g'\in \varphi^{-1}(\mathrm{2C})\cap L$ there is some $i\in\{1,2\}$ such that $K_i(g')\leq L$. We find that there are exactly three such (conjugacy representative) groups $L_1,L_2,L_3$, elementary abelian of orders 16, 32, and 64, respectively. Thus, we can assume that either $|H\cap\varphi^{-1}(\mathrm{2A})|=3$, or one of $L_1,L_2,L_3$ is a subgroup of $H$ by Lemmas~\ref{lem:conjconn} and Lemma~\ref{lem:Finf}.

    Suppose that $\langle L_i,h'\rangle\leq H$ for some $h'\in (H\cap\varphi^{-1}(\mathrm{2A}))\setminus L_i$. We can again compute the smallest group, $\overline{L_i}$ containing $\langle L_i,h'\rangle$ such that $K_j(g')\leq \overline{L_i}$ for all $g'\in \varphi^{-1}(\mathrm{2C})\cap L$, finding that $\overline{L_i}$ is either conjugate to one of $L_1,L_2,L_3$, or intersects $Z(G)$ non-trivially. Thus, $\langle H\cap \varphi^{-1}(\mathrm{2A})\rangle\in\{K_1(g),L_1,L_2,L_3\}$. In fact, all involutions of $H$ lie in this subgroup $\langle H\cap \varphi^{-1}(\mathrm{2A})\rangle$. Indeed, by Proposition~\ref{prop:fi222B2C}, for any 2B- or 2C-lift $k\in H$, there exists either a pair or a triple of 2A-lifts whose product is $k$.

    Now, $H\leq N_G(J)$ for some $J\in\{K_1(g),L_1,L_2,L_3\}$.     Suppose first that $H$ is a 2-group. As established, all involutions in $H$ lie in $J$; we compute in \magma~the conjugacy class representatives of all groups with $|J|-1$ involutions which are generated by $J$ together with additional 2-elements (see~\cite{comps} for details). This list then consists of the conjugacy representatives of all eligible 2-groups $H$; we test the groups with {\tt NearTITest()}, confirming that none yield binary actions. 
    
    On the other hand, if $H$ is not a 2-group, then $7\mid |H|$ by Corollary~\ref{cor:orderformFi22}, and so $7\mid |N_G(J)|$---we verify in \magma~that this is only true for $J=L_3$. We use {\tt FilteredSubgroups()} to generate the list of all subgroups of $N_G(L_3)$ with order divisible by 7 which contain $L_3$; testing each such group with {\tt NearTITest()} we confirm the result.
             \end{proof}
\begin{prop}\label{prop:clasFi22}
    Either $H=1$ or $H$ is generated by an element of $\varphi^{-1}(\mathrm{2A})$.
\end{prop}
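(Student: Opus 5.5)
We combine the reductions already established for $G/Z(G)=\mathrm{Fi}_{22}$. Assume $H\neq 1$. By Corollary~\ref{cor:orderformFi22}, $|H|=2^m p$ with $m\geq 1$ and $p\in\{1,7\}$, so $H$ contains an involution. Every involution of $H$ maps under $\varphi$ into $\mathrm{2A}\cup\mathrm{2B}\cup\mathrm{2C}$, since $H\cap Z(G)=1$ and the non-central involution lifts in the covers map to these classes. By Corollary~\ref{cor:Fi22no2C} (when $2\nmid|Z(G)|$) and Proposition~\ref{prop:2fi22no2C} (when $2\mid |Z(G)|$), $H\cap\varphi^{-1}(\mathrm{2C})=\emptyset$. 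Next we show $H$ meets $\varphi^{-1}(\mathrm{2A})$. If it contains some $g\in\varphi^{-1}(\mathrm{2B})$, then Proposition~\ref{prop:fi222B2C} writes $g=xy$ with $x,y\in H\cap\varphi^{-1}(\mathrm{2A})$. Otherwise every involution of $H$ lies in $\varphi^{-1}(\mathrm{2A})$ and there is nothing to prove. In either case $|\varphi(H)\cap\mathrm{2A}|\geq 1$.

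We split on $|\varphi(H)\cap\mathrm{2A}|$. If it equals $1$, Corollary~\ref{cor:fi22unique2a} gives $|H|=2$, so $H$ is generated by an element of $\varphi^{-1}(\mathrm{2A})$, as required. If it is greater than $1$, Proposition~\ref{prop:fi222C} gives $|\varphi(H)\cap\mathrm{2A}|\geq 3$, and the product of any three distinct $\mathrm{2A}$-lifts in $H$ maps into $\mathrm{2C}$. First we check that these products really are products of three distinct elements of $H$. A subtlety is that distinct lifts $x,y,z\in H$ have distinct images in $\varphi(H)$, because $H\cap Z(G)=1$ makes $\varphi|_H$ injective. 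So from three distinct elements of $\varphi(H)\cap\mathrm{2A}$ we obtain three distinct elements $x,y,z\in H\cap\varphi^{-1}(\mathrm{2A})$. Then $xyz\in H\cap\varphi^{-1}(\mathrm{2C})$, contradicting the previous paragraph. Hence $|\varphi(H)\cap\mathrm{2A}|=1$ and we are back in the first case.

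The main obstacle is the applicability of Proposition~\ref{prop:fi222C} as stated. Its final claim was verified only for mutually commuting triples, whereas its statement is for arbitrary distinct triples. We therefore need the three chosen $\mathrm{2A}$-lifts to commute. Any two non-commuting $\mathrm{2A}$-involutions have product of order $3$, and $3\nmid|H|$ by the divisibility results at the start of this subsection. Hence all $\mathrm{2A}$-lifts in $H$ pairwise commute modulo $Z(G)$. To lift this to genuine commutation in $H$, note that the commutator $[x,y]$ lies in $Z(G)\cap H=1$. The argument is thus purely a combination of prior results, with no new computation.
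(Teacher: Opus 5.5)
Your proof is correct and follows the paper's argument. Corollary~\ref{cor:orderformFi22} makes $|H|$ even; Proposition~\ref{prop:fi222C}, together with Corollary~\ref{cor:Fi22no2C} and Proposition~\ref{prop:2fi22no2C}, then forces $|\varphi(H)\cap\mathrm{2A}|=1$, and Corollary~\ref{cor:fi22unique2a} finishes. Your extra checks make explicit what the paper leaves implicit: nonemptiness of $H\cap\varphi^{-1}(\mathrm{2A})$ via Proposition~\ref{prop:fi222B2C}, injectivity of $\varphi|_H$, and the pairwise commutation of the $\mathrm{2A}$-lifts in $H$ (from the $3$-transposition property, $3\nmid|H|$, and $Z(G)\cap H=1$).
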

\begin{proof}
    Suppose that $H\ne 1$. First observe that $|H|$ is even by Corollary~\ref{cor:orderformFi22}. Thus, Proposition~\ref{prop:fi222C} together with Corollary~\ref{cor:Fi22no2C} and Proposition~\ref{prop:2fi22no2C} imply that $|\varphi(H)\cap\mathrm{2A}|=1$. The result now follows from Corollary~\ref{cor:fi22unique2a}. 
   \end{proof}

\subsection{$\mathrm{HN}$} We move on to $G=\mathrm{HN}$; by~\cite[Propositions 3.1, 3.2, and 3.3]{CDP}, $\Gamma(\mathcal{C})$ is connected for all $\mathcal{C}\in\{\mathrm{2A,2B,3A,3B,5A,5B,5C,5D}\}$. Consequently we have the following.

\begin{prop}\label{prop:firstredHN}
    The stabiliser $H$ has order dividing $7\cdot 11\cdot 19$.
\end{prop}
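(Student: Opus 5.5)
The plan is to apply Lemma~\ref{lem:conjconn} prime by prime to exclude the primes $2,3,5$ from $|H|$. Then we use the order of $\mathrm{HN}$, which is $2^{14}\cdot3^6\cdot5^6\cdot7\cdot11\cdot19$, to conclude.

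For the exclusion step, fix $p\in\{2,3,5\}$ and suppose $g\in H$ has order $p$. Since $\mathrm{HN}$ has trivial centre, every prime order element is non-central. By Lemma~\ref{lem:conjconn}, either $\Delta(g,\C)\le H$, or there is some $h$ of order $p$ with $\fix(g)\subsetneq\fix(h)$ and $\Delta(h,\mathcal D)\le H$. In the second case we again have $h\in H$, because $H$ is the stabiliser of a point of $\fix(g)$. In either case $H$ contains a component group $\Delta(x,\mathcal{C}')$ for some element $x$ of order $p$ in $H$ and its class $\mathcal{C}'$.

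The classes of elements of order $2$, $3$ and $5$ in $\mathrm{HN}$ are exactly $\mathrm{2A},\mathrm{2B},\mathrm{3A},\mathrm{3B},\mathrm{5A},\mathrm{5B},\mathrm{5C},\mathrm{5D}$. By the cited results of \cite{CDP}, each of the corresponding graphs $\Gamma(\mathcal C')$ is connected. Hence $\Delta(x,\mathcal C')=\langle\mathcal C'\rangle$ is a non-trivial normal subgroup of the simple group $G$, so it equals $G$. This gives $G\le H$, contradicting $H<G$, and therefore $p\nmid|H|$ for $p\in\{2,3,5\}$.

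There is one point to check here. The paper sometimes uses rational classes, and the classes $\mathrm{5A},\mathrm{5B}$ may fuse into a single rational class. However, connectivity of the graph on a single conjugacy class already suffices: applying Lemma~\ref{lem:conjconn} with conjugacy classes gives the same conclusion. So the proof really is just this bookkeeping, with no substantial obstacle.

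Finally, since $7$, $11$ and $19$ each divide $|G|$ exactly once, $|H|$ divides $7\cdot11\cdot19$.
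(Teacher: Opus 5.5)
\textbf{There is a genuine gap at $p=5$: your list of classes of elements of order $5$ is incomplete.} Your treatment of $p=2$ and $p=3$ is fine; it is essentially what lies behind Proposition~\ref{prop:binarydiv}, which the paper simply cites. Your final divisibility step is also correct.

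$\mathrm{HN}$ has five classes of elements of order $5$, namely $\mathrm{5A},\mathrm{5B},\mathrm{5C},\mathrm{5D},\mathrm{5E}$. The graphs $\Gamma(\mathrm{5A}),\dots,\Gamma(\mathrm{5D})$ are connected, but $\Gamma(\mathrm{5E})$ is \emph{not}: by \cite[Proposition 3.3]{CDP} its component group is $\Delta(h)=5^{1+4}_+$ for $h\in\mathrm{5E}$. So if the element $x$ produced by your chain argument from Lemma~\ref{lem:conjconn} lies in $\mathrm{5E}$, you only obtain $5^{1+4}_+\le H$. That is no contradiction by itself, so your claim that every $\Delta(x,\mathcal{C}')$ equals $G$ fails. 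Your aside about rational classes is harmless but beside the point.

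The missing step is the one the paper supplies, and it needs a structural fact plus Lemma~\ref{lem:maxfixdelta}:
\begin{enumerate}[(a)]
\item Your chain argument yields an element of maximal $5$-fixity in $H$.
\item Such an element cannot lie in $\mathrm{5A}\cup\mathrm{5B}\cup\mathrm{5C}\cup\mathrm{5D}$. Otherwise Lemma~\ref{lem:conjconn} would give $\langle\mathcal{C}\rangle=G\le H$. Hence $\mathrm{5E}$ is the unique class of maximal $5$-fixity, and $5^{1+4}_+=\Delta(h)\le H$ for some $h\in\mathrm{5E}$.
\item One must then know that the centre of this $5^{1+4}_+$ is generated by an element of class $\mathrm{5B}$. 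By Lemma~\ref{lem:maxfixdelta}, that central element is also of maximal $5$-fixity, contradicting the uniqueness of $\mathrm{5E}$. Equivalently, Lemma~\ref{lem:conjconn} together with connectivity of $\Gamma(\mathrm{5B})$ would force $G\le H$.
\end{enumerate}
Without the identification of $\Delta(h)$ and its central $\mathrm{5B}$-element, your argument does not exclude the prime $5$.
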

\begin{proof}
    That $2\nmid|H|$ and $3\nmid |H|$ is Proposition~\ref{prop:binarydiv}; we show that $H$ has no elements of order 5. Suppose the contrary. Since $\Gamma(\mathcal{C})$ is connected for all $\mathcal{C}\in\{\mathrm{5A,5B,5C,5D}\}$, it follows from Lemma~\ref{lem:conjconn} that $\mathrm{5E}$ is the unique $G$-conjugacy class of maximal 5-fixity and so $\Delta(h)\leq H$ for some $h\in\mathrm{5E}$. By~\cite[Proposition 3.3]{CDP}, $\Delta(h)=5^{1+4}_+$; examining this subgroup $5^{1+4}_+$ closer, we see that its center is generated by an element of $G$-conjugacy class $\mathrm{5B}$. Consequently, $\mathrm{5B}$ is a class of maximal $5$-fixity by Lemma~\ref{lem:maxfixdelta}, a contradiction.
\end{proof}

The remaining primes are easy to deal with.

\begin{prop}
    The group $\mathrm{HN}$ has no non-trivial faithful transitive binary actions.
\end{prop}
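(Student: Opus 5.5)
By Proposition~\ref{prop:firstredHN}, $|H|$ divides $7\cdot 11\cdot 19$. So the plan is to rule out, in turn, every non-trivial subgroup whose order divides this number. I will first dispose of the subgroups of prime order and then the composite orders.

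\emph{Prime order.} Suppose $|H|=p$ with $p\in\{7,11,19\}$. Here $H=\langle g\rangle$ is cyclic of prime order, so Lemma~\ref{lem:cycbintest} applies. For each rational class $\mathcal{C}$ of elements of order $p$ in $\mathrm{HN}$ (classes $\mathrm{7A}$, $\mathrm{11A}$, and $\mathrm{19A}\cup\mathrm{19B}$), I compute the sum of the class multiplication coefficients $\sum_{(i,j)}n_G(\mathcal{C}_i,\mathcal{C}_j,\mathcal{C}_1)$ using the \gap~Character Table Library. I expect each sum to be far larger than $p-2$, since these are small classes in a very large group. Lemma~\ref{lem:cycbintest} then shows that none of these actions is binary.

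\emph{Composite order.} Now suppose $|H|$ is divisible by two of the primes $7$, $11$, $19$. I would consult the list of maximal subgroups of $\mathrm{HN}$ from the $\Bbb{ATLAS}$ and determine which of them have order divisible by $7\cdot 11$, $7\cdot 19$ or $11\cdot 19$. I expect that $19$ divides the order of no maximal subgroup except possibly one, and in that case its order should not be divisible by $77$-type products. If $19$ divides no such order at all, this is immediate, since then $19\nmid|H|$.

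\emph{The case $|H|=77$.} The order $77$ must be handled separately. The maximal subgroups with order divisible by $77$ are $A_{12}$ and perhaps $2.\mathrm{HS}.2$. Since $|H|$ is odd, $H$ would lie in $A_{12}$ or in $\mathrm{HS}$. In $\mathrm{HS}$ I can invoke Lemma~\ref{lem:sub} together with Proposition~\ref{prop:1stclas}, which gives $H=1$ and hence a contradiction. In $A_{12}$ I can invoke Lemma~\ref{lem:sub} together with the classification of binary actions of alternating groups \cite{GG23}, or more simply observe that $A_{12}$ has no subgroup of order $77$, because $7+11>12$. Alternatively, I would run {\tt FilteredSubgroups()} and {\tt NearTITest()} on the few surviving candidates. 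Any order involving $19$ is excluded by the maximal-subgroup check above.

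\emph{Main obstacle.} The main risk is bookkeeping: I must be sure the maximal subgroup orders have been checked correctly, so that no composite case is overlooked. I expect the character-table computation itself to be routine.
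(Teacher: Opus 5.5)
\textbf{Gap: the composite orders involving $19$.} Your prime-order step is the same as the paper's. The composite step, however, rests on an expectation that is false. You expect that no maximal subgroup of $\mathrm{HN}$ has order divisible by $19$ times another of the primes $7,11$, and you conclude that ``any order involving $19$ is excluded by the maximal-subgroup check.'' But $\mathrm{U}_3(8){:}3$ is maximal in $\mathrm{HN}$ and has order $2^9\cdot 3^5\cdot 7\cdot 19$. So subgroup orders alone do not exclude $|H|=7\cdot 19=133$, and your plan never treats this case. Orders do exclude $11\cdot 19$ and $7\cdot 11\cdot 19$, because $\mathrm{U}_3(8){:}3$ is the only maximal subgroup of order divisible by $19$.

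\textbf{The missing idea.} The paper handles all products at once. For distinct $p,q\in\{7,11,19\}$ we have $p\nmid q-1$, so every group of order $pq$ is cyclic. Since $\mathrm{HN}$ has no elements of order $77$, $133$ or $209$, no subgroup of any of these orders exists. In addition, no maximal subgroup has order divisible by $7\cdot 11\cdot 19$. Together these force $H$ to be trivial or of prime order, and you then finish with Lemma~\ref{lem:cycbintest} exactly as the paper does.

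This argument also makes your separate treatment of $|H|=77$ unnecessary. If you keep that treatment, note one correction. The odd-order subgroups of $2.\mathrm{HS}.2$ lie in the non-split double cover $2.\mathrm{HS}$, which contains no copy of $\mathrm{HS}$. So that step should apply Lemma~\ref{lem:sub} together with Proposition~\ref{prop:1stclas} for $2.\mathrm{HS}$, not for $\mathrm{HS}$.
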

\begin{proof}
    We show that $H$ has no elements of order $7$, $11$, or $19$. Note that no maximal subgroup of $G$ has order divisible by $7\cdot11\cdot19$. Additionally, $G$ has no elements of order $7\cdot 11$, $7\cdot19$, or $11\cdot 19$, whence $H$ is either trivial or cyclic of order $7$, $11$, or $19$ (here we are using that $p\nmid q-1$ for all primes $p,q\in\{7,11,19\}$). The class multiplication coefficient $n_G(\mathcal{C})$ is exceedingly large for each $\mathcal{C}\in\{\mathrm{7A,11A,19A,19B}\}$ so $|H|$ is not prime by Lemma~\ref{lem:cycbintest}. The result follows.
\end{proof}

\subsection{$\mathrm{Ly}$} The next group to consider is $G=\mathrm{Ly}$. By~\cite[Propositions 3.1, 3.2, and 3.3]{CDP}, $\Gamma(\mathcal{C})$ is connected for all $\mathcal{C}\in\{\mathrm{2A},\mathrm{3B},\mathrm{5A}\}$. We quickly deduce the following.
\begin{prop}
    The stabiliser $H$ has order dividing $3^7\cdot 7\cdot 11\cdot 31\cdot37\cdot 67$
\end{prop}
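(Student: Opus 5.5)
The plan is to handle each prime dividing $|\mathrm{Ly}|=2^8\cdot3^7\cdot5^6\cdot7\cdot11\cdot31\cdot37\cdot67$ separately. We must show that $2\nmid|H|$ and $5\nmid|H|$. The remaining primes carry no restriction, since $3^7$ is the full $3$-part of $|G|$ and $7,11,31,37,67$ divide $|G|$ exactly once.

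The prime $2$ is immediate: $(\mathrm{Ly},2)$ appears in Table~\ref{tbl:forbiddenpbinary}, so $2\nmid|H|$ by Proposition~\ref{prop:binarydiv}. For completeness I would also note the direct reason. $\mathrm{Ly}$ has a single involution class $\mathrm{2A}$ and $\Gamma(\mathrm{2A})$ is connected, so $\Delta(g)=\langle\mathrm{2A}\rangle=G$. Lemma~\ref{lem:conjconn} then forbids any involution in $H$, because there is no other class of order $2$ with strictly larger fixed-point set.

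For the prime $5$, suppose $g\in H$ has order $5$. The classes of elements of order $5$ in $\mathrm{Ly}$ are $\mathrm{5A}$ and $\mathrm{5B}$, and $\Gamma(\mathrm{5A})$ is connected. Since $G$ is simple, $\Delta(x)=\langle\mathrm{5A}\rangle=G$ for every $x\in\mathrm{5A}$. So by Lemma~\ref{lem:conjconn}, whether $g$ lies in $\mathrm{5A}$ or $\mathrm{5B}$, there must be an element $h\in\mathrm{5B}$ of maximal $5$-fixity with $\Delta(h,\mathrm{5B})\le H$. Next I would read off $\Delta(h,\mathrm{5B})$ from~\cite[Proposition 3.3]{CDP}, and I expect this identification to be the main step. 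One possibility is that $\Delta(h)$ has even order, which contradicts $2\nmid|H|$. The other is that $\Delta(h)$ is a $5$-local group such as an extraspecial $5^{1+4}$ containing elements of $\mathrm{5A}$ (for instance in its centre). In that case Lemma~\ref{lem:maxfixdelta} makes such a $\mathrm{5A}$ element of maximal $5$-fixity. Lemma~\ref{lem:conjconn} then forces its component group, which is $G$, into $H$, a contradiction. This mirrors the argument for $\mathrm{HN}$ in Proposition~\ref{prop:firstredHN}. Either way $5\nmid|H|$.

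If~\cite{CDP} instead reports $\Gamma(\mathrm{5B})$ as connected, the argument is even shorter: both classes then have component group $G$, and Lemma~\ref{lem:conjconn} excludes order-$5$ elements outright. The $3$-part needs no argument for this statement. The fact that $\Gamma(\mathrm{3B})$ is connected will only be used later, to force every element of order $3$ in $H$ into $\mathrm{3A}$. Combining the two exclusions gives $|H|\mid3^7\cdot7\cdot11\cdot31\cdot37\cdot67$.
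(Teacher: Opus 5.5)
Your proposal is correct and matches the paper's proof: $2\nmid|H|$ comes from Proposition~\ref{prop:binarydiv}, and for $p=5$ the connectedness of $\Gamma(\mathrm{5A})$ forces a $\mathrm{5B}$ element $h$ of maximal $5$-fixity with $\Delta(h)\le H$, after which Lemma~\ref{lem:maxfixdelta} gives the contradiction. The step you left open is settled by \cite[Proposition 3.3]{CDP}, which gives $\Delta(h)=5^{1+4}_+$ meeting $\mathrm{5A}$ non-trivially, so your second branch is the one that occurs; your case split alone would not be exhaustive, since an odd-order $\Delta(h)$ avoiding $\mathrm{5A}$ is not covered.
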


\begin{proof}
    That $2\nmid |H|$ is Proposition~\ref{prop:binarydiv}. Suppose that $5\mid |H|$. Since $\Gamma(\mathrm{5A})$ is connected it follows that $\mathrm{5B}$ is the unique $G$-conjugacy class of maximal 5-fixity, and so $\Delta(h)\leq H$ for some $h\in\mathrm{5B}$. By~\cite[Proposition 3.3]{CDP}, $\Delta(h)=5^{1+4}_+$ which is verified to intersect $\mathrm{5A}$ non-trivially, and so $\mathrm{5A}$ is a class of maximal $5$-fixity by Lemma~\ref{lem:maxfixdelta}, a contradiction.
\end{proof}
We now proceed in a slightly different manner than for many of the other groups. In particular, for our methods it is preferable to work inside permutation representations of our groups; unfortunately, we only have access to high-dimensional matrix representations of $\mathrm{Ly}$ in \magma~\cite{magma}, and so we instead start by sequentially considering the maximal subgroups of $G$ which have permutation representations available.
\begin{prop}\label{prop:Lymaxes1}
    If $H\ne 1$ then either $H$ is generated by an element of the $G$-conjugacy class $\mathrm{3A}$, or $H$ is not a subgroup of either of the maximal subgroups $G_2(5)$ and $2.A_{11}$.
\end{prop}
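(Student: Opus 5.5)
The plan is to treat the two maximal subgroups $M\in\{G_2(5),2.A_{11}\}$ separately, each in a permutation representation in \magma. Suppose $1\ne H\le M$ and the action of $G$ on $(G:H)$ is binary. Then by Lemma~\ref{lem:sub} the action of $M$ on $(M:H)$ is binary. By the previous proposition, $|H|$ divides $3^7\cdot 7\cdot 11\cdot 31\cdot37\cdot 67$. Intersecting this with $|M|$ gives: for $G_2(5)$, $|H|$ divides $3^3\cdot 7\cdot 31$; for $2.A_{11}$ (order $2^8\cdot3^4\cdot5^2\cdot7\cdot11$), $|H|$ divides $3^4\cdot7\cdot11$. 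In particular $|H|$ is odd, so $H$ is solvable, and the candidates for $H$ form a short list of small odd-order subgroups.

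For each $M$ I will enumerate the relevant subgroups. One route is {\tt SolvableSubgroups()} followed by discarding those of even order or order divisible by $5$. The other is {\tt FilteredSubgroups()} applied to subgroups of order dividing the numbers above. Before testing I will use Lemma~\ref{lem:cycbintest} in $G$ itself (via the \gap~character table of $\mathrm{Ly}$) to remove cyclic subgroups of prime order $7,11,31$ and $3$ from class $\mathrm{3B}$. I expect $n_G(\mathcal{C})>p-2$ in each of these cases, so only cyclic subgroups generated by $\mathrm{3A}$-elements survive among the groups of prime order. The $\mathrm{3A}$ case is the allowed exception, consistent with Proposition~\ref{prop:allbins}. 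For this step the $M$-classes need to be fused into $G$-classes, and I will do this with the class fusion stored in the \gap~library.

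For the remaining non-prime candidates $H\le M$, I will test non-binarity. This can be done inside $M$ with {\tt NearTITest(M,H)}, since by Lemma~\ref{lem:sub} non-binarity for $M$ implies non-binarity for $G$. Lemma~\ref{lem:efficienttest} applies within $M$ whenever we find conjugates $H^{m_2},H^{m_3}$ meeting $H$ trivially and elements with $h_1h_2h_3=1$, $h_1\ne1$. Because the candidates are small relative to $M$, random search should find such witnesses quickly. For the $3$-groups, in particular the subgroups of a Sylow $3$-subgroup of order up to $81$ in $2.A_{11}$ or $27$ in $G_2(5)$, I will work in the Sylow subgroup directly, as was done for $\mathrm{Suz}$.

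The main obstacle I expect is candidates for which the test inside $M$ fails because the action of $M$ on $(M:H)$ genuinely is binary. The natural example is cyclic $H$ of order $3$ whose $G$-class is $\mathrm{3B}$, or products like $3\times3$ that look binary locally. For these I will pass back to $G$: Lemma~\ref{lem:cycbintest} handles the prime order cases. For the others, I will check whether $H$ contains a $\mathrm{3B}$-element, in which case Lemma~\ref{lem:conjconn} together with the connectivity of $\Gamma(\mathrm{3B})$ forces a $\mathrm{3A}$-element $h\in H$ with $\fix(g)\subsetneq\fix(h)$. Lemma~\ref{lem:Finf} then forces an orbit of $N_M(\langle g\rangle)$ on $\mathrm{3A}\cap M$ into $H$, and I will check in \magma~that this orbit generates something too large.
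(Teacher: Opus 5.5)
Your proposal is correct and follows essentially the paper's route. You enumerate the odd-order subgroups of order prime to $5$ in each of $G_2(5)$ and $2.A_{11}$, set aside the cyclic subgroup generated by a $\mathrm{3A}$-element, rule out every other candidate with {\tt NearTITest()} inside the maximal subgroup, and transfer non-binarity to $G$ via Lemma~\ref{lem:sub}. In the paper this single test inside $K$ already disposes of all candidates, prime-order ones included, so your class-multiplication pre-filter in $G$ and your Lemma~\ref{lem:Finf} fallback for subgroups containing $\mathrm{3B}$-elements are valid but never needed.
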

\begin{proof}
    We use {\tt{FilteredSubgroups()}} to generate all subgroups of $K\in\{G_2(5), 2.A_{11}\}$ with order divisible by at least one of $3,$ $7$, $11$ or $31$ and indivisible by both 2 and 5, apart from the subgroup generated by an element of $G$-conjugacy class $\mathrm{3A}$. We confirm that none of these subgroups yield binary actions of $K$ by running {\tt{NearTITest()}} on each; the result now follows from Lemma~\ref{lem:sub}.
\end{proof}

\begin{prop}\label{prop:Lymaxes2}
If $1\ne H\leq 3.\mathrm{McL}\cn2$, then either $|H|=9$, or $H$ is generated by an element of $G$-conjugacy class $\mathrm{3A}$.
\end{prop}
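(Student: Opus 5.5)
We argue exactly as in Proposition~\ref{prop:Lymaxes1}, with $K:=3.\mathrm{McL}\cn2$ in place of $G_2(5)$ and $2.A_{11}$. Suppose $1\ne H\le K$ and that the action of $G$ on $(G:H)$ is binary. By Lemma~\ref{lem:sub}, the action of $K$ on $(K:H)$ is then binary. By the preceding proposition, $|H|$ divides $3^7\cdot 7\cdot 11\cdot 31\cdot 37\cdot 67$. Since $|K|=2^8\cdot 3^7\cdot 5^3\cdot 7\cdot 11$, this sharpens to $|H|$ odd, coprime to $5$, and dividing $3^7\cdot 7\cdot 11$.

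\textbf{Reduction to $3.\mathrm{McL}$.} Since $|H|$ is odd, $H\le K':=3.\mathrm{McL}$. Lemma~\ref{lem:sub} again shows that $K'$ acts binarily on $(K':H)$. Two cases arise, according to whether $H$ meets the centre $Z(K')=O_3(K)$.
\begin{itemize}
\item If $H\cap Z(K')=1$, the action of $K'$ is a faithful transitive binary action of a cover of $\mathrm{McL}$. Proposition~\ref{prop:McLclass} then forces $H=\langle t\rangle$ with $\varphi(t)\in\mathrm{3A}$ in $\mathrm{McL}$.
\item If $Z(K')\le H$, the action of $K'$ factors through $\mathrm{McL}$ with stabiliser $H/Z(K')$. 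Proposition~\ref{prop:McLclass} then gives either $H=Z(K')$, of order $3$, or $H/Z(K')$ cyclic of order $3$ generated by a $\mathrm{3A}$ element of $\mathrm{McL}$, so that $|H|=9$.
\end{itemize}

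\textbf{Identifying the order-$3$ candidates.} It remains to determine which of the order-$3$ subgroups above are generated by elements of the $G$-class $\mathrm{3A}$. The centre of $3.\mathrm{McL}$ is generated by a $G$-element whose centraliser is $K'$; in $\mathrm{Ly}$ the normaliser of a $\mathrm{3A}$ subgroup is $3.\mathrm{McL}\cn2$, so $Z(K')$ is generated by a $\mathrm{3A}$ element. The other candidates are lifts $t$ of $\mathrm{3A}$ elements of $\mathrm{McL}$ with $\langle t\rangle\cap Z(K')=1$. For each $K'$-class of such lifts I would compute its fusion into $G$, using the stored fusion map $3.\mathrm{McL}\cn2\to\mathrm{Ly}$ in the \gap~Character Table Library~\cite{CTblLib1.3.11}. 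Lifts fusing to $\mathrm{3A}$ are allowed by the statement. For lifts fusing to $\mathrm{3B}$, we have seen that $\Gamma(\mathrm{3B})$ is connected and $\mathrm{Ly}$ is simple, so Lemma~\ref{lem:conjconn} would place a $\mathrm{3A}$ element of larger fixity in $H$, contradicting $|H|=3$. Alternatively, one can check directly that $n_G(\mathrm{3B})>1$ and apply Lemma~\ref{lem:cycbintest}. Either way these lifts are excluded.

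\textbf{Main obstacle.} The delicate point is the class fusion bookkeeping and the use of Proposition~\ref{prop:McLclass} for the non-faithful quotient action. If the fusion argument proves awkward, the fallback is purely computational. Using a permutation representation of $3.\mathrm{McL}\cn2$, I would run {\tt FilteredSubgroups()} for subgroups of odd order prime to $5$, discard the order-$9$ groups and the cyclic groups generated by $G$-class $\mathrm{3A}$ elements, and apply {\tt NearTITest()} inside $K$ to each remaining subgroup. Lemma~\ref{lem:sub} then completes the argument.
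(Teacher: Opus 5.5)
Your proof is correct and follows the paper's route. Oddness of $|H|$ puts $H$ in $3.\mathrm{McL}$, and Proposition~\ref{prop:McLclass} (applied to the faithful action, or to $\mathrm{McL}$ when $Z(K')\le H$) leaves only $Z(K')$, a group of order $9$, or a cyclic group generated by a lift of a $\mathrm{McL}$-class $\mathrm{3A}$ element; class fusion into $\mathrm{Ly}$ then finishes.

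Your last step is more complete than the paper's, which only records that $t$ and $Z(K)$ fuse to $\mathrm{3A}$. The other lifts $tz,tz^2$ also give binary actions of $3.\mathrm{McL}$, but they cannot lie in $\mathrm{3A}$: otherwise $t$ would be a product of two $\mathrm{3A}$ elements other than $t^2\cdot t^2$, contradicting $n_{\mathrm{Ly}}(\mathrm{3A})=1$. Your exclusion of $\mathrm{3B}$-generated $H$, via connectivity of $\Gamma(\mathrm{3B})$ and Lemma~\ref{lem:conjconn}, is exactly what closes that case, and it makes the explicit fusion computation unnecessary.
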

\begin{proof}
        Since $|H|$ is odd, $ H\leq K:=3.\mathrm{McL}$, and the action of $K$ on $(K:H)$ is binary by Lemma~\ref{lem:sub}. Therefore, by Proposition~\ref{prop:McLclass}, $H$ is conjugate to a subgroup of $\langle Z(K),t\rangle$ where $t$ is a lift of the $\mathrm{McL}$ class 3A. Using the \gap~ Character Table Library~\cite{CTblLib1.3.11}, we check class fusion, verifying that $t$ fuses to the $G$-conjugacy class 3A, as do the non-trivial central elements, hence the result. 
                 \end{proof}
Examining class fusion using the Character Table Library, we see that the maximal $5^3.\mathrm{L}_{3}(5)$ has no class fusing to $\mathrm{3A}$ (which is the unique class $\mathcal{C}$ of 3-elements such that $\Gamma(\mathcal{C})$ is disconnected~\cite[Proposition 3.1]{CDP}). Consequently, if $H\leq5^3.\mathrm{L}_{3}(5)$ yields a binary action of $G$, then $3\nmid |H|$. This leaves us with the possibility that such $H$ is either trivial or cyclic of order 31. Since $n_G(\mathcal{C})>29$ for every class of 31-elements we deduce the following result.

\begin{prop}\label{prop:Lymax3}
    If $H\ne 1$ then $H$ is not a subgroup of the maximal subgroup $5^3.\mathrm{L}_3(5)<_{\max}G.$
\end{prop}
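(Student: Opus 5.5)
The plan is to argue by contradiction, mirroring the paragraph just before the statement. Suppose $1\ne H\le M:=5^3.\mathrm{L}_3(5)$ and that the action of $G=\mathrm{Ly}$ on $(G:H)$ is binary. By the preceding proposition, $|H|$ divides $3^7\cdot 7\cdot 11\cdot 31\cdot 37\cdot 67$. Intersecting this with $|M|=2^5\cdot 3\cdot 5^6\cdot 31$ leaves only the primes $3$ and $31$.

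First I would exclude the prime $3$. Here I use that $\mathrm{3A}$ is the unique class $\mathcal{C}$ of $3$-elements of $G$ with $\Gamma(\mathcal{C})$ disconnected~\cite[Proposition 3.1]{CDP}, while $\Gamma(\mathrm{3B})$ is connected. Suppose $g\in H$ has order $3$. Lemma~\ref{lem:conjconn} gives either $\Delta(g)\le H$ or an element $h$ of order $3$ with $\Delta(h)\le H$. A connected component group is $\langle\mathcal{C}\rangle=G$, so the relevant element must lie in $\mathrm{3A}$ (this is the maximal $3$-fixity argument). In particular, in either case $H$ contains a $\mathrm{3A}$-element, since $\Delta(h)$ contains $h$. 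But the class fusion from the Character Table Library shows that no class of $M$ fuses into $\mathrm{3A}$, so $H\le M$ cannot meet $\mathrm{3A}$. This is a contradiction, so $3\nmid|H|$.

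Next, $|H|$ divides $31$, so $H$ is cyclic of order $31$, since $H\neq 1$. The $31$-classes of $G$ form one or more rational classes. Lemma~\ref{lem:cycbintest} says a binary action with stabiliser of order $31$ forces $\sum n_G(\mathcal{C}_i,\mathcal{C}_j,\mathcal{C}_1)=29$. I would compute $n_G(\mathcal{C})$ from the character table for each class $\mathcal{C}$ of elements of order $31$ and check that it exceeds $29$. Since the sum of nonnegative coefficients over the rational class is at least the diagonal term, the sum exceeds $p-2$, which contradicts binarity.

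The main obstacle is the first step. It requires the fusion check $M\to G$, and one must make sure that Lemma~\ref{lem:conjconn} really forces a $\mathrm{3A}$-element into $H$ itself rather than merely into some overgroup. I would handle this exactly as in the proof for $\mathrm{HN}$ and the $5$-argument above. The second step is a routine character-table computation.
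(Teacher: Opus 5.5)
Your proposal is correct and follows the paper's argument essentially verbatim. The fusion check that no class of $5^3.\mathrm{L}_3(5)$ fuses to $\mathrm{3A}$, the unique $3$-class with disconnected Gill--Guillot graph, rules out $3\mid |H|$ via Lemma~\ref{lem:conjconn}; this leaves $H$ cyclic of order $31$, which $n_G(\mathcal{C})>29$ and Lemma~\ref{lem:cycbintest} then eliminate, and your bookkeeping with $|M|=2^5\cdot 3\cdot 5^6\cdot 31$ and the $(1,1)$ term of the sum only makes explicit what the paper leaves implicit.
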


The next result allows us to eliminate the presence of elements of order $37$ and $67$, leaving us with one maximal subgroup to consider.

\begin{prop}
    Neither $37$ nor $67$ divide the order of the stabiliser $H$. In particular, $H\leq3^5\cn(2\times\mathrm{M}_{11})$
\end{prop}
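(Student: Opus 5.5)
Since $37\mid|H|$ or $67\mid|H|$ forces $H$ into a maximal subgroup, I would use the maximal subgroups of $\mathrm{Ly}$ containing elements of these orders. These are $37\cn18$ and $67\cn22$, which are the only maximal subgroups of order divisible by $37$ and by $67$ respectively.

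\emph{Case $37\mid|H|$.} Here $H\le 37\cn18$. Since $|H|$ is odd and divisible only by primes in $\{3,7,11,31,37,67\}$, we get $H\le 37\cn9$. So $H$ is one of $37$, $37\cn3$ or $37\cn9$ (up to conjugacy). For $H=C_{37}$, I would compute with the \gap~Character Table Library that $n_G(\mathcal{C})>35$ for each class $\mathcal{C}$ of elements of order $37$, then apply Lemma~\ref{lem:cycbintest}. The rational class of $37$-elements is the union of $37\mathrm{A}$ and $37\mathrm{B}$, so the sum in the lemma is even larger. For $H=37\cn3$ or $37\cn9$, $H$ contains an element $g$ of order $3$. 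By~\cite[Proposition 3.1]{CDP} and Lemma~\ref{lem:conjconn}, either $\Delta(g)\le H$ (impossible, since the component groups of the connected class $3\mathrm{B}$ are all of $G$), or $H$ contains a $3\mathrm{A}$-element $h$ with $\fix(g)\subseteq\fix(h)$ and $\Delta(h)\le H$. The cleaner route is probably a direct computation: $37\cn18$ has a permutation representation of degree $37$, so I would run {\tt NearTITest()} on these two subgroups inside a suitable permutation representation of $G$. Failing that, I would argue as follows. $H$ is a Frobenius group $37\cn3$ or $37\cn9$, and its action on an $H$-orbit of size $|H|/|H\cap H^x|$ with $H\cap H^x$ a complement is a Frobenius action with complement of order $>2$. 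Lemma~\ref{lem:frob} then shows it is non-binary, and Lemma~\ref{lem:suborb} finishes. This needs a double coset $HxH$ with $H\cap H^x$ of order $3$ or $9$. Since $N_G(\langle g\rangle)$ is large relative to $H$, some conjugate $H^x\ne H$ should contain $g$ and meet $H$ in exactly $\langle g\rangle$ (or the full complement). I would verify this by a count of fixed points of $g$ on $(G:H)$ via $|C_G(g)|$ and the number of $H$-conjugates of $g$.

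\emph{Case $67\mid|H|$.} Here $H\le 67\cn22$, and oddness gives $H\le 67\cn11$. So $H$ is $C_{67}$ or $67\cn11$. $C_{67}$ is excluded by $n_G(\mathcal{C})>65$ for the $67$-classes and Lemma~\ref{lem:cycbintest}. $67\cn11$ is Frobenius with complement of order $11$, so I would use the same Lemma~\ref{lem:frob}/\ref{lem:suborb} argument. This requires a conjugate $H^x$ with $H\cap H^x$ of order $11$, which exists because an $11$-element of $G$ lies in many conjugates of $H$; I would confirm this by a fixed-point count using $|C_G(11\mathrm{A})|=66$.

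\emph{The ``in particular''.} Now $|H|$ divides $3^7\cdot7\cdot11\cdot31$, and $H\ne1$ may be assumed. I would run through the list of maximal subgroups of $\mathrm{Ly}$: $G_2(5)$, $3.\mathrm{McL}\cn2$, $5^3.\mathrm{L}_3(5)$, $2.A_{11}$, $5^{1+4}\cn4S_6$, $3^5\cn(2\times\mathrm{M}_{11})$, $3^{2+4}\cn2A_5.D_8$, $67\cn22$, $37\cn18$. Propositions~\ref{prop:Lymaxes1}, \ref{prop:Lymaxes2} and~\ref{prop:Lymax3} exclude the first four, apart from the allowed cases, which are disposed of as in the statement. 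The cases with $|H|=9$ inside $3.\mathrm{McL}$ either sit inside $3^5\cn(2\times\mathrm{M}_{11})$ or need a separate check, and I would flag this. $5^{1+4}\cn4S_6$ and $3^{2+4}\cn2A_5.D_8$ have $\{3,7,11,31\}$-part dividing $3^3$ and $3^6$. So $H$ is a $3$-group, which lies in a Sylow $3$-subgroup of $G$, and the Sylow $3$-subgroup sits inside $3^5\cn(2\times\mathrm{M}_{11})$. The last two are excluded above. Hence $H\le 3^5\cn(2\times\mathrm{M}_{11})$.

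\emph{Main obstacle.} I expect the hardest step to be exhibiting the Frobenius suborbit, that is, a conjugate meeting $H$ in exactly a complement. This would need either an explicit fixed-point count or a computation in a permutation representation, which the paper notes is awkward for $\mathrm{Ly}$ itself.
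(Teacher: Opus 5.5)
Your argument for the case $37\mid|H|$ is not finished; each of your three routes stops short. The missing ingredient is a class-fusion fact, which makes this case a one-liner in the paper:
\begin{itemize}
\item In the Frobenius group $37{:}18$, every element of order $3$ lies in a cyclic complement $C_{18}$, so it is a power of an element of order $18$.
\item No element of order $18$ in $\mathrm{Ly}$ powers into $3\mathrm{A}$.
\end{itemize}
Hence $37{:}18$ contains no $3\mathrm{A}$-elements, so your $g$ lies in $3\mathrm{B}$; your first route assumes this without justification. The second alternative of Lemma~\ref{lem:conjconn} is then impossible: it gives an element $h\in H$ of order $3$ with $\Delta(h)\le H$, and $h$ would have to be in $3\mathrm{A}$ because $\Gamma(3\mathrm{B})$ is connected. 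Thus $3\nmid|H|$, so $H=C_{37}$, and $n_G(37\mathrm{A})>35$ together with Lemma~\ref{lem:cycbintest} finishes the case.

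Your computational route cannot replace this. Inside $37{:}18$ the subgroups $37{:}3$ and $37{:}9$ are normal, so the induced actions are regular actions of $C_6$ and $C_2$, which are binary. A test in $\mathrm{Ly}$ itself is exactly what the paper avoids, since only large matrix representations of $\mathrm{Ly}$ are available.

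Your Frobenius route can be completed, but you only assert that a suitable conjugate ``should'' exist. To finish it:
\begin{itemize}
\item Let $\langle a\rangle$ be the Sylow $37$-subgroup of $H$, and take $t\in C_G(g)\setminus N_G(\langle a\rangle)$. This is possible because $C_G(g)\cap N_G(\langle a\rangle)=C_{18}$ is far smaller than $C_G(g)$.
\item Then $H\cap H^t$ is a non-trivial $3$-subgroup of a complement.
\item When $H=37{:}9$ and $|H\cap H^t|=3$, the action of $H$ on $H/\langle g\rangle$ is \emph{not} Frobenius, since $\langle g\rangle$ fixes three cosets. You then need Lemma~\ref{lem:sub} with $\langle g\rangle<37{:}3<H$ to reach a Frobenius action with complement of order $3$.
\end{itemize}

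Your $67$ case agrees with the paper. The paper obtains $H\cap H^t=\langle y\rangle$ by choosing $t\in C_G(y)$ of order $3$, using $|C_G(y)|=66$ and $3\nmid|N_G(\langle x\rangle)|$. Your reduction to $3^5{:}(2\times\mathrm{M}_{11})$ is also the paper's, up to small slips:
\begin{itemize}
\item The $3$-parts of $5^{1+4}{:}4S_6$ and $3^{2+4}{:}2A_5.D_8$ are $3^2$ and $3^7$, not $3^3$ and $3^6$; this is harmless.
\item Your hedge about $|H|=9$ is unnecessary. Every $3$-subgroup is conjugate into $3^5{:}(2\times\mathrm{M}_{11})$, which contains a Sylow $3$-subgroup.
\item $67{:}22$ and $37{:}18$ are not ``excluded above'' in general. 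An odd-order subgroup of either, of order prime to $67$ (resp.\ $37$), is $C_{11}$ (resp.\ $C_3$ or $C_9$). Such a subgroup is conjugate into $3^5{:}(2\times\mathrm{M}_{11})$ because the Sylow $11$- and $3$-subgroups of $G$ are; you should say this explicitly.
\end{itemize}
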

\begin{proof}
    Suppose first that $37\mid |H|$. Examining the list of maximal subgroups~\cite{atlas}, we deduce that $H\leq37\cn18$. Now, there is no element of order 18 which powers into the $G$-conjugacy class $\mathrm{3A}$, whence $H$ is cyclic of order 37. But $n_G(\mathrm{37A})>35$, contradicting Lemma~\ref{lem:cycbintest}.

    Next we consider the case that $67\mid |H|$. In this case $H\leq 67\cn11$, and moreover, $H$ is not cyclic as $n_G(\mathrm{67A})=n_G(\mathrm{67B})=n_G(\mathrm{67C})>65$. Therefore, we may write $H=\langle x\rangle\cn\langle y\rangle$ for some $x$ of order 67 and $y$ of order $11$.  Since $|C_G(y)|=66$~\cite{atlas}, we may take $t\in C_G(y)$ of order $3$. Since $3\nmid |N_G(\langle x\rangle)|$ it follows that $H\cap H^t=\langle y\rangle$, and so $H$ has an orbit on which it is Frobenius, contradicting Lemmas~\ref{lem:frob} and~\ref{lem:suborb}.

    To deduce the final claim we begin by noting that by Lemmas~\ref{prop:Lymaxes1}, ~\ref{prop:Lymaxes2}, and~\ref{prop:Lymax3}, $|H|$ is indivisible by $7$ and $31$, and so $|H|\mid 3^7\cdot 11$, and moreover, $11\mid |H|$ only if $H\leq 3^5\cn(2\times\mathrm{M}_{11})$. On the other hand, if $11\nmid |H|$, then $H$ is a 3-group---since $3^5\cn(2\times\mathrm{M}_{11})$ contains a Sylow 3-subgroup of $G$ we deduce that $H\leq 3^5\cn(2\times\mathrm{M}_{11})$. 
\end{proof}
\begin{prop}
    Either $H=1$ or $H$ is generated by an element of the $G$-conjugacy class $\mathrm{3A}$.
\end{prop}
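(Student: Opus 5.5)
We work inside $M:=3^5\cn(2\times\mathrm{M}_{11})$, which by the previous proposition contains $H$, and recall that $|H|$ divides $3^7\cdot 11$. By Lemma~\ref{lem:sub} the action of $M$ on $(M:H)$ is binary. Since $|H|$ is odd, $H$ lies in the index-two subgroup $M_0:=3^5\cn\mathrm{M}_{11}$, and the action of $M_0$ on $(M_0:H)$ is binary as well. Unlike $\mathrm{Ly}$ itself, $M$ has a reasonable permutation representation, for instance on the $3^5$ vectors or via the $\mathbb{ATLAS}$ generators. So we can run the same filtering-and-testing machinery used for the other groups.

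First I would dispose of the case $11\mid |H|$. Let $N:=3^5\trianglelefteq M$. Suppose $H\cap N=1$. Then $H$ is a $3'$-subgroup only if $H$ is a $\{3,11\}$-group with trivial Sylow $3$-part; otherwise we use the image of $H$ in $\mathrm{M}_{11}$. The order of $H$ divides $3^7\cdot 11$ and $\mathrm{M}_{11}$ has no subgroup of order $33$, so the image of $H$ in $\mathrm{M}_{11}$ is $11$ or $3^a$. Hence if $11\mid|H|$, then $H\cap N$ has index $11$ in $H$. I would generate with {\tt FilteredSubgroups()} all subgroups of $M_0$ of odd order divisible by $11$, drop the cyclic group of order $11$ (excluded since $n_G(\mathrm{11A})>9$ via Lemma~\ref{lem:cycbintest}), and run {\tt NearTITest()} on each in $G$. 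Testing in $G$ is possible by taking a permutation action of a larger maximal subgroup such as $G_2(5)$, or by testing binarity of $M$ on $(M:H)$, which suffices by Lemma~\ref{lem:sub}. This should leave $H$ a $3$-group.

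For $H$ a $3$-group, I would reduce using the $3$-fixity machinery. By \cite[Proposition 3.1]{CDP}, $\Gamma(\mathrm{3B})$ is connected and $\mathrm{3A}$ is the only disconnected class. So any $g\in H\cap\mathrm{3B}$ forces, via Lemma~\ref{lem:conjconn} and Lemma~\ref{lem:Finf}, an element $h\in H\cap \mathrm{3A}$ with $\fix(g)\subsetneq\fix(h)$, together with $h^{C_G(g)}\subseteq H$. Rather than computing in $G$, I would enumerate the subgroups of a Sylow $3$-subgroup $P\le M$ (order $3^7$) up to $G$-conjugacy, using class fusion from $M$ to $\mathrm{Ly}$ via the Character Table Library. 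I would discard the trivial group and the cyclic $\mathrm{3A}$ subgroups, and test the remainder with {\tt NearTITest()} in $M$. The earlier propositions help here: $H$ cannot sit in $3.\mathrm{McL}\cn2$ unless $|H|=9$ or $H=\langle t\rangle$ with $t\in\mathrm{3A}$. This kills every order-$9$ or larger candidate contained in a conjugate of $3.\mathrm{McL}$, and also handles cyclic $\mathrm{3B}$ subgroups, since $n_G(\mathrm{3B})>1$.

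The main obstacle I expect is the subgroups of order $9$ left open by Proposition~\ref{prop:Lymaxes2}, together with small $3$-groups for which a random search in $M$ fails to find a witness for Lemma~\ref{lem:efficienttest}. The reason is that a witness may genuinely require conjugates lying outside $M$. For these I would try the following in turn:
\begin{enumerate}[(a)]
\item Use Lemma~\ref{lem:Finf} with $N_G(\langle g\rangle)$ for a $\mathrm{3A}$ element $g\in H$: a normaliser orbit generating something of even order or order divisible by $5$ would give a contradiction.
\item Failing that, place $H$ inside $G_2(5)$ or $2.A_{11}$ and invoke Proposition~\ref{prop:Lymaxes1}.
\end{enumerate}
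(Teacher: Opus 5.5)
There is a genuine gap in your treatment of the case $11\mid |H|$. You show that $H\cap N$ has index $11$ in $H$. Since $3$ has order $5$ modulo $11$, an element of order $11$ acts irreducibly on $N=3^5$, so $H\cap N\in\{1,N\}$. Hence the only candidate besides the cyclic group of order $11$ is $H=N\cn 11$.

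This subgroup contains $N=O_3(M)\trianglelefteq M$, so every $M$-conjugate of $H$ meets $H$ in at least $N$. Therefore {\tt NearTITest()} run in $M$ (or $M_0$) can never find conjugates $H_2,H_3$ with $H\cap H_2=H\cap H_3=1$, as Lemma~\ref{lem:efficienttest} requires. Indeed, the paper's computation leaves exactly this group of order $3^5\cdot 11$ undiscounted. Your other option, testing in $G$ via a larger maximal subgroup such as $G_2(5)$, is not available: $M$ is itself maximal in $\mathrm{Ly}$, and $11\nmid|G_2(5)|$.

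The missing idea is to pass to the quotient. Since $N\le H$ and $N\trianglelefteq M_0$, the action of $M_0$ on $(M_0:H)$ is the faithful action of $M_0/N\cong\mathrm{M}_{11}$ on the cosets of a subgroup of order $11$. This action is non-binary by Proposition~\ref{prop:1stclas}, so Lemma~\ref{lem:sub} excludes $H$.

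For $3$-groups your plan is close to the paper's, but you miss the observation that closes it immediately. The group $3.\mathrm{McL}\cn 2$ contains a Sylow $3$-subgroup of $\mathrm{Ly}$, so every $3$-subgroup of $G$ lies in a conjugate of it, and Proposition~\ref{prop:Lymaxes2} applies to all of them, not just some. This gives $|H|\in\{3,9\}$ directly, with $|H|=3$ forcing a $\mathrm{3A}$ generator. So your separate handling of cyclic $\mathrm{3B}$ subgroups, and your worry about other small $3$-groups, are unnecessary. Note also that Proposition~\ref{prop:Lymaxes2} leaves order $9$ open, so it does not kill ``order-$9$ or larger'' candidates.

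Since $M$ also contains a Sylow $3$-subgroup, every subgroup of order $9$ is conjugate into $M$, and {\tt NearTITest()} in $M$ does discount all of them. This is exactly how the paper finishes, so your fallbacks (a) and (b) are not needed. Fallback (b) in particular would require showing that every subgroup of order $9$ lies in a conjugate of $G_2(5)$ or $2.A_{11}$, which you have not justified.
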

\begin{proof}
Suppose that $H\ne 1$. We construct $K:=3^{5}\cn(2\times\mathrm{M}_{11})$ in \magma~\cite{magma} as a subgroup of $G$ using a straight-line program available from~\cite{webatlas}, and use the \magma~command {\tt{PermutationRepresentation()}} to build a permutation representation of $K$. We then use our program {\tt{FilteredSubgroups()}} to generate the list of all subgroups of $K$ of order divisible by at least one of 3 or 11, and indivisible by any other prime. We next run {\tt{NearTITest()}} in $K$ on each of the remaining groups; the only groups for which it does not succeed in discounting have orders $\{3,3^3,3^4,3^5,3^6,3^7,3^5\cdot11\}.$ There is a unique remaining class of subgroups of order $3^5\cdot 11$ and the action of $3^5\cn\mathrm{M}_{11}$ on the cosets of each of these subgroups is equivalent to the action of the $\mathrm{M}_{11}$ with stabiliser of order 11. This action is non-binary by Proposition~\ref{prop:1stclas}, and so the action of $G$ with stabiliser $3^5\cn 11$ is non-binary by Lemma~\ref{lem:sub}. Therefore, $|H|\in\{3,3^3,3^4,3^5,3^6,3^7\}$.

Since $H$ is a $3$-group, we can find $H$ in $3.\mathrm{McL}\cn 2$ ($3.\mathrm{McL}\cn2$ contains a Sylow 3-subgroup of $G$). Therefore, $|H|\in\{3,9\}$ by Proposition~\ref{prop:Lymaxes2}. But $|H|\ne 9$ by the previous paragraph, whence $H$ is generated by an element of the $G$-conjugacy class $\mathrm{3A}$, as was to be shown.
\end{proof}
\subsection{$\mathrm{Th}$} The challenging work for $G=\mathrm{Th}$ was done in~\cite{CDP}. Indeed, by Proposition~\ref{prop:binarydiv}, if the action of $G$ on $(G:H)$ is binary then $|H| \mid 13\cdot19\cdot31$ so we need only consider the possibility of the primes $13$, $19$, $31$.
\begin{prop}
   The group $\mathrm{Th}$ has no non-trivial faithful transitive binary actions.
\end{prop}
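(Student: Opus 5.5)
We aim to show that $H=1$. By Proposition~\ref{prop:binarydiv}, $|H|$ divides $13\cdot19\cdot31$, so the argument reduces to two steps. First, we show that $H$ cannot be cyclic of prime order. Second, we show that no subgroup of $\mathrm{Th}$ whose order is a product of two or three of these primes can occur at all.

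For the prime-order case we use Lemma~\ref{lem:cycbintest}. For each prime $p\in\{13,19,31\}$, $\mathrm{Th}$ has a single rational class of elements of order $p$. That rational class may split into several conjugacy classes; for $31$ it is $31\mathrm{A}\cup31\mathrm{B}$. We compute the relevant class multiplication coefficients from the \gap~Character Table Library~\cite{CTblLib1.3.11}. We expect $\sum_{(i,j)}n_G(\mathcal{C}_i,\mathcal{C}_j,\mathcal{C}_1)$ to exceed $p-2$ by a wide margin, since the group is huge compared with $p$. Lemma~\ref{lem:cycbintest} then rules out $|H|=p$.

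For composite orders we examine the maximal subgroups of $\mathrm{Th}$ listed in the $\mathbb{ATLAS}$~\cite{atlas}. Among them, $13$ divides the orders of only a few, for example ${}^3D_4(2)\cn3$ and $\mathrm{L}_2(13)$-type subgroups, and the Sylow normalisers for $19$ and $31$ are $\mathrm{U}_3(8)\cn6$ and $2^5.\mathrm{L}_5(2)$, $31\cn15$. We check that no maximal subgroup has order divisible by $13\cdot19$, $13\cdot31$ or $19\cdot31$. This follows from $|\mathrm{Th}|$ and the maximal subgroup orders, although the pair $13,19$ needs checking, since $\mathrm{U}_3(8)$ has order divisible by $19$ and $7$ but not by $13$. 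It follows that $|H|$ is prime, and the previous paragraph finishes the proof.

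An alternative route is available, and may be cleaner where the maximal subgroup check is awkward. No pair of distinct primes $p,q\in\{13,19,31\}$ satisfies $p\mid q-1$, and $\mathrm{Th}$ has no elements of order $pq$. Hence any $H$ with $|H|$ dividing $13\cdot19\cdot31$ and divisible by two of these primes would be a non-cyclic group of squarefree order with no such divisibility relations among its prime factors. Such a group must be cyclic, which is a contradiction. This mirrors the argument used for $\mathrm{HN}$ and avoids the maximal subgroup list entirely.

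The main obstacle is confirming the numerical facts from the character table: the element orders of $\mathrm{Th}$ and the size of each $n_G(\mathcal{C})$ for the $31$-classes. These are routine lookups in the \gap~Character Table Library.
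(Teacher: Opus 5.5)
Your proposal is correct. Your \emph{alternative route} is the paper's proof. The paper observes that $p\nmid q-1$ for all $p,q\in\{13,19,31\}$, so a subgroup whose order is a product of two or more of these primes would be cyclic. That is impossible, because $\mathrm{Th}$ has no elements of order larger than $39$. Hence $H$ has prime order, and the large values of $n_G(\mathcal{C})$ for $\mathcal{C}\in\{\mathrm{13A},\mathrm{19A},\mathrm{31A},\mathrm{31B}\}$ rule this out via Lemma~\ref{lem:cycbintest}.

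Your primary route through the maximal subgroups also works, but it depends on the full classification of maximal subgroups of $\mathrm{Th}$, and your reading of that list is inaccurate in two ways:
\begin{itemize}
\item There is no $\mathrm{L}_2(13)$-type maximal subgroup. The maximal subgroups of order divisible by $13$ are ${}^3D_4(2)\cn 3$, $(3\times G_2(3))\cn 2$ and $\mathrm{L}_3(3)$. Those divisible by $19$ are $\mathrm{U}_3(8)\cn 6$ and $\mathrm{L}_2(19)\cn 2$. Those divisible by $31$ are $2^5.\mathrm{L}_5(2)$ and $31\cn 15$.
\item $\mathrm{U}_3(8)\cn 6$ and $2^5.\mathrm{L}_5(2)$ are not Sylow normalisers. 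The normalisers of Sylow $19$- and $31$-subgroups are $19\cn 18$ and $31\cn 15$.
\end{itemize}
With the correct list, no maximal subgroup has order divisible by two of the three primes, so your check goes through.

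The element-order argument is the cleaner of the two. It uses only data that can be read off the character table and power maps, namely element orders and class multiplication coefficients, so it does not rely on Linton's classification or on reading that list correctly.
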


\begin{proof}
    We first observe that $p\nmid q-1$ for all $p,q\in\{13,19,31\}$, whence $G$ has no non-cyclic subgroup of order a product of more than one element in $\{13,19,31\}$. Moreover, $G$ has no cyclic subgroups of order larger than 39, and so if $H\ne 1$ then it is cyclic of prime order. Computing $n_G(\mathcal{C})$ for each $\mathcal{C}\in\{\mathrm{13A,19A,31A,31B}\}$ we find the result is much larger than $p-2$ so $H=1$ by Lemma~\ref{lem:cycbintest}, as was to be shown.
\end{proof}
\subsection{$\mathrm{Fi}_{23}$}
We next consider $G=\mathrm{Fi}_{23}$. Using the results of~\cite{CDP} we can quickly reduce ourselves to the case that $H$ is a 2-group.

\begin{prop}\label{prop:Fi232group}
    The stabiliser $H$ is a 2-group.
\end{prop}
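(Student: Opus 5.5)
By Proposition~\ref{prop:binarydiv}, $3\nmid|H|$ and $5\nmid|H|$. The prime divisors of $|\mathrm{Fi}_{23}|$ are $2,3,5,7,11,13,17,23$. So it remains to exclude the primes $p\in\{7,11,13,17,23\}$ from $|H|$, and I would treat them in two stages: first the case where $H$ is cyclic of prime order, then the case where it is not.

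\emph{Prime-order case.} Using the \gap~Character Table Library I would compute $n_G(\mathcal{C})$ for every rational class $\mathcal{C}$ of elements of order $p$, summing over the constituent classes (for instance $\mathrm{23A}\cup\mathrm{23B}$). I expect every such sum to be far larger than $p-2$. Lemma~\ref{lem:cycbintest} then rules out $H$ cyclic of order $p$.

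\emph{Non-prime-order case.} Suppose some such $p$ divides $|H|$ and $H$ is not of prime order. Then $|H|$ is divisible by $2p$ or by $pq$ for another prime $q$ in the list. I would use the maximal subgroups of $\mathrm{Fi}_{23}$ to locate $H$.
\begin{enumerate}[(a)]
\item For $p=23$, $H$ lies in $2^{11}.\mathrm{M}_{23}$. The Frattini-type argument with Lemma~\ref{lem:SZappl} shows that the odd part sits in $\mathrm{M}_{23}$.
\item For $p=17$, $H$ lies in $S_4(4)\cn4$ or $\mathrm{L}_2(17)$-type subgroups.
\item For $p=13$, $H$ lies in $S_{12}$-type or $\mathrm{L}_3(3)$-type subgroups, or in $2^2.\mathrm{U}_6(2).2$, and similarly for $11$ and $7$.
\end{enumerate}
In each case, Lemma~\ref{lem:sub} forces a binary action of the overgroup $M$. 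Where $M$, or a section of it, is a group already classified (such as $\mathrm{M}_{23}$, $\mathrm{Fi}_{22}$, $2.\mathrm{Fi}_{22}$, or those in Proposition~\ref{prop:1stclas}), I would pass to the quotient when $H$ contains the normal subgroup, or to a complement via Lemma~\ref{lem:SZappl} when $H$ is a $p'$-group over a normal $2$-subgroup. That yields a contradiction except when $H$ has order $2p$-ish. The remaining small candidates I would enumerate with {\tt FilteredSubgroups()}: subgroups of order divisible by some $p\in\{7,\dots,23\}$ and coprime to $15$. Each candidate would then be tested with {\tt NearTITest()}, in the same way as Corollary~\ref{cor:orderformFi22} for $\mathrm{Fi}_{22}$.

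\emph{Expected obstacle.} The hard part is $p=7$ (and possibly $11$), since many maximal subgroups have order divisible by these primes. Here I would try a Frobenius argument first. If $H=\langle x\rangle\cn\langle y\rangle$ has a non-trivial Frobenius suborbit, Lemmas~\ref{lem:frob} and~\ref{lem:suborb} finish it, as in the $67\cn11$ case of $\mathrm{Ly}$. Failing that, I would fall back on the computational filter inside the permutation representation of $G$ on $31671$ points, which should be feasible because the subgroups involved are small.
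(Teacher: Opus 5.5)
Your argument rests on the same step as the paper's: use {\tt FilteredSubgroups()} on $\mathrm{Fi}_{23}$ itself to list all subgroups of order coprime to $15$ and divisible by one of $7,11,13,17,23$, then eliminate each with {\tt NearTITest()}; this also disposes of the prime-order cases, so your class-multiplication stage and maximal-subgroup pruning are optional extras. Make sure that final enumeration is global rather than conditioned on the pruning, because some of its overgroup claims are wrong: $13$ divides neither $|S_{12}|$ nor $|2^2.\mathrm{U}_6(2).2|$, your $p=13$ list omits $2.\mathrm{Fi}_{22}$, $\mathrm{O}_8^+(3)\cn S_3$ and $\mathrm{O}_7(3)\times S_3$, and for $p=23$ you overlook the maximal subgroup $\mathrm{L}_2(23)$.
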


\begin{proof}
    By Proposition~\ref{prop:binarydiv}, $|H|\mid 2^{18}\cdot7\cdot11\cdot13\cdot17\cdot23$. We run {\tt{FilteredSubgroups()}} to generate the list of all subgroups of $G$ whose order is divisible by at least one of $7$, 11, 13, 17, or 23, and indivisible by 3 and 5. We next verify that none of the remaining subgroups yield genuine binary actions by running {\tt{NearTITest()}} on each candidate, and thus deduce that $H$ is a 2-group.
 \end{proof}
By~\cite[Propositions 3.1, 3.2, and 3.3]{CDP}, $\Gamma(\mathrm{2B})$ and $\Gamma(\mathrm{2C})$ are connected, whence $\mathrm{2A}$ is the unique class of maximal $p$-fixity. Thus, provided that $H\ne 1$, then $|H\cap\mathrm{2A}|\geq 1$; the next proposition gives us an upper bound.
   \begin{prop}\label{prop:fi23clas}
    Either $H=1$ or $H$ is generated by an element from the $G$-conjugacy class $\mathrm{2A}.$
\end{prop}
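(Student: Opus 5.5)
We follow the pattern of the $\mathrm{Fi}_{22}$ argument, since $\mathrm{2A}$ in $\mathrm{Fi}_{23}$ is again a class of $3$-transpositions. By Proposition~\ref{prop:Fi232group}, $H$ is a $2$-group, and as noted above, if $H\ne1$ then $H\cap\mathrm{2A}\neq\emptyset$. The first step is to show that $|H\cap\mathrm{2A}|\le1$. Suppose $x,y\in H\cap\mathrm{2A}$ are distinct. Since $H$ is a $2$-group, $xy$ cannot have order $3$, so by the $3$-transposition property $x$ and $y$ commute and $xy\in\mathrm{2B}$ (the $\mathbb{ATLAS}$ fixes the class of the product of two commuting transpositions). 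We then split according to whether $H$ also meets $\mathrm{2B}$ or $\mathrm{2C}$.

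For any $g\in H\cap(\mathrm{2B}\cup\mathrm{2C})$, the class $\mathrm{2A}$ is the unique class of maximal $2$-fixity. Hence, exactly as in Proposition~\ref{prop:fi222B2C}, Lemma~\ref{lem:conjconn} gives some $h\in\mathrm{2A}$ with $\fix(g)\subseteq\fix(h)$, and Lemma~\ref{lem:Finf} forces $H$ to contain a whole $C_G(g)$-orbit on $\mathrm{2A}$. The plan is to compute these orbits in \magma. Any orbit containing a non-commuting pair generates a group of order divisible by $3$, which is impossible. The orbits consisting of mutually commuting transpositions generate elementary abelian groups $L$, and these should be few; in $\mathrm{Fi}_{23}$ one expects candidates such as $2^{11}$ inside $2^{11}\cn\mathrm{M}_{23}$ and small $2^k$ inside $2^2.\mathrm{U}_6(2)$. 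We then iterate as in Proposition~\ref{prop:Fi222local}: close $L$ under adjoining the forced orbits $\mathcal{O}(h)$ for each new $h\in\mathrm{2B}\cup\mathrm{2C}$, and discard any closure of order divisible by $3$. This should leave a short list of elementary abelian subgroups $J=\langle H\cap\mathrm{2A}\rangle$. Then $H\le N_G(J)$, and every involution of $H$ lies in $J$, since each $\mathrm{2B}$ or $\mathrm{2C}$ element is a product of commuting transpositions in $H$.

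For each surviving $J$ we argue in one of two ways. If $N_G(J)$ is a maximal subgroup of shape $2^k\cn X$ with $J=O_2$, we use Lemma~\ref{lem:sub} to reduce to a binary action of $X$ with stabiliser $H/J$, and then Proposition~\ref{prop:1stclas} (for $X=\mathrm{M}_{23}$, for example) forces $H=J$. Otherwise we enumerate the $2$-subgroups of $N_G(J)$ whose involutions all lie in $J$. In both cases the final candidates are tested with {\tt NearTITest()}, and we expect all of them to fail.

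Once $|H\cap\mathrm{2A}|=1$, we finish as in Corollary~\ref{cor:fi22unique2a}. By the orbit argument above, $H$ contains no $\mathrm{2B}$ or $\mathrm{2C}$ element, since such an element would force an orbit of size larger than one. So $H$ has a unique involution $x\in\mathrm{2A}$. No class of elements of order $4$ powers into $\mathrm{2A}$, so $H$ cannot be cyclic of order $4$ or generalised quaternion, and therefore $H=\langle x\rangle$. The main obstacle is the orbit and closure computation: $C_G(g)$ for $g\in\mathrm{2B},\mathrm{2C}$ is large, and $|\mathrm{2A}|=31671$, so we would use {\tt OrbReps()}-style memory-saving orbit representatives and control the iteration carefully.
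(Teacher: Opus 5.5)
Your outline is logically sound, but it takes a much longer route than the paper. The paper does no $3$-transposition analysis in $\mathrm{Fi}_{23}$ at all. The $\mathrm{2A}$-centraliser $M=2.\mathrm{Fi}_{22}$ contains a Sylow $2$-subgroup of $G$ (both have $2$-part $2^{18}$), so the $2$-group $H$ lies in a conjugate of $M$. Lemma~\ref{lem:sub} then makes the action of $M$ on $(M:H)$ binary. Proposition~\ref{prop:clasFi22}, applied to $M$ (or to $M/Z(M)\cong\mathrm{Fi}_{22}$ if $Z(M)\le H$), gives $H\le\langle Z(M),t\rangle\cong 2^2$ with $t$ a lift of $\mathrm{2A}$. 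Maximal $2$-fixity rules out the cyclic subgroup generated by the $\mathrm{2B}$ element of this $2^2$, and a single {\tt NearTITest()} on the $2^2$ finishes the proof.

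You instead rerun the whole $\mathrm{Fi}_{22}$ machinery inside $\mathrm{Fi}_{23}$: forced $C_G(g)$-orbits via Lemmas~\ref{lem:conjconn} and~\ref{lem:Finf}, closure to $J=\langle H\cap\mathrm{2A}\rangle$, enumeration of $2$-subgroups of $N_G(J)$ with all involutions in $J$, and an $\mathrm{M}_{23}$-reduction for $J=2^{11}$. Each of these steps is valid. Your ending is also correct: a unique involution in $\mathrm{2A}$, a class containing no squares, forces $|H|=2$.

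What your route buys is independence from the $2.\mathrm{Fi}_{22}$ classification. What it costs is that every decisive step is a predicted computer output (``should be few'', ``should leave a short list'', ``we expect all of them to fail''). You also assert some facts without checking them: that $Z(C_G(g))$ contains no $\mathrm{2A}$ element for $g\in\mathrm{2B}\cup\mathrm{2C}$, and that such $g$ lies in the group generated by its forced orbit. The Sylow-embedding observation makes all of this unnecessary. Your route would in the end still have to test exactly the paper's group: two commuting transpositions generating $2^2$.
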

\begin{proof}
    Since $H$ is a 2-group, we can find $H$ in a maximal subgroup $M=2.\mathrm{Fi}_{22}$. Therefore, $H\leq \langle Z(M),t\rangle$ where $t$ is a lift of the $M/Z(M)$-class 2A by Proposition~\ref{prop:clasFi22}. Since the $G$-class 2A is the unique class of maximal 2-fixity, either $H=1$, $H$ is generated by an element of 2A, or $H=\langle Z(M),t\rangle$. We generate the group $\langle Z(M),t\rangle$ in \magma~as a subgroup of $G$ and test it with {\tt NearTITest()}, confirming that it does not yield a binary action, hence the result.
    \end{proof}

\subsection{$\mathrm{Co}_{1}$} Consider now the case $G/Z(G)=\mathrm{Co}_1$. By Proposition~\ref{prop:binarydiv} and~\cite[Theorem 4.2]{CDP}, if the action of $G$ on $(G:H)$ is binary then $|H|\mid 3^9\cdot11\cdot13\cdot23$. We can quickly reduce this to the situation where $H$ is a 3-group.
\begin{prop}
    The stabiliser $H$ is a 3-group.
\end{prop}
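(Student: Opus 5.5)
We start from the restriction $|H|\mid 3^9\cdot 11\cdot 13\cdot 23$ established just before the statement. The goal is to show that none of $11$, $13$ or $23$ divides $|H|$. The plan follows the pattern used for $\mathrm{Co}_2$ and $\mathrm{Ly}$. First we dispose of $23$ and $11$ by passing to overgroups among the maximal subgroups of $\mathrm{Co}_1$ and applying Lemma~\ref{lem:sub}. Then we handle $13$ with a Frobenius-type or class multiplication argument.

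\emph{The prime $23$.} If $23\mid|H|$, then inspecting the maximal subgroups of $\mathrm{Co}_1$ shows that $\varphi(H)$ lies in $\mathrm{Co}_2$, $\mathrm{Co}_3$ or $2^{11}\cn\mathrm{M}_{24}$. In $2.\mathrm{Co}_1$ we use the corresponding preimages, which split over the centre where appropriate. Since $|H|$ is odd, Lemma~\ref{lem:SZappl} lets us conjugate $H$ into $\mathrm{M}_{24}$ in the last case. By Lemma~\ref{lem:sub} we then obtain a faithful binary action of $\mathrm{Co}_3$ or $\mathrm{M}_{24}$ with stabiliser $H\neq1$, contradicting Proposition~\ref{prop:1stclas}. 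For $\mathrm{Co}_2$ the action would have odd-order non-trivial stabiliser, contradicting Proposition~\ref{prop:co2class}.

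\emph{The prime $11$.} The same argument applies. An odd-order subgroup with $11\mid|H|$ lies, after Lemma~\ref{lem:SZappl} where needed, in one of $\mathrm{Co}_2$, $\mathrm{Co}_3$, $\mathrm{M}_{24}$, $\mathrm{Suz}$ (or $6.\mathrm{Suz}$ inside $2.\mathrm{Co}_1$, where we use the full cover list of Proposition~\ref{prop:suzclass}), $\mathrm{McL}$, $\mathrm{HS}$, $\mathrm{M}_{12}$, $\mathrm{M}_{11}$ or $\mathrm{U}_6(2)$. In each case the relevant earlier proposition forbids a non-trivial odd stabiliser, except for the cyclic $3\mathrm{A}$ case in $\mathrm{McL}$, which does not involve $11$. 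For $\mathrm{U}_6(2)$ we reuse the computation in the proof of Proposition~\ref{prop:co2no1123}.

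\emph{The prime $13$.} Here $H$ lies in a maximal subgroup with order divisible by $13$, such as $(A_4\times G_2(4))\cn2$, $3.\mathrm{Suz}\cn2$ or $(A_6\times\mathrm{U}_3(3))\cn2$. If $H$ embeds in $3.\mathrm{Suz}$, then Proposition~\ref{prop:suzclass}, together with the fact that $H$ avoids the centre (checked by fusion), gives a contradiction. Otherwise $H\le 3^a\cdot 13$-type groups inside $G_2(4)$-type overgroups; we handle these by {\tt FilteredSubgroups()} and {\tt NearTITest()} inside a permutation representation of that maximal subgroup. If $H$ is cyclic of order $13$, we check with the \gap{} Character Table Library that $n_G(\mathcal{C})>11$ for every class $\mathcal{C}$ of $13$-elements, and conclude by Lemma~\ref{lem:cycbintest}.

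I expect the main obstacle to be the $13$ case. It requires a reliable permutation representation of the $G_2(4)$-type maximal subgroup, or of its preimage in $2.\mathrm{Co}_1$, and a full list of its $\{3,13\}$-subgroups. If that computation is too heavy, I would first try a Frobenius argument in the style of the $67\cn11$ case for $\mathrm{Ly}$, via Lemmas~\ref{lem:frob} and~\ref{lem:suborb}. That argument would handle non-abelian $13\cn3$ subgroups, and the remaining subgroups are abelian, hence cyclic of order dividing $39$.
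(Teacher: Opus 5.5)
\textbf{There is a genuine gap in the prime $11$ step, with smaller problems in the $13$ step.}

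\emph{The $11$ step.} Lemma~\ref{lem:SZappl} conjugates a $p'$-subgroup into a complement of a normal $p$-subgroup. For the maximal subgroup $3^6\cn 2\mathrm{M}_{12}$ that prime is $p=3$, but you only know $|H|$ is odd, not prime to $3$. Over $\mathbb{F}_3$, the module $3^6$ restricted to an element of order $11$ is $1\oplus 5$. So $3^6\cn 2\mathrm{M}_{12}$ contains odd-order subgroups $3\times 11$, $3^5\cn 11$ and $3^6\cn 11$ that meet $3^6$ non-trivially. These are not conjugate into $2\mathrm{M}_{12}$, and your argument says nothing about them.

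Similarly, an odd-order subgroup of $\mathrm{U}_6(2)\cn S_3$ lies in $\mathrm{U}_6(2)\cn 3$, not necessarily in $\mathrm{U}_6(2)$. By a Frattini argument, some outer $3$-element normalises, hence centralises, a Sylow $11$-subgroup. This gives a cyclic subgroup of order $33$ outside $\mathrm{U}_6(2)$, which the computation you quote from Proposition~\ref{prop:co2no1123} does not cover.

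\emph{Smaller slips.} Your claim that $H$ avoids $Z(3.\mathrm{Suz})$ is false in general. That centre is generated by a $3\mathrm{A}$-element, which $H$ may contain (e.g.\ $H$ cyclic of order $33$ or $39$). This is harmless: if $Z(3.\mathrm{Suz})\le H$, pass to the quotient $\mathrm{Suz}$, which Proposition~\ref{prop:suzclass} also covers. Also, $(A_6\times\mathrm{U}_3(3))\cn 2$ has order prime to $13$. Your fallback for $13$ overlooks subgroups such as $3\times(13\cn 3)$, which are neither abelian nor Frobenius.

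\textbf{How to repair it.} Among the four classes of elements of order $3$ in $\mathrm{Co}_1$, only $3\mathrm{A}$ has centraliser order divisible by $11$ or $13$. The normalisers of $3\mathrm{B}$, $3\mathrm{C}$ and $3\mathrm{D}$ involve only $\mathrm{U}_4(3)$, $\mathrm{U}_4(2)$ and $A_9$. So any element $t$ of order $3$ commuting with an element of order $11$ or $13$ lies in $3\mathrm{A}$, and $C_G(t)=3.\mathrm{Suz}$. This handles each problem case:
\begin{enumerate}[(a)]
\item Take an odd-order $H\le 3^6\cn 2\mathrm{M}_{12}$ with $11\mid|H|$. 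Since $5\nmid |H|$ and $\mathrm{M}_{12}$ has no elements of order $33$, $H\le 3^6\langle x\rangle$ for some $x$ of order $11$. The element $x$ fixes some $1\ne t\in 3^6$, because $729\equiv 3\pmod{11}$. Hence $H\le C_G(t)$.
\item The cyclic group of order $33$ in $\mathrm{U}_6(2)\cn 3$ lies in $C_G(t)$ for its own element $t$ of order $3$.
\item Any odd-order subgroup of $(A_4\times G_2(4))\cn 2$ lies in $\langle t\rangle\times G_2(4)\le C_G(t)$ for some $t\in A_4$ of order $3$.
\end{enumerate}
Proposition~\ref{prop:suzclass}, applied via Lemma~\ref{lem:sub} to $3.\mathrm{Suz}$ or its quotient $\mathrm{Suz}$, then kills all of these. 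The same should go through with $6.\mathrm{Suz}$ in $2.\mathrm{Co}_1$. No computation in $G_2(4)$ is needed.

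\textbf{Comparison with the paper.} Once repaired, your route is a genuinely different and essentially computation-free proof, resting on the earlier classifications for $\mathrm{Co}_2$, $\mathrm{Co}_3$, $\mathrm{M}_{24}$ and $\mathrm{Suz}$. The paper instead runs {\tt FilteredSubgroups()} over all subgroups of $G$ of order divisible by $11$, $13$ or $23$ and prime to $2$, $5$, $7$. It then eliminates each one with {\tt NearTITest()} in $G$ itself.
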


Recall that we are assuming that $H\cap Z(G)=1$.

\begin{proof}
    We run {\tt{FilteredSubgroups()}} to generate the list of all subgroups of $G$ with order divisible by at least one of $11$, $13$, or $23$ but indivisible by 2, 5, and 7. Finally, we run {\tt{NearTITest()}} on each of the remaining groups to deduce the result.
\end{proof}

For ease of notation we identify the classes of elements of order 3 of $2.\mathrm{Co}_1$ with their images in the simple quotient. By~\cite[Propositions 3.1, 3.2, and 3.3]{CDP} together with~\cite[Lemma 2.12]{CDP}, the $G$-conjugacy class $\mathrm{3A}$ is the unique class $\mathcal{C}$ of elements of order 3 such that $\Gamma(\mathcal{C})$ is disconnected, and thus is the unique class of maximal 3-fixity by Lemma~\ref{lem:conjconn}. We shall show over the next two propositions that $H$ contains no elements of $\mathrm{3B}\cup\mathrm{3C}$.

\begin{prop}\label{prop:co1no3c}
    The stabiliser $H$ contains no elements of the $G$-conjugacy class $\mathrm{3C}$.
\end{prop}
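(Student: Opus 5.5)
Suppose for a contradiction that some $g\in H$ lies in the $G$-conjugacy class $\mathrm{3C}$. I would mirror the argument of Propositions~\ref{prop:Co2no2B} and~\ref{prop:Co2no3B}.

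\textbf{Step 1: find a 3A element with larger fixed-point set.} Since $\mathrm{3A}$ is the unique class of maximal 3-fixity, Lemma~\ref{lem:conjconn} gives some $h\in H\cap\mathrm{3A}$ with $\fix(g)\subsetneq\fix(h)$. Here the rational-class version is used for the cover $2.\mathrm{Co}_1$, where the classes of order 3 are identified with their images in the simple quotient.

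\textbf{Step 2: trap a whole orbit inside $H$.} By Lemma~\ref{lem:Finf} and the remark following it, $H$ contains $F_\infty(g,h)$. In particular $H$ contains the whole orbit $h^{N_G(\langle g\rangle)}$, and this is a single $N_G(\langle g\rangle)$-orbit on $\mathrm{3A}$. Working in $\mathrm{Co}_1$ (the case $2.\mathrm{Co}_1$ lifts, as $H\cap Z(G)=1$), we have $N_G(\langle g\rangle)=3^{1+4}.\mathrm{Sp}_4(3).2$ for $g\in\mathrm{3C}$, which is a maximal subgroup. I would compute in \magma\ orbit representatives of $N_G(\langle g\rangle)$ on $\mathrm{3A}$. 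The class has size roughly $|\mathrm{Co}_1|/|3.\mathrm{Suz}.2|$, so I would use {\tt OrbReps()} to save memory. For each representative $h$, I would compute the group generated by $g$ together with the orbit, or at least a subgroup of $F_\infty(g,h)$ via {\tt Finf()}.

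\textbf{Step 3: derive the contradiction.} I expect every such group to have order divisible by a prime other than 3 (typically 2), or to be all of $G$. Either outcome contradicts the preceding proposition that $H$ is a 3-group. If some orbit happens to generate a 3-group, I would fall back on an alternative. One option is to note that $K:=\langle g,h^{N_G(\langle g\rangle)}\rangle$ is normalised by $N_G(\langle g\rangle)$, which is maximal. Then either $K\trianglelefteq G$, which is impossible, or $K\trianglelefteq N_G(\langle g\rangle)$, as in Lemma~\ref{lem:normalsub}, so $K\geq 3^{1+4}$. That subgroup contains 3B or 3C elements, so I would iterate. The other option is to run {\tt NearTITest()} on the finitely many 3-subgroups containing $K$ inside a Sylow 3-subgroup.

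The main obstacle is the computational one in Step 2. $\mathrm{Co}_1$ has no small permutation representation: its smallest has degree 98280, and the class $\mathrm{3A}$ is huge. So enumerating the orbits of $N_G(\langle g\rangle)$ on $\mathrm{3A}$ will require the memory-saving orbit-representative approach, possibly working inside the maximal subgroup $3^{1+4}.\mathrm{Sp}_4(3).2$ and its double coset decomposition.
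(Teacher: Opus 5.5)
Your Steps~1 and~2 are sound as far as they go, but they do not constitute a proof. Step~2 rests on a large orbit computation in $\mathrm{Co}_1$ whose outcome you only predict. The prediction is correct, but the reason for it is structural. You set up that reason in your fallback and then draw the wrong conclusion, so the decisive step is missing.

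As you say, $K:=\langle g,h^{N_G(\langle g\rangle)}\rangle\le H$ is normalised by the maximal subgroup $N_G(\langle g\rangle)$. It cannot be normal in $G$, so $K\trianglelefteq N_G(\langle g\rangle)$; this is exactly what Lemma~\ref{lem:normalsub} packages, and it is the paper's route. By the preceding proposition $H$ is a $3$-group, so $K$ is a normal $3$-subgroup of $N_G(\langle g\rangle)$. What you need is therefore the upper bound $K\le O_3(N_G(\langle g\rangle))=3^{1+4}$, rather than $K\ge 3^{1+4}$. The missing fact is that this $3^{1+4}$ contains no elements of $\mathrm{3A}$, which can be read off from the class fusion of $N_G(\langle g\rangle)$ into $G$ in the \gap\ Character Table Library. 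Since $h\in K\cap\mathrm{3A}$, this is an immediate contradiction. It also shows that no orbit can ever generate a $3$-group, so the expensive computation (and the {\tt OrbReps()} machinery) is unnecessary.

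Neither of your proposed repairs closes the argument. Noting that $3^{1+4}$ contains $\mathrm{3B}$ or $\mathrm{3C}$ elements and ``iterating'' gives no contradiction: $H$ already contains the $\mathrm{3C}$-element $g$. Moreover, the paper excludes $\mathrm{3B}$-elements (Proposition~\ref{prop:co1no3B}) only afterwards, and that proof uses the present statement, so iterating through $\mathrm{3B}$ would be circular. Running \texttt{NearTITest()} on $3$-overgroups of $K$ is a one-sided random search for witnesses to Lemma~\ref{lem:efficienttest}. It is not guaranteed to eliminate every candidate.
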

\begin{proof}
    Suppose, for contradiction, that there exists $g\in H\cap\mathrm{3C}$. Since the normaliser $N_G(\langle g\rangle )=Z(G).(3^{1+4}.2\mathrm{U}_4(2).2)$ is maximal~\cite{atlas}, Lemma~\ref{lem:normalsub} implies that $H$ has a subgroup $K$ which is normal in $N_G(\langle g\rangle)$, and also contains an element of the $G$-conjugacy class $\mathrm{3A}$. Checking class fusion using the {\sf GAP} Character Table Library~\cite{CTblLib1.3.11}, we see that $O_3(N_G(\langle g\rangle))=3^{1+4}$ contains no element of $\mathrm{3A}$, and so $K$ must have order not a power of 3, a contradiction.
\end{proof}
\begin{prop}\label{prop:co1no3B}
    The stabiliser $H$ contains no element of the $G$-conjugacy class $\mathrm{3B}$. 
\end{prop}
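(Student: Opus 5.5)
We mirror the argument of Proposition~\ref{prop:co1no3c}, with Lemma~\ref{lem:Finf} supplementing Lemma~\ref{lem:normalsub} where the structural argument alone does not settle the matter. Suppose for contradiction that $g\in H\cap\mathrm{3B}$. Since $H$ is a 3-group and $\mathrm{3A}$ is the unique class of maximal 3-fixity, Lemma~\ref{lem:conjconn} yields some $h\in H\cap\mathrm{3A}$ with $\fix(g)\subsetneq\fix(h)$. By the ATLAS, $N_G(\langle g\rangle)=Z(G).(3^{1+4}{:}\mathrm{Sp}_4(3){:}2)$, up to the precise shape of the extension, and this normaliser is maximal. Lemma~\ref{lem:normalsub} (with the unique disconnected rational class $\mathcal{C}=\mathrm{3A}$) then produces a subgroup $K\le H$ with $\langle g\rangle<K\trianglelefteq N_G(\langle g\rangle)$, where $K$ meets $\mathrm{3A}$.

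As $K$ is a 3-group (being contained in $H$) and normal in $N:=N_G(\langle g\rangle)$, it lies in $O_3(N)$, which is $3^{1+4}$ modulo the centre. I would first check class fusion from $N$ to $G$ in the \gap~Character Table Library, to see whether $O_3(N)$ contains any $\mathrm{3A}$-elements. If it contains none, we obtain an immediate contradiction, exactly as in Proposition~\ref{prop:co1no3c}.

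The main obstacle is that, unlike the $\mathrm{3C}$ case, the extraspecial group $3^{1+4}$ here plausibly does contain $\mathrm{3A}$-elements, so the fusion check alone may not suffice. In that case I would use the stronger form of the argument: by Lemma~\ref{lem:Finf}, $H$ contains the whole $N$-orbit $h^{N}$, or even the $C_G(g)$-orbit, of some $\mathrm{3A}$-element $h\in O_3(N)$. Since $\mathrm{Sp}_4(3)$ acts irreducibly on $O_3(N)/\langle g\rangle\cong 3^4$, the normal subgroup generated by such an orbit is all of $O_3(N)=3^{1+4}$. So $H$ contains $3^{1+4}$, and hence $H\le N_G(3^{1+4})=N$.

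Granting this, Lemma~\ref{lem:sub} gives a binary action of $N$ on $(N:H)$. Passing to the quotient by $O_3(N)$ and arguing as in Corollary~\ref{cor:Fi22no2C}, we would use the fact that $H/O_3(N)$ is a 3-subgroup of $\mathrm{Sp}_4(3){:}2$. This leaves a short explicit list of candidate subgroups of $N$ containing $3^{1+4}$, built in \magma~via a permutation representation of the maximal subgroup, which I would test with {\tt NearTITest()}. Alternatively, I would run {\tt Finf()} directly on pairs $(g,h)$ with $h$ ranging over $C_G(g)$-orbit representatives on $\mathrm{3A}$, as in Proposition~\ref{prop:Co2no2B}, expecting each output to have order divisible by 2 and thus to contradict the fact that $H$ is a 3-group. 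The hardest part in practice will be the orbit computation inside $\mathrm{Co}_1$, which requires the memory-saving {\tt OrbReps()} approach.
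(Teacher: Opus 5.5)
There is a genuine gap, and it starts with the structure you assume. The group $3^{1+4}.2\mathrm{U}_4(2).2$ (your $3^{1+4}{:}\mathrm{Sp}_4(3){:}2$) is the normaliser of a $\mathrm{3C}$-element, which is why the fusion argument works in Proposition~\ref{prop:co1no3c}. For $g\in\mathrm{3B}$ (taking $Z(G)=1$; the other case is analogous) one has $N_G(\langle g\rangle)=3^2.\mathrm{U}_4(3).2^2$. This group is \emph{not} maximal: it lies maximally in both $3^2.\mathrm{U}_4(3).D_8$ and $3.\mathrm{Suz}{:}2$.

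So Lemma~\ref{lem:normalsub} only gives $K\trianglelefteq L$ with $L$ one of these three groups. Since $K\le H$ is a $3$-group properly containing $\langle g\rangle$, the only possibility is $K=O_3(N_G(\langle g\rangle))\cong 3^2$. This $K$ does meet $\mathrm{3A}$; indeed it is generated by its $\mathrm{3A}$-elements. Hence there is no fusion contradiction, and your irreducibility argument for $\mathrm{Sp}_4(3)$ on $3^4$ concerns the wrong group. Separately, the inference ``$H$ contains $O_3(N)$, hence $H\le N_G(O_3(N))$'' is invalid: containing a subgroup does not mean normalising it. In Corollary~\ref{cor:Fi22no2C}, normality came from $\langle H\cap\mathrm{2A}\rangle\trianglelefteq H$ in Proposition~\ref{prop:Fi222local}, not from containment.

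Your fallback computation cannot close the argument either. $N_G(\langle g\rangle)$ normalises $K$, and $K$ is also the $O_3$ of the normaliser of each $\mathrm{3B}$-subgroup it contains. Hence any $c$ with $g^c\in K$ normalises $K$, and for $h\in K\cap\mathrm{3A}$ the group $F_\infty(g,h)$ stays inside $K$. So \texttt{Finf()} returns a $3$-group, not a group of even order, on exactly the orbits that matter. In other words, $H=K\cong 3^2$ survives all the fixity arguments and must be eliminated separately.

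The paper does this in four steps:
\begin{enumerate}
\item[(i)] For representatives $x\notin K$ of the $N_G(K)$-orbits on $\mathrm{3A}$ (via \texttt{OrbReps()}), each $\langle K,x\rangle$ has even order or contains a $\mathrm{3C}$-element. So $H\cap\mathrm{3A}\subseteq K$ by Proposition~\ref{prop:co1no3c}.
\item[(ii)] Since $K$ is determined by $g$ and generated by its $\mathrm{3A}$-elements, every $\mathrm{3B}$-element of $H$ also lies in $K$.
\item[(iii)] No element of order $9$ powers into $\mathrm{3A}\cup\mathrm{3B}$, so $H=K$.
\item[(iv)] \texttt{NearTITest()} shows that the action on $(G:K)$ is not binary.
\end{enumerate}
Step (iv) is entirely missing from your plan. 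Without it, the candidate $H=O_3(N_G(\langle g\rangle))$ is never excluded.
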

\begin{proof}
    Suppose, for contradiction, that there exists $g\in H\cap\mathrm{3B}$. For ease of notation we assume $Z(G)=1$; the other case is similar. Now, $N_G(\langle g\rangle )=3^{2}.\mathrm{U}_4(3).2^2$ can only be a subgroup of the maximals $3^{2}.\mathrm{U}_4(3).D_8$ and $3.\mathrm{Suz}\cn2$, and is maximal in both~\cite{atlas}. Therefore, $H$ has a subgroup $K$ which is normal in either $N_G(\langle g\rangle)$, $3^{2}.\mathrm{U}_4(3).D_8$, or $3.\mathrm{Suz}\cn2$ and properly contains a subgroup generated by an element of the $G$-conjugacy class $\mathrm{3A}$ by Lemma~\ref{lem:normalsub}. Clearly, the only possible such subgroup is $O_3(N_G(\langle g\rangle)) =3^2=K\leq H$. 
    
    Suppose now that $H$ contains an element $x\in\mathrm{3A}\setminus K$---we shall examine the structure of all possible such groups $\langle K,x\rangle$ to arrive at a contradiction. It suffices to consider representatives $x$ of each $N_G(K)$-orbit on $\mathrm{3A}$---we generate a list of these using our \magma~function {\tt OrbReps()}, and find that each possible group of the form $\langle K,x\rangle$ either has order divisible by 2 or has an element $y\in \mathrm{3C}$, a contradiction. Therefore, $\mathrm{3A}\cap H\leq K$. Since the group $K$ was uniquely defined by $g$ and $K$ is generated by its elements in 3A, this implies that $\mathrm{3B}\cap H\leq K$; elements of $\mathrm{3A}\cup\mathrm{3B}$ are not the power-up of any 3-elements in $G$~\cite{atlas}, whence $H=K$. Running {\tt NearTITest()} on $K$ we reach a contradiction, thus $H\cap\mathrm{3B}=\emptyset$.
\end{proof}
This is enough of a reduction to classify the binary actions of $G$.
\begin{prop}
    Either $H=1$, or $Z(G)\ne 1$ and $H$ is generated by an element of the $G$-conjugacy class $\mathrm{3A}$.
\end{prop}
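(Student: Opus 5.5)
We have already shown that $H$ is a 3-group with $H\cap Z(G)=1$. By Propositions~\ref{prop:co1no3c} and~\ref{prop:co1no3B}, every element of order 3 in $H$ lies in $\mathrm{3A}$. The plan has three steps. First, show $H$ has exponent 3 and small order. Second, reduce to $|H|=3$. Third, treat the cases $Z(G)=1$ and $Z(G)\neq1$ separately using Lemma~\ref{lem:cycbintest}.

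\textbf{Step 1: exponent and order.} I would check from the power maps in the $\mathbb{ATLAS}$ that no class of elements of order 9 in $\mathrm{Co}_1$ powers into $\mathrm{3A}$. This is consistent with the observation, used in the proof of Proposition~\ref{prop:co1no3B}, that $\mathrm{3A}$ elements are not power-ups of 3-elements. It follows that $H$ has exponent 3. Next suppose $|H|\ge 9$ and take distinct commuting elements $x,y\in H\cap \mathrm{3A}$ with $\langle x\rangle\neq\langle y\rangle$; such elements exist, for instance two elements of $Z(H)$ and a further element of $H$. Then every nontrivial element of $\langle x,y\rangle$ must lie in $\mathrm{3A}$. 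Using \magma, via {\tt OrbReps()} on $C_G(x)$-orbits of $\mathrm{3A}$, I would determine whether $\mathrm{Co}_1$ contains a pure $\mathrm{3A}$ elementary abelian subgroup $3^2$.

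\textbf{Step 2: eliminating $|H|\ge 9$.} If no pure $\mathrm{3A}$ subgroup $3^2$ exists, then $|H|=3$ immediately. If such subgroups do exist, I expect them to be scarce. In that case I would enumerate the pure-$\mathrm{3A}$ exponent-3 subgroups inside the normaliser $N_G(\langle x\rangle)$, which is of shape $3.\mathrm{Suz}\cn2$ (lifted to $G$ when $Z(G)\neq1$). This is legitimate because $H\le C_G(z)$ for some $z\in Z(H)\cap\mathrm{3A}$. I would then rule out each such subgroup with {\tt NearTITest()}, discarding those that meet $Z(G)$ nontrivially. An alternative for this step is Lemma~\ref{lem:sub}: work inside $3.\mathrm{Suz}$, where $H/\langle z\rangle$ or a complement to $\langle z\rangle$ gives a binary action of a group closely related to $\mathrm{Suz}$. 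That group is ruled out by Proposition~\ref{prop:suzclass}, although this route needs care with the centre.

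\textbf{Step 3: $H=\langle g\rangle$ with $g\in\mathrm{3A}$.} When $Z(G)=1$, the class $\mathrm{3A}$ is real, so it is a single rational class. I would compute $n_G(\mathrm{3A},\mathrm{3A},\mathrm{3A})$ from the \gap\ Character Table Library and expect it to exceed $1=p-2$. Lemma~\ref{lem:cycbintest} then shows the action is not binary, so $H=1$. When $Z(G)\neq 1$, the group $G=2.\mathrm{Co}_1$ is exactly the entry of Table~\ref{tbl:transall}, and the action is binary by Proposition~\ref{prop:allbins}. This is the only possibility remaining.

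I expect the main obstacle to be Step 2. Excluding larger pure-$\mathrm{3A}$ 3-subgroups in a group as large as $\mathrm{Co}_1$ requires memory-heavy orbit computations, and these must be done separately for $\mathrm{Co}_1$ and for $2.\mathrm{Co}_1$.
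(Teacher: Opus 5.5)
Your proposal has a genuine gap at the outset. Besides $\mathrm{3A}$, $\mathrm{3B}$ and $\mathrm{3C}$, the group $\mathrm{Co}_1$ has a fourth class $\mathrm{3D}$ of elements of order $3$. Its centraliser is $3\times A_9$ and its normaliser is the maximal subgroup $A_9\times S_3$. Propositions~\ref{prop:co1no3c} and~\ref{prop:co1no3B} say nothing about $\mathrm{3D}$, so your assertion that every element of order $3$ in $H$ lies in $\mathrm{3A}$ is unjustified. Steps 1 and 2 rest on that assertion in four places:
the purity of $\langle x,y\rangle$;
the existence of $z\in Z(H)\cap\mathrm{3A}$, which you use to put $H$ inside $C_G(z)$;
the inference ``no pure-$\mathrm{3A}$ subgroup $3^2$, hence $|H|=3$'';
the exponent-$3$ claim, which needs the cube of any element of order $9$ in $H$ (a priori in $\mathrm{3C}\cup\mathrm{3D}$) to be excluded.
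As written, nothing in your argument rules out, say, an $H\cong 3^2$ containing elements of both $\mathrm{3A}$ and $\mathrm{3D}$.

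The paper avoids this by never claiming that $H$ is pure $\mathrm{3A}$. It works only with $H\cap\mathrm{3A}$, which is nonempty because $\mathrm{3A}$ is the unique class of maximal $3$-fixity. The extra input is the character-table computation $n_G(\mathrm{3A},\mathrm{3A},\mathcal{C})=0$ for every class $\mathcal{C}\notin\{\mathrm{1A},\mathrm{3A},\mathrm{3B}\}$ of $3$-elements. Since $H$ is a $3$-group avoiding $\mathrm{3B}$, this makes $\langle g,h\rangle$ pure $\mathrm{3A}$ for any $g,h\in H\cap\mathrm{3A}$. Because $\Gamma(\mathrm{3A})$ is edgeless \cite[Proposition 3.1]{CDP}, there is no pure-$\mathrm{3A}$ subgroup $3^2$, so your \magma\ search is unnecessary, and $h\in\langle g\rangle$. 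Hence $H$ normalises, indeed centralises, $\langle g\rangle$. Lemma~\ref{lem:sub} and Proposition~\ref{prop:suzclass}, applied to $C_G(g)/\langle g\rangle\in\{\mathrm{Suz},2.\mathrm{Suz}\}$ acting on the cosets of $H/\langle g\rangle$, then give $H=\langle g\rangle$. This disposes of $\mathrm{3D}$ and of elements of order $9$ without ever naming them, and your Step 3 then agrees with the paper ($n_G(\mathrm{3A})=5347$ when $Z(G)=1$, and $1$ otherwise).

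Alternatively, you can rescue your own route by first excluding $\mathrm{3D}$ exactly as the paper excludes $\mathrm{3C}$. $\Gamma(\mathrm{3D})$ is connected, and $N_G(\langle d\rangle)$, the preimage of $A_9\times S_3$, is maximal with no normal $3$-subgroup properly containing $\langle d\rangle$. That contradicts Lemma~\ref{lem:normalsub}. After this, your exponent-$3$ and purity argument is valid, and it reaches $|H|=3$ without the reduction to $\mathrm{Suz}$.
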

\begin{proof}
    Suppose $g,h\in\mathrm{3A}\cap H$; we first show that $h\in\langle g\rangle$. We compute using the {\sf GAP} Character Table Library~\cite{CTblLib1.3.11} that $$n_G(\mathrm{3A},\mathrm{3A},\mathcal{C})=\begin{cases}
        2&\text{if $\C=\mathrm{3B}$}
        \\0&\text{if $\C\not\in\{\mathrm{1A},\mathrm{3A},\mathrm{3B}\}$ is a class of $3$-elements.}
    \end{cases}$$
    Consequently, since $H\cap\mathrm{3B}=\emptyset$, we deduce that all non-trivial elements of $\langle g,h\rangle$ are in $\mathrm{3A}$. Now, $\Gamma(\mathrm{3A})$ is edgeless by \cite[Proposition 3.1]{CDP}, and so $G$ has no elementary abelian subgroup of order 9 with all eight non-trivial elements in $\mathrm{3A}$, and so $h\in \langle g\rangle$, as was to be shown, thus $H\leq N_G(\langle g\rangle)=(Z(G)\times3).\mathrm{Suz}\cn2$~\cite{atlas}. Since $|H|$ is odd, $H\leq 3.\mathrm{Suz}$, and so $H\leq \langle g\rangle$ by Proposition~\ref{prop:suzclass}. Finally, $$n_G(\mathrm{3A})=\begin{cases}
        1&\text{ if $Z(G)\ne 1$}\\
        5347&\text{ if $Z(G)=1$},
    \end{cases}$$
    so the result follows from Lemma~\ref{lem:cycbintest}.
    \end{proof}
\subsection{$\mathrm{J}_4$} We next consider the largest of Janko's simple groups, $G=\mathrm{J}_4$. By Proposition~\ref{prop:binarydiv}, $|H|\mid 5\cdot7\cdot11^3\cdot23\cdot29\cdot31\cdot37\cdot43.$ The group $G$ has 13 conjugacy classes of maximal subgroups, as determined by Kleidman and Wilson~\cite{KW88}; we write representatives in order of increasing index as $K_1,K_2,\dots,K_{13}$.

\begin{prop}\label{prop:J4bigorderreduction}
The stabiliser $H$ has order dividing $5\cdot7\cdot 11^3$.
\end{prop}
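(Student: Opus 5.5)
We need to rule out the primes $23,29,31,37,43$ dividing $|H|$, starting from the divisibility $|H|\mid 5\cdot7\cdot11^3\cdot23\cdot29\cdot31\cdot37\cdot43$ given by Proposition~\ref{prop:binarydiv}. For each such prime $p$, I will list the maximal subgroups $K_i$ of $\mathrm{J}_4$ whose order is divisible by $p$, using the Kleidman--Wilson list~\cite{KW88}. The relevant ones should be the following:
\begin{itemize}
\item $p=43$: only $43\cn14$.
\item $p=37$: only $37\cn12$ and $\mathrm{U}_3(11)\cn2$.
\item $p=29$: only $29\cn28$.
\item $p=31$: $2^{10}\cn\mathrm{L}_5(2)$ and $\mathrm{L}_2(32)\cn5$.
\item $p=23$: $2^{11}\cn\mathrm{M}_{24}$ and $\mathrm{L}_2(23)\cn2$.
\end{itemize}
Since $|H|$ is odd, $H$ lies in the odd-order part of one of these. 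In the Frobenius-type maximals $p\cn m$, the odd part is $p\cn m'$ with $m'\mid 7$, $m'\mid 3$ or $m'=7$. For $2^{11}\cn\mathrm{M}_{24}$ and $2^{10}\cn\mathrm{L}_5(2)$, Lemma~\ref{lem:SZappl} lets me move $H$ into a complement $\mathrm{M}_{24}$, respectively $\mathrm{L}_5(2)$. For $\mathrm{U}_3(11)\cn2$, $\mathrm{L}_2(32)\cn5$ and $\mathrm{L}_2(23)\cn2$, $H$ lies in the odd-index part.

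I will then eliminate each case. If $H\le\mathrm{M}_{24}$, then by Lemma~\ref{lem:sub} the action of $\mathrm{M}_{24}$ on $(\mathrm{M}_{24}:H)$ is binary, contradicting Proposition~\ref{prop:1stclas}. For $3\nmid|H|$ (here $3$ is excluded by Proposition~\ref{prop:binarydiv}), the odd-order subgroups of $p\cn m$ containing an element of order $p$ reduce to:
\begin{itemize}
\item $43$ or $43\cn7$;
\item $29$ or $29\cn7$;
\item $37$ (note $37\cn3$ is excluded since $3\nmid|H|$);
\item subgroups of $\mathrm{L}_2(32)\cn5$ and $\mathrm{L}_2(23)$ with $31$, respectively $23$, dividing the order, which are $31$, $31\cn5$, $23$, $23\cn11$;
\item odd subgroups of $\mathrm{L}_5(2)$ of order divisible by $31$, which are $31$ and $31\cn5$;
\item odd subgroups of $\mathrm{U}_3(11)$ of order divisible by $37$, which are $37$ and possibly $37\cn$ odd.
\end{itemize}

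The cyclic cases of prime order are handled by Lemma~\ref{lem:cycbintest}: I would compute $n_G(\mathcal{C})$ in the \gap~Character Table Library for every class of elements of order $23,29,31,37,43$, expecting values far exceeding $p-2$. The remaining non-cyclic candidates are Frobenius groups $p\cn r$. For these I would mimic the $67\cn11$ argument in $\mathrm{Ly}$. Take $y$ of order $r$ in $H$ and an element $t\in C_G(y)$ of a prime order not dividing $|N_G(\langle x\rangle)|$, where $x$ generates the normal $p$-subgroup. Then $H\cap H^t=\langle y\rangle$, so $H$ acts as a Frobenius group with complement of order $r>2$ on an orbit, contradicting Lemmas~\ref{lem:frob} and~\ref{lem:suborb}. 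If a suitable $t$ is awkward to exhibit, I would instead build these small groups in \magma\ inside the relevant maximal subgroup (which has a permutation representation) and run {\tt NearTITest()}, applying Lemma~\ref{lem:sub}.

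The main obstacle is bookkeeping: I must make sure every maximal subgroup containing a given $p$ has been listed, and I must confirm the structure of the centralisers used in the Frobenius argument. The elements of order $5$ and $7$ are genuinely fine-grained, which is why the statement stops at $5\cdot7\cdot11^3$.
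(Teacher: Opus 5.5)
Apart from one case, your plan is the paper's. You use the same list of maximal subgroups, and Lemma~\ref{lem:SZappl} with Lemma~\ref{lem:sub} and Proposition~\ref{prop:1stclas} for anything inside $\mathrm{M}_{24}$. You use Lemma~\ref{lem:cycbintest} for prime order. For $43\cn7$, $29\cn7$ and $31\cn5$ you use a Frobenius suborbit, where $t\in C_G(y)$ of order $5$, respectively $7$, works exactly as in the paper.

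The case that breaks is $H=23\cn11=\langle x\rangle\cn\langle y\rangle$, coming from $\mathrm{L}_2(23)\cn2$. Here $N_G(\langle x\rangle)=23\cn22$, so your recipe needs $t\in C_G(y)$ of prime order outside $\{2,11,23\}$. But $y$ is conjugate to an $11$-element of $\mathrm{M}_{24}<2^{11}\cn\mathrm{M}_{24}$, and that element fixes exactly two points. It therefore centralises the duad involution of the cocode module $2^{11}$, which is of class $\mathrm{2B}$ (whose centraliser is $2^{11}\cn\mathrm{M}_{22}\cn2$). The $\mathrm{11A}$ elements commute only with $\mathrm{2A}$ involutions, so $y\in\mathrm{11B}$. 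The centraliser of $y$ is then a $\{2,11\}$-group, of order $242=2\cdot 11^2$, so no such $t$ exists.

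Your fallback fails too. In $\mathrm{L}_2(23)\cn2\cong\mathrm{PGL}_2(23)$ the subgroup $23\cn11$ is the index-two subgroup of a Borel subgroup. Every other conjugate meets it in the order-$11$ part of a split torus. So the hypothesis $H\cap H_2=H\cap H_3=1$ of Lemma~\ref{lem:efficienttest} is never satisfied, and {\tt NearTITest()} cannot find a witness there.

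The paper's fix is to observe that $23\cn11\le \mathrm{L}_2(23)<\mathrm{M}_{24}$. Sylow $23$-subgroups are conjugate, and $N_G(\langle x\rangle)=23\cn22$ has a unique subgroup of index $2$. Hence every $23\cn11$ in $\mathrm{J}_4$ is conjugate into $\mathrm{M}_{24}\le 2^{11}\cn\mathrm{M}_{24}$. Your own $\mathrm{M}_{24}$ argument then disposes of both $23$ and $23\cn11$, with no class multiplication coefficients needed.

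Alternatively, you can keep the Frobenius argument and only ask for some $t\in N_G(\langle y\rangle)\setminus N_G(H)$. Two choices work:
\begin{itemize}
\item an element of order $11$ in $C_G(y)\setminus\langle y\rangle$, which cannot normalise $\langle x\rangle$ since $11^2\nmid |N_G(\langle x\rangle)|$;
\item an element of $\mathrm{PGL}_2(23)$ swapping the two fixed points of the torus containing $y$.
\end{itemize}
In either case $\langle y\rangle\le H\cap H^t\ne H$, so $H\cap H^t=\langle y\rangle$, and Lemmas~\ref{lem:frob} and~\ref{lem:suborb} apply.
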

\begin{proof}
    Suppose first that $p\mid |H|$ for some $p\in\{43,37,29\}$, and take $x\in H$ with order $p$. If $p\in\{43,29\}$, the only maximal subgroups of $G$ containing $x$ are $N_G(\langle x\rangle)<p\cn(p-1)$.  If $p=37$, then either $H\leq N_G(\langle x\rangle)=K_{13}=37\cn12$ or else $H\leq K_5= \mathrm{U}_3(11).2$.
    
    Since neither $2$ nor $3$ divide $|H|$, it follows in all cases that either $H=\langle x\rangle$, or $H=\langle x\rangle\cn 7$ (we are using here that $\mathrm{U}_3(11)$ has a unique maximal subgroup of order divisible by 37, which has structure $37\cn 3$). We compute that $n_G(\mathcal{C})>p-2$ for each class of elements of order $p$, and thus $H=\langle x\rangle\cn \langle y\rangle$ for some $y$ of order 7 by Lemma~\ref{lem:cycbintest} (and in particular we may already deduce that $p\ne37)$. Next $|C_G(y)|=840$~\cite{atlas}, and thus $y$ is centralised by some element $t$ of order $5$---no such element normalises $\langle x\rangle$, so $H\cap H^t=\langle y\rangle$, and thus $H$ has an orbit on which it is Frobenius, and hence not binary by Lemma~\ref{lem:frob}. Thus, $G$ is non-binary by Lemma~\ref{lem:suborb}, a contradiction.

    Suppose now that $23\mid |H|$. Then by Lemma~\ref{lem:SZappl} either $H\leq \mathrm{M}_{24}<2^{11}\cn\mathrm{M}_{24}=K_1$, or $H\leq \mathrm{L}_2(23)\cn2=K_9$. The former contradicts Proposition~\ref{prop:1stclas} by Lemma~\ref{lem:sub}, and the latter---together with our divisibility conditions---implies that $H\leq23\cn11$. But then we again have that $H\leq \mathrm{M}_{24}$, a contradiction, whence $23\nmid |H|.$

    Finally, if $31\mid |H|$, then using Lemma~\ref{lem:SZappl} we deduce that either $H\leq \mathrm{L}_{5}(2)<2^{10}\cn\mathrm{L}_5(2)=K_3$ or $H\leq\mathrm{L}_{2}(32)\cn5=K_8$. Considering the subgroups of both $\mathrm{L}_5(2)$ and $\mathrm{L}_{2}(32)$ of order indivisible by 2 and 3, we deduce that $H\leq 31\cn5=:\langle x\rangle\cn\langle y\rangle$. Now, $H$ cannot be cyclic by Lemma~\ref{lem:cycbintest} as $n_G(\mathcal{C})>29$ for each class $\mathcal{C}$ of elements of order 31, thus $H=\langle x\rangle\cn\langle y\rangle$. But $7\mid|C_G(y)|$ and $7\nmid |N_G(\langle x\rangle)|$~\cite{atlas}, and so there is some $t\in G$ such that $H\cap H^t=\langle y\rangle$. This contradicts Lemmas~\ref{lem:frob} and~\ref{lem:suborb}, hence the result.
\end{proof}
We next consider the situation where $11^{2}\mid |H|$. 
\begin{prop}
    The stabiliser $H$ has order dividing $ 5\cdot7\cdot 11$.
\end{prop}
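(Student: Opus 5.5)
By Proposition~\ref{prop:J4bigorderreduction}, $|H|$ divides $5\cdot 7\cdot 11^3$. It therefore suffices to show that $11^2\nmid|H|$. Suppose for contradiction that $11^2\mid|H|$, and let $S$ be a Sylow $11$-subgroup of $H$, so $|S|\in\{11^2,11^3\}$. The Sylow $11$-subgroup of $\mathrm{J}_4$ is $11^{1+2}_+$, and among the maximal subgroups its normaliser lies only in $11^{1+2}_+\cn(5\times 2S_4)$. Subgroups of order $11^2$ also lie, for instance, in $\mathrm{U}_3(11)\cn2$ and in $11^{1+2}\cn(5\times2S_4)$. I would first pin down, from the list of maximal subgroups of~\cite{KW88}, exactly which $K_i$ have order divisible by $11^2$. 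I expect these to be $K_4=11^{1+2}_+\cn(5\times 2S_4)$ and $K_5=\mathrm{U}_3(11)\cn2$; the factor $11^2$ does not divide $|\mathrm{M}_{24}|$, $|\mathrm{L}_5(2)|$ or the other maximals. Hence $H\leq K_4$ or $H\leq K_5$.

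In the case $H\leq K_5$, since $|H|$ is odd we get $H\leq \mathrm{U}_3(11)$. By Lemma~\ref{lem:sub}, the action of $\mathrm{U}_3(11)$ on $(\mathrm{U}_3(11):H)$ is then binary. I would use \texttt{FilteredSubgroups()} in a permutation representation of $\mathrm{U}_3(11)$ (degree $1332$) to list all subgroups of order dividing $5\cdot7\cdot11^3$ and divisible by $11^2$. These are essentially the subgroups of the Borel $11^{1+2}\cn 40$ of odd order, so the list should be short, and I would run \texttt{NearTITest()} on each in $\mathrm{U}_3(11)$. If some candidate survives there, I would instead test it directly in $G$.

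In the case $H\leq K_4=11^{1+2}\cn(5\times 2S_4)$, the oddness of $|H|$ and $7\nmid|K_4|$ give $H\leq 11^{1+2}\cn 5$ with $|H|\in\{11^2,11^3,11^2\cdot5,11^3\cdot5\}$. The subgroups of $11^{1+2}\cn5$ can be enumerated in \magma\ after building $K_4$ via a straight-line program from~\cite{webatlas} and taking a permutation representation, as was done for $\mathrm{Ly}$. I would then test each with \texttt{NearTITest()} in $K_4$ and appeal to Lemma~\ref{lem:sub}. For subgroups containing $Z(11^{1+2})$, a theoretical shortcut may also be available. If $H$ contains the centre $\langle z\rangle$, which is normalised by the whole of $K_4$, then Lemma~\ref{lem:Finf}-type arguments, or a Frobenius orbit argument as in Proposition~\ref{prop:J4bigorderreduction}, could be tried: pick $t\in C_G(z)\setminus N_G(H)$ with $H\cap H^t$ acting Frobeniusly, and apply Lemmas~\ref{lem:frob} and~\ref{lem:suborb}.

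The main obstacle is computational. $\mathrm{J}_4$ has no convenient small permutation representation, so every test has to be carried out inside the maximal subgroups $K_4$ and $K_5$ rather than in $G$. A candidate that survives \texttt{NearTITest()} in the subgroup, where there are fewer conjugates available as witnesses, would need a separate argument. For such survivors I would first try the Frobenius-orbit trick using an element of order $5$ or $7$ centralising a suitable subgroup, and only then resort to a matrix-representation search for witnesses to Lemma~\ref{lem:efficienttest} in $G$.
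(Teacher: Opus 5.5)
Your reduction matches the paper's. The only maximal subgroups of order divisible by $11^2$ are $\mathrm{U}_3(11)\cn2$ and the Sylow $11$-normaliser $N_G(P)=11^{1+2}\cn(5\times 2S_4)$, where $P=11^{1+2}$ (this is $K_7$ in the paper's numbering, not $K_4$). In both cases $H\le P\cn5$. This subgroup is normal in $N_G(P)$ and also lies in the Borel subgroup $P\cn40$ of $\mathrm{U}_3(11)$, so one computation in $\mathrm{U}_3(11)$ covers both branches. The gap is that your proposal stops exactly where the real work begins. That computation leaves one survivor, $H=11^{1+2}\cn5$, and none of your fallbacks can remove it:
\begin{itemize}
\item In $\mathrm{U}_3(11)$ the action on the cosets of $H$ has base size $3$. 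So there is no conjugate $H_2$ with $H\cap H_2=1$, and Lemma~\ref{lem:efficienttest} cannot apply.
\item In $K_7$ the subgroup $H$ is normal. So $K_7$ acts on $(K_7:H)$ as the regular action of $2S_4$, which is binary, and Lemma~\ref{lem:sub} gives nothing.
\item A random witness search directly in $\mathrm{J}_4$ is hopeless in practice. Heuristically a pair of conjugates gives a witness with probability about $|H|^3/|G|\approx 3\times 10^{-9}$.
\item Your shortcut with $t\in C_G(z)\setminus N_G(H)$, $z\in Z(P)$, is empty. Since $Z(P)$ is characteristic in $P$ and $N_G(P)$ is maximal, $C_G(z)\le N_G(\langle z\rangle)=N_G(P)\le N_G(H)$.
\end{itemize}

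What is missing is a concrete Frobenius-suborbit argument, which is how the paper finishes. Every element of order $11$ in $\mathrm{J}_4$ has centraliser of order coprime to $5$. Hence the subgroup of order $5$ acts fixed-point-freely on $P$, and $H$ is a Frobenius group with complement of order $5$. One then needs some $t$ with $|H\cap H^t|=5$; the paper finds such $t$ in $\mathrm{U}_3(11)$. In fact any $t\in\mathrm{U}_3(11)$ outside the Borel $B$ works. Here $B\cap B^t$ is a two-point stabiliser, cyclic of order $40$, and $H$ and $H^t$ each contain all elements of order $5$ of $B$ and $B^t$ respectively. Then on the $H$-orbit of $\alpha^t$, where $G_\alpha=H$, the group $H$ acts as a Frobenius group with complement of order $5>2$. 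This contradicts Lemmas~\ref{lem:frob} and~\ref{lem:suborb}. Your suggestion of a $t$ of order $5$ or $7$ centralising the complement $\langle y\rangle$ does not deliver this by itself. For $t\in C_G(y)$ you would still have to show $P\cap P^t=1$ to get $H\cap H^t=\langle y\rangle$, and you do not address that.
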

\begin{proof}
    Suppose that $11^2\mid |H|$. Examining the orders of the maximal subgroups of $G$, we deduce that either $H\leq\mathrm{U}_{3}(11)<\mathrm{U}_3(11)\cn 2=K_5$, or $H\leq 11^{1+2}\cn(5\times2S_4)=K_7$. In each case the only subgroups of order indivisible by 2 and 3 are contained in $11^{1+2}\cn5$ which can be found in $\mathrm{U}_3(11)$ as a subgroup of a Sylow normaliser. We generate the list of all subgroups of $\mathrm{U}_{3}(11)$ of order divisible by $11^2$, and test each with {\tt{NearTITest()}}, deducing that $H=11^{1+2}\cn5$ (this group escapes detection by {\tt{NearTITest()}} since the transitive action of $\mathrm{U}_{3}(11)$ with stabiliser $11^{1+2}\cn5$ has base size 3). It is not hard to find $t\in\mathrm{U}_{3}(11)$ such that $|H\cap H^t|=5$. Thus, since the elements of $G$ of order $11$ have centralisers of order coprime to 5, $G$ has a suborbit on which it acts as a Frobenius group, a contradiction to Lemmas~\ref{lem:frob} and~\ref{lem:suborb}. 
\end{proof}
\begin{prop}
    The group $\mathrm{J}_4$ has no non-trivial faithful transitive binary actions.
\end{prop}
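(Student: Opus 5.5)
By the previous proposition we may assume $|H|$ divides $5\cdot 7\cdot 11$, and we aim to show $H=1$. The plan is to first rule out $H$ being cyclic of prime order, and then to handle the composite possibilities. For the first part we use Lemma~\ref{lem:cycbintest}: using the \gap~Character Table Library~\cite{CTblLib1.3.11}, compute $n_G(\mathcal{C})$ for every class $\mathcal{C}$ of elements of order $5$, $7$ or $11$ in $\mathrm{J}_4$ (namely $\mathrm{5A}$, $\mathrm{7A}$, $\mathrm{7B}$, $\mathrm{11A}$, $\mathrm{11B}$, suitably combined into rational classes where needed). These coefficients should all be far larger than $p-2$, which excludes $|H|\in\{5,7,11\}$.

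It then remains to treat $|H|\in\{35,55,77,385\}$. Since $p\nmid q-1$ for $(p,q)\in\{(7,5),(11,5),(11,7)\}$, and $5\nmid 11-1$ fails, we distinguish cases. Any group of order $35$ or $77$ is cyclic. We check the $\mathbb{ATLAS}$ element orders of $\mathrm{J}_4$ (which contain $35$ but not $77$), so order $77$ is impossible and order $35$ gives $H=\langle x\rangle$ cyclic. A group of order $55$ is either cyclic ($\mathrm{J}_4$ has no elements of order $55$) or $11\cn 5$. A group of order $385$ has a normal Sylow $7$ and a normal Sylow $11$, so it contains a cyclic subgroup of order $77$, which is impossible.

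For $H=11\cn 5=\langle x\rangle\cn\langle y\rangle$, we argue exactly as in Proposition~\ref{prop:J4bigorderreduction} using a Frobenius suborbit. Since $|C_G(y)|$ is divisible by $7$ (for instance $5\times 7$ elements exist), we can pick $t\in C_G(y)$ of order $7$. As $7\nmid |N_G(\langle x\rangle)|$, we get $H\cap H^t=\langle y\rangle$. Then $H$ acts as a Frobenius group with complement of order $5>2$ on the $H$-orbit of $H^t$-coset, contradicting Lemmas~\ref{lem:frob} and~\ref{lem:suborb}.

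For $H=C_{35}=\langle x\rangle$, we have $H$ abelian. We try Lemma~\ref{lem:efficienttest} directly, or alternatively use Lemma~\ref{lem:conjconn}. The subgroup $\langle x^7\rangle$ of order $5$ must satisfy the fixity conclusion, giving $\Delta(h)\leq H$ for some element of order $5$. Since \cite{CDP} gives connected (hence non-abelian, non-$35$) component groups for $\mathrm{5A}$, this is contradictory. If that bookkeeping fails, we instead locate $H$ inside a maximal subgroup with a permutation representation (e.g.\ $2^{11}\cn\mathrm{M}_{24}$, reducing via Lemma~\ref{lem:SZappl} to $\mathrm{M}_{24}$, where Proposition~\ref{prop:1stclas} and Lemma~\ref{lem:sub} apply). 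Finding a suitable overgroup with a known non-binary restriction is the step I expect to be the main obstacle, together with confirming the element-order and centraliser facts from the $\mathbb{ATLAS}$.
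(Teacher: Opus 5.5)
Your proposal has a genuine gap in the case $H\cong C_{35}$.

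\textbf{The primary argument fails.} It assumes that $\Gamma(\mathrm{5A})$ is connected, or at least that the component group of a $\mathrm{5A}$-element is too large to fit inside $C_{35}$. This is false. Since $|\mathrm{J}_4|_5=5$, an element of $\mathrm{5A}$ commuting with $h\in\mathrm{5A}$ generates with $h$ a $5$-group of order $5$, so it lies in $\langle h\rangle$. Hence the component of $h$ in $\Gamma(\mathrm{5A})$ is just $\langle h\rangle\setminus\{1\}$, and $\Delta(h)=\langle h\rangle\leq H$. Lemma~\ref{lem:conjconn} therefore gives no contradiction. This is exactly why $5$ does not appear for $\mathrm{J}_4$ in Table~\ref{tbl:forbiddenpbinary}.

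\textbf{The fallback fails too.} By Lemma~\ref{lem:SZappl}, a $C_{35}$ inside $2^{11}\cn\mathrm{M}_{24}$ would be conjugate into $\mathrm{M}_{24}$, which has no elements of order $35$. Going through the maximal subgroups, $2^{1+12}.3\mathrm{M}_{22}\cn2$, $2^{10}\cn\mathrm{L}_5(2)$ and $\mathrm{M}_{22}\cn2$ likewise have no elements of order $35$. The remaining maximal subgroups, apart from $K_4$, have order not divisible by $35$. So the only maximal subgroup (up to conjugacy) containing $C_{35}$ is $K_4=2^{3+12}.(S_5\times\mathrm{L}_3(2))$.

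\textbf{No suborbit argument can rescue this case.} $H$ is abelian, so on each of its orbits it acts as the regular action of a quotient of $H$, which is binary; Lemma~\ref{lem:suborb} is useless here. You need an actual three-point witness. The paper obtains one by running \texttt{NearTITest()} on $C_{35}$ inside $K_4$ (built from a straight-line program), then transfers non-binarity to $G$ via Lemma~\ref{lem:sub}. Writing ``we try Lemma~\ref{lem:efficienttest} directly'' without producing the witnesses leaves this case open. The witnesses could come from computation, or from a structure-constant count that controls $H\cap H_2$ and $H\cap H_3$.

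\textbf{The rest is sound.} The prime-order cases and the elimination of orders $77$, $385$ and cyclic $55$ agree with the paper.

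For $H\cong 11\cn5=\langle x\rangle\cn\langle y\rangle$ you take a different and cleaner route than the paper's computer check of both classes of $11\cn5$ inside $\mathrm{U}_3(11)$:
\begin{itemize}
\item $y\in\mathrm{5A}$ commutes with an element $t$ of order $7$.
\item $7\nmid|N_G(\langle x\rangle)|$, since a $7$-element normalising $\langle x\rangle$ would centralise it and give an element of order $77$.
\item Hence $H\cap H^t=\langle y\rangle$, and Lemmas~\ref{lem:frob} and~\ref{lem:suborb} apply, as in Proposition~\ref{prop:J4bigorderreduction}.
\end{itemize}
That argument is correct and avoids computation entirely.
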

\begin{proof}
    Any group of order $5\cdot7\cdot11$ has an element of order 77, so cannot be a subgroup of $G$. Thus, since $G$ has no elements of order 55, the only remaining possible groups are cyclic of orders 5, 7, 11, or 35, or the group $11\cn5$. The cyclic groups of prime order can be eliminated as the relevant class multiplication coefficients are large. Any cyclic group of order 35 may be found in the maximal subgroup $K_4$ which we construct in \magma~\cite{magma} using a straight-line program from ~\cite{webatlas}; running {\tt{NearTITest()}} on such a subgroup in $K_4$ reveals that this action of $K_4$ is not binary, whence neither is the corresponding action of $G$ by Lemma~\ref{lem:sub}. We can find both classes of $11\cn5$ subgroups in $\mathrm{U}_{3}(11)$---running {\tt{NearTITest()}} on each reveals that neither gives a genuine binary action of $G$, by Lemma~\ref{lem:sub}.
\end{proof}
\subsection{$\mathrm{Fi}_{24}'$} When $G/Z(G)=\mathrm{Fi}_{24}'$ we need not get our hands dirty. By Proposition~\ref{prop:binarydiv} and---in the case that $|Z(G)|=3$---Lemma~\ref{lem:conn3cov},~\cite[Lemma 2.12]{CDP} and the data of~\cite[Table 8]{CDP}, if the action of $G$ on $(G:H)$ is binary, then $|H|\mid 11\cdot13\cdot17\cdot23\cdot29$. From here it is enough to look at class multiplication coefficients and the orders of maximal subgroups to deduce that $H=1$.
\begin{prop}
    The group $G$ has no non-trivial faithful transitive binary actions.
\end{prop}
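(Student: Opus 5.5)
We start from the reduction recorded just before the statement: $|H|$ divides $11\cdot13\cdot17\cdot23\cdot29$. The plan is to show first that $H$ is cyclic of prime order, and then to rule out each prime with Lemma~\ref{lem:cycbintest}.

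\emph{Step 1 ($H$ is cyclic of prime order).} The primes involved are $p,q\in\{11,13,17,23,29\}$. We check directly that $p\nmid q-1$ for every pair except $(11,23)$ and $(13,\cdot)$; note that $13\nmid 28$ and $29-1=28$, so in fact only $11\mid 23-1$ occurs. Hence any subgroup of order $pq$ with $\{p,q\}\ne\{11,23\}$ is cyclic. We then consult the $\mathbb{ATLAS}$ power maps and see that $\mathrm{Fi}_{24}'$ (and its triple cover, modulo the centre) has no elements of order $pq$ for distinct $p,q$ in this set. This excludes every such product.

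It remains to handle the pair $\{11,23\}$, i.e. a possible Frobenius group $23\cn11$. Here we examine the maximal subgroups of $\mathrm{Fi}_{24}'$. The only maximal subgroups of order divisible by $23$ are $\mathrm{Fi}_{23}$ and $2^{11}\cn\mathrm{M}_{24}$, and neither contains $23\cn11$: in $\mathrm{M}_{24}$ the normaliser of a $23$-element is $23\cn11$, but $2\cdot 11$-divisibility is fine, so this case needs care. If $23\cn11\le \mathrm{M}_{24}$ arises via Lemma~\ref{lem:SZappl} inside $2^{11}\cn\mathrm{M}_{24}$, then Lemma~\ref{lem:sub} combined with Proposition~\ref{prop:1stclas} for $\mathrm{M}_{24}$ shows the action is not binary. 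The same argument works inside $\mathrm{Fi}_{23}$ by Proposition~\ref{prop:fi23clas}, since $|H|$ is odd. Alternatively, a Sylow $p$-subgroup has order $p$ for each of these primes, so $H$ is either cyclic of prime order or one of the excluded non-abelian groups. In the triple cover $G=3.\mathrm{Fi}_{24}'$, the condition $H\cap Z(G)=1$ lets us work with the image in the simple quotient.

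\emph{Step 2 (ruling out each prime).} For each class $\mathcal{C}$ of elements of order $p\in\{11,13,17,23,29\}$ in $G$, we compute with the \gap~Character Table Library the sum over the rational class in Lemma~\ref{lem:cycbintest}. We expect it to be enormous, far exceeding $p-2$, because $|\mathcal{C}|^2/|G|$ is huge. By Lemma~\ref{lem:cycbintest}, no such $H$ gives a binary action, so $H=1$.

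The main obstacle is the bookkeeping in Step 1, namely confirming from the maximal subgroup list and the element orders that no non-cyclic subgroup survives. I expect this to be routine via Lemmas~\ref{lem:SZappl} and~\ref{lem:sub}.
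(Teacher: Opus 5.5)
Your overall plan matches the paper: reduce to $|H|$ prime, then apply Lemma~\ref{lem:cycbintest}. Your arithmetic reduction is a fine substitute for the paper's inspection of maximal-subgroup orders. It uses that $|H|$ is squarefree, that $11\mid 23-1$ is the only divisibility among the primes, and that no element order reaches $143$. You should, however, state explicitly that a group of squarefree order is solvable and so has Hall $\{p,q\}$-subgroups. That is what actually reduces you to $|H|\in\{p,253\}$, and your sentence about Sylow subgroups only gestures at it.

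The genuine gap is the case $H\cong 23\cn 11$. Your branch through the $2$-local subgroup invokes Lemma~\ref{lem:SZappl}, which needs a \emph{split} extension, and you have not checked this. In fact the relevant maximal subgroup of $\mathrm{Fi}_{24}'$ is the non-split extension $2^{11}\cdot\mathrm{M}_{24}$, in contrast with the split $2^{11}\cn\mathrm{M}_{24}$ of $\mathrm{J}_4$ that the paper uses. So there is no copy of $\mathrm{M}_{24}$ containing $H$, and Proposition~\ref{prop:1stclas} cannot be brought to bear. Your $\mathrm{Fi}_{23}$ branch is sound, but as written it only handles those $H$ already known to lie in $\mathrm{Fi}_{23}$. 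Separately, your claim that neither maximal subgroup contains $23\cn 11$ is false: both do, e.g.\ via $\mathrm{L}_2(23)<\mathrm{Fi}_{23}$.

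There are two easy repairs.

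\emph{Conjugacy.} From the $\mathbb{ATLAS}$, an element $x$ of order $23$ has $|C_G(x)|=23$, and the two classes of $23$-elements are algebraically conjugate, so $N_G(\langle x\rangle)\cong 23\cn 11$. Hence any $H\cong 23\cn 11$ is the full normaliser of its Sylow $23$-subgroup, and all such subgroups are conjugate. In particular $H$ is conjugate into $\mathrm{Fi}_{23}$, where your argument via Lemma~\ref{lem:sub} and Proposition~\ref{prop:fi23clas} finishes. The same works in $3.\mathrm{Fi}_{24}'$, because there $H$ is the unique $3'$-Hall subgroup of $N_G(P)\cong 3\times 23\cn 11$.

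\emph{The paper's route.} The paper avoids the $2$-local structure entirely. Write $H=\langle x\rangle\cn\langle y\rangle$ with $y$ of order $11$. Then $3\mid |C_G(y)|$ but $3\nmid |N_G(\langle x\rangle)|$, so some $t\in C_G(y)$ of order $3$ satisfies $H\cap H^t=\langle y\rangle$. Thus $H$ acts on a suborbit as a Frobenius group with complement of order $11$, contradicting Lemmas~\ref{lem:frob} and~\ref{lem:suborb}.

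With either fix, your Step~2 is exactly the paper's final step.
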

\begin{proof}
    Let $S:=\{11,13,17,23,29\}$. If $H<\mathrm{Fi}_{23}< G$, then by Lemma~\ref{lem:sub}, the action of $\mathrm{Fi}_{23}$ on $\mathrm{Fi}_{23}/H$ is binary, and so $|H|\leq 2$ by Proposition~\ref{prop:fi23clas}, a contradiction. Consequently, after examining the orders of all maximal subgroups of $G$~\cite{LW91}, we deduce that $|H|$ is divisible by at most two primes from $S$.

    Suppose that $|H|$ is divisible by exactly two primes from $S$. Again, by examining the order of maximals we deduce that $|H|\in\{11\cdot13,11\cdot17,11\cdot23\}$. Since $G$ has no elements of order $11\cdot13$, $11\cdot17$, or $11\cdot23$, we deduce that $H=23\cn11=:\langle x\rangle\cn\langle y\rangle$. Since $3\mid C_G(y)$ but $3\nmid N_G(\langle x\rangle)$~\cite{atlas} we reach a contradiction to Lemmas~\ref{lem:frob} and~\ref{lem:suborb}, so either $H=1$ or $|H|\in S$. We compute that $n_G(\mathcal{C})>p-2$ for each $G$-conjugacy class of elements of order $p\in  S$, hence the result follows from Lemma~\ref{lem:cycbintest}.
\end{proof}
\subsection{$\mathbb{B}$} Suppose $G/Z(G)=\mathbb{B}$. The group $\mathbb{B}$ has four classes of involutions; by~\cite[Propositions 3.1, 3.2, 3.3]{CDP} the class 2A is the only involution class with disconnected graph. Similarly by~\cite[Theorem 4.2]{CDP} the class 2b is the unique non-central involution class of $2.\mathbb{B}$ with disconnected graph. By Proposition~\ref{prop:binarydiv}, the stabiliser of any transitive binary action of $G$ has order dividing $2^{41}\cdot11\cdot13\cdot17\cdot19\cdot23\cdot31\cdot47$. Let $\varphi:G\to G/Z(G)$ be the quotient map.

\begin{prop}\label{prop:Bno2B}
    The group $\varphi(H)$ contains no elements of the $G/Z(G)$-conjugacy class $\mathrm{2B}.$
\end{prop}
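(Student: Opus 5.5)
We argue by contradiction, following the pattern of Proposition~\ref{prop:Co2no2B}. Suppose $g\in H$ with $\varphi(g)\in\mathrm{2B}$. The lifts of $\mathrm{2B}$ are all of order 2: in $\mathbb{B}$ itself trivially, and in $2.\mathbb{B}$ because the cover is split over $\mathrm{2B}$ (those lifts give a non-central involution class of $2.\mathbb{B}$ other than $\mathrm{2b}$). Also $H\cap Z(G)=1$. The first step is to locate the involutions of maximal $2$-fixity. By~\cite[Propositions 3.1, 3.2, 3.3]{CDP}, $\Gamma(\mathcal{C})$ is connected for the involution classes $\mathrm{2B},\mathrm{2C},\mathrm{2D}$ of $\mathbb{B}$. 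By~\cite[Theorem 4.2]{CDP}, $\mathrm{2b}$ is the only non-central involution class of $2.\mathbb{B}$ with disconnected graph. Lemma~\ref{lem:conjconn} together with faithfulness then shows that only (lifts of) $\mathrm{2A}$ can have maximal $2$-fixity. Hence there is some $h\in H$ with $\varphi(h)\in\mathrm{2A}$ (for $2.\mathbb{B}$, $h\in\mathrm{2b}$) and $\fix(g)\subsetneq\fix(h)$.

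Next we apply Lemma~\ref{lem:Finf} (and its remark): the whole $C_G(g)$-orbit $h^{C_G(g)}$ lies in $H$, and indeed so does $F_\infty(g,h)$. In $\mathbb{B}$ we have $C_G(g)=2^{1+22}.\mathrm{Co}_2$, and the lift of this group in $2.\mathbb{B}$. So the argument reduces to a finite list: the orbits of $C_G(g)$ on the class $\mathrm{2A}$ (respectively $\mathrm{2b}$), up to $G$-conjugacy of the pair $(g,h)$. For each orbit representative $h$ we will show one of the following:
\begin{enumerate}[(a)]
\item the group $\langle h^{C_G(g)}\rangle$ (or a computed subgroup of $F_\infty(g,h)$) has order divisible by a prime excluded by Proposition~\ref{prop:binarydiv}, namely $3$, $5$ or $7$;
\item this group meets $Z(G)$ non-trivially;
\item this group is all of $G$.
\end{enumerate}
Each outcome contradicts the faithful transitive binary hypothesis. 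Many orbits should already fail via (a). Class multiplication coefficients $n_G(\mathrm{2A},\mathrm{2A},\mathcal{C})$ from the character table show which classes contain products $hh'$ with $h,h'\in\mathrm{2A}$. A single pair $h,h'$ in the orbit whose product has order divisible by $3$, $5$ or $7$ forces that prime to divide $|H|$, which is impossible.

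The main obstacle is computational. $\mathbb{B}$ has no small permutation representation, and $|\mathrm{2A}|$ is about $1.3\times10^{10}$, so the $C_G(g)$-orbits on $\mathrm{2A}$ cannot be enumerated directly. There are two ways to get around this.
\begin{itemize}
\item Work structurally inside $C_G(g)=2^{1+22}.\mathrm{Co}_2$. The $\mathrm{2A}$-elements commuting with $g$ lie in this group, and $\mathrm{Co}_2$'s binary analysis (Propositions~\ref{prop:Co2no2B} and~\ref{prop:co2class}) constrains how they act. The orbits of elements not commuting with $g$ are governed by the dihedral group $\langle g,h\rangle$, whose order $gh$ is read off from the structure constants $n_G(\mathrm{2B},\mathrm{2A},\mathcal{C})$. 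Any orbit containing an $h$ for which $gh$ has order divisible by $3$, $5$ or $7$ is excluded at once, because $gh\in H$.
\item Use character-theoretic data, namely these structure constants and the class fusion of $C_G(g)$ into $G$, to show every orbit contains such an element.
\end{itemize}
For the remaining commuting case, I would take $h$ in $2^{1+22}.\mathrm{Co}_2$ and compute, in a permutation representation of a suitable subgroup obtained from straight-line programs~\cite{webatlas}, that $F_\infty(g,h)$ contains an element of order $3$. The step I expect to be hardest is this commuting case: the orbits of $2^{1+22}.\mathrm{Co}_2$ on its $\mathrm{2A}$-elements must be handled via the images in $\mathrm{Co}_2$, where I would first check whether every such orbit generates a group containing $O_2$-elements whose products with $g$ lie in a class of order $6$ or $10$.
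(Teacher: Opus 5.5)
There is a genuine gap: the step that decides the statement is left open.

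\paragraph{Where the plan fails.} Your opening reduction is sound. Only (lifts of) $\mathrm{2A}$ can have maximal $2$-fixity, so some $h\in H$ over $\mathrm{2A}$ satisfies $h^{C_G(g)}\subseteq H$ by Lemma~\ref{lem:Finf}. The trouble is that the case split (a)--(c) does not close on the one orbit that matters.

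First, the non-commuting orbits need no structure constants. Put $M=\langle h^{C_G(g)}\rangle$. It is normalised by the maximal subgroup $C_G(g)$, so either $M\le N_G(M)=C_G(g)$, or $M\trianglelefteq G$ and hence $M=G$. This is what Lemma~\ref{lem:normalsub} packages. So $h$ commutes with $g$. For $G=\mathbb{B}$ it then lies in one of the two $C_G(g)$-classes fusing to $\mathrm{2A}$:
\begin{itemize}
\item $\mathrm{2f}$, outside $O_2(C_G(g))$. Its orbit generates a group mapping onto $\mathrm{Co}_2$, so $3\mid |H|$, which is excluded.
\item $\mathrm{2b}$, inside $O_2(C_G(g))=2^{1+22}$. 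Its orbit generates a proper $2$-subgroup of $\mathbb{B}$, so none of (a)--(c) applies.
\end{itemize}
Your proposed remedies for the $\mathrm{2b}$ case do not work. Products of elements of $O_2(C_G(g))$ with $g$ stay in that $2$-group, so they can never have order $6$ or $10$. The ``images in $\mathrm{Co}_2$'' of these elements are trivial. The computation of $F_\infty(g,h)$ ``in a suitable subgroup'' is unspecified, and you give no reason an element of order $3$ should appear.

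\paragraph{The missing idea.} What is needed is a bootstrap using structure constants of $C_G(g)$.
\begin{itemize}
\item Since $n_{C_G(g)}(\mathrm{2f},\mathrm{2f},\mathrm{2a})=n_{C_G(g)}(\mathrm{2f},\mathrm{2b},\mathrm{2a})=0$, any two $\mathrm{2A}$-involutions whose product is a $\mathrm{2B}$-element $g'$ lie in $O_2(C_G(g'))$. Such involutions automatically commute with $g'$.
\item Since $n_{C_G(g)}(\mathrm{2f},\mathrm{2c},\mathrm{2f})=63$, every $x\in\mathrm{2f}$ has a partner $y\in\mathrm{2f}$ with $xy\in\mathrm{2c}\subseteq O_2(C_G(g))\le H$, and $\mathrm{2c}$ fuses to $\mathrm{2B}$.
\item Rerunning the argument with $g'=xy$ gives $O_2(C_G(xy))\le H$, and hence $x\in H$.
\end{itemize}
Thus $H\ge\langle O_2(C_G(g)),\mathrm{2f}\rangle=C_G(g)$. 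So $H=C_G(g)$ is maximal, contradicting the classification of primitive binary actions (or simply $3\mid|H|$). This amounts to a hand verification that $F_2(g,h)\ge C_G(g)$. Your guess about $F_\infty$ is therefore correct, but the proposal contains no argument for it.

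\paragraph{The case $2.\mathbb{B}$.} You only assert that outcome (b) will occur, but a separate argument is needed. A normal subgroup $K\le H$ of $C_G(g)=2.(2^{1+22}.\mathrm{Co}_2)$ avoiding $Z(G)$ maps isomorphically onto $2^{1+22}$ or $2^{1+22}.\mathrm{Co}_2$. The latter is impossible because $2.\mathbb{B}$ has no subgroup $2^{1+22}.\mathrm{Co}_2$. In the former case, the trivial Schur multiplier of $\mathrm{Co}_2$ splits $C_G(g)/K\cong 2.\mathrm{Co}_2$ and again produces such a subgroup. Nothing in your plan replaces this.
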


\begin{proof}
    Suppose for contradiction that there is an involution $g\in H\cap\varphi^{-1}(\mathrm{2B})$. Suppose first that $|Z(G)|=1$.  Since $C_G(g)$ is maximal and $\mathrm{2A}$ is the unique $G$-conjugacy class of involutions $\mathcal{C}$ for which $\Gamma(\mathcal{C})$ is disconnected~\cite[Propositions 3.1, 3.2, and 3.3]{CDP}, we deduce from Lemma~\ref{lem:normalsub} that $H$ contains some $K\trianglelefteq C_G(g)=2^{1+22}.\mathrm{Co}_2$ properly containing $\langle g\rangle$. There is a unique such proper subgroup, namely $K=O_2(C_G(g))=2^{1+22}$. Now, checking class fusion from $C_G(g)$ to $G$ using the \gap~Character Library~\cite{CTblLib1.3.11}, we see that $g$ is in the $C_G(g)$-class $\mathrm{2a}$, and that $C_G(g)$ has exactly two classes fusing to the $G$-class $\mathrm{2A}$: the class labelled $\mathrm{2f}$ lies outside of $K$ and fuses to $\mathrm{2A}$, and the class $\mathrm{2b}$ lies inside $K$ and fuses to $\mathrm{2A}$. Moreover, $n_{C_G(g)}(\mathrm{2f},\mathrm{2f},\mathrm{2a})=n_{C_G(g)}(\mathrm{2f},\mathrm{2b},\mathrm{2a})=0,$ and so $K$---and hence $H$---contains every pair of $\mathrm{2A}$ elements of $G$ whose product is $g$ (using that an element in such a pair necessarily commutes with $g$).

    Let $x\in\mathrm{2f}$. We compute that $n_{C_G(g)}(\mathrm{2f},\mathrm{2c},\mathrm{2f})=63,$ so for each $x\in\mathrm{2f}$ we may find some $y\in\mathrm{2f}$ such that $xy\in\mathrm{2c}$. The class $\mathrm{2c}$ lies in $K$ and fuses to $\mathrm{2B}$ in $G$, so we now repeat the argument of the preceding paragraph with $xy$ in place of $g$ to deduce that $x\in H$. Since $x$ was arbitrary, we deduce that $\langle O_2(C_G(g)),\mathrm{2f}\rangle\leq H$, but $\langle O_2(C_G(g)),\mathrm{2f}\rangle\trianglelefteq C_G(g)$, and so $H=C_G(g)$ contradicting Cherlin's Conjecture~\cite{GLS22}.
    
    Finally, suppose that $|Z(G)|=2$. Again, since $C_G(g)$ is maximal, and $\mathrm{2b}$ is the unique $G$-conjugacy class of involutions $\mathcal{C}$ for which $\Gamma(\mathcal{C})$ is disconnected, we deduce from Lemma~\ref{lem:normalsub} that $H$ contains some $K\trianglelefteq C_G(g)=2.(2^{1+22}.\mathrm{Co}_2)$ properly containing $\langle g\rangle$ but intersecting $Z(G)$ trivially. Such a subgroup satisfies $\varphi(K)\in\{2^{1+22},2^{1+22}.\mathrm{Co}_2\}$. Since $K\cap Z(G)=1$, $\varphi|_K$ is injective whence $K\in\{2^{1+22},2^{1+22}.\mathrm{Co}_2\}$.
    Next, we can check from the character tables of $G$ and $2^{1+22}.\mathrm{Co}_2$ that $G$ can have no subgroup of the form $2^{1+22}.\mathrm{Co}_2$ (so the extension $2.(2^{1+22}.\mathrm{Co}_2)$ does not split). Consequently, $K=2^{1+22}$ whence $C_G(g)/K$ is a central extension $2.\mathrm{Co}_2$. But $\mathrm{Co}_2$ has trivial Schur multiplier; taking the preimage of a complement $\mathrm{Co}_2$ we get a subgroup $2^{1+22}.\mathrm{Co}_2< G$, a contradiction, hence the result.
\end{proof}
The $\mathbb{B}$-class $\mathrm{2C}$ lifts to elements of order 4 in $2.\mathbb{B}$, but when $Z(G)=1$ we cannot immediately rule out the presence of a $\mathrm{2C}$ element in $H$. However, if $H$ contains such an element, $g$, say, then since $C_G(g)$ has no central $\mathrm{2A}$ element we deduce from Lemmas~\ref{lem:conjconn} and~\ref{lem:Finf} that $|H\cap\mathrm{2A}|\geq 2$. In fact, we will never get more than two elements from $\mathrm{2A}$.
\begin{prop}\label{prop:B2C}
    If $Z(G)=1$ then $H$ contains at most two elements of the $G$-conjugacy class $\mathrm{2A}$ with equality if and only if $|H\cap\mathrm{2C}|=1.$ If $Z(G)=2$ then $H$ has at most one involution $g$ with $\varphi(g)\in\mathrm{2A}$.
\end{prop}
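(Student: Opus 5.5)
We follow the pattern already used for $\mathrm{Co}_2$ (Corollary~\ref{cor:Co2cent2A}) and for $\mathrm{Fi}_{22}$ (Proposition~\ref{prop:fi222C}). We already know that $\varphi(H)$ contains no $\mathrm{2B}$ element (Proposition~\ref{prop:Bno2B}). The first task is to determine which classes can contain a product of two distinct elements of $H\cap\varphi^{-1}(\mathrm{2A})$. Since $\mathrm{2A}$ is a class of $\{3,4\}$-transpositions in $\mathbb{B}$, we read off from the class multiplication coefficients $n_{\mathbb{B}}(\mathrm{2A},\mathrm{2A},\mathcal{C})$ that such a product $xy$ lies in $\mathrm{2B}\cup\mathrm{2C}\cup\mathrm{3A}\cup\mathrm{4B}$. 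The case $\mathrm{2B}$ is excluded directly by Proposition~\ref{prop:Bno2B}, and $\mathrm{4B}$ is excluded because it powers into $\mathrm{2B}$. The case $\mathrm{3A}$ is excluded because $3\nmid|H|$ by Proposition~\ref{prop:binarydiv}. Hence any two distinct $\mathrm{2A}$-elements of $\varphi(H)$ commute, and their product lies in $\mathrm{2C}$.

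For $Z(G)=1$, suppose next that $x,y,w\in H\cap\mathrm{2A}$ are distinct. Then they pairwise commute and all three pairwise products lie in $\mathrm{2C}$. We rule this out by computing in $C_G(x,y)$, just as in the $\mathrm{Co}_2$ case: we search for $w\in\mathrm{2A}\cap C_G(x,y)$ with $\{xw,yw\}\subseteq\mathrm{2C}$. If such $w$ exist, we instead pass to $N_G(\langle x,y\rangle)$ and enumerate the small candidate subgroups containing $\langle x,y,w\rangle$ that avoid $\mathrm{2B}$ and elements of odd order divisible by $3,5,7$. We then run {\tt NearTITest()} on each candidate. This gives $|H\cap\mathrm{2A}|\le2$.

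The equivalence then splits into two directions.
\begin{enumerate}[(a)]
\item If $|H\cap\mathrm{2A}|=2$, then $xy\in\mathrm{2C}\cap H$. Every element of $H\cap\mathrm{2C}$ must arise as such a product: by the remark preceding the statement, a $\mathrm{2C}$-element $g\in H$ forces, via Lemmas~\ref{lem:conjconn} and~\ref{lem:Finf}, a non-singleton $C_G(g)$-orbit of $\mathrm{2A}$-elements inside $H$. That orbit must be exactly $\{x,y\}$, so $g$ is determined as $xy$, or at least the orbit is determined, and we check in \magma\ that $g=xy$. Hence $|H\cap\mathrm{2C}|=1$.
\item Conversely, $|H\cap\mathrm{2C}|=1$ gives $|H\cap\mathrm{2A}|\ge2$, by the same remark.
\end{enumerate}

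For $|Z(G)|=2$, let $x,y\in H$ with $\varphi(x),\varphi(y)\in\mathrm{2A}$ be distinct involutions. The argument above shows that $\varphi(xy)\in\mathrm{2C}$, and $\varphi(x)\neq\varphi(y)$ since $H\cap Z(G)=1$. However, $\mathrm{2C}$ lifts to elements of order $4$ in $2.\mathbb{B}$, so $xy$ has order $4$ with $(xy)^2\in Z(G)$. On the other hand, $x$ and $y$ commute modulo $Z(G)$, and a careful check using the character table of $2.\mathbb{B}$ shows that lifts of commuting $\mathrm{2A}$ pairs with product in $\mathrm{2C}$ fail to commute. Therefore $(xy)^2=[x,y]$ is the central involution and lies in $H\cap Z(G)=1$, a contradiction.

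The main obstacle is the computation in the triple case for $\mathbb{B}$. The centraliser $C_G(x,y)$ is large and only matrix or large-degree representations are available, so we expect to rely on class multiplication coefficients inside $C_G(x)=2.{}^2E_6(2){:}2$ rather than on direct orbit computations.
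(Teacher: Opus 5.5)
Your outline follows the paper's for the reduction of a product of two $\mathrm{2A}$-elements of $H$ to $\mathrm{2C}$, for the orbit argument giving the $\mathrm{2C}$ equivalence, and for the $2.\mathbb{B}$ case. In that last case your commutator check is redundant: $xy$ having order $4$ already gives $(xy)^2=z\in H$.

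The gap is the central step, ruling out three distinct $x,y,w\in H\cap\mathrm{2A}$. The $\mathrm{Co}_2$-style plan does not transfer.
\begin{itemize}
\item A direct search in $C_G(x,y)$, which has index $2$ in $C_G(xy)=(2^2\times F_4(2)){:}2$, is not practical.
\item The fallback of enumerating subgroups of $N_G(\langle x,y\rangle)$ and running {\tt NearTITest()} is infeasible at this size. It is also unjustified: in $\mathrm{Co}_2$ one gets $H\leq N_G(\langle x,y\rangle)$ only \emph{after} knowing $H\cap\mathrm{2A}=\{x,y\}$. Once $H$ has three $\mathrm{2A}$-elements, there is no reason for $H$ to normalise $\langle x,y\rangle$.
\item Your closing suggestion to use structure constants in $C_G(x)$ names no coefficients and no contradiction.
\end{itemize}
So as written, the bound $|H\cap\mathrm{2A}|\leq 2$ is not established.

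The missing idea is to work in the centraliser of the $\mathrm{2C}$-element $xy$. Since $x,y,w$ pairwise commute, all three lie in $C_G(xy)=(2^2\times F_4(2)){:}2$. There $xy$ is central, and the relation $xw=(xy)(yw)$ can be tracked by structure constants.
\begin{itemize}
\item Using the fusions into $\mathbb{B}$ from the \gap\ library, $x,y$ lie in class $\mathrm{2h}$, $xy$ in $\mathrm{2d}$, and $w$ in $\mathrm{2i}$ or $\mathrm{2j}$. The computation is the same for both choices of $w$.
\item $n_{C_G(xy)}(\mathrm{2h},\mathrm{2i},\mathcal{C})\neq 0$ only for $\mathcal{C}\in\{\mathrm{2a},\mathrm{2e}\}$. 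Since $\mathrm{2a}$ fuses to $\mathrm{2B}$, this forces $yw\in\mathrm{2e}$.
\item The only involution class $\mathcal{C}$ with $n_{C_G(xy)}(\mathrm{2d},\mathrm{2e},\mathcal{C})\neq0$ is $\mathrm{2a}$. Hence $xw$ fuses to $\mathrm{2B}$, a contradiction.
\end{itemize}
This shows that no such triple exists in $\mathbb{B}$ at all, so no subgroup enumeration is needed.

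Similarly, in your step (a) no \magma\ check is required. $C_G(g)$ permutes the orbit $\{x,y\}$ and $xy=yx$, so $C_G(g)$ centralises $xy$. Since $Z(C_G(g))=\langle g\rangle$, this gives $xy=g$.
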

\begin{proof}
    The product of any two distinct $\mathrm{2A}$ involutions in $G/Z(G)$ lies in one of $\mathrm{2B},\mathrm{2C},\mathrm{3A}$, or $\mathrm{4B}$~\cite{atlas}---since $H$ contains no elements of order 3 or lifts of the $G/Z(G)$-conjugacy class $\mathrm{2B}$, we deduce that for any $x,y\in H\cap \varphi^{-1}(\mathrm{2A})$ distinct, $xy\in\varphi^{-1}(\mathrm{2C})$ (using that $\mathrm{4B}$ powers up to $\mathrm{2B}$ in $G$~\cite{atlas}). Since $\mathrm{2C}$ lifts to elements of order 4 in $2.\mathbb{B}$ we deduce the claimed result when $Z(G)=2$. For the remainder we shall assume $Z(G)=1$.

    Suppose we have $x,y,z\in H\cap \mathrm{2A}$ distinct. Then $xy,xz,yz\in\mathrm{2C}$, and so $x,y$, and $z$ centralise  $xy\in\mathrm{2C}$. Using the \gap~Character Table Library~\cite{CTblLib1.3.11}, we compute all possible class fusions from $C_G(xy)$ to $G$ finding that $x$ and $y$ are in the $C_G(xy)$-class labelled $\mathrm{2h}$, $xy\in\mathrm{2d}$ and either $z\in\mathrm{2i}$ or $z\in\mathrm{2j}$---the coming calculations give the same results for both possibilities of $z$, so without loss of generality assume that $z\in\mathrm{2i}$. We compute that $$n_{C_G(xy)}(\mathrm{2h},\mathrm{2i},\mathcal{C})=\begin{cases}
        1&\text{if $\mathcal{C}\in\{\mathrm{2a,2e}\}$,}\\
        0&\text{otherwise.} 
    \end{cases}$$
    Since $\mathrm{2a}$ fuses to $\mathrm{2B}$ in $G$, it follows from the above and Proposition~\ref{prop:Bno2B} that $yz\in\mathrm{2e}$. Finally, for involution classes $\mathcal{C}$ of $C_G(g)$ we compute that $n_{C_G(xy)}(\mathrm{2d},\mathrm{2e},\mathcal{C})\ne0$ if and only if $\mathcal{C}=\mathrm{2a}$, and so since $xy\in\mathrm{2d}$ and  $yz\in \mathrm{2e}$ we deduce that $xz=xy\cdot yz\in\mathrm{2a}$ which fuses to $\mathrm{2B}$ in $G$, a contradiction, hence $|H\cap 2\mathrm{A}|\leq 2$.

    Suppose, finally, that $|H\cap \mathrm{2A}|=2$; we shall show that $|H\cap \mathrm{2C}|=1$. The centraliser in $\mathbb{B}$ of a 2C-element $g\in H$ is $(2^2\times F_4(2))\cn2$. This group $C_G(g)$ centralises no $\mathrm{2A}$-element thus $x\ne y\in H\cap\mathrm{2A}$ form an orbit of $C_G(g)$ on $\mathrm{2A}$ by Lemma~\ref{lem:Finf}. But then $xy\in\mathrm{2C}$ by the first paragraph, so $xy=yx$ is centralised by $C_G(g)$. Therefore, since $Z(C_G(g))=\langle g\rangle$ we deduce that $xy=g$ and $\langle x,y\rangle$ is a normal subgroup $2^2$ in $C_G(g)$. Thus the pair $x,y$ is uniquely defined by $g$; the result now follows.    
   \end{proof}
 In particular, Propositions~\ref{prop:Bno2B} and~\ref{prop:B2C} tell us that one of the following holds: \begin{itemize}
    \item $|H|$ is odd;
    \item $H$ contains a unique element of $\varphi^{-1}(\mathrm{2A})$ and so $H\leq Z(G).(2.{}^2E_6(2).2)$; or
    \item $Z(G)=1$ and $H$ contains a unique $\mathrm{2C}$ involution, and so $H\leq(2^2\times F_4(2))\cn2$.
\end{itemize}

To eliminate the final case we begin by excluding the presence of $\mathrm{2D}$ elements.

\begin{prop}\label{prop:Bno2D}
    The stabiliser $H$ contains no element of the $G$-conjugacy class $\varphi^{-1}(\mathrm{2D})$.
\end{prop}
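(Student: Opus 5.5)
We argue by contradiction, following the template of Propositions~\ref{prop:Bno2B} and~\ref{prop:B2C}. Suppose that $g\in H\cap\varphi^{-1}(\mathrm{2D})$. When $|Z(G)|=2$ we first check via the \gap~Character Table Library whether $\mathrm{2D}$ lifts to involutions or to elements of order $4$ in $2.\mathbb{B}$. If it lifts to order-$4$ elements, then $g^2\in Z(G)\cap H=1$, a contradiction, and we are done in that case. Otherwise we work with an involution lift $g$.

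Since $\mathrm{2A}$ (respectively $\mathrm{2b}$) is the unique involution class with disconnected Gill--Guillot graph, Lemma~\ref{lem:conjconn} produces $h\in H\cap\varphi^{-1}(\mathrm{2A})$ with $\fix(g)\subsetneq\fix(h)$. Lemma~\ref{lem:Finf} then shows that $H$ contains the whole $C_G(g)$-orbit of $h$.

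By Proposition~\ref{prop:B2C}, $|H\cap\varphi^{-1}(\mathrm{2A})|\le 2$. Hence this $C_G(g)$-orbit has length at most $2$, and so $C_G(g)$ normalises a subgroup of order at most $4$ generated by $\mathrm{2A}$-lifts. Here $C_G(g)=2^{9+16}.\mathrm{S}_8(2)$ in $\mathbb{B}$ (or its preimage in $2.\mathbb{B}$). We then split into two cases.

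\textbf{Orbit of length $1$.} Then $h$ is central in $C_G(g)$. We rule this out by checking that $Z(C_G(g))=\langle g\rangle$ (together with $Z(G)$ in the cover), for instance from the class fusion of $C_G(g)$ into $G$, which shows that no $C_G(g)$-class of size $1$ fuses to $\mathrm{2A}$.

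\textbf{Orbit of length $2$.} Then $\{x,y\}$ is a $C_G(g)$-orbit with $x\neq y$, and by Proposition~\ref{prop:B2C} we have $Z(G)=1$ and $xy\in\mathrm{2C}$. Since $C_G(g)$ permutes $\{x,y\}$, it centralises $xy$. Thus $xy$ is a $\mathrm{2C}$-involution centralised by $C_G(g)$, so $C_G(g)\le C_G(xy)$. Comparing orders,
\[
|C_G(g)|=2^{25}|\mathrm{S}_8(2)| \quad\text{and}\quad |(2^2\times F_4(2))\cn 2|,
\]
or more robustly noting that $xy\in Z(C_G(g))=\langle g\rangle$, we get $xy=g$. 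This contradicts $g\in\mathrm{2D}\neq\mathrm{2C}$.

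The main obstacle is verifying that $Z(C_G(g))$ is exactly $\langle g\rangle$, so that no $\mathrm{2A}$- or $\mathrm{2C}$-element is central in $C_G(g)$. The same issue arises for the preimage in $2.\mathbb{B}$. I would settle this with the \gap~Character Table Library: compute the class fusion of the maximal $2^{9+16}.\mathrm{S}_8(2)$ (and its lift) into $G$ and read off which central classes exist. The library table may admit several possible fusions; in that case I would check every possibility, as was done in Proposition~\ref{prop:B2C}. If the character-theoretic check is inconclusive, I would instead use the fact that $O_2(C_G(g))=2^{9+16}$ has a center on which $\mathrm{S}_8(2)$ acts irreducibly, so the only central involution is $g$.
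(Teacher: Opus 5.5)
Your argument has the same structure as the paper's proof. You obtain $h\in H\cap\varphi^{-1}(\mathrm{2A})$ with $h^{C_G(g)}\subseteq H$ from Lemmas~\ref{lem:conjconn} and~\ref{lem:Finf}, and bound that orbit using Proposition~\ref{prop:B2C}. You then contradict the fact that $C_G(g)$ centralises no element of $\varphi^{-1}(\mathrm{2A})$ and, when $Z(G)=1$, no element of $\mathrm{2C}$. The gap is in how you propose to establish that last fact, which is the step you flag as the crux and the one the paper's first paragraph is devoted to.

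Your identification $C_G(g)=2^{9+16}.\mathrm{S}_8(2)$ is false. That maximal subgroup has order divisible by $2^{25}\cdot 2^{16}=2^{41}=|\mathbb B|_2$, so it could only centralise a $2$-central involution. In fact it is the normaliser of an elementary abelian $2^9$ on which $\mathrm{S}_8(2)$ fixes no non-zero vector, and it has trivial centre; it is not an involution centraliser at all. This has three consequences:
\begin{itemize}
\item Reading off size-one classes from its library table tells you nothing about $Z(C_G(g))$.
\item Your order comparison uses the wrong order.
\item Your fallback contradicts itself. If $\mathrm{S}_8(2)$ acted irreducibly on a centre of order greater than $2$, no non-trivial element of that centre would be fixed by $C_G(g)$. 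Then $g$ itself, which lies in $Z(O_2(C_G(g)))$, could not be central in $C_G(g)$.
\end{itemize}
As written, the key verification is therefore not carried out.

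The repair is the paper's dual check, which needs no character table of $C_G(g)$ at all. Suppose $C_G(x)$ centralised some $y\in\varphi^{-1}(\mathrm{2A})\cup\mathrm{2C}$. Then $x\in C_G(y)$ and $C_{C_G(y)}(x)=C_G(x)$. So the table of $C_G(y)$ would contain a class fusing to $\mathrm{2D}$ with centraliser order $|C_G(x)|$. Here $C_G(y)$ is one of the maximal subgroups $2.{}^2E_6(2).2$ or $(2^2\times F_4(2))\cn 2$, whose tables are in the \gap~library, and one checks that no such class exists. The same check works in the cover $2.\mathbb B$, ignoring $\mathrm{2C}$.

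With that fact in hand, both of your cases go through as you wrote them. In the length-$1$ case $h$ would be central in $C_G(g)$. In the length-$2$ case $C_G(g)$ would centralise $xy\in\mathrm{2C}$. Both contradict the check above. Your preliminary treatment of possible order-$4$ lifts in $2.\mathbb B$ is also fine.
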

\begin{proof}

   We begin by showing that the centraliser of an involution from $\varphi^{-1}(\mathrm{2D})$ does not centralise an involution of $\varphi^{-1}(\mathrm{2A})$ nor an involution of 2C when $Z(G)=1$. For ease of notation we assume $Z(G)=1$, the argument is identical otherwise (ignoring 2C lifts). Let $x\in\mathrm{2D}$, let $y\in\mathrm{2A}\cup\mathrm{2C}$, and suppose for contradiction that $C_G(x)$ centralises $y$. Then $x\in C_G(y)$, and $C_G(y,x)=C_G(x)$, in particular, $C_G(y)$ has a class fusing to $\mathrm{2D}$ whose centralisers in $C_G(y)$ have order $|C_G(x)|$. We check using the \gap~Character Table Library~\cite{CTblLib1.3.11} that this is not the case, and so the claim follows. 
    We continue now with no assumption on $Z(G)$. Suppose for contradiction that there is some $g\in\varphi^{-1}(\mathrm{2D})\cap H$. Then there exists an $h\in\varphi^{-1}(\mathrm{2A})$ satisfying $h^{C_G(g)}\subseteq H$ by Lemmas~\ref{lem:conjconn} and Lemma~\ref{lem:Finf}. By the above, $|h^{C_G(g)}|\geq 2$---this is a contradiction when $|Z(G)|=2$ by Proposition~\ref{prop:B2C}, so for the remainder $Z(G)=1$. By Proposition~\ref{prop:B2C}, $h^{C_G(g)}=\{h,h'\}$ for some $h'\in\mathrm{2A}$. Consequently, $C_G(g)$ centralises $hh'\in\mathrm{2C}$. This is a contradiction to the first paragraph, hence the result.
\end{proof}

\begin{prop}
    The stabiliser $H$ contains at most one element of the $G$-conjugacy class $\varphi^{-1}(\mathrm{2A})$, and no elements of  $\varphi^{-1}(\mathrm{2C})$.
\end{prop}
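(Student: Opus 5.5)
By Proposition~\ref{prop:B2C}, the case $|Z(G)|=2$ is already done: $H$ has at most one involution lifting $\mathrm{2A}$, and $\mathrm{2C}$ lifts to elements of order $4$ in $2.\mathbb{B}$, so no involution of $H$ maps to $\mathrm{2C}$. For the $\mathrm{2C}$ part we would also need that no order-$4$ element of $H$ lifts $\mathrm{2C}$. That fits into the argument below, since its square is central and $H\cap Z(G)=1$. So we assume $Z(G)=1$. By Proposition~\ref{prop:B2C}, it then suffices to show that $H\cap\mathrm{2C}=\emptyset$. The remaining configuration is $g\in H\cap\mathrm{2C}$ with $H\cap\mathrm{2A}=\{x,y\}$, where $xy=g$ and $\langle x,y\rangle\trianglelefteq C_G(g)$. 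In particular $H\leq C_G(g)=(2^2\times F_4(2))\cn 2$.

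First we pin down the involutions of $H$. By Propositions~\ref{prop:Bno2B} and~\ref{prop:Bno2D}, every involution of $H$ lies in $\mathrm{2A}\cup\mathrm{2C}$. By Proposition~\ref{prop:B2C}, $H$ contains exactly one $\mathrm{2C}$ element, namely $g$. So the involutions of $H$ are exactly $x$, $y$ and $g$, and $V:=\langle x,y\rangle$ is the set of involutions together with $1$. Next, any $t\in H$ normalises $V$ and fixes $g$, so either $t$ centralises $V$ or swaps $x$ and $y$. In the swapping case, if $t$ has $2$-power order, then (as in the proof of Proposition~\ref{prop:fi222C}) some element of the coset $Vt$ is an involution outside $V$, which is a contradiction. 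Thus a Sylow $2$-subgroup $S$ of $H$ has $V\leq Z(S)$ and contains only three involutions. Hence $S$ is $V$ itself, or $S=V$ extended by elements whose squares lie in $V$. We would rule out the latter by showing that no element of $C_G(V)$ of order $4$ squares to $x$, $y$ or $g$, using power maps in the $\mathbb{ATLAS}$: no class of elements of order $4$ in $\mathbb{B}$ powers into $\mathrm{2A}$, and we would check $\mathrm{2C}$ via the character table of $C_G(g)$. This gives $S=V$.

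For odd order, Proposition~\ref{prop:binarydiv} gives $|H|\mid 2^{41}\cdot11\cdot13\cdot17\cdot19\cdot23\cdot31\cdot47$. Intersecting this with $|C_G(g)|$, which is $2^{27}\cdot|F_4(2)|$, leaves only the odd primes $13$ and $17$. Since $|C_G(V)|=|V|\cdot|F_4(2)|$ up to the index-$2$ piece and odd elements centralise $V$, the odd part of $H$ embeds in $F_4(2)$. We then use that the subgroups of $F_4(2)$ of order dividing $13\cdot17$ are cyclic, since $F_4(2)$ has no elements of order $221$ and $13\nmid 16$. So $H$ is one of $V$, $V\times 13$ or $V\times 17$, up to the swapping element, which is excluded above.

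The final step is to eliminate these few candidates. We would test them with {\tt NearTITest()} inside $C_G(g)$ or $2^2\times F_4(2)$, working in a permutation representation of $F_4(2)$ and using Lemma~\ref{lem:sub}. Alternatively, for $V$ in $G$ we would use an explicit Lemma~\ref{lem:efficienttest} witness built from conjugates $V^c$ with $V\cap V^c=1$, found by random search. The main obstacle is computational: $\mathbb{B}$ has no convenient permutation representation. So all tests must be run inside $C_G(g)$ via Lemma~\ref{lem:sub}, and this only works if the action of $C_G(g)$ itself is already non-binary. If it is binary for $H=V$, we would instead use Lemma~\ref{lem:suborb}, looking for an $H$-suborbit on which $H$ acts as a Frobenius group of order at least $3$ or acts non-binarily, using a $3$-element of $F_4(2)$ as the conjugating element.
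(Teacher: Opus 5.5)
There is a genuine gap. Your reduction to $H\in\{V,\,V\times 13,\,V\times 17\}$ with $V=\langle x,y\rangle$ is essentially sound, but the proposal never disposes of the main candidate $H=V$, and the tools you name for it cannot work.

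\begin{itemize}
\item Lemma~\ref{lem:sub} gives nothing. As you note, $V\trianglelefteq C_G(g)$, so the action of $C_G(g)$ (or of $2^2\times F_4(2)$) on its cosets of $V$ is just the regular action of the quotient by $V$, which is binary.
\item Your fallback via Lemma~\ref{lem:suborb} fails for a structural reason. $V$ is abelian, so on each $V$-orbit in $(G:V)$ the point stabiliser $V\cap V^c$ is the kernel. Hence $V$ acts regularly, and so binarily, and never as a Frobenius group. Your suggested $3$-element of $F_4(2)$ even centralises $V$ and gives a fixed point.
\item What remains is a witness for Lemma~\ref{lem:efficienttest} in $\mathbb{B}$ itself. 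You set this aside as computationally out of reach, so as written the case $H=V$ is left open.
\end{itemize}

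The missing idea is that such a witness can be shown to exist from character-table data alone. Since $n_G(\mathrm{2A},\mathrm{2A},\mathrm{2C})=2$, every $t\in\mathrm{2C}$ has a unique factorisation $t=x_ty_t$ with $x_t,y_t\in\mathrm{2A}$. So $\langle x_t,y_t\rangle$ is the unique conjugate of $V$ containing $t$. There are $n_G(\mathrm{2C})=184246272$ pairs $(t,r)\in\mathrm{2C}^2$ with $tr=g$. Such a pair is determined by either entry, so at most $4\,n_G(\mathrm{2C},\mathrm{2A},\mathrm{2A})=4\cdot 23113728$ of them satisfy $\{x,y\}\cap\{x_t,y_t,x_r,y_r\}\neq\emptyset$. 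Any other pair gives conjugates $H_2=\langle x_t,y_t\rangle$ and $H_3=\langle x_r,y_r\rangle$ of $V$ with $V\cap H_2=V\cap H_3=1$ and $g\cdot t\cdot r=1$. Lemma~\ref{lem:efficienttest} then shows the action on $(G:V)$ is not binary. This is exactly how the paper finishes.

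Two smaller points.
\begin{itemize}
\item Your claim that a swapping element $t$ of $2$-power order always yields an involution in $Vt$ is false in general. In $M_{16}=\langle a,b\mid a^8=b^2=1,\ bab=a^5\rangle$ the only involutions are $a^4,b,a^4b$, the element $a$ swaps $b$ and $a^4b$, and $Va$ contains no involution. It is harmless here. Since $C_G(V)=V\times F_4(2)$ contains no element squaring to a non-trivial element of $V$, you get $S\cap C_G(V)=V$ and $|S|\le 8$, and the order-$4$ coset computation then forces $S=V$. The paper instead gets $|H/V|$ odd from connectivity of the involution graphs of $F_4(2){:}2$ via Lemma~\ref{lem:conjconn}.
\item $|C_G(g)|=2^3|F_4(2)|$, not $2^{27}|F_4(2)|$. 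Your list of surviving odd primes, $13$ and $17$, is still correct.
\end{itemize}
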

\begin{proof}
    When $|Z(G)|=2$ this is just Proposition~\ref{prop:B2C}, so we assume $Z(G)=1$.
    
    Suppose otherwise, so that $|H\cap\mathrm{2A}|=2$ and $|H\cap\mathrm{2C}|=1$ by Proposition~\ref{prop:B2C}. Then $N:=2^2\trianglelefteq H\leq(2^2\times F_4(2))\cn2=:K$. Moreover, by Propositions~\ref{prop:Bno2B},~\ref{prop:B2C}, and~\ref{prop:Bno2D}, all involutions in $H$ lie in the normal subgroup $N$. As a result, the action of $K$ on $(K:H)$ is equivalent to that of $\overline{K}=F_4(2)\cn2$ on the cosets of a subgroup $\overline{H}=H/N$ with order indivisible by 3, 5, and 7 by Proposition~\ref{prop:binarydiv}. 

Now, the classes of involutions in the subgroup $F_4(2)\trianglelefteq \overline{K}$ all have connected graph by~\cite[Theorem 2]{GGL25}. Additionally, using the \gap~Character Table Library~\cite{CTblLib1.3.11} we compute that the $\overline{K}$-conjugacy class $\mathcal{C}$ of involutions not in the normal $F_4(2)$ satisfies $n_{\overline{K}}(\mathcal{C})=45744$; since this conjugacy class has maximal centraliser~\cite{atlas} we deduce that $\Gamma(\mathcal{C})$ is connected as well by~\cite[Lemma 2.14]{CDP}. Therefore, $|\overline{H}|$ is odd by Lemma~\ref{lem:conjconn}, whence $|\overline{H}|\mid 13\cdot17$.

    Since $F_4(2)\cn2$ has no subgroups of order $13\cdot 17$, we deduce that $\overline{H}$ is cyclic of order either $1$, $13$, or $17$. We compute that $n_{\overline{K}}(\mathcal{C})>p-2$ for all classes of elements of order $p\in\{13,17\}$, and so $\overline{H}=1$ by Lemma~\ref{lem:cycbintest}. Therefore, $H=N$.

    To complete the proof we must show that the action of $G$ on $(G:N)$ is not binary; we do this by appealing to Lemma~\ref{lem:efficienttest}. Write $N\cap\mathrm{2A}=\{g,h\}$. Since $n_G(\mathrm{2A},\mathrm{2A},\mathrm{2C})=2$, for each $z\in\mathrm{2C}$ there is a unique pair $x_z,y_z\in\mathrm{2A}$ such that $x_zy_z=z$. We now use a counting argument---the aim is to show that their exists a pair $t,r\in\mathrm{2C}$ such that $tr(gh)=1$ and $g,h\not\in\{x_t,y_t,x_r,y_r\}$, from which the result will follow by Lemma~\ref{lem:efficienttest} using $H=\langle g,h\rangle$, $H_2=\langle x_t,y_t\rangle$ and $H_3=\langle x_r,y_r\rangle$. Set $A=\{(t,r)\in\mathrm{2C}\times\mathrm{2C} : trgh=1\}$. Then $|A|=n_G(\mathrm{2C})=184246272$. Now, set $A_g=\{t\in \mathrm{2C} : g\in\{x_t,y_t\}\}$. Observe that $t\in A_g$ if and only if either $tx_t=g$ or $ty_t=g$, whence $|A_g|=n_G(\mathrm{2C,2A,2A})=23113728$. Next, $$|\{(t,r)\in A \mid \{t,r\}\cap A_g\ne\emptyset\}|\leq 2|A_g|,$$ since the pair $(t,r)\in A$ is uniquely defined by either entry. Defining $A_h$ similarly, we deduce that there are at most $$|\{(t,r)\in A \mid \{t,r\}\cap A_g\ne\emptyset\}\cup\{(t,r)\in A \mid \{t,r\}\cap A_h\ne\emptyset\}|\leq 4|A_g|,$$ pairs $(t,r)\in A$ such that $\{g,h\}\cap \{x_t,y_t,x_r,y_r\}\ne\emptyset$.

Therefore, the number of pairs $(t,r)$ satisfying $trgh=1$ and $g,h\not\in\{x_t,y_t,x_r,y_r\}$ is at least $$184246272
-4\cdot23113728=91791360$$ hence the result.\end{proof}

\begin{prop}\label{prop:Bclas}
    Either $H=1$ or $H$ is generated by an involution from the $G$-conjugacy class $\varphi^{-1}(2\mathrm{A})$.
\end{prop}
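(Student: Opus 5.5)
By the preceding propositions, and the trichotomy recorded after Proposition~\ref{prop:B2C}, exactly one of two cases holds. Either $|H|$ is odd, or $H$ contains a unique element $g\in\varphi^{-1}(\mathrm{2A})$ and no elements of $\varphi^{-1}(\mathrm{2B}\cup\mathrm{2C}\cup\mathrm{2D})$. I will treat the two cases separately. In both, I will use that $|H|$ divides $2^{41}\cdot11\cdot13\cdot17\cdot19\cdot23\cdot31\cdot47$ by Proposition~\ref{prop:binarydiv}.

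\emph{Odd case.} Here $|H|\mid 11\cdot13\cdot17\cdot19\cdot23\cdot31\cdot47$, and I aim to show $H=1$ using only maximal subgroup orders and class multiplication coefficients, as for $\mathrm{Th}$ and $\mathrm{Fi}_{24}'$.
\begin{enumerate}[(a)]
\item If $H$ lies in a maximal subgroup isomorphic to $\mathrm{Fi}_{23}$ (or a cover of it), then Lemma~\ref{lem:sub} and Proposition~\ref{prop:fi23clas} force $|H|\le 2$, so $H=1$.
\item For the remaining maximal subgroups (e.g.\ $47\cn23$, $31\cn15\times S_3$, $\mathrm{Th}$, and the $\mathrm{HN}$-type and ${}^2E_6(2)$-type maximals), the orders should show that $|H|$ is divisible by at most two primes from this list.
\item Any such pair must then give a Frobenius group $\langle x\rangle\cn\langle y\rangle$, since there are no elements of the corresponding composite orders. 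The main candidate is $47\cn23$.
\item To exclude $\langle x\rangle\cn\langle y\rangle$, I pick $t\in C_G(y)$ of order dividing $|C_G(y)|$ but not $|N_G(\langle x\rangle)|$. Then $H\cap H^t=\langle y\rangle$, so $H$ has a Frobenius suborbit, contradicting Lemmas~\ref{lem:frob} and~\ref{lem:suborb}. For $2.\mathbb{B}$, $C_G(y)$ contains $Z(G)$, but I still need an element not normalising $\langle x\rangle$; I will check this from~\cite{atlas}.
\item Cyclic $H$ of prime order $p$ is excluded by Lemma~\ref{lem:cycbintest}, computing $n_G(\mathcal{C})>p-2$ for every class $\mathcal{C}$ of elements of order $p$.
\end{enumerate}

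\emph{Even case.} Then $H\le C_G(g)=Z(G).(2.{}^2E_6(2).2)$, and $\langle g\rangle$ is normal in $H$.
\begin{enumerate}[(a)]
\item No element of order $4$ squares into $\varphi^{-1}(\mathrm{2A})$, by~\cite{atlas}. Every involution of $H$ equals $g$ (all other involution classes have been excluded). Hence a Sylow $2$-subgroup of $H$ is $\langle g\rangle$, and $|H/\langle g\rangle|$ is odd.
\item By Lemma~\ref{lem:sub}, $C_G(g)$ acts binarily on $(C_G(g):H)$. Passing to the quotient by $\langle g\rangle$ (and by $Z(G)$ where necessary), this is a binary action of $\overline{K}={}^2E_6(2).2$ (or a cover) with stabiliser $\overline H\cong H/\langle g\rangle$ of odd order dividing $11\cdot13\cdot17\cdot19$, since $|{}^2E_6(2)|$ has no factor $23,31,47$.
\item I then repeat the odd-case argument inside ${}^2E_6(2)$: use its maximal subgroups to limit how many of these primes divide $|\overline H|$, kill Frobenius subgroups via centraliser orders, and kill prime-order cyclic stabilisers by computing $n_{\overline K}(\mathcal C)>p-2$ with the \gap~library.
\item This gives $\overline H=1$, so $H=\langle g\rangle$, as required.
\end{enumerate}

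The main obstacle is step (b) of the even case. I need to check that the reduction to a faithful action of ${}^2E_6(2).2$ really identifies $H/\langle g\rangle$ with a point stabiliser, and not merely with a subgroup of $2.{}^2E_6(2).2$ that meets the centre nontrivially. If this identification fails, the fallback is to work directly in $C_G(g)$ and apply Lemma~\ref{lem:cycbintest}-type coefficient tests to its classes of elements of order $p$.
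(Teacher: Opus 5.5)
Your overall route is the paper's. You split into the odd case and the case of a unique involution $g\in\varphi^{-1}(\mathrm{2A})$. You eliminate prime-order stabilisers by Lemma~\ref{lem:cycbintest}, and the metacyclic candidates by a Frobenius suborbit or by embedding in $\mathrm{Fi}_{23}$. In the even case you pass to $C_G(g)/\langle g\rangle$.

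However, step (b) of the odd case fails as stated. Orders of maximal subgroups cannot show that at most two of the primes divide $|H|$: the maximal subgroup $2.{}^2E_6(2).2$ has order divisible by $11\cdot13\cdot17\cdot19$, and $\mathrm{Th}$ has order divisible by $13\cdot19\cdot31$. What works, and what the paper uses, is the element-order argument you only invoke in (c):
\begin{itemize}
\item $|H|$ is squarefree, so $H$ is solvable and has a Hall subgroup for each pair of primes dividing $|H|$.
\item $G$ has no elements of order $\geq 11\cdot 13$, so each such Hall subgroup is non-abelian. This forces one prime to divide the other minus one, which among the listed primes happens only for $\{11,23\}$ and $\{23,47\}$.
\item Any three of the primes contain a pair other than these two, which would give a cyclic Hall subgroup.
\end{itemize}
So $H$ has prime order or is one of $23\cn11$, $47\cn23$. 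You must also dispose of $23\cn11$, not just $47\cn23$. You can do this by your (d), or, as in the paper, by using that $G$ has a single class of such subgroups, all lying in conjugates of $\mathrm{Fi}_{23}$, so that (a) applies.

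In the even case the same observation shortens your (c). No prime in $\{11,13,17,19\}$ divides another minus one, and ${}^2E_6(2)$ has no elements of order a product of two of them. Hence $\bar H=H/\langle g\rangle$ is trivial or of prime order at once, and only Lemma~\ref{lem:cycbintest} is needed; there are no Frobenius subgroups to kill.

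As for your `main obstacle', when $Z(G)=1$ there is none. There $C_G(g)/\langle g\rangle={}^2E_6(2).2$, and $\bar H$ has odd order, so it is core-free and a genuine point stabiliser.

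When $|Z(G)|=2$ you must \emph{not} also factor out $Z(G)$. Since $z\notin H$, $Z(G)$ is not in the kernel of the action on $(C_G(g):H)$. Instead, $\bar K=C_G(g)/\langle g\rangle$ contains the central involution $\bar z$. This meets $\bar H$ trivially because $H\cap\langle g,z\rangle=\langle g\rangle$. Lemma~\ref{lem:cycbintest} must then be applied with class multiplication coefficients computed in $\bar K$ itself. That is your fallback, and it is exactly the group $C_G(x)/\langle x\rangle$ in which the paper computes.
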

\begin{proof}
    Suppose throughout that $H\ne 1$. By Propositions~\ref{prop:Bno2B},~\ref{prop:Bno2D}, and ~\ref{prop:B2C}, either $|H|\mid 11\cdot13\cdot17\cdot19\cdot23\cdot31\cdot47$, or $H$ has a unique involution, and this involution is of $G$-conjugacy class $\varphi^{-1}(\mathrm{2A})$.

    Suppose first that $|H|\mid 11\cdot13\cdot17\cdot19\cdot23\cdot31\cdot47$. Since $G$ has no elements of order at least $11\cdot 13$, either $|H|$ is prime, or $H\in\{23\cn11,47\cn23\}$ (note $G$ has a unique class of both subgroups~\cite{W99}). We compute that $n_G(\mathcal{C})>p-2$ for each class of elements of relevant prime order $p$, and so $H\in\{23\cn11,47\cn23\}$. 
    
    Suppose $H=47\cn23$ and write $H=\langle x\rangle\cn\langle y\rangle$ where $x$ has order $47$ and $y$ has order 23. Then $11\nmid |N_G(\langle x\rangle)|$ so there is some $t\in G$ of order $11$ such that $\langle y\rangle^t=\langle y\rangle$ and $\langle x\rangle^t\cap\langle x\rangle=1.$ In particular, $H\cap H^t=\langle y\rangle$, a contradiction to Lemma~\ref{lem:frob} and Lemma~\ref{lem:suborb}. On the other hand, the subgroup $23\cn11$ can be found in $\mathrm{Fi}_{23}$, which has no non-trivial binary actions with odd stabiliser by Proposition~\ref{prop:fi23clas}, hence the action of $G$ with stabiliser $23\cn11$ is not binary by Lemma~\ref{lem:sub}. Therefore, $|H|$ is even.

    Let $x$ be the unique involution in $H$. Then $H\leq C_G(x)=Z(G).(2.{}^2E_6(2).2)$---since the $G$-conjugacy class $\varphi^{-1}(\mathrm{2A})$ is not the power up of any class of $2$-power order, we deduce that the Sylow 2-subgroup of $H$ is generated by $x$. Consequently, the action of $C_G(x)$ on $C_G(x)/H$ is equivalent to an action of $C_G(x)/\langle x\rangle$ with stabiliser of order dividing $11\cdot 13\cdot17\cdot19$, and moreover this action is binary by Lemma~\ref{lem:sub}. Since $p\nmid q-1$ for each pair $p,q\in\{11,13,17,19\}$ (and ${}^2E_6(2)$ has no elements of order at least $143$), we deduce that either $|H/\langle x\rangle|$ is prime, or $H=\langle x\rangle$. We compute that $n_{C_G(x)/\langle x\rangle}(\mathcal{C})>p-2$ for each ${}^2E_6(2).2$-class of $p$-elements, $p\in\{11,13,17,19\}$. Therefore, $H=\langle x\rangle$ by Lemma~\ref{lem:cycbintest}, as desired. 
\end{proof}
\subsection{$\mathbb{M}$} The final group we must consider is $G=\mathbb{M}$. Despite recent advances~\cite{Sey24}, many computations in the monster group remain a challenge. Luckily, the monster is very well-behaved when it comes to this problem, so we need not rely heavily on computations (indeed this is also the case for~\cite{CDP}). By Proposition~\ref{prop:binarydiv}, $|H| \mid 17\cdot19\cdot23\cdot29\cdot31\cdot41\cdot47\cdot59\cdot71$. Throughout this section we define $S=\{17,19,23,29,31,41,47,59,71\}$.
\begin{prop}\label{prop:Mnocyc}
    The stabiliser $H$ is not cyclic of prime order. 
\end{prop}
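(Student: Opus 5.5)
The plan is to apply Lemma~\ref{lem:cycbintest} to every prime $p\in S$, exactly as for the other large sporadic groups. Suppose $H=\langle g\rangle$ has prime order $p$. Then $p\in S$ by Proposition~\ref{prop:binarydiv}.

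For each such $p$, let $\mathcal{C}$ be the rational class of $g$. By the first half of Lemma~\ref{lem:cycbintest}, binarity forces
$$\sum_{(i,j)\in[m]^2}n_G(\mathcal{C}_i,\mathcal{C}_j,\mathcal{C}_1)=p-2.$$
Here $\mathcal{C}=\bigsqcup_{i=1}^m\mathcal{C}_i$ is its decomposition into $G$-classes: 17A, 19A, 23A/B, 29A, 31A/B, 41A, 47A/B, 59A/B and 71A/B. We may take $g\in\mathcal{C}_1$, relabelling if necessary.

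It is enough to exhibit a single pair $(i,j)$ with $n_G(\mathcal{C}_i,\mathcal{C}_j,\mathcal{C}_1)>p-2$. The simplest choice is $n_G(\mathcal{C}_1,\mathcal{C}_1,\mathcal{C}_1)$, since all coefficients are nonnegative. We would compute this with the structure-constant formula from the \gap~Character Table Library~\cite{CTblLib1.3.11}; the character table of $\mathbb{M}$ is available there.

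The expected reason the coefficient is huge is as follows. The leading term of the formula is $|\mathcal{C}_1|^2/|G|$, and $|C_G(g)|$ is tiny: at most $17\cdot 3\cdot\ldots$, and in general of order $p$ times a small number. So $|\mathcal{C}_1|^2/|G|=|G|/|C_G(g)|^2$ is astronomically large, of order $10^{50}$. The remaining character terms cannot cancel this. In practice we simply read off the computed values, each vastly exceeding $p-2\le 69$.

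The only real obstacle is bookkeeping rather than mathematics. For the primes with two classes ($23,31,47,59,71$), we must make sure we treat the full rational class, or at least a single class coefficient that is already too big. We must also handle the irrationalities correctly in \gap. Since the sum includes the diagonal term, a single large class coefficient suffices, and Lemma~\ref{lem:cycbintest} gives the contradiction. Hence $H$ is not cyclic of prime order.
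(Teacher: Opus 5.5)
Your proposal is essentially the paper's proof: compute $n_G(\mathcal{C})$ from the character table for each class $\mathcal{C}$ of elements of order $p\in S$, observe that it exceeds $p-2$, and conclude by Lemma~\ref{lem:cycbintest}, with the diagonal term bounding the rational-class sum from below. Your heuristic about centraliser sizes is not needed, since the computed values carry the argument.
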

\begin{proof}
    We compute the class multiplication coefficients $n_G(\mathcal{C})$ for every class $\C$ of elements of order $p\in S$. In each case $n_G(\mathcal{C})>p-2$, so the result follows from Lemma~\ref{lem:cycbintest}.
\end{proof}
Since $119<17\cdot 19$ is the largest element order in $G$ and $|H|$ is squarefree we deduce from Proposition~\ref{prop:Mnocyc} that $H\in\{59\cn29,47\cn 23\}$. Note that $59\cn 29$ is a maximal subgroup~\cite{DLP25}, and so cannot yield a binary action as this would contradict Cherlin's Conjecture~\cite{GLS22}. Moreover, the subgroup $47\cn 23\leq G$ can be found in $2.\mathbb{B}$ and so cannot yield a binary action by Proposition~\ref{prop:Bclas} and Lemma~\ref{lem:sub}. Thus we have shown the following.
  \begin{prop}\label{prop:Mclas}
    The group $\mathbb{M}$ has no non-trivial faithful transitive binary actions.
\end{prop}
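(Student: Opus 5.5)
The statement to prove is Proposition~\ref{prop:Mclas}: $\mathbb{M}$ has no non-trivial faithful transitive binary action. I will assume the action of $G=\mathbb{M}$ on $(G:H)$ is binary with $H\ne 1$ and derive a contradiction in four steps.

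\textbf{Step 1: restrict $|H|$.} By Proposition~\ref{prop:binarydiv}, $|H|$ divides $\prod_{p\in S}p$. In particular $|H|$ is squarefree and involves only primes from $S$.

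\textbf{Step 2: reduce to two candidate groups.} Proposition~\ref{prop:Mnocyc} shows $H$ is not cyclic of prime order, so $|H|$ is a product of at least two distinct primes from $S$.
\begin{itemize}
\item Since $17\cdot 19=323$ exceeds $119$, the largest element order in $\mathbb{M}$, no product of two primes from $S$ is an element order. Hence $H$ is not cyclic.
\item A group of squarefree order is metacyclic. Take a minimal counterexample subgroup of order $pq$ with $p,q\in S$. It must be nonabelian (otherwise it is cyclic), which forces $q\mid p-1$ for some pair in $S$.
\item Checking $S$, the only such pairs are $(p,q)=(47,23)$ and $(59,29)$. For example, $71-1=70$, $41-1=40$, $31-1=30$, $29-1=28$, $23-1=22$, $19-1=18$ and $17-1=16$ have no prime divisor in $S$.
\end{itemize}
Three or more primes cannot occur. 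Such an $H$ would contain a nonabelian subgroup of order $pq$ from this list together with a further prime $r\in S$. But $23$ and $29$ are not divisible by $47$ or $59$, and the relevant normalisers are too small. Concretely, the normaliser of a $47$-element in $\mathbb{M}$ is contained in $47\cn23\times 2$, and that of a $59$-element is $59\cn29$, so no room remains for a third prime. Thus $H\in\{59\cn29,\,47\cn23\}$.

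\textbf{Step 3: rule out $59\cn29$.} This subgroup is maximal in $\mathbb{M}$~\cite{DLP25}. A binary action with stabiliser $59\cn29$ would be a primitive binary action of $\mathbb{M}$, contradicting the classification of primitive binary groups~\cite{GLS22} (Cherlin's conjecture).

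\textbf{Step 4: rule out $47\cn23$.} This group lies in $2.\mathbb{B}=C_{\mathbb{M}}(2A)$. By Lemma~\ref{lem:sub}, the action of $2.\mathbb{B}$ on $(2.\mathbb{B}:47\cn23)$ would then be binary. That is impossible by Proposition~\ref{prop:Bclas}, because $47\cn23$ is neither trivial nor generated by a 2b involution.

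The main obstacle is Step 2. It rests on the element-order bound for $\mathbb{M}$ and on the structure of normalisers of Sylow $p$-subgroups for $p\in S$ (standard $\mathbb{ATLAS}$ data), used to exclude groups of order divisible by three primes.
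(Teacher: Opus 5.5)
Your proof is correct and follows the paper's argument essentially step for step: exclude prime order via Proposition~\ref{prop:Mnocyc}, use the element-order bound $119<17\cdot 19$ and squarefreeness to force $H\in\{59\cn29,\,47\cn23\}$, rule out $59\cn29$ by its maximality together with~\cite{GLS22}, and rule out $47\cn23$ by embedding it in $2.\mathbb{B}$ and applying Lemma~\ref{lem:sub} with Proposition~\ref{prop:Bclas}. One small tidy-up: handle the three-prime case by applying your $pq$ argument to a Hall $\{p,q\}$-subgroup of the solvable group $H$ for \emph{every} pair of primes dividing $|H|$, so every such pair would have to be $\{23,47\}$ or $\{29,59\}$, which is impossible with three primes since these pairs are disjoint; your normaliser argument tacitly assumes the Sylow $47$- or $59$-subgroup is normal in $H$, and that need not hold if, say, $71$ also divides $|H|$.
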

 
Combining the results of this section we have now proved Theorem~\ref{thm:maintrans}.
\section{Intransitive actions}\label{sec:intrans}
The vast majority of research on relational complexity has focused on the transitive case. By \cite[Lemma 2.5]{GG23} each orbit of a binary action induces a transitive binary permutation group, so to classify \emph{all} binary actions we first need to build some machinery to detect which combinations of binary orbits induce non-binary actions. This was initiated in~\cite{GG23}, however the situation for the sporadic groups is more complicated, and thus demands a broader approach.
\subsection{General results}

Let $\Omega$ be a $G$-set and let $I$ be an $n$-tuple of points of $\Omega$. For $m\leq n$ we write $I_{(m)}$ to denote the $(n-1)$-tuple obtained from $I$ by removing the $m$th coordinate.
\begin{lemma}\label{lem:distincttups}
    Let $\Omega$ be a $G$-set. Suppose that $I=(\alpha_1,\dots,\alpha_n)\in \Omega^n$ is such that $G_{\alpha_s}\geq G_{\alpha_t}$ for some $1\leq s\ne t\leq n$, and that $J=(\alpha_1',\dots,\alpha_n')$, is such that there exists some $y\in G$ satisfying $\alpha_s'=\alpha_s^y$ and $\alpha_t'=\alpha_t^y$. Then $I\stb{n} J$ if and only if $I_{(s)}\stb{n-1}J_{(s)}$.
\end{lemma}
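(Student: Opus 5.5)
The forward direction is immediate: if $I\stb{n}J$ then some $g\in G$ maps $I$ to $J$ coordinatewise. In particular it maps $I_{(s)}$ to $J_{(s)}$, so $I_{(s)}\stb{n-1}J_{(s)}$. All the content is therefore in the converse.

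For the converse, take $g\in G$ with $(I_{(s)})^g=J_{(s)}$. Then $\alpha_i^g=\alpha_i'$ for every $i\neq s$, and in particular $\alpha_t^g=\alpha_t'$. The plan is to show that this same $g$ also sends $\alpha_s$ to $\alpha_s'$, so that $I^g=J$ and $I\stb{n}J$.

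We are given $y\in G$ with $\alpha_t^y=\alpha_t'$ and $\alpha_s^y=\alpha_s'$. Since $\alpha_t^g=\alpha_t'=\alpha_t^y$, the element $gy^{-1}$ fixes $\alpha_t$, so $gy^{-1}\in G_{\alpha_t}\leq G_{\alpha_s}$ by hypothesis. Hence $gy^{-1}$ also fixes $\alpha_s$, and therefore
$$\alpha_s^g=\alpha_s^{(gy^{-1})y}=\alpha_s^y=\alpha_s'.$$
Combined with $\alpha_i^g=\alpha_i'$ for $i\neq s$, this gives $I^g=J$.

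The only step needing care is the order of composition: with right actions, $gy^{-1}$ applied to $\alpha_t$ first applies $g$ and then $y^{-1}$, which yields $\alpha_t$ as required. There is no substantive obstacle. The key point is that the stabiliser containment $G_{\alpha_t}\le G_{\alpha_s}$ means $\alpha_s$ is determined by $\alpha_t$ within the orbit of the pair $(\alpha_s,\alpha_t)$, and the hypothesis on $y$ records exactly that orbit.
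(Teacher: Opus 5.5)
Your proposal is correct and follows the same argument as the paper: take $x$ with $I_{(s)}^x=J_{(s)}$, observe that it agrees with $y$ on $\alpha_t$, and use $G_{\alpha_t}\le G_{\alpha_s}$ to conclude that it also agrees with $y$ on $\alpha_s$. The only cosmetic difference is that the paper works with $yx^{-1}$ where you use its inverse $gy^{-1}$, and both lie in $G_{\alpha_t}$.
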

\begin{proof}
    The forward implication is clear. Suppose that $I_{(s)}\stb{n-1} J_{(s)}$. Then there is some $x\in G$ such that $I_{(s)}^x=J_{(s)}$, and thus $\alpha_t^x=\alpha_t'=\alpha_t^y$. That is $yx^{-1}\in G_{\alpha_t}\leq G_{\alpha_s}$. But then $\alpha_s^x=(\alpha_s^{yx^{-1}})^x=\alpha_s^y$, and so $I\stb{n} J.$
\end{proof}

For convenience, we shall now restate a few lemmas from~\cite{GG23}.

\begin{lemma}[{\cite[Lemma 2.5]{GG23}}]\label{GGlem2.5}
Let $A$ and $B$ be $G$-sets with $B\subseteq A$. Then $\mathrm{RC}(G,B)\leq\mathrm{RC}(G,A)$.
\end{lemma}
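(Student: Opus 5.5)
The argument is a direct unwinding of the definition of $\stb{k}$ together with a restriction of tuples. Set $k=\mathrm{RC}(G,A)$. I will show that for every $n\geq k$ and all $I,J\in B^n$, the hypothesis $I\stb{k}J$ with respect to the action on $B$ forces $I\stb{n}J$ with respect to the action on $B$. Since $\mathrm{RC}(G,B)$ is the least integer $k'\geq 2$ with this property, this gives $\mathrm{RC}(G,B)\leq k$.

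\textbf{The comparison step.} Because $B\subseteq A$ is a $G$-set, the action of $G$ on $B$ is the restriction of its action on $A$. Hence for any index set $S$ and any $g\in G$, the statement $(\alpha_i)_{i\in S}^g=(\beta_i)_{i\in S}$ reads the same whether we view the points in $B$ or in $A$.

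Consequently, for tuples $I,J\in B^n\subseteq A^n$ and any $m\leq n$, the relation $I\stb{m}J$ computed in $B$ holds exactly when it holds computed in $A$. The definition of $\stb{m}$ only asks for group elements carrying sub-tuples of $I$ to the corresponding sub-tuples of $J$, and these sub-tuples lie in $B$ in either interpretation.

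\textbf{Finishing.} Let $n\geq k$ and suppose $I,J\in B^n$ satisfy $I\stb{k}J$ in $B$.
\begin{enumerate}[(i)]
\item By the comparison step, $I\stb{k}J$ also holds in $A$.
\item Since $\mathrm{RC}(G,A)=k$, it follows that $I\stb{n}J$ in $A$.
\item Applying the comparison step again, $I\stb{n}J$ holds in $B$.
\end{enumerate}
Thus $k$ satisfies the defining property for $B$. We also have $k\geq 2$, so $k$ lies in the range over which the minimum defining $\mathrm{RC}(G,B)$ is taken, and therefore $\mathrm{RC}(G,B)\leq k$.

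There is no genuine obstacle here. The only point requiring care is that the equivalence in the comparison step is used in both directions, and this holds because $B$ is $G$-invariant.
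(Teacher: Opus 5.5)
Your proof is correct. Since $B$ is a $G$-invariant subset of $A$, each relation $\stb{m}$ on $B^n$ is the restriction of the corresponding relation on $A^n$, so $k=\mathrm{RC}(G,A)$ satisfies the defining property for $B$, and hence $\mathrm{RC}(G,B)\leq k$. The paper gives no proof of its own here and simply quotes the result from \cite{GG23}; your direct unwinding of the definition is the natural argument.
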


\begin{lemma}[{\cite[Lemma 2.6]{GG23}}]\label{GGlem2.6}
Let $A$, $B$, and $C$ be disjoint $G$-sets. If $B$ and $C$ are isomorphic as $G$-sets, then $\mathrm{RC}(G,A\sqcup B\sqcup C)=\mathrm{RC}(G,A\sqcup B).$
\end{lemma}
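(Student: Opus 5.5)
The inequality $\mathrm{RC}(G,A\sqcup B)\leq \mathrm{RC}(G,A\sqcup B\sqcup C)$ is immediate from Lemma~\ref{GGlem2.5}, since $A\sqcup B$ is a sub-$G$-set of $A\sqcup B\sqcup C$. So the plan is to prove the reverse inequality by ``folding'' the copy $C$ onto $B$ and pulling the resulting equivalence back.

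Set $k:=\mathrm{RC}(G,A\sqcup B)\geq 2$ and fix a $G$-set isomorphism $\varphi\colon C\to B$, so that $\varphi(\gamma^g)=\varphi(\gamma)^g$ for all $\gamma\in C$ and $g\in G$. Define $\psi\colon A\sqcup B\sqcup C\to A\sqcup B$ to be the identity on $A\sqcup B$ and $\varphi$ on $C$; then $\psi$ is a $G$-equivariant surjection. Let $n\geq k$ and let $I=(\alpha_1,\dots,\alpha_n)$ and $J=(\beta_1,\dots,\beta_n)$ be tuples in $(A\sqcup B\sqcup C)^n$ with $I\stb{k}J$. The first step is the observation that, since $k\geq 1$, each coordinate pair $\alpha_i,\beta_i$ lies in a common $G$-orbit. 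Because $A$, $B$ and $C$ are disjoint unions of $G$-orbits, it follows that $\alpha_i\in C$ if and only if $\beta_i\in C$, and likewise for $A$ and $B$.

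Next I will apply $\psi$ coordinatewise to get $\psi(I),\psi(J)\in(A\sqcup B)^n$. By equivariance, any $g$ witnessing $I\stb{k}J$ on a $k$-subset $S$ of coordinates also witnesses it for $\psi(I)$ and $\psi(J)$ on $S$. Hence $\psi(I)\stb{k}\psi(J)$, and since $k=\mathrm{RC}(G,A\sqcup B)$, there is some $g\in G$ with $\psi(I)^g=\psi(J)$.

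The remaining step, which is the only point requiring any care, is to show that this $g$ actually maps $I$ to $J$, i.e.\ that folding has lost no information. For coordinates with $\alpha_i\in A\sqcup B$ this is immediate, since $\psi$ is the identity there and, by the first observation, $\beta_i\in A\sqcup B$ as well. For coordinates with $\alpha_i\in C$ we also have $\beta_i\in C$, so
\[
\varphi(\alpha_i^g)=\varphi(\alpha_i)^g=\varphi(\beta_i).
\]
Since $\varphi$ is injective, $\alpha_i^g=\beta_i$. Thus $I^g=J$, that is, $I\stb{n}J$. This gives $\mathrm{RC}(G,A\sqcup B\sqcup C)\leq k$ and completes the proof.
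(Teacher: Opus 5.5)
Your proof is correct and complete. The map $\psi$ is $G$-equivariant, so it carries $I\stb{k}J$ to $\psi(I)\stb{k}\psi(J)$. The resulting witness $g$ then lifts back to $I$ and $J$: each $\alpha_i,\beta_i$ (and hence $\alpha_i^g$) lie in the same one of $A$, $B$, $C$, and $\psi$ is injective on $A\sqcup B$ and on $C$ separately. The only point worth making explicit is that $\psi(I)$ may acquire repeated entries. This is harmless, because the paper's definition of $\mathrm{RC}$ quantifies over all tuples in $(A\sqcup B)^n$, not just those with distinct entries.

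The paper gives no proof of its own; it simply imports the statement as \cite[Lemma 2.6]{GG23}. Your folding argument is a natural self-contained justification, and it also supplies the matching lower bound via Lemma~\ref{GGlem2.5}.
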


\begin{lemma}[{\cite[Lemma 2.7]{GG23}}]\label{GGlem2.7}
Let $G$ act on the disjoint union of $G$-sets $A\sqcup B$ and suppose that the action on $A$ is either trivial or semi-regular. Then $\mathrm{RC}(G,A\sqcup B)=\mathrm{RC}(G, B).$
\end{lemma}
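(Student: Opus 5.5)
The statement is Lemma~\ref{GGlem2.7}: if $G$ acts on $A$ trivially or semi-regularly, then $\mathrm{RC}(G,A\sqcup B)=\mathrm{RC}(G,B)$. One inequality is free. Since $B\subseteq A\sqcup B$, Lemma~\ref{GGlem2.5} gives $\mathrm{RC}(G,B)\leq \mathrm{RC}(G,A\sqcup B)$.

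For the reverse inequality, set $k=\max\{2,\mathrm{RC}(G,B)\}$. Take $n\geq k$ and tuples $I,J\in (A\sqcup B)^n$ with $I\stb{k}J$; we must show $I\stb{n}J$. Because $k\geq 2$ and $G$ preserves $A$ and $B$, for each coordinate $i$ the points $\alpha_i$ and $\beta_i$ lie in the same part. The plan is to remove the $A$-coordinates one at a time using Lemma~\ref{lem:distincttups}, and then finish inside $B$.

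Suppose some coordinate $s$ has $\alpha_s\in A$.
\begin{enumerate}[(a)]
\item \emph{Trivial action on $A$.} Here $G_{\alpha_s}=G$ contains every stabiliser, and $\alpha_s=\beta_s$ (apply the $1$-subset condition). So for any other index $t$, an element $y$ carrying $\alpha_t$ to $\beta_t$ also carries $\alpha_s$ to $\beta_s$. Lemma~\ref{lem:distincttups} applies, so $I\stb{n}J$ if and only if $I_{(s)}\stb{n-1}J_{(s)}$.
\item \emph{Semi-regular action on $A$.} Here $G_{\alpha_s}=1$ is contained in every stabiliser. The roles are swapped: we delete some other coordinate $t$ and use $\alpha_s$ as the anchor. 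The $2$-subset condition on $\{s,t\}$ provides $y$ with $\alpha_s^y=\beta_s$ and $\alpha_t^y=\beta_t$, so Lemma~\ref{lem:distincttups} (with $G_{\alpha_t}\geq G_{\alpha_s}$) reduces to the tuple with coordinate $t$ removed.
\end{enumerate}
Moreover, $I\stb{k}J$ is inherited by subtuples, so the hypothesis survives each deletion.

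We iterate the deletions.
\begin{itemize}
\item In the trivial case, we delete $A$-coordinates until none remain, or until a single coordinate is left.
\item In the semi-regular case, we keep one $A$-coordinate $\alpha_s$ and delete every other coordinate. This leaves a $1$-tuple, which is settled by the $1$-subset condition (implied by $\stb{k}$).
\end{itemize}
Once only $B$-coordinates remain, say $m$ of them, there are two cases. If $m\geq k$, then $\mathrm{RC}(G,B)\leq k$ gives $\stb{m}$-equivalence. If $m<k$, then $\stb{k}$ already covers the whole tuple (every subset of size at most $k$ is contained in some $k$-subset).

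The one point needing care is this small-$m$ and boundary bookkeeping. We need every reduction step to start from at least two coordinates, so that Lemma~\ref{lem:distincttups} applies, and we need the definition of $\stb{k}$ for tuples shorter than $k$ to be read correctly. I expect this to be routine. With it, $\mathrm{RC}(G,A\sqcup B)\leq k$. Since relational complexity is at least $2$ by definition, $k=\mathrm{RC}(G,B)$, which gives equality.
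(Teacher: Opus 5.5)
Your proof is correct. The paper gives no proof of this lemma; it is quoted from \cite[Lemma 2.7]{GG23}, so there is no in-paper argument to compare with.

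Your use of Lemma~\ref{lem:distincttups} is legitimate: that lemma is proved from scratch just before and does not rely on Lemma~\ref{GGlem2.7}. The bookkeeping issue you flag is harmless if you carry the hypothesis in the form ``every set of at most $k$ surviving coordinates is matched by a single element of $G$''. This form follows from $I \stb{k} J$ with $n\geq k$, and it genuinely passes to subtuples, whereas $\stb{k}$ itself is only defined for tuples of length at least $k$. You already use it in this form in your final $m<k$ step.

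The iterated deletion is also more machinery than needed; the same content can be read off directly.
\begin{itemize}
\item \emph{Trivial case.} Every $A$-coordinate has $\alpha_i=\beta_i$ and is fixed by all of $G$. So any element matching the $B$-coordinates matches the whole tuple. Such an element comes from $\mathrm{RC}(G,B)\le k$, or from the at-most-$k$ condition if there are fewer than $k$ such coordinates.
\item \emph{Semi-regular case.} If some $\alpha_s\in A$, there is exactly one $g$ with $\alpha_s^g=\beta_s$. For each $t$, the $2$-subset condition on $\{s,t\}$ gives an element with $\alpha_s\mapsto\beta_s$ and $\alpha_t\mapsto\beta_t$, and this element must be $g$. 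Hence $g$ maps $I$ to $J$ outright.
\end{itemize}
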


We call a pair $G,H$ of groups \emph{comparable} if either $H\leq G$ or $G\leq H$. Observe that if $H_1$, $H_2$, and $H_3$ are subgroups with some pair of them comparable, then $H_1\cap (H_2\cdot H_3)\subseteq H_1\cap((H_1\cap H_2)\cdot H_3)$. Indeed, this is trivial when either $H_2\leq H_1$, $H_1\leq H_2$, or $H_1\leq H_3$; if $H_3\leq H_1$, and $x\in H_1\cap (H_2\cdot H_3)$ then we can write $x=h_2h_3$ with $h_2\in H_2$ and $h_3\in H_3\leq H_1$. But then $h_2=xh_3^{-1}\in H_1\cap H_2$ and thus $x\in H_1\cap((H_1\cap H_2)\cdot H_3)$. Finally, if $H_3\leq H_2$ then and $ H_1\cap (H_2\cdot H_3)=H_1\cap H_2\subseteq H_1\cap((H_1\cap H_2)\cdot H_3),$ and similarly if $H_2\leq H_3$. These observations will save us some case checking down the road, and are used immediately in the statement of the following key lemma.

\begin{lemma}\label{lem:intransconditions2}
    Let $A$ and $B$ be $G$-sets satisfying the following restrictions:\begin{enumerate}
        \item[1.] $\mathrm{RC}(G,A)=2$ and $\mathrm{RC}(G,B)=2$;
        \item[2.] if $H_1, H_2,$ and $H_3$ are point stabilisers of points in $A$, $A\sqcup B$, and $B$, respectively such that no two $H_i$ are comparable, then $H_1\cap (H_2\cdot H_3)\subseteq H_1\cap((H_1\cap H_2)\cdot H_3)$;
        \item[3.] there exists a subgroup $N$ contained in every $A$-stabiliser such that $H_1\cap H_2=N$ for every pair $H_1,H_2$ of incomparable $A$-stabilisers, and if $L$ is contained in the intersection of two incomparable $B$-stabilisers then $(L\cdot H_1)\cap (L\cdot H_2)\subseteq L\cdot N$; and
        \item[4.] whenever $H$ is an $A$-stabiliser and $K_1,K_2,K_3$ are incomparable $B$-stabilisers         then for any $L_1\leq K_1\cap K_2$ and $L_2\leq K_1\cap K_3$ we have $(L_1\cdot H) \cap (L_2\cdot H)\subseteq (L_1\cap L_2)\cdot H$.
    \end{enumerate}
    Then $\mathrm{RC}(G,A\sqcup B)=2$.
\end{lemma}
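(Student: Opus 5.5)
The plan is to check directly that, for every $n\ge 3$ and all $I,J\in(A\sqcup B)^n$ with $I\stb{2}J$, we have $I\stb{n}J$. I would argue by induction on $n$: assume $I\stb{n-1}J$, which the inductive hypothesis gives from $I\stb{2}J$, and deduce $I\stb{n}J$.

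First I would make some normalisations. If two coordinates of $I$ have comparable stabilisers, say $G_{\alpha_s}\ge G_{\alpha_t}$, then Lemma~\ref{lem:distincttups} applies. The pair $(s,t)$ is mapped correctly by some $y$, by $2$-equivalence. So $I\stb{n}J$ follows from $I_{(s)}\stb{n-1}J_{(s)}$, which holds by induction. Hence I may assume that the stabilisers of the coordinates of $I$ are pairwise incomparable. Write $I=(I_A,I_B)$ for the coordinates lying in $A$ and in $B$ respectively. If one of these parts is empty, we are done by hypothesis 1. Otherwise hypothesis 1 gives $x,y\in G$ with $I_A^x=J_A$ and $I_B^y=J_B$. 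We then need an element of $G_{(I_A)}x\cap G_{(I_B)}y$, which amounts to showing $xy^{-1}\in G_{(I_A)}\cdot G_{(I_B)}$, after replacing $J$ by $J^{y^{-1}}$ so that $J_B=I_B$. The $(n-1)$-equivalences obtained by deleting single coordinates give elements $g_i$ with $I_{(i)}^{g_i}=J_{(i)}$. Each quotient $g_i x^{-1}$ or $g_i y^{-1}$ then lies in a product of a pointwise stabiliser with a single point stabiliser.

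The case $n=3$ with points in both $A$ and $B$ is handled separately. Here the $2$-versus-$3$ obstruction is exactly Statement 3 of Lemma~\ref{lem:basiccriteria}: a triple $H_1,H_2,H_3$ of stabilisers, one from $A$, one from $A\sqcup B$ and one from $B$ (suitably ordered). The comparable cases are covered by the observation preceding the lemma, and the incomparable case is hypothesis 2.

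For $n\ge 4$ I would split according to $k=|I_A|$.

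\emph{Case $k\ge 2$.} Let $\alpha_1,\alpha_2$ be $A$-coordinates with stabilisers $H_1,H_2$. By hypothesis 3, $G_{(I_A)}=N$. When $|I_B|\ge 2$, set $L=G_{(I_B)}$; this lies in the intersection of two incomparable $B$-stabilisers. Deleting $\alpha_1$ and using hypothesis 1 on the remaining $A$-points gives $xy^{-1}\in L\cdot H_1$; deleting $\alpha_2$ gives $xy^{-1}\in L\cdot H_2$, up to the appropriate normalisation. Hypothesis 3 then yields $xy^{-1}\in L\cdot N=G_{(I_B)}G_{(I_A)}$, which is what we need.

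\emph{Case $k=1$.} Let $H$ be the stabiliser of the single $A$-point and let $K_1,K_2,K_3$ be three $B$-stabilisers. Deleting the second and third $B$-points gives $xy^{-1}\in L_1H$ and $xy^{-1}\in L_2H$, where $L_1,L_2$ are the pointwise stabilisers of the $B$-points other than the deleted one. We have $L_1\le K_1\cap K_2$ and $L_2\le K_1\cap K_3$, and $L_1\cap L_2=G_{(I_B)}$. Hypothesis 4 then gives $xy^{-1}\in G_{(I_B)}H$, as required.

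The main obstacle is bookkeeping rather than ideas. One has to make sure the elements produced by the deleted-coordinate equivalences can all be normalised against the same $x$ and $y$, so that the two memberships really concern the same coset. One must also handle the boundary configurations, such as $k\ge2$ with $|I_B|=1$, or small $n$, where $L$ is not an intersection of two $B$-stabilisers. For those I would reduce to the triple case via Lemma~\ref{lem:distincttups} together with hypothesis 2.
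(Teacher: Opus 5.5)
Your argument is correct in substance but is organised differently from the paper's. The paper does not induct on $n$. After the reduction via Lemma~\ref{lem:distincttups}, it normalises $J=(\alpha_1,\dots,\alpha_s,\beta_1^y,\dots,\beta_t^y)$. From each triple $(\alpha_1,\beta_1,\beta_i)$ and hypothesis~2 it extracts by hand that $y\in(G_{\beta_1}\cap G_{\beta_i})G_{\alpha_1}$. It then iterates hypothesis~4 over $i$, does the same for every $\alpha_j$, and closes with hypothesis~3. For $t=1$, the triple $(\alpha_1,\alpha_2,\beta_1)$ and hypothesis~2 directly give an element of $N$ that works.

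You instead feed the full $(n-1)$-equivalences into each step, so each inductive step invokes at most one hypothesis, once:
hypothesis~2 only at $n=3$, hypothesis~3 only when $|I_A|=2$, and hypothesis~4 only when $|I_A|=1$. For $|I_A|\ge 3$ no hypothesis is needed: the element obtained by deleting $\alpha_1$ already maps $I$ to $J$, because $G_{(I_A\setminus\{\alpha_1\})}=N$ fixes $\alpha_1$. This also settles your boundary case $|I_B|=1$, $n\ge 4$. Note that Lemma~\ref{lem:distincttups} is not available there, since you have already made all stabilisers incomparable.

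Your route makes it transparent which hypothesis governs which configuration. The paper's route needs only $3$-point data and does the triple computation explicitly. You instead read Lemma~\ref{lem:basiccriteria} per triple, which is fine provided you order the triple with the moved point in $B$ and a fixed point in $A$, so that hypothesis~2 applies.

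One thing to fix: your cosets are inverted.
\begin{itemize}
\item Deleting $\alpha_j$ gives $xy^{-1}\in G_{(I_A\setminus\{\alpha_j\})}G_{(I_B)}$.
\item Deleting $\beta_j$ gives $xy^{-1}\in G_{(I_A)}G_{(I_B\setminus\{\beta_j\})}$.
\end{itemize}
So it is $yx^{-1}$ that lies in the products $L\cdot H$ to which hypotheses~3 and~4 apply. Also, for $|I_A|=2$, deleting $\alpha_1$ yields $L\cdot H_2$, not $L\cdot H_1$. As written, you end with $xy^{-1}\in G_{(I_B)}G_{(I_A)}$, which is not the set you said you need. Working with $yx^{-1}$ throughout, the conclusion $yx^{-1}\in G_{(I_B)}G_{(I_A)}$ inverts to exactly $xy^{-1}\in G_{(I_A)}G_{(I_B)}$.
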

\begin{proof}
Suppose $$I=(\alpha_1,\alpha_2,\dots,\alpha_s,\beta_1,\beta_2,\dots,\beta_t)\stb{2} (\alpha_1',\alpha_2',\dots,\alpha_s',\beta_1',\beta_2',\dots,\beta_t')=J.$$
    We may assume without loss of generality that $\alpha_i\in A$ and $\beta_i\in B$ for all $i$. Moreover, since the action of $G$ on $A$ is binary, by replacing $I$ with $I^g$ for an appropriate $g$, we may assume that $\alpha_i=\alpha_i'$ for all $i$. Similarly we may assume that there is some $y\in G$ such that $J=(\alpha_1,\dots,\alpha_s,\beta_1^y,\dots,\beta_t^y)$. Finally, we may assume that no two of the points in $I$ from $A$ have comparable stabilisers and similarly that no two of the points in $I$ from $B$ have comparable stabilisers by Lemma~\ref{lem:distincttups}.
    
    Suppose first that $t>1$. Consider the subtuples $\widetilde{I}:=(\alpha_1,\beta_1,\beta_2)$, and $\widetilde{J}:=(\alpha_1,\beta_1^y,\beta_2^y)$. Since $I\stb{2}J$, we deduce that $\widetilde{I}\stb{2}\widetilde{J}$, and so there exists $g\in H_1:=G_{\alpha_1}$ such that $\beta_1^g=\beta_1^y$. Additionally, $\widetilde{I}\stb{2}\widetilde{J}^{g^{-1}}$, so there is an $h_1\in H_1$ such that $\beta_2^{h_1}=\beta_2^{yg^{-1}}$, and an $h_2\in H_2:=G_{\beta_1}$ such that $\beta_2^{h_2}=\beta_2^{yg^{-1}}$. Therefore,  $h_3:=h_2h_1^{-1}\in H_3:=G_{\beta_2}$, and so $h_1^{-1}=h_2^{-1}h_3\in H_1\cap(H_2\cdot H_3)$. 
        Now, Condition 2 implies that $h_1^{-1}=h_2'h_3'$ for some $h_2'\in H_1\cap H_2$ and $h_3'\in H_3$. Finally, $h_2^{-1}h_3=h_2'h_3'$, so $h_2h_2'\in H_3$, whence $\beta_2^{h_2'^{-1}}=\beta_2^{h_2}=\beta_2^{yg^{-1}}$, and so by setting $x=h_2'^{-1}g$ we have that $\widetilde{I}^{x}=\widetilde{J}$. Thus $yx^{-1}\in G_{\beta_1}\cap G_{\beta_2}$, and so $y\in(G_{\beta_1}\cap G_{\beta_2})\cdot G_{\alpha_1}$. Repeating the above argument with each $\beta_i\ne \beta_1$ in place of $\beta_2$ we deduce that $y\in  (G_{\beta_1}\cap G_{\beta_i})\cdot G_{\alpha_1}$ for each $i$. Thus, by applying Condition 4 inductively, we deduce that $y \in (\bigcap_{i=1}^t G_{\beta_i})\cdot G_{\alpha_1}$. Arguing similarly for each $\alpha_i$, we get $y\in (\bigcap_{i=1}^t G_{\beta_i})\cdot G_{\alpha_i}$ and so
    $$y\in \left(\bigcap_{i=1}^t G_{\beta_i}\right)\cdot \left(\bigcap_{i=1}^s G_{\alpha_i}\right)$$ by Condition 3. Therefore, we can find $a\in \left(\bigcap_{i=1}^s G_{\alpha_i}\right)$ and $b\in (\bigcap_{i=1}^t G_{\beta_i})$,  such that $y=ba$. But then $\beta_i^{a}=\beta_i^{ba}=\beta_i^{y}$ and $\alpha_i^a=\alpha_i$, and so $I^a=J$, as desired.
    
    Now, suppose that $t=1$---we may assume that $s>1$ for otherwise there is nothing to show. Consider the subtuples $\bar{I}=(\alpha_1,\alpha_2,\beta_1)$ and $\bar{J}=(\alpha_1,\alpha_2,\beta_1^y)$. Since $\bar{I}\stb{2}\bar{J}$, we may find $g_1\in G_{\alpha_1}$ and $g_2\in G_{\alpha_2}$ satisfying $\beta_1^{g_i}=\beta_1^y$. Thus $g_2g_1^{-1}\in G_{\beta_1}$, and so $g_1^{-1}\in G_{\alpha_1}\cap(G_{\alpha_2}\cdot G_{\beta_1})$.     Condition 2 now implies that there exist $g_2'\in (G_{\alpha_1}\cap G_{\alpha_2})=N$ and $g_3'\in G_{\beta_1}$ such that $g_1^{-1}=g_2'g_3'$. But then $\beta_1^{g_2'^{-1}}=\beta_1^{g_3'g_1}=\beta_1^y$. But $N\leq G_{\alpha_i}$ for all $i$, thus $I^{g_2'^{-1}}=J$, hence the result follows in this case.
\end{proof}
The conditions of Lemma~\ref{lem:intransconditions2} may appear complicated and contrived, but in practice they are relatively straightforward to check. In many situations we will be able to use the following slightly simpler corollary.

\begin{cor}\label{cor:intransconditions}
    Let $A$ and $B$ be $G$-sets satisfying the following restrictions:\begin{enumerate}
        \item[1.] $\mathrm{RC}(G,A)=2$ and $\mathrm{RC}(G,B)=2$;
        \item[2.] If $H_1, H_2,$ and $H_3$ are point stabilisers of points in $A$, $A\sqcup B$, and $B$, respectively such that no two $H_i$ are comparable, then $H_1\cap (H_2\cdot H_3)\subseteq H_1\cap((H_1\cap H_2)\cdot H_3)$; and
        \item[3.] There exists a subgroup $N$ contained in every $A$-stabiliser such that $H_1\cap H_2=N$ for every pair $H_1,H_2$ of incomparable $A$-stabilisers, and $K_1\cap K_2\leq N$ for every pair of incomparable $B$-stabilisers. 
    \end{enumerate}
    Then $\mathrm{RC}(G,A\sqcup B)=2$.
\end{cor}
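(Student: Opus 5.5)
The plan is to deduce the corollary directly from Lemma~\ref{lem:intransconditions2}. Conditions~1 and~2 are identical in the two statements, and the first half of Condition~3 (the subgroup $N$ contained in every $A$-stabiliser with $H_1\cap H_2=N$ for incomparable $A$-stabilisers) is also the same. So two things remain. First, the second half of Condition~3 of the lemma: if $L\leq K_1\cap K_2$ for incomparable $B$-stabilisers $K_1,K_2$, then $(L\cdot H_1)\cap(L\cdot H_2)\subseteq L\cdot N$ for $A$-stabilisers $H_1,H_2$. Second, Condition~4 of the lemma.

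For the second half of Condition~3, our hypothesis gives $L\leq K_1\cap K_2\leq N$. Since $N\leq H_i$ for each $i$, we get $L\cdot H_i=H_i$. Hence
\[
(L\cdot H_1)\cap(L\cdot H_2)=H_1\cap H_2 .
\]
If $H_1,H_2$ are incomparable, this equals $N=L\cdot N$ and we are done. The comparable case needs a brief check: if, say, $H_1\leq H_2$, then the intersection is $H_1$, which need not lie in $N$. However, the lemma only applies this condition to stabilisers of points in the reduced tuple, which were arranged to be pairwise incomparable, so only the incomparable case is needed. Alternatively, one reads the condition as quantified over incomparable $H_1,H_2$, which is how it is used in the lemma's proof.

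For Condition~4, let $H$ be an $A$-stabiliser and $K_1,K_2,K_3$ incomparable $B$-stabilisers, with $L_1\leq K_1\cap K_2$ and $L_2\leq K_1\cap K_3$. The hypothesis gives $L_1,L_2\leq N\leq H$. Therefore $L_1\cdot H=H=L_2\cdot H$, so
\[
(L_1\cdot H)\cap(L_2\cdot H)=H\subseteq (L_1\cap L_2)\cdot H .
\]
This holds trivially, since $1\in L_1\cap L_2$.

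All four conditions of Lemma~\ref{lem:intransconditions2} therefore hold, and $\mathrm{RC}(G,A\sqcup B)=2$. The only point needing care is the comparability caveat in Condition~3. I would handle it by checking that the proof of Lemma~\ref{lem:intransconditions2} invokes the condition only for the pairwise incomparable stabilisers $G_{\alpha_i}$ produced there via Lemma~\ref{lem:distincttups}.
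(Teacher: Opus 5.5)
Your proposal is correct and is essentially the paper's own proof. Both reduce to Lemma~\ref{lem:intransconditions2} and note that any $L$ inside an intersection of incomparable $B$-stabilisers lies in $N\leq H_i$, so $L\cdot H_i=H_i$; this yields Condition~3 as $H_1\cap H_2=N=L\cdot N$ and Condition~4 as $(L_1\cdot H)\cap(L_2\cdot H)=H=(L_1\cap L_2)\cdot H$. Your comparability caveat also matches the paper, which likewise reads the second clause of Condition~3 as applying only to incomparable $A$-stabilisers $H_1,H_2$.
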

\begin{proof} 
We must check that the conditions of Lemma~\ref{lem:intransconditions2} hold; of course, Conditions 1 and 2 are the same, so we need only check \ref{lem:intransconditions2}.3 and \ref{lem:intransconditions2}.4.

Suppose $L$ is contained in the intersection of two incomparable $B$-stabilisers. Then $L\leq N$ by \ref{cor:intransconditions}.3, and so $L\leq H_1\cap H_2$ for any two incomparable $A$-stabilisers. Consequently, $(L\cdot H_1)\cap (L\cdot H_2)=H_1\cap H_2=N=L\cdot N$ and so \ref{lem:intransconditions2}.3 holds.

Finally, suppose that $H$ is an $A$ stabiliser, $K_1,K_2,K_3$ are incomparable $B$-stabilisers, and that $L_1\leq K_1\cap K_2$ and $L_2\leq K_1\cap K_3$. Then $L_1,L_2\leq N\leq H$, and so $$(L_1\cdot H)\cap (L_2\cdot H)=H\cap H=H=(L_1\cap L_2)\cdot H.$$ Therefore, all conditions of Lemma~\ref{lem:intransconditions2} are satisfied, hence the result.
           \end{proof}
\begin{cor}\label{cor:intranscharcalc}
    Suppose that $H$ and $K$ are non-conjugate prime order subgroups of $G$ such that the actions of $G$ on $A=(G:H)$ and $B=(G:K)$ are binary. Suppose that there are $G$-classes $\mathcal{C}_H$ and $\mathcal{C}_K$ such that the non-trivial elements of $H$ (resp. $K$) lie in $\mathcal{C}_H$ (resp. $\C_K$). If $n_G(\mathcal{C}_H,\C_K,\C_H)=n_G(\mathcal{C}_K,\C_K,\C_H)=0$ then the action of $G$ on $A\sqcup B$ is binary.
\end{cor}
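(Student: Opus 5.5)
We apply Corollary~\ref{cor:intransconditions} with $A=(G:H)$ and $B=(G:K)$, and check its three conditions in turn.

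Condition 1 is our hypothesis. For Condition 3 we take $N=1$. Distinct conjugates of $H$ have prime order, so they meet trivially, and the same holds for conjugates of $K$. Two stabilisers of prime order are comparable exactly when they are equal. Hence any two incomparable $A$-stabilisers intersect in $1=N$, and any two incomparable $B$-stabilisers intersect in $1\leq N$. Also $N=1$ lies in every $A$-stabiliser.

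Condition 2 carries the real content. Let $H_1=H^a$, let $H_3=K^b$, and let $H_2$ be a point stabiliser of $A\sqcup B$, with no two of these comparable. Since $H$ and $K$ are non-conjugate of prime order, distinct stabilisers intersect trivially. So the right-hand side $H_1\cap((H_1\cap H_2)\cdot H_3)$ equals $H_1\cap H_3=1$. We must therefore show $H_1\cap(H_2\cdot H_3)=1$.

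Suppose instead that $1\neq h_1=h_2h_3$ with $h_2\in H_2$ and $h_3\in H_3$. If $h_3=1$ then $h_1=h_2\in H_1\cap H_2=1$, a contradiction. If $h_2=1$ then $h_1=h_3\in H_1\cap H_3=1$, again a contradiction. So all three elements are non-trivial. Then $h_1\in\C_H$ and $h_3\in\C_K$, and $h_2$ lies in $\C_H$ or in $\C_K$ according as $H_2$ is a conjugate of $H$ or of $K$. Rewriting $h_1=h_2h_3$ expresses the fixed element $h_1\in\C_H$ as a product $xy$ with $x\in\C_H$, $y\in\C_K$, or with $x\in\C_K$, $y\in\C_K$. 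This forces $n_G(\C_H,\C_K,\C_H)>0$ or $n_G(\C_K,\C_K,\C_H)>0$, contrary to hypothesis. Hence Condition 2 holds, and Corollary~\ref{cor:intransconditions} gives that the action on $A\sqcup B$ is binary.

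The main care needed is in the bookkeeping of Condition 2. The condition is stated with a specific ordering, so we must confirm that the only triples to check are (an $A$-stabiliser, any stabiliser, a $B$-stabiliser), with $h_1$ coming from $H_1$. This is what makes exactly the two coefficients with third class $\C_H$ relevant. The argument above covers both possibilities for $H_2$, so this step goes through.
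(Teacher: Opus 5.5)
Your proposal is correct and follows the paper's proof essentially step for step. Conditions 1 and 3 of Corollary~\ref{cor:intransconditions} hold trivially because all stabilisers have prime order, and Condition 2 reduces to showing $H_1\cap(H_2\cdot H_3)=1$: a factorisation $h_1=h_2h_3$ with all factors nontrivial would be counted by $n_G(\mathcal{C}_H,\mathcal{C}_K,\mathcal{C}_H)$ or $n_G(\mathcal{C}_K,\mathcal{C}_K,\mathcal{C}_H)$, and your case split on which factor is trivial only spells out what the paper compresses into one line.
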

\begin{proof}
 Conditions 1 and 3 of Corollary~\ref{cor:intransconditions} clearly hold since all stabilisers have prime order. Let $L_1$, $L_2$, and $L_3$ be distinct stabilisers for $A$, $A\sqcup B$, and $B$, respectively. Suppose $1\ne \ell\in L_1\cap(L_2\cdot L_3)$ so that $\ell=\ell_2\ell_3$ with $\ell\in L_1$, $\ell_2\in L_2$ and $\ell_3\in L_3$. Since $n_G(\mathcal{C}_H,\C_K,\C_H)=n_G(\mathcal{C}_K,\C_K,\C_H)=0$ and $L_1\cap L_3=1$, we deduce that $\ell_3=1$ and $\ell_2=\ell$, but then $L_1=L_2$, a contradiction. Therefore, $L_1\cap(L_2\cdot L_3)=1$ so all conditions of Corollary~\ref{cor:intransconditions} hold, hence the result.
\end{proof}

                  Given a $G$-set, $\Omega$, recall that $\mathcal{S}(\Omega)$ is the set of all conjugacy classes of stabilisers of points of $\Omega$.
\begin{lemma}\label{lem:centralstabs}
    Let $\Omega$ be a $G$-set. Suppose $N\trianglelefteq G$ is a cyclic group. If $\mathcal{S}(\Omega)$ consists of (conjugacy classes of) subgroups only of $N$ then $\mathrm{RC}(G,\Omega)=2$.
\end{lemma}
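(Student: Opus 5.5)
We prove the statement directly from the definition of $\stb{2}$ and $\stb{n}$. The key structural fact is that every subgroup of the cyclic group $N$ is characteristic in $N$, hence normal in $G$. So if every point stabiliser is conjugate to a subgroup of $N$, then each stabiliser $G_\alpha$ is a normal subgroup of $G$ contained in $N$. Moreover, since $N$ has a unique subgroup of each order dividing $|N|$, any two stabilisers are comparable: $G_\alpha\le G_\beta$ or $G_\beta\le G_\alpha$, according to which has smaller order. (Divisibility is not needed; both are subgroups of the same cyclic group, so the one whose order divides the other's is contained in it, and in any case $G_\alpha\cap G_\beta$ is the subgroup of order $\gcd$.) We aim to show that the stabilisers are in fact totally ordered by inclusion.

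Here is the precise claim. Two subgroups $A,B$ of a cyclic group satisfy $A\le B$ if and only if $|A|$ divides $|B|$, and these need not be comparable in general. For example, in $C_6$ the subgroups $C_2$ and $C_3$ are incomparable. So we should not rely on comparability. Instead we argue directly.

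Take $I=(\alpha_1,\dots,\alpha_n)\stb{2}J=(\beta_1,\dots,\beta_n)$. Since $G_{\alpha_i}\trianglelefteq G$, the stabiliser is constant on each $G$-orbit, and $I\stb{2}J$ forces each $\beta_i$ to lie in the orbit of $\alpha_i$.

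Fix $g_1$ with $\alpha_1^{g_1}=\beta_1$ and replace $J$ by $J^{g_1^{-1}}$, so that $\beta_1=\alpha_1$. For each $i$ there is $x_i$ with $\alpha_i^{x_i}=\beta_i$, and by the $2$-condition on the coordinates $\{1,i\}$ we may choose $x_i\in G_{\alpha_1}$.

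The natural approach is to use the abelian quotient structure. Put $K=\bigcap_i G_{\alpha_i}$, which is normal in $G$. We pass to $\overline{G}=G/K$, noting that $\overline{G}$ need not be abelian, so this alone is not enough. The cleaner route is induction on $n$ using Lemma~\ref{lem:distincttups}-type reasoning together with the observation that $\stb{2}$ already gives us, for each pair $(i,j)$, an element $x_{ij}$ mapping $(\alpha_i,\alpha_j)$ to $(\beta_i,\beta_j)$. The coset of elements sending $\alpha_i\mapsto\beta_i$ is $G_{\alpha_i}x_i$, and we need $\bigcap_i G_{\alpha_i}x_i\neq\emptyset$ given that these cosets pairwise intersect. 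Since every $G_{\alpha_i}$ is a normal subgroup of the cyclic group $N$, we work modulo $N$: all the cosets lie in cosets of $N$, and pairwise intersection means the $x_i$ all lie in a common $N$-coset $Nx$. Writing $x_i=n_ix$ with $n_i\in N$, the problem reduces to the cyclic group $N$. There we need an $m\in N$ with $m\equiv n_i \pmod{G_{\alpha_i}}$ for all $i$, given pairwise compatibility $n_i\equiv n_j \pmod{G_{\alpha_i}G_{\alpha_j}}$.

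This last step is precisely the Chinese Remainder Theorem in its general form for $\mathbb{Z}/|N|$: a system of congruences $m\equiv a_i \pmod{d_i}$ is solvable if and only if it is pairwise solvable, that is, $a_i\equiv a_j \pmod{\gcd(d_i,d_j)}$. Here $G_{\alpha_i}G_{\alpha_j}$ corresponds to the subgroup of $N$ indexed by $\gcd$.

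The main obstacle is making the reduction to $N$ rigorous. We must check that the cosets $G_{\alpha_i}x_i$ being pairwise intersecting lets us choose all the $x_i$ in one coset $Nx$. This holds because each $G_{\alpha_i}\le N$, so $G_{\alpha_i}x_i\subseteq Nx_i$, and pairwise intersection forces $Nx_i=Nx_j$. Once that is done, the translation of subgroups of cyclic $N$ to divisors of $|N|$ makes the CRT step immediate, giving an element of $\bigcap_i G_{\alpha_i}x_i$ and hence $I\stb{n}J$.
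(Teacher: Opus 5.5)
Your argument is correct, but it takes a different route from the paper. The paper runs the lemma through its gluing criterion, Lemma~\ref{lem:intransconditions2}. Each single orbit is binary because every subgroup of $N$ is normal in $G$. Orbits are then added one at a time, and Conditions 2--4 are checked using the distributivity of the subgroup lattice of $N$.

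You bypass that machinery entirely, in four steps:
\begin{enumerate}[(a)]
\item $I\stb{n}J$ holds exactly when the right cosets $G_{\alpha_i}x_i$ share a common element.
\item $I\stb{2}J$ says precisely that these cosets meet pairwise.
\item Every actual stabiliser $G_{\alpha_i}$ lies in $N$, so all the cosets lie in the single coset $Nx_1$, and the problem moves into $N$.
\item In $N\cong\mathbb{Z}/m$, pairwise-intersecting cosets have a common point by the generalised Chinese Remainder Theorem.
\end{enumerate}

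The underlying arithmetic is the same in both proofs. The generalised CRT is essentially the distributive law $\gcd(\lcm(a,b),c)=\lcm(\gcd(a,c),\gcd(b,c))$, which is the paper's lattice distributivity in disguise. What your version buys: it treats all $n$-tuples at once, avoids the induction over orbits and the case-checking of the technical conditions, and makes clear that the only thing really used is that every point stabiliser lies in one fixed cyclic subgroup. What the paper's version buys: it fits the general gluing framework it reuses later, for example with stabilisers $\langle g,z\rangle$, where no such clean arithmetic shortcut is available.

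In a final write-up, delete your opening assertion that any two stabilisers are comparable; it is false, as you yourself note with $C_2,C_3\le C_6$. Also drop the unused preliminaries: the normalisation $\beta_1=\alpha_1$, the remark about $G/K$, and the mention of induction on $n$.
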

\begin{proof}
    Since $N$ is cyclic, each of its subgroups are normal in $G$ and so the action on the cosets of $H\leq N$ is binary. Now, the subgroup lattice of a cyclic group is distributive, so given three subgroups $H_1,H_2,H_3\leq N$ we have that \begin{align*}H_1\cap(H_2\cdot H_3)&=(H_1\cap H_2)\cdot(H_1\cap H_3)\\&=((H_1\cap H_2)\cdot H_1)\cap((H_1\cap H_2)\cdot H_3)\\&=H_1\cap ((H_1\cap H_2)\cdot H_3).\end{align*} The distributive properties also show that the inclusions of Conditions 3 and 4 of Lemma~\ref{lem:intransconditions2} always hold for subgroups of $N$. Therefore, arguing inductively (at each stage setting the new $B$ to be the union of the previous $A$ and $B$, and setting the new $A$ to be any new subgroup of $N$), all conditions of Lemma~\ref{lem:intransconditions2} are satisfied, hence the result.
                    \end{proof}
\begin{rk}
For us, Lemma~\ref{lem:centralstabs} will only be used with the stronger assumption that $N$ is a central subgroup of $G$.
\end{rk}
\begin{lemma}\label{lem:cycstabs}
    Suppose that the action of $G$ on $\Omega$ is binary, that $z\in Z(G)$ and that $g\in G\setminus Z(G)$ is of prime order $p$ such that $\langle g\rangle^G\geq 2$ and $g$ is conjugate to all of its non-trivial powers but is not conjugate to $gt$ for any $t\in Z(G)\setminus \{1\}$.         Suppose there exist integers $i,j$ such that the following hold:\begin{multicols}{2}\begin{itemize}\item[(1)]$\langle gz^i\rangle \ne\langle gz^j\rangle$,\item[(2)]$\{\langle gz^i\rangle^G,\langle gz^j\rangle^G\}\subseteq \mathcal{S}(\Omega)$, \item[(3)]$Z(G)\cap\langle gz^i\rangle=1$, \item[(4)]~$z^{i-j}\not\in\langle gz^j\rangle$.\end{itemize}\end{multicols} Then $\mathcal{S}(\Omega)\cap\{C^G,\langle g,C\rangle^G : \langle z^{i-j}\rangle\leq C\leq Z(G)\}=\emptyset.$

       \end{lemma}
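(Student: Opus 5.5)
The plan is to argue by contradiction using Statement 3 of Lemma~\ref{lem:basiccriteria}. Suppose that $K\in\mathcal{S}(\Omega)$ with $K=C$ or $K=\langle g,C\rangle$, where $\langle z^{i-j}\rangle\leq C\leq Z(G)$. I will exhibit three point stabilisers $H_1,H_2,H_3$ of points of $\Omega$ such that
$$H_1\cap (H_2\cdot H_3)\not\subseteq H_1\cap((H_1\cap H_2)\cdot H_3).$$
That lemma then shows the action on $\Omega$ is non-binary, which is a contradiction. By~(2), every conjugate of $\langle gz^i\rangle$, of $\langle gz^j\rangle$ and of $K$ is a point stabiliser, so I may choose representatives within these conjugacy classes freely.

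\emph{Choice of stabilisers.} Take $H_1=\langle gz^i\rangle$ and $H_2=\langle gz^j\rangle$. If $K=C$, take $H_3=C$. If $K=\langle g,C\rangle$, choose $y\in G$ with $\langle g\rangle^y\neq\langle g\rangle$, which is possible since $|\langle g\rangle^G|\geq 2$, and take $H_3=K^y=\langle g^y,C\rangle$; here I use that $C$ is central. In both cases $z^{i-j}\in H_3$. Therefore
$$gz^i=(gz^j)\,z^{i-j}\in H_1\cap(H_2\cdot H_3).$$

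\emph{Computing the right-hand side.} By~(3), $H_1$ meets $Z(G)$ trivially, and $gz^i$ is a product of commuting elements $g$ (of order $p$) and $z^i$. Hence $H_1$ has prime order $p$, and any subgroup of $H_1$ is trivial or all of $H_1$.

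First, $H_1\cap H_2=1$. Otherwise $gz^i\in\langle gz^j\rangle$, which gives $z^{i-j}=(gz^j)^{-1}gz^i\in\langle gz^j\rangle$, contradicting~(4). It follows that $H_1\cap((H_1\cap H_2)\cdot H_3)=H_1\cap H_3$. So it suffices to show $gz^i\notin H_3$.

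\emph{Showing $gz^i\notin H_3$.} This is the only genuinely delicate step.
\begin{itemize}
\item If $H_3=C$, then $gz^i\in C\leq Z(G)$ would force $g\in Z(G)$, which is false.
\item If $H_3=\langle g^y,C\rangle$, every element of $H_3$ has the form $h\,c$ with $h\in\langle g^y\rangle$ and $c\in C$. Suppose $gz^i=hc$. Then $g=h\,t$ with $t:=cz^{-i}\in Z(G)$.
  \begin{itemize}
  \item If $h=1$, then $g$ is central, which is absurd.
  \item So $h$ is a non-trivial power of $g^y$, hence conjugate to $g$ by the hypothesis that $g$ is conjugate to all its non-trivial powers. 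Thus $g$ is conjugate to $gt^{-1}$, and the hypothesis forces $t=1$.
  \item Then $g=h\in\langle g^y\rangle$, so $\langle g\rangle=\langle g^y\rangle$, contradicting the choice of $y$.
  \end{itemize}
\end{itemize}

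Hence in every case $gz^i$ lies in $H_1\cap(H_2\cdot H_3)$ but not in $H_1\cap((H_1\cap H_2)\cdot H_3)$. Lemma~\ref{lem:basiccriteria} then shows the action on $\Omega$ is non-binary, contrary to assumption. I expect the main care to lie in this last step for $K=\langle g,C\rangle$, where the conjugacy hypotheses on $g$ are exactly what is needed. The remaining points are routine checks that the chosen subgroups really are stabilisers in $\mathcal{S}(\Omega)$ and that $H_1$ has prime order.
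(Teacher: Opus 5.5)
Your proof is correct and is essentially the paper's argument: both apply Statement~3 of Lemma~\ref{lem:basiccriteria} to a triple made of conjugates of $\langle gz^i\rangle$, $\langle gz^j\rangle$ and the forbidden stabiliser $C$ or $\langle g,C\rangle$. The only difference is the assignment of roles. The paper takes the forbidden subgroup as $H_1$, with $H_2=\langle g'z^i\rangle$ and $H_3=\langle g'z^j\rangle$ for a conjugate $g'$ with $\langle g'\rangle\neq\langle g\rangle$, and uses the witness $z^{i-j}$; it then uses the conjugacy hypotheses to get $H_1\cap H_2=1$ and (4) to exclude the witness from $H_3$, whereas you use these two ingredients the other way round.
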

\begin{proof}
                   Consider $H_1\in\{C,\langle g,C\rangle\}$ for some $\langle z^{i-j}\rangle\leq C\leq Z(G)$, $H_2=\langle g'z^i\rangle$, and $H_3=\langle g'z^j\rangle$ for some distinct conjugate $g'$ of $g$. Then $H_1\cap H_2=1$ by (3) together with the fact that $g$ is not conjugate to $gt$ for any $t\in Z(G)\setminus \{1\}$. Therefore, $H_1\cap((H_1\cap H_2)\cdot H_3)=H_1\cap H_3\not\ni z^{i-j}$ by (4), while $z^{i-j}\in H_1\cap (H_2\cdot H_3)$, so the result now follows from Lemma~\ref{lem:basiccriteria}.
\end{proof}

\begin{lemma}\label{lem:intrans}
    Let $g\in G\setminus Z(G)$ have prime order $p$, and let $1\ne z\in Z(G)$.     Suppose $H_1\leq \langle g,z\rangle$ and that the action of $G$ on $(G:\langle g,z\rangle)$ is binary.     Suppose that $|\langle g\rangle^G|\geq 3$ and that $g$ is conjugate to each of its non-trivial powers, but is not conjugate to $gt$ for any $t\in Z(G)\setminus \{1\}$. Fix integers $i$ and $j$ and suppose that $H_2\in\{\langle gz^i\rangle,\langle g,z^i\rangle\}^G$ and $H_3\in \{\langle gz^j\rangle,\langle g,z^j\rangle\}^G$ are such that $Z(G)H_1, Z(G)H_2,$ and $Z(G)H_3$ are distinct. Then $H_1\cap (H_2\cdot H_3)\subseteq H_1\cap((H_1\cap H_2)\cdot H_3)$.
\end{lemma}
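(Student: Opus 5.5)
The plan is to project everything onto the three conjugates $\langle g_k,z\rangle$ of the binary stabiliser $\langle g,z\rangle$, use binarity there to force any element of $H_1\cap(H_2\cdot H_3)$ into $Z(G)$, and then finish using distributivity of the subgroup lattice of the cyclic group $\langle z\rangle$.

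\emph{Setup.} Write $H_k\leq\langle g_k,z\rangle$ for $k=1,2,3$, where $g_1=g$ and $g_2,g_3$ are the appropriate $G$-conjugates of $g$. This holds because $\langle gz^i\rangle$ and $\langle g,z^i\rangle$ both lie in $\langle g,z\rangle$, and conjugation fixes $z$. The hypothesis that the $Z(G)H_k$ are distinct says exactly that the subgroups $\langle g_k\rangle Z(G)/Z(G)$ are pairwise distinct.

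\emph{Step 1 (a rigidity fact).} I would first show: if $g_k^a\in g_l^bZ(G)$ with $k\neq l$, then $p\mid a$ and $p\mid b$. Suppose not, and write $g_k^a=g_l^bt$ with $t\in Z(G)$. Since $g$ is conjugate to all its non-trivial powers, both $g_k^a$ and $g_l^b$ are conjugate to $g$. So $g$ is conjugate to $gt'$ for some central $t'$, and the hypothesis forces $t'=1$, hence $t=1$. Then $\langle g_k\rangle=\langle g_l\rangle$, contradicting distinctness modulo $Z(G)$. Two consequences follow.
\begin{itemize}
\item The subgroups $\langle g_k,z\rangle$ are pairwise distinct.
\item $\langle g_k,z\rangle\cap\langle g_l,z\rangle=\langle z\rangle$ for $k\neq l$.
\end{itemize}

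\emph{Step 2 (use binarity).} Take $x\in H_1\cap(H_2\cdot H_3)$. Then $x\in\langle g_1,z\rangle\cap(\langle g_2,z\rangle\cdot\langle g_3,z\rangle)$. These three subgroups are distinct point stabilisers of the binary action on $(G:\langle g,z\rangle)$, so Statement 3 of Lemma~\ref{lem:basiccriteria} (in contrapositive form) gives
\[
x\in\langle g_1,z\rangle\cap\bigl(\langle z\rangle\cdot\langle g_3,z\rangle\bigr)=\langle g_1,z\rangle\cap\langle g_3,z\rangle=\langle z\rangle .
\]
So $x$ is central. Now write $x=h_2h_3$ with $h_2=g_2^az^{c}\in H_2$ and $h_3=g_3^bz^{d}\in H_3$. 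Then $g_2^a\in g_3^{-b}Z(G)$, and Step 1 gives $p\mid a,b$. Hence $h_2\in H_2\cap\langle z\rangle$ and $h_3\in H_3\cap\langle z\rangle$.

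\emph{Step 3 (distributivity).} Set $A_k=H_k\cap\langle z\rangle$; these are subgroups of the cyclic group $\langle z\rangle$. Explicitly, $A_k$ is $\langle z^{pi}\rangle$ or $\langle z^i\rangle$ according to the form of $H_k$, but only cyclicity matters. Step 2 gives $x\in A_1\cap(A_2A_3)$. By distributivity of the subgroup lattice of a cyclic group, as in the proof of Lemma~\ref{lem:centralstabs},
\[
A_1\cap(A_2A_3)=(A_1\cap A_2)(A_1\cap A_3)\subseteq (H_1\cap H_2)\cdot H_3 .
\]
Since $x\in H_1$ as well, we get $x\in H_1\cap((H_1\cap H_2)\cdot H_3)$, which is the required inclusion.

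\emph{Where the difficulty lies.} I expect the main care to be needed in Step 1: ruling out non-trivial central twists relies precisely on $g$ not being conjugate to $gt$ for central $t\neq 1$, and I must also check that the rigidity applies to the cosets of $H_2$ and $H_3$. The hypothesis $|\langle g\rangle^G|\geq 3$ seems to serve only to guarantee that three distinct conjugates exist; I would check at the end whether it is needed anywhere else.
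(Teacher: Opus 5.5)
Your argument is correct and is essentially the paper's proof. First, binarity of $(G:\langle g,z\rangle)$ applied to three distinct conjugates $\langle g_k,z\rangle$ forces any $x\in H_1\cap(H_2\cdot H_3)$ into $\langle z\rangle$. Second, the hypothesis that $g$ is not conjugate to $gt$ forces both factors of $x=h_2h_3$ to be central. Third, what remains is a computation in the cyclic group $\langle z\rangle$. Your use of distributivity of the subgroup lattice of $\langle z\rangle$ replaces the paper's explicit $\gcd/\lcm$ bookkeeping. It is tidier, and it treats all four shapes of $(H_2,H_3)$ at once, whereas the paper does one and says the others are similar.

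There is one slip to fix. When $H_1\leq Z(G)$ (including $H_1=1$) we have $Z(G)H_1=Z(G)$, so the hypothesis does \emph{not} say that $\langle g_1\rangle Z(G)$ differs from $\langle g_2\rangle Z(G)$ and $\langle g_3\rangle Z(G)$. With $g_1=g$ fixed you may have $g_2=g$, and then Step 2 applies binarity to non-distinct stabilisers and yields nothing. There are two easy repairs:
\begin{enumerate}
\item[(a)] In this case the conclusion of Step 2 is immediate, because $x\in H_1\leq\langle g,z\rangle\cap Z(G)=\langle z\rangle$.
\item[(b)] Alternatively, choose $g_1$ among the conjugates of $g$ with $\langle g_1\rangle\notin\{\langle g_2\rangle,\langle g_3\rangle\}$. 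This is exactly where the paper uses $|\langle g\rangle^G|\geq 3$.
\end{enumerate}
With repair (a), that hypothesis is not needed at all, since it holds automatically when $H_1$ is non-central.

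Also, in Step 1 the case where exactly one of $a,b$ is divisible by $p$ is not covered by your sentence that both $g_k^a$ and $g_l^b$ are conjugate to $g$. It needs its own line: there, a non-trivial power of $g_l$ or $g_k$ would be central, which is impossible.
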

\begin{proof}
     Since $Z(G)H_1,Z(G) H_2,$ and $Z(G)H_3$ are distinct, we may find conjugates $K_1=\langle g,z\rangle\geq H_1$, $K_2=\langle g_2,z\rangle\geq H_2$, and $K_3=\langle g_3,z\rangle\geq H_3$ of $\langle g,z\rangle$ where no two of the cyclic subgroups $\langle g_i\rangle$ are equal (using that $|\langle g\rangle^G|\geq 3$). Therefore, since the action of $G$ on $\langle g,z\rangle $ is binary, \begin{align*}
            H_1\cap (H_2\cdot H_3)&\subseteq K_1\cap (K_2\cdot K_3)\\&\subseteq K_1\cap ((K_1\cap K_2)\cdot K_3)\\&=K_1\cap (\langle z\rangle\cdot K_3)\\&\subseteq\langle z\rangle.
        \end{align*}
    
     We will assume that $H_2=\langle g_2z^i\rangle$ and $H_3=\langle g_3,z^j\rangle$ for some $g_2\ne g_3\in g^G$; the other cases are very similar. Suppose $1\ne z_0\in H_1\cap (H_2\cdot H_3)\subseteq\langle z\rangle$. Then $z_0=(g_2^mz^{im})(g_3^nz^{jk})$ where $m,n,k$ are integers. Then, $g_2^m\in g_3^{-n}Z(G)$, so since $g$ is not conjugate to $gt$ for any $t\in Z(G)\setminus{1}$ we deduce that $g_2^m=g_3^{-n}$. But $\langle g_2\rangle\cap\langle g_3\rangle=1$, so $p\mid m$, hence $z_0=z^{im+jk}\in \langle z^{\gcd(pi,j)}\rangle\cap H_1$. Thus, if $z^s$ generates $H_1\cap Z(G)$, then $z_0\in \langle z^{\lcm(\gcd(pi,j),s)}\rangle$.
    Finally, \begin{align*}
             H_1\cap ((H_1\cap H_2)\cdot H_3)&= H_1\cap (\langle z^{\lcm(pi,s)}\rangle\cdot H_3)\\&=H_1\cap \langle g_3,z^{\gcd(\lcm(pi,s),j)}\rangle\\&= \langle z^{\lcm(\gcd(\lcm(pi,s),j),s)}\rangle.
        \end{align*}
    Now, $\lcm(\gcd(pi,j),s)=\gcd(\lcm(pi,s),\lcm(j,s))=\lcm(\gcd(\lcm(pi,s),j),s)$, from the distributive properties of $\lcm$ and $\gcd$, hence the result.
   \end{proof}

For the next two results we will only consider centres of prime order $p\in\{2,3\}$.

      Call a subgroup $K\leq G$ \emph{purely binary} if for every $H<G$, the action of $G$ on $(G:H)$ is binary if and only if $H$ is contained in a conjugate of $K$. In general such subgroups of quasisimple groups with $|K|$ not a prime seem very rare, but we will encounter them a couple of times, in the specific scenario of the coming lemmas.

\begin{lemma}\label{lem:p^2intrans2}
    Let $G$ be a group with $Z(G)$ of order $2$. Suppose that $K\geq Z(G)$ is an elementary abelian purely binary subgroup of $G$ of order $4$. Write $K=\langle z,g\rangle$ where $z\in Z(G)\setminus\{1\}$ and $|\langle g\rangle^G|\geq 3$. The action of $G$ on a $G$-set $\Omega$ is binary if and only if $\mathcal{S}(\Omega)\setminus\{\{1\},\{G\}\}$ is contained in at least one of
      $$\{\langle z\rangle^G,\langle g\rangle^G,K^G\}\setminus\{\langle gz\rangle^G\},\,\,\,\,\,\{\langle z\rangle^G,\langle gz\rangle^G,K^G\}\setminus\{\langle g\rangle^G\},\,\,\text{ or }\,\, \{\langle g\rangle^G, \langle gz\rangle^G\},$$    \end{lemma}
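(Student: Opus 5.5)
The plan has two directions: necessity, which uses Lemma~\ref{lem:cycstabs}, and sufficiency, which uses Lemma~\ref{lem:intransconditions2} and its corollaries.

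\textbf{Transitive constituents.} By Lemma~\ref{GGlem2.5} every orbit of a binary action is binary. Since $K$ is purely binary, every element of $\mathcal{S}(\Omega)$ is conjugate to a subgroup of $K$. The subgroups of $K$ are $1$, $\langle z\rangle$, $\langle g\rangle$, $\langle gz\rangle$ and $K$, and $G$ itself also occurs. Points with stabiliser $1$ or $G$ can be discarded by Lemma~\ref{GGlem2.7}. Repeated orbit types can be discarded by Lemma~\ref{GGlem2.6}. So we only need to decide which subsets of $\{\langle z\rangle^G,\langle g\rangle^G,\langle gz\rangle^G,K^G\}$ give binary actions.

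\textbf{Necessity.} We show that the excluded combinations are non-binary, namely $\{\langle g\rangle,\langle gz\rangle\}$ together with either $\langle z\rangle$ or $K$. For this we apply Lemma~\ref{lem:cycstabs} with $i=0$, $j=1$ and $C=Z(G)$. Condition (3) of that lemma holds since $\langle g\rangle\cap Z(G)=1$. Condition (4) asks that $z\notin\langle gz\rangle$, which is clear. The lemma also needs two facts: that $g$ is not conjugate to $gz$, and that $g$ is conjugate to its powers. The latter is automatic since $p=2$. For the former: if $g$ were conjugate to $gz$, then the two cyclic classes $\langle g\rangle^G$ and $\langle gz\rangle^G$ would coincide, so $K=\langle g,gz\rangle$ would be generated by two conjugates of $g$. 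I expect this to follow from the hypothesis that $K$ is purely binary, using Lemma~\ref{lem:basiccriteria}. This is the step to check most carefully. If it fails, the lemma's statement should be read with the three sets collapsing.

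\textbf{Sufficiency.} Each of the three listed families must be shown binary. First consider $\{\langle z\rangle,\langle g\rangle,K\}$, the case with $\langle gz\rangle$ excluded. Take $A$ to be the $K$-orbit, and build up $B$ from the $\langle g\rangle$- and $\langle z\rangle$-orbits, verifying the hypotheses of Lemma~\ref{lem:intransconditions2}.
\begin{itemize}
\item Condition 2 is Lemma~\ref{lem:intrans}, applied with $j=0$ and $i\in\{0,1\}$ as appropriate. The comparable cases are handled by the remark preceding Lemma~\ref{lem:intransconditions2}. Lemma~\ref{lem:intrans} requires $Z(G)H_i$ to be distinct; when they are not distinct, two of the stabilisers lie in a common conjugate of $K$ and are therefore comparable or easy to handle directly.
\item Condition 3 holds with $N=\langle z\rangle$, because two distinct conjugates of $K$ meet in $\langle z\rangle$.
\item Condition 4 is immediate, since intersections of incomparable $B$-stabilisers are contained in $\langle z\rangle$.
\end{itemize}
The $\langle z\rangle$-orbit is normal-cyclic, so it can be absorbed using Lemma~\ref{lem:centralstabs}-style distributivity. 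The family $\{\langle z\rangle,\langle gz\rangle,K\}$ is symmetric: replace $g$ by $gz$. For $\{\langle g\rangle,\langle gz\rangle\}$, apply Corollary~\ref{cor:intransconditions} with $N=1$. Its Condition 2 follows from the binary $K$-action exactly as in the proof of Lemma~\ref{lem:intrans}: any element of $H_1\cap(H_2H_3)$ lies in $\langle z\rangle\cap H_1=1$.
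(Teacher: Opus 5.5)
\textbf{The gap is the case $g$ conjugate to $gz$.} When $g$ is not conjugate to $gz$, your necessity argument via Lemma~\ref{lem:cycstabs} is exactly the paper's. The problem is the step you flagged. You cannot derive that $g$ is not conjugate to $gz$ from $K$ being purely binary; that implication is false. The paper applies this very lemma to $2.\mathbb{B}$ with $g\in\mathrm{2b}$, and there $g$ \emph{is} conjugate to $gz$. This is why the statement is written with set differences.

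In this collapsed case the three families become $\{\langle z\rangle^G,K^G\}$, $\{\langle z\rangle^G,K^G\}$ and $\{\langle g\rangle^G\}$. Necessity then means showing that $\langle g\rangle^G$ cannot occur in $\mathcal{S}(\Omega)$ together with $\langle z\rangle^G$ or $K^G$. Lemma~\ref{lem:cycstabs} cannot be invoked, because its hypothesis that $g$ is not conjugate to $gt$ fails. Saying the statement ``should be read with the sets collapsing'' is a correct reading, but it leaves this half of the forward direction unproved.

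The fix is a direct use of Lemma~\ref{lem:basiccriteria}:
\begin{itemize}
\item Since $|\langle g\rangle^G|\ge 3$, choose $g'$ with $\langle g'\rangle\notin\{\langle g\rangle,\langle gz\rangle\}$.
\item Set $H_1=\langle g\rangle$ and $H_2=\langle gz\rangle$; the latter is a point stabiliser because $gz$ is conjugate to $g$.
\item Take $H_3=\langle z\rangle$ or $H_3=\langle g',z\rangle$, according to which of $\langle z\rangle^G$, $K^G$ lies in $\mathcal{S}(\Omega)$.
\item Then $g=(gz)z\in H_1\cap(H_2\cdot H_3)$. But $H_1\cap H_2=1$ and $H_1\cap H_3=1$, so $H_1\cap((H_1\cap H_2)\cdot H_3)=1$.
\end{itemize}
Equivalently, rerun the proof of Lemma~\ref{lem:cycstabs} with the conjugate chosen outside $gZ(G)$.

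The same case also affects sufficiency. Your argument for $\{\langle z\rangle,\langle g\rangle,K\}$ goes through Lemma~\ref{lem:intrans}, which also assumes $g$ is not conjugate to $gz$. In the collapsed case you only need $\{\langle z\rangle,K\}$ and $\{\langle g\rangle\}$. The first is vacuous by Corollary~\ref{cor:intransconditions}, since the single $\langle z\rangle$-stabiliser lies in every conjugate of $K$; the second is given. This should be said explicitly.

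\textbf{Two smaller points in the non-collapsed case.}
\begin{itemize}
\item Lemma~\ref{lem:centralstabs} does not show that $(G:\langle g\rangle)\sqcup(G:\langle z\rangle)$ is binary, because $\langle g\rangle$ does not lie in a cyclic normal subgroup, yet your $B$ needs this. You can check it with Corollary~\ref{cor:intransconditions}: $z\in\langle g_2\rangle\langle g_3\rangle$ would force $g_3=g_2z$, which is impossible when $g$ is not conjugate to $gz$. Alternatively, as the paper does, first join $(G:K)$ with $(G:Z(G))$, which is vacuous, and then add $(G:\langle h\rangle)$ for $h\in\{g,gz\}$.
\item For $\{\langle g\rangle,\langle gz\rangle\}$, the Lemma~\ref{lem:intrans}-type containment $H_1\cap(H_2H_3)\subseteq\langle z\rangle$ needs $Z(G)H_1$, $Z(G)H_2$, $Z(G)H_3$ to be distinct. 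The coincident cases, such as $H_1=\langle g\rangle$, $H_3=\langle gz\rangle$, $H_2=\langle g'\rangle$, must be checked by hand. They do give $H_1\cap(H_2H_3)=1$, as the paper verifies in its final paragraph.

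\end{itemize}
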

\begin{proof}
If $g$ is not conjugate to $gz$ then the forward implication follows immediately from Lemma~\ref{lem:cycstabs}. On the other hand, if $\langle g\rangle^G=\langle gz\rangle^G$ then the statement to prove is that $\mathcal{S}(\Omega)\setminus\{\{1\},\{G\}\}$ is a subset of either $\{\langle z\rangle ^G,K^G\}$ or $\{\langle g\rangle ^G\}$. That is, it suffices to show that the action of $G$ on $(G:\langle g\rangle)\sqcup(G:\langle z,g^i\rangle)$ is not binary for $i\in \{0,1\}$. This is easy: set $H_1=\langle g\rangle$, $H_2=\langle gz\rangle$, and $H_3=\langle z,g'^i\rangle$ where $\langle g'\rangle\not\in\{\langle g\rangle,\langle gz\rangle\}$. Then $H_1\cap(H_2\cdot H_3)=H_1$, but $H_1\cap H_2=1$ so $H_1\cap ((H_1\cap H_2)\cdot H_3)=H_1\cap H_3=\{1\}$ whence the action is not binary by Lemma~\ref{lem:basiccriteria}. This completes the proof of the forward implication.

To prove the converse we may assume throughout that no orbit has stabiliser $1$ or $G$, and that no two orbits are equivalent by~Lemmas~\ref{GGlem2.7} and~\ref{GGlem2.6}, respectively.     
We claim that the conditions of Corollary~\ref{cor:intransconditions} are satisfied when we take $(G:K)$ to be $A$ and $(G:Z(G))$ to be $B$. Indeed, there is a unique $B$-stabiliser, and it is contained in every $A$-stabiliser hence all conditions are satisfied vacuously.     Therefore, the action of $G$ on $A\sqcup B$ is binary by Corollary~\ref{cor:intransconditions}. This completes the proof in the case that $g$ is conjugate to $gz$, so for the remainder of the proof we assume that $\langle g\rangle^G\ne\langle gz\rangle^G$.
    
    Let $h\in \{g,gz\}$ and observe that $h$ is not conjugate to $hz$.  Define $A':=A\sqcup B$ (with $A$ and $B$ as defined in the previous paragraph), and $B':=(G:\langle h\rangle)$. Then Condition 3 of Corollary~\ref{cor:intransconditions} clearly holds with respect to $A'\sqcup B'$, as does Condition 1 by the previous argument. Let $H_1,H_2,$ and $H_3$ be point stabilisers of $G$ on $A'$, $A'\sqcup B'$, and $B'$, respectively, no two of which are comparable. Then $H_2\ne Z(G)$ as this would force $H_2\subseteq H_1$. Moreover, if $H_1=Z(G)$ then $H_2=\langle h_2\rangle$ and $H_3=\langle h_3\rangle$ for conjugates $h_2,h_3$ of $h$, whence $H_1\cap (H_2\cdot H_3)=1$, satisfying Condition 2. Otherwise, $H_1\cap (H_2\cdot H_3)\subseteq H_1\cap((H_1\cap H_2)\cdot H_3)$ by Lemma~\ref{lem:intrans}, and so the action of $G$ on $A'\sqcup B'$ is binary, as desired. 
               
It remains only to show that if we take $A=(G:\langle g\rangle)$ and $B=(G : \langle gz\rangle )$, then the action of $G$ on $A\sqcup B$ is binary. It is clear that Conditions 1 and 3 of Corollary~\ref{cor:intransconditions} are satisfied, so we need only check Condition 2. Let $H_1=\langle g\rangle$, $H_2$ a point stabiliser of $A\sqcup B$ and $H_3$ a point stabiliser of $B$, all mutually incomparable. If $H_3=\langle gz\rangle$ then $H_1\cap(H_2\cdot H_3)=1$ since then $H_2\not\in\{\langle g\rangle,\langle gz\rangle\}$, thus we may assume that $H_1Z(G)\ne H_3Z(G)$. Similarly, if $H_2Z(G)=\langle g,z\rangle$ then $H_2=\langle gz\rangle$ whence $H_1\cap(H_2\cdot H_3)=1$, and so we may assume $H_2Z(G)\ne H_1 Z(G)$. Similarly, we can assume that $H_2Z(G)\ne H_3Z(G)$. But then $H_1\cap(H_2\cdot H_3)\subseteq H_1\cap ((H_1\cap H_2)\cdot H_3)$ by Lemma~\ref{lem:intrans}, hence the result.
                                   \end{proof}
\begin{lemma}\label{lem:p^2intrans3}
    Let $G$ be a group with $Z(G)$ of order $3$. Suppose that $K\geq Z(G)$ is an elementary abelian purely binary subgroup of $G$ of order $9$. Suppose that $K=\langle z,g\rangle$ where $z\in Z(G)\setminus\{1\}$ and $g^G$ is a rational class with $|\langle g\rangle^G|\geq 4$. The action of $G$ on a $G$-set $\Omega$ is binary if and only if $\mathcal{S}(\Omega)\setminus\{\{1\},\{G\}\}$ is contained in at least one of
      $$\{\langle z\rangle^G,\langle g\rangle^G,K^G\}\setminus\{\langle gz\rangle^G\},\,\,\,\,\{\langle g\rangle^G\},\,\,\text{ or }\,\,\, \{\langle gz\rangle^G\}.$$    \end{lemma}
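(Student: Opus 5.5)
The plan is to follow the proof of Lemma~\ref{lem:p^2intrans2}, adapted to a centre of order $3$. Write $Z(G)=\langle z\rangle$. The order-$3$ subgroups of $K$ are $\langle z\rangle,\langle g\rangle,\langle gz\rangle,\langle gz^2\rangle$. Because $g^G$ is a rational class, $g^{-1}$ is conjugate to $g$. Hence $\langle gz\rangle=\langle g^{-1}z^2\rangle$ is conjugate to $\langle gz^2\rangle$, and in fact each $\langle g'z\rangle$ with $g'\in g^G$ equals some $\langle g''z^2\rangle$ with $g''\in g^G$. Since $K$ is purely binary, every orbit of a binary action has stabiliser conjugate to one of $1$, $\langle z\rangle$, $\langle g\rangle$, $\langle gz\rangle$, $K$ or $G$, using \cite[Lemma 2.5]{GG23}. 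There are two cases according to whether $gz$ is conjugate to $g$ (equivalently, $g$ is conjugate to $gt$ for some $1\ne t\in Z(G)$). If it is, the three listed sets collapse to $\{\langle z\rangle^G,K^G\}$ and $\{\langle g\rangle^G\}$.

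\emph{Forward direction.} Everything reduces to two explicit witnesses via Lemma~\ref{lem:basiccriteria}(3).

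First, $\langle g\rangle^G$ and $\langle gz\rangle^G$ cannot both occur when they are distinct classes. Take $H_1=\langle gz\rangle$, $H_2=\langle gz^2\rangle$ (a conjugate of $\langle gz\rangle$) and $H_3=\langle g\rangle$. Then $(gz^2)^2g^2=g^4z^4=gz$, so $H_1\cap(H_2H_3)=H_1$. On the other hand $H_1\cap H_2=1$ and $H_1\cap H_3=1$, so $H_1\cap((H_1\cap H_2)H_3)=1$.

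Second, $\langle gz\rangle^G$ cannot occur together with $\langle z\rangle^G$ or $K^G$. Choose $g'\in g^G$ with $\langle g'\rangle\ne\langle g\rangle$, which is possible since $|\langle g\rangle^G|\ge 4$. Take $H_1\in\{\langle z\rangle,K\}$, $H_2=\langle g'z\rangle$ and $H_3=\langle g'z^2\rangle$; the latter is conjugate to $H_2$. Then $z^{-1}=(g'z)(g'z^2)^{-1}\in H_1\cap(H_2H_3)$. However $H_1\cap H_2=1$ and $H_1\cap H_3=1$, because $\langle g'\rangle\not\le H_1$ (and $\langle g'\rangle Z(G)\ne K$ for the choice of $g'$). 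This is essentially the argument of Lemma~\ref{lem:cycstabs}.

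In the case $gz\sim g$, the same two triples, with $\langle g\rangle$ in place of $\langle gz\rangle$ where needed, show that $\langle g\rangle^G$ excludes both $\langle z\rangle^G$ and $K^G$. In every case these exclusions force $\mathcal S(\Omega)\setminus\{\{1\},\{G\}\}$ into one of the listed sets.

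\emph{Converse.} By Lemmas~\ref{GGlem2.7} and~\ref{GGlem2.6} I may assume that no orbit has stabiliser $1$ or $G$ and that the orbits are pairwise non-isomorphic. The singleton sets give transitive actions, and these are binary because $K$ is purely binary. For $\{\langle z\rangle^G,K^G\}$, apply Corollary~\ref{cor:intransconditions} with $A=(G:K)$ and $B=(G:Z(G))$. The unique $B$-stabiliser lies in every $A$-stabiliser, so Conditions 2 and 3 hold vacuously.

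Then add $B'=(G:\langle g\rangle)$, only in the case $gz\not\sim g$, which is the only case in which the first listed set contains $\langle g\rangle^G$. In that case $g$ is not conjugate to $gt$ for any $1\neq t\in Z(G)$, so the hypotheses of Lemma~\ref{lem:intrans} hold; note also $|\langle g\rangle^G|\ge 4\ge 3$. Condition 3 of Corollary~\ref{cor:intransconditions} is clear. For Condition 2, take pairwise incomparable $H_1,H_2,H_3$.
\begin{itemize}
\item If $H_2=Z(G)$, then $H_2$ is comparable with $H_1$, so this case does not arise.
\item If $H_1=Z(G)$, then $H_2,H_3$ are distinct conjugates of $\langle g\rangle$, and $H_1\cap(H_2H_3)=1$ since no non-trivial central element is a product of elements of two distinct conjugates of $\langle g\rangle$.
\item Otherwise, incomparability forces $Z(G)H_1,Z(G)H_2,Z(G)H_3$ to be distinct, and Lemma~\ref{lem:intrans} gives the required inclusion.
\end{itemize}

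The step I expect to need the most care is the case bookkeeping in this last step. Specifically, one must confirm that incomparability really forces the three subgroups $Z(G)H_i$ to be distinct, and that this holds uniformly whether or not $\langle gz\rangle$ is conjugate to $\langle g\rangle$.
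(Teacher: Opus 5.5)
Your proposal is correct and follows essentially the paper's proof: explicit triples via Lemma~\ref{lem:basiccriteria} for the forward direction, and the same two-step application of Corollary~\ref{cor:intransconditions} and Lemma~\ref{lem:intrans} for the converse. For excluding $\langle z\rangle^G,K^G$ alongside $\langle gz\rangle^G$ you use the Lemma~\ref{lem:cycstabs}-style triple instead of the paper's triple~\eqref{eq:gz}, which is an inessential variation; just state the choice as $g'\in g^G\setminus K$, which $|\langle g\rangle^G|\ge 4$ guarantees, because $\langle g'\rangle\ne\langle g\rangle$ alone does not give $K\cap\langle g'z\rangle=1$ when $gz$ is conjugate to $g$.
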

\begin{proof}
First, observe that since $g$ is conjugate to $g^2$ we deduce that $gz$ is conjugate to $g^2z$, whence $\langle gz\rangle$ is conjugate to $\langle gz^2\rangle=\langle g^2z\rangle$. We proceed with the following observation: if $\Omega$ is a $G$-set such that there is some $i\in \{0,1\}$ with $\{\langle gz\rangle^G,\langle g^i, z\rangle^G\}\subseteq\mathcal{S}(\Omega)$ then, taking $H_1=\langle g^2z\rangle$, $H_2=\langle gz\rangle$ and $H_3=\langle g'^i,z\rangle$ for $g'\in g^G\setminus\langle g,z\rangle$ we compute \begin{equation}\label{eq:gz}
    H_1\cap (H_2\cdot H_3)\supseteq H_1\cap K=H_1\,\,\text{ but }\,\, H_1\cap ((H_1\cap H_2)\cdot H_3)=H_1\cap H_3=1.
\end{equation}
Consequently, no binary action can simultaneously have stabilisers conjugate to $\langle gz\rangle$ and one of $Z(G)$ or $K$.

We deduce from Lemma~\ref{lem:cycstabs} and~\eqref{eq:gz},  that $\mathcal{S}(\Omega)\setminus\{\{1\},\{G\}\}$ is contained in at least one of
      $\{\langle z\rangle^G,\langle g\rangle^G,K^G\}$ or $\{\langle g\rangle^G, \langle gz\rangle^G\}.$ Thus, to prove the forward implication it suffices to show that if $A=(G:\langle gz\rangle)$ and $B=(G:\langle g\rangle)$  then the action of $G$ on $A\sqcup B$ is not binary. This is straightforward: take $H_1=\langle g\rangle$, $H_2=\langle gz\rangle$, and $H_3=\langle gz^2\rangle$ so that $H_1\cap (H_2\cdot H_3)=H_1$ but $H_1\cap ((H_1\cap H_2)\cdot H_3)=1$. This completes the proof of the forward implication when $\langle g\rangle^G\ne\langle gz\rangle^G$.
This completes the proof of the forward implication.

To prove the converse we may assume throughout that no orbit has stabiliser $1$ or $G$, and that no two orbits are equivalent by~Lemmas~\ref{GGlem2.7} and~\ref{GGlem2.6}, respectively.      
We claim that the conditions of Corollary~\ref{cor:intransconditions} are satisfied when we take $(G:K)$ to be $A$ and $(G:Z(G))$ to be $B$. Indeed, there is a unique $B$-stabiliser, and it is contained in every $A$-stabiliser hence all conditions are satisfied vacuously.     Therefore, the action of $G$ on $A\sqcup B$ is binary by Corollary~\ref{cor:intransconditions}. This completes the proof in the case that $g$ is conjugate to $gt$ for some $t\in Z(G)\setminus \{1\}$.
    
    Suppose finally that $\langle g\rangle^G\ne\langle gz\rangle^G$.  Define $A':=A\sqcup B$ (with $A$ and $B$ as defined in the previous paragraph), and $B':=(G:\langle g\rangle)$.     Arguing identically as in the penultimate paragraph of the proof of Lemma~\ref{lem:p^2intrans2}, we deduce that the action of $G$ on $A'\sqcup B'$ is binary, as desired. 
                                                  \end{proof}
\subsection{$G$ has a centre of order $6$.}

The universal cover of the simple group $\mathrm{Fi}_{22}$ has a centre with order $6$ -- this turns out to be rather tricky and so we will need to develop some extra machinery. Throughout this section we always consider groups $G$ such that $|Z(G)|=6$.

\begin{lemma}\label{lem:cent6forbidden}
    Suppose that the action of $G$ on $\Omega$ is binary, and that $Z(G)=\langle z\rangle$ has order 6. Suppose $g\in G\setminus Z(G)$ has order 2 and that $g$ is not conjugate to $gz^3$. If there are integers $i,j$ such that $\langle g,z^i\rangle^G,\langle gz^j\rangle^G\in\mathcal{S}(\Omega)$ where $\langle gz^{j}\rangle\cap\langle g,z^i\rangle\leq Z(G)$. Then $\{\langle g,z\rangle^G,\langle g,z^3\rangle^G,\langle z\rangle^G ,\langle z^3\rangle^G\}\cap \mathcal{S}(\Omega)=\emptyset$.\end{lemma}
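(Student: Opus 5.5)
The plan is to apply Lemma~\ref{lem:basiccriteria} (Statement 3), in the same spirit as the proof of Lemma~\ref{lem:cycstabs}. I would first extract arithmetic information from the hypothesis, and then use $z^3$ as a witness.

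\textbf{Step 1: the hypothesis forces $j$ to be odd.} If $j\equiv 0\pmod 6$, then $g\in\langle gz^j\rangle\cap\langle g,z^i\rangle$, which is not contained in $Z(G)$. If $j$ is even but $j\not\equiv 0\pmod 6$, then $(gz^j)^3=gz^{3j}=g$. Again $g\in\langle gz^j\rangle\cap\langle g,z^i\rangle$, contradicting the hypothesis. So $j$ is odd. Similarly, every odd power $(gz^j)^{k}=gz^{kj}$ with $k$ odd is a non-central element of $\langle gz^j\rangle$. Hence the hypothesis gives $z^{kj}\notin\langle z^i\rangle$ for all odd $k$. In particular, taking $k=3$ gives $z^3=z^{3j}\notin\langle z^i\rangle$.

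\textbf{Step 2: the triple of stabilisers.} Let $H_1$ be one of $\langle g,z\rangle,\langle g,z^3\rangle,\langle z\rangle,\langle z^3\rangle$, assumed for contradiction to lie in $\mathcal{S}(\Omega)$. Each of these contains $z^3$. Choose a conjugate $g'\neq g$ of $g$, which exists since $g$ is non-central. Set $H_2=\langle g',z^i\rangle$ and $H_3=\langle g'z^j\rangle$. Both are point stabilisers by hypothesis. Then
\[
z^3=z^{3j}=g'\cdot(g'z^j)^3\in H_2\cdot H_3,
\]
so $z^3\in H_1\cap(H_2\cdot H_3)$.

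\textbf{Step 3: $z^3$ is not in $H_1\cap((H_1\cap H_2)\cdot H_3)$.} First I compute $H_1\cap H_2$. The elements of order $2$ in the coset $gZ(G)$ are $g$ and $gz^3$, and $g$ is not conjugate to $gz^3$. Since $g'\neq g$ is a conjugate of $g$, it follows that $g'\notin gZ(G)$. Hence $H_1\subseteq\langle g\rangle Z(G)$ meets $g'Z(G)$ trivially, and so $H_1\cap H_2\subseteq\langle z^i\rangle$.

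Next, $H_3\cap Z(G)=\langle z^{2j}\rangle$, because the odd powers of $g'z^j$ lie in $g'Z(G)$. Therefore the central elements of $(H_1\cap H_2)\cdot H_3$ lie in $\langle z^i\rangle\langle z^{2j}\rangle$. The remaining elements of this product lie in $g'Z(G)$, which is disjoint from $H_1$.

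Now suppose $z^3=z^{3j}\in\langle z^i\rangle\langle z^{2j}\rangle$. Then $z^{j(3-2b)}\in\langle z^i\rangle$ for some integer $b$, and $3-2b$ is odd. This contradicts Step 1. So $z^3\notin H_1\cap((H_1\cap H_2)\cdot H_3)$, and Lemma~\ref{lem:basiccriteria} shows the action is not binary.

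\textbf{Main obstacle.} The main care is in Step 3: one must verify that $H_1\cap H_2$ contains no non-central element. This is exactly where the hypothesis that $g$ is not conjugate to $gz^3$ enters, since it rules out $g'=gz^3$. One must also handle carefully the non-central part of $(H_1\cap H_2)\cdot H_3$, by showing it lies in $g'Z(G)$ and hence misses $H_1$.
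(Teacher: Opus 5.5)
Your proof is correct and follows essentially the paper's route: a single application of Statement~3 of Lemma~\ref{lem:basiccriteria} to one stabiliser from each of the three relevant classes. In both arguments the hypothesis that $g$ is not conjugate to $gz^3$ is what keeps $g$ and a distinct conjugate $g'$ in different $Z(G)$-cosets.

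The only difference is how the roles are assigned. The paper takes $H_1=\langle gz^j\rangle$, $H_2=\langle g,z^i\rangle$ and the forbidden group as $H_3=\langle g',z^k\rangle$. It then uses the non-central witness $gz^{3j}$ against the central set $H_1\cap((H_1\cap H_2)\cdot H_3)\subseteq\langle z^{2j}\rangle$. This uses the hypothesis $\langle gz^j\rangle\cap\langle g,z^i\rangle\leq Z(G)$ directly, and so avoids the parity analysis of $i$ and $j$ in your Step~1.
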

\begin{proof}
    
    Take $H_1=\langle gz^j\rangle$, $H_2=\langle g,z^i\rangle$, and $H_3=\langle g',z^k\rangle$ where $g'\in \{1\}\cup \left(g^G\setminus\{g\}\right)$ and $k\in \{1,3\}$. Then $H_1\cap((H_1\cap H_2)\cdot H_3)\subseteq H_1\cap \langle g',z\rangle=\langle z^{2j}\rangle$, but $$H_1\cap(H_2\cdot H_3)=H_1\cap(\langle g,z^i\rangle\cdot \langle g',z^k\rangle)\supseteq H_1\cap\langle g,z^3\rangle\supseteq\langle gz^{3j}\rangle.$$ The result follows from Lemma~\ref{lem:basiccriteria}.
\end{proof}

For the remainder of this section we add the following hypotheses: Let $G$ be a group with $|Z(G)|=6$ acting on a set $\Omega$. Suppose that $K\geq Z(G)$ is an abelian purely binary subgroup of order 12, $K=\langle g,z\rangle$ where $g$ is an involution and $\langle z\rangle=Z(G)$. Finally, assume that no two subgroups of $K$ are $G$-conjugate, and that $|g^G|\geq 3$. 

\begin{lemma}\label{lem:nogzi}
If $\mathcal{S}(\Omega)\subseteq\left\{\{1\},\langle g,z\rangle^G,\langle g,z^3\rangle^G,\langle g\rangle^G,\langle gz^2\rangle^G,\langle z\rangle^G,\langle z^2\rangle^G,\langle z^3\rangle^G,\{G\}\right\}$ then the action of $G$ on $\Omega$ is binary.
\end{lemma}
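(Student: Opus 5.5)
First, reduce using Lemmas~\ref{GGlem2.7} and~\ref{GGlem2.6}. We may discard the orbits with stabiliser $1$ or $G$, and we may assume no two orbits are equivalent. This leaves at most seven orbits, with stabilisers drawn from $\langle g,z\rangle$, $\langle g,z^3\rangle$, $\langle g\rangle$, $\langle gz^2\rangle$, $\langle z\rangle$, $\langle z^2\rangle$ and $\langle z^3\rangle$. By Lemma~\ref{GGlem2.5}, it suffices to treat the case where all seven occur. Each of these subgroups lies in $K$, so each transitive action is binary, since $K$ is purely binary. The idea is to build the full $G$-set one orbit at a time, as in the proofs of Lemmas~\ref{lem:p^2intrans2} and~\ref{lem:p^2intrans3}.

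The first step handles the central orbits. The three stabilisers $\langle z\rangle$, $\langle z^2\rangle$, $\langle z^3\rangle$ are subgroups of the cyclic normal subgroup $Z(G)$, so their union $B_0$ is binary by Lemma~\ref{lem:centralstabs}. Next, add $A=(G:\langle g,z\rangle)$ using Corollary~\ref{cor:intransconditions}. Every $B_0$-stabiliser is normal, so any two of them are comparable; moreover, Condition 3 holds with $N=\langle z\rangle$, since two distinct conjugates of $K$ meet exactly in $\langle z\rangle$ because $g\ne g'$ and $g$ is not conjugate to $gz^3$. For Condition 2, note that a $B_0$-stabiliser $H_3$ is central. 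If $H_2$ is also a $B_0$-stabiliser, then $H_2H_3\le Z(G)\le H_1$. If $H_2$ is an $A$-stabiliser, then $H_2H_3=H_2$, and the inclusion is trivial. Then add $(G:\langle g,z^3\rangle)$ in the same way. Here the intersection of two incomparable stabilisers inside $A\sqcup B$ is contained in $Z(G)$, and for the relevant pairs it lies in $\langle z^3\rangle$ or $\langle z\rangle$. In this step I may instead use Lemma~\ref{lem:intransconditions2}, since Condition 3 of Corollary~\ref{cor:intransconditions} can fail: for example, $\langle z\rangle\not\le\langle g,z^3\rangle$. In that case I check directly the weaker inclusions $(L\cdot H_1)\cap(L\cdot H_2)\subseteq L\cdot N$ and Condition 4. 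Since $L\le Z(G)$ is central, these reduce to computing intersections of the groups $\langle g_i,z^k\rangle$, which meet pairwise in subgroups of $Z(G)$.

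The final step adds the non-central cyclic orbits $\langle g\rangle$ and $\langle gz^2\rangle$, one at a time. Condition 2 is the main obstacle. The plan is to apply Lemma~\ref{lem:intrans}, with the binary action on $(G:\langle g,z\rangle)$ as its hypothesis, whenever the three groups $Z(G)H_1$, $Z(G)H_2$, $Z(G)H_3$ are distinct. The configurations not covered by Lemma~\ref{lem:intrans} are those in which two of the $H_i$ are central or lie over the same $\langle g',z\rangle$. These I handle directly, using the hypothesis that $g$ is not conjugate to $gt$ for $1\ne t\in Z(G)$. That hypothesis follows from the assumption that no two subgroups of $K$ are conjugate. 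The key point is that $\langle g\rangle$, $\langle gz^2\rangle$, $\langle g,z^3\rangle$ and $\langle g,z\rangle$ are exactly the subgroups of $K$ avoiding $gz^{\pm1}$ and $gz^3$.

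Specifically, suppose $H_1=\langle g'\rangle$ with $H_2H_3\ni g'$. Write the element as a product and project to $G/Z(G)$. This forces both factors into a common coset $g'Z(G)$, or else forces the trivial intersection as in the proof of Lemma~\ref{lem:intrans}. Every element of $K$ of the form $g'z^k$ has $k$ even or $k=3$ paired with $\langle g,z^3\rangle$. A parity computation of exponents modulo $2$ and $3$ therefore shows that $g'$ can only arise when it already lies in $(H_1\cap H_2)H_3$. This is exactly where the excluded subgroups $\langle gz\rangle$ and $\langle gz^3\rangle$ would break the argument, as in Lemma~\ref{lem:cent6forbidden}. I expect this exponent bookkeeping, case by case, to be the main work. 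Conditions 3 and 4 should follow as before, since all pairwise intersections of incomparable stabilisers lie in $Z(G)$.
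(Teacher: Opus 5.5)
Your plan is sound and is essentially the paper's proof. Both arguments build $\Omega$ one orbit at a time via Corollary~\ref{cor:intransconditions} and Lemma~\ref{lem:intransconditions2}. Both settle triples with distinct $Z(G)H_i$ by Lemma~\ref{lem:intrans} and the remaining triples by direct computation. Both finish by adding $\langle gz^2\rangle$ through Lemma~\ref{lem:intransconditions2} to exactly the same union; the paper merely adds the earlier orbits in the order $\langle z^3\rangle,\langle z^2\rangle,\langle g\rangle,\langle g,z^3\rangle,\langle z\rangle,\langle g,z\rangle$. So the Condition~2 case analysis you defer is precisely the one the paper writes out.

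When you write it out, correct the following slips:
\begin{itemize}
\item $\langle z^2\rangle$ and $\langle z^3\rangle$ are incomparable. This is harmless, since they meet trivially.
\item Your ``key point'' is false as stated: $\langle g,z^3\rangle$ contains $gz^3$, and $K$ contains $gz$. The property that actually drives the computation is that the allowed non-central stabilisers inside $\langle g',z\rangle$ are exactly its non-central subgroups containing $g'$ itself.
\item Lemma~\ref{lem:intrans} requires $H_2$ and $H_3$ to be non-central. Triples with a single central member must therefore also be checked by hand.
\item Condition~2 when adding $\langle g,z^3\rangle$ is not vacuous, unlike the step for $\langle g,z\rangle$; consider $H_3=\langle z\rangle$ or $H_3=\langle g',z\rangle$. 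So ``in the same way'' does not cover it.
\item Your claim that incomparable stabilisers meet inside $Z(G)$ needs $\langle g\rangle$ to be added before $\langle gz^2\rangle$. In the other order, $\langle g'z^2\rangle\cap\langle g',z^3\rangle=\langle g'\rangle$ is non-central.
\end{itemize}
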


\begin{proof}
              
     Start by setting $A_0:=(G:\langle z^3\rangle)\sqcup(G:\langle z^2\rangle ),$ and $B_0:=(G:\langle g\rangle)$. The action on $A_0$ is binary by Lemma~\ref{lem:centralstabs}. Moreover, distinct conjugates of $\langle g\rangle$ meet trivially, as do distinct $A_0$-stabilisers, and so Conditions 1 and 3 of Corollary~\ref{cor:intransconditions} are satisfied. Let $H_1$ be an $A_0$-stabiliser, let $H_2$ be a stabiliser of $A_0\sqcup B_0$ and $H_3$ a $B_0$-stabiliser such that the $H_i$ are mutually incomparable. Thus there is some integer $i\in \{2,3\}$ and conjugates $g'\ne g''$ of $g$ such that either $(H_1,H_2,H_3)=(\langle z^i\rangle,\langle z^{i+1}\rangle,\langle g'\rangle)$ or $(H_1,H_2,H_3)=(\langle z^i\rangle,\langle g'\rangle,\langle g''\rangle)$. In the former case, $H_1\cap (H_2\cdot H_3)=\langle z^i\rangle\cap\langle z^{i+1},g'\rangle=1,$ and in the latter case, $H_1\cap (H_2\cdot H_3)\subseteq H_1\cap((H_1\cap H_2)\cdot H_3)$ by Lemma~\ref{lem:intrans}. Therefore, the action of $G$ on $A_0\sqcup B_0$ is binary by Corollary~\ref{cor:intransconditions}.

                                Next, set $A_1:=(G:\langle g,z^3\rangle)$ and $B_1:= A_0\sqcup B_0$; it is easy to see that Conditions 1 and 3 of Corollary~\ref{cor:intransconditions} are satisfied. Take $H_1:=\langle g,z^3\rangle$, and let $H_2$ be an $(A_1\sqcup B_1)$-stabiliser, and $H_3$ a $B_1$-stabiliser such that each of the $H_i$ are mutually incomparable. Then $H_2\in\{\langle g',z^3\rangle,\langle g'\rangle,\langle z^2\rangle\}$ for some $g'\in g^G\setminus\{g\}$, and $H_3\in\{\langle z^2\rangle,\langle g''\rangle\}$ for some $g''\in g^G\setminus\{g,g'\}$. If $\langle z^2\rangle\not\in \{H_2,H_3\}$ then $H_1\cap (H_2\cdot H_3)\subseteq H_1\cap((H_1\cap H_2)\cdot H_3)$ by Lemma~\ref{lem:intrans}. On the other hand if $H_3= \langle z^2\rangle$, then $$H_1\cap ((H_1\cap H_2)\cdot H_3)=\begin{cases}
         1&\text{ if $H_2=\langle g'\rangle$}\\
         \langle z^3\rangle&\text{ if $H_2=\langle g',z^3\rangle$}
     \end{cases}=H_1\cap (H_2\cdot H_3),$$ and if $H_2=\langle z^2\rangle$ then $H_3=\langle g''\rangle$, whence $H_1\cap (H_2\cdot H_3)=1$. Therefore, the conditions of Corollary~\ref{cor:intransconditions} are met, whence the action of $G$ on $A_1\sqcup B_1$ is binary.   
     
          Now, set $A_2=(G:\langle z\rangle)$ and $B_2=A_1\sqcup B_1$. Then every pair of incomparable $B_2$-stabilisers intersects in a subgroup of $\langle z\rangle$, hence conditions 1 and 3 of Corollary~\ref{cor:intransconditions} hold. We set $H_1=\langle z\rangle$, and let $H_2$ be a $(A_2\sqcup B_2)$-stabiliser and $H_3$ a $B_2$-stabiliser such that each $H_i$ is incomparable to the others. Then $H_2,H_3\in\{\langle g,z^3\rangle ,\langle g\rangle \}^G$ with $Z(G)H_2\ne Z(G)H_3$, whence $H_1\cap (H_2\cdot H_3)\subseteq H_1\cap ((H_1\cap H_2)\cdot H_3)$ by Lemma~\ref{lem:intrans}. Therefore the action on $A_2\sqcup B_2$ is binary by Corollary~\ref{cor:intransconditions}. 

     Next, set $A_3:=(G:\langle g,z\rangle)$ and $B_3=A_2\sqcup B_2$; the intersection of any two distinct conjugates of $\langle g,z\rangle$ is $\langle z\rangle$, which contains the intersection of any two incomparable $B_3$-stabilisers, hence Conditions 1 and 3 of Corollary~\ref{cor:intransconditions} are satisfied. Let $H_1=\langle g,z\rangle$, and suppose that $H_2$ and $H_3$ are $(A_3\sqcup B_3)$-stabilisers with no two of the $H_i$ comparable. Then $H_2$ and $H_3$ are conjugates of either $\langle g\rangle$, $\langle g,z^3\rangle$, or $\langle g,z\rangle$. Observe that $Z(G)H_i$ are distinct since the $H_i$ are incomparable. Thus $H_1\cap(H_2\cdot H_3)\subseteq H_1\cap((H_1\cap H_2)\cdot H_3)$ by Lemma~\ref{lem:intrans}. Consequently, the action of $G$ on $A_3\sqcup B_3$ is binary.

  Finally, take $A_4:=(G:\langle gz^2\rangle)$, and $B_4:=A_3\sqcup B_3$; we shall now appeal to Lemma~\ref{lem:intransconditions2}. Any two incomparable $A_4$-stabilisers intersect at $\langle z^2\rangle$, and  if $L_1$ is contained in the intersection of incomparable $B_4$-stabilisers, then $L_1=\langle z^i\rangle$ for some $i\in\{0,1,2,3\}$. Thus $$L_1\cdot \langle gz^2\rangle\cap L_1\cdot\langle g'z^2\rangle=\langle z^{gcd(2,i)}\rangle=L_1\cdot (\langle gz^2\rangle \cap\langle g'z^2\rangle)$$ whenever $g'\in g^G\setminus \{g\}$, thus Condition 3 of Lemma~\ref{lem:intransconditions2} holds. Similarly, if $L_2$ is contained in the intersection of incomparable $B_4$-stabilisers then $L_2=\langle z^j\rangle$ for some $j\in\{0,1,2,3\}$, hence

    $$(L_1\cdot\langle gz^2\rangle)\cap (L_2\cdot\langle gz^2\rangle)=\begin{cases}
        \langle g,z\rangle\cap\langle gz^2\rangle=\langle gz^2\rangle&\text{if exactly one of $i,j$ is odd,}\\
        \langle g,z\rangle\cap\langle g,z\rangle=\langle g,z\rangle&\text{if both of $i,j$ are odd,}\\
        \langle gz^2\rangle\cap\langle gz^2\rangle=\langle gz^2\rangle&\text{if both of $i,j$ are even.}
    \end{cases}$$
     On the other hand
     $$(L_1\cap L_2)\cdot\langle gz^2\rangle=\langle z^{\lcm(i,j)},gz^2\rangle=\begin{cases}\langle g,z\rangle&\text{if both of $i,j$ are odd,}\\
        \langle gz^2\rangle&\text{otherwise,}\\
        
    \end{cases}$$
    and thus Condition 4 of Lemma~\ref{lem:intransconditions2} also holds. It remains to check Condition 2. Take $H_1=\langle gz^2\rangle$, and let $H_2$ and $H_3$ be $(A_4\sqcup B_4)$- and $B_4$-stabilisers, respectively such that no two are comparable. Neither $H_2$ nor $H_3$ are $\langle z^2\rangle$ since $\langle z^2\rangle\leq \langle gz^2\rangle$. Suppose first that $H_3$ is central. Then $H_3\in \{\langle z\rangle,\langle z^3\rangle\}$. Thus either $H_2\in H_1^G\cup\langle g\rangle^G$, or $H_3=\langle z\rangle$ and $H_2\in\langle g,z^3\rangle^G$. In the former case  \begin{equation*}\label{eq:gz^2}      H_1\cap (H_2\cdot H_3)=\begin{cases}
        \langle z^2\rangle&\text{if $H_2\in H_1^G$ or $H_3=\langle z\rangle$}\\
        1&\text{otherwise}
    \end{cases}=H_1\cap ((H_1\cap H_2)\cdot H_3),\end{equation*}
    and in the latter case, $H_1\cap H_2$ is either $\langle g\rangle$ or $1$, so $$H_1\cap (H_2\cdot H_3)=\begin{cases}
        H_1&\text{if $H_1\cap H_2=\langle g\rangle$}\\
        \langle z^2\rangle&\text{if $H_1\cap H_2\ne\langle g\rangle$}
    \end{cases}=H_1\cap((H_1\cap H_2)\cdot H_3).$$
    A similar argument shows that if $H_2$ is central then $H_1\cap(H_2\cdot H_3)=H_1\cap((H_1\cap H_2)\cdot H_3)$, so we assume that neither is central. 
    
    Thus $H_2\in \langle gz^2\rangle^G\cup\langle g,z\rangle^G\cup\langle g,z^3\rangle^G\cup\langle g\rangle^G$ and $H_3\in\langle g,z^3\rangle^G\cup\langle g,z\rangle^G\cup\langle g\rangle^G$. Write $H_3=\langle g_3,z^i\rangle$ where $g_3\in g^G$ and $i\in \{0,1,3\}$. If $H_2=\langle g,z^3\rangle$, then $g_3\ne g$, and so $$H_1\cap (H_2\cdot H_3)=H_1\cap (\langle g \rangle\cdot H_3)=H_1\cap((H_1\cap H_2)\cdot H_3),$$ thus we assume that $H_2\not\in\{\langle g,z^3\rangle,\langle g,z\rangle\}$, and so $Z(G)H_2\ne Z(G)H_1$. Now, if $g_3\in H_2$, then (since $H_2$ and $H_3$ are incomparable) $H_2=\langle g_3z^2\rangle$ and so $H_1\cap (H_2\cdot H_3)=\langle z^2\rangle =H_1\cap((H_1\cap H_2)\cdot H_3)$, whence we may assume that $Z(G)H_3\ne Z(G)H_2$. Finally, if $g_3=g$, then $H_3=\langle g,z^3\rangle$ and so $$H_1\cap (H_2\cdot H_3)=\begin{cases}
        H_1&\text{ if $z^2\in H_2$}\\
        \langle g\rangle&\text{otherwise}\\
    \end{cases}=H_1\cap((H_1\cap H_2)\cdot H_3),$$ and so we assume that none of the $Z(G)H_j$ are equal. Thus, $H_1\cap (H_2\cdot H_3)\subseteq H_1\cap((H_1\cap H_2)\cdot H_3)$ by Lemma~\ref{lem:intrans}. Therefore, all conditions of Lemma~\ref{lem:intransconditions2} are satisfied, and so the action of $G$ on $A_4\sqcup B_4$ is binary.

     Therefore, any action of $G$ all of whose stabilisers lie in $$\{\{1\},\langle g,z\rangle^G,\langle g,z^3\rangle^G,\langle g\rangle^G,\langle gz^2\rangle^G,\langle z\rangle^G,\langle z^2\rangle^G,\langle z^3\rangle^G,\{G\}\}$$ is binary by Lemmas~\ref{GGlem2.5} and~\ref{GGlem2.7}.
\end{proof}

\begin{lemma}\label{lem:gz}
    If $\mathcal{S}(\Omega)\subseteq\left\{\{1\},\langle g,z\rangle^G,\langle g,z^3\rangle^G,\langle gz\rangle^G,\langle gz^3\rangle^G,\langle z\rangle^G,\langle z^2\rangle^G,\langle z^3\rangle^G,\{G\}\right\}$ then the action of $G$ on $\Omega$ is binary.
\end{lemma}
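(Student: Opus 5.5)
We follow the template of Lemma~\ref{lem:nogzi}: build the $G$-set up one orbit type at a time, checking Corollary~\ref{cor:intransconditions} (or Lemma~\ref{lem:intransconditions2} when intersections of $B$-stabilisers are not contained in the common core of the $A$-stabilisers) at each stage. At the end we invoke Lemmas~\ref{GGlem2.5}, \ref{GGlem2.6} and~\ref{GGlem2.7} to discard trivial and regular orbits and repeated orbit types. Throughout we use that $\langle g\rangle$, $\langle gz\rangle$, $\langle gz^3\rangle$ have pairwise trivial intersection with their distinct conjugates, since $g$ is not conjugate to $gt$ for central $t\ne1$ (no two subgroups of $K$ are conjugate). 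Distinct conjugates of $\langle gz\rangle$ meet in $\langle z^2\rangle$, as $(gz)^2=z^2$ and $\langle gz\rangle=\langle gz^3\rangle\times\langle z^2\rangle$, which has order $6$. Distinct conjugates of $\langle gz^3\rangle$ meet trivially. We also use that the action on $(G:K)$ is binary because $K$ is purely binary.

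The order of assembly is as follows. First take $A_0=(G:\langle z^3\rangle)\sqcup(G:\langle z^2\rangle)$, which is binary by Lemma~\ref{lem:centralstabs}, and $B_0=(G:\langle gz^3\rangle)$. Here $\langle gz^3\rangle$ plays exactly the role that $\langle g\rangle$ played in Lemma~\ref{lem:nogzi}, since $gz^3$ is again an involution satisfying the same hypotheses with $K=\langle gz^3,z\rangle$. So the first two or three steps of that proof transfer verbatim after replacing $g$ by $gz^3$: add $(G:\langle g,z^3\rangle)=(G:\langle gz^3,z^3\rangle)$, then $(G:\langle z\rangle)$, then $(G:K)$. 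Each time, Condition~2 reduces via Lemma~\ref{lem:intrans} once the $Z(G)H_i$ are distinct, and the few degenerate cases are computed directly.

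The remaining step adds $A_4=(G:\langle gz\rangle)$, mirroring the $\langle gz^2\rangle$ step of Lemma~\ref{lem:nogzi}. Any two incomparable $A_4$-stabilisers meet in $N=\langle z^2\rangle$. Intersections of incomparable $B_4$-stabilisers are central subgroups $\langle z^i\rangle$, so Conditions~3 and~4 of Lemma~\ref{lem:intransconditions2} reduce to gcd/lcm bookkeeping. For example, $\langle z^i\rangle\langle gz\rangle\cap\langle z^i\rangle\langle g'z\rangle$ equals $\langle z\rangle$ or $\langle z^2\rangle$ according to the parity of $i$, which matches $\langle z^i\rangle\cdot\langle z^2\rangle$.

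For Condition~2 with $H_1=\langle gz\rangle$, we split according to whether $H_2$ or $H_3$ is central, and otherwise whether $H_1,H_2,H_3$ share a coset $Z(G)H$. When the cosets are distinct, Lemma~\ref{lem:intrans} applies, with $i,j$ read off from $H_2\in\{\langle gz^i\rangle,\langle g,z^i\rangle\}^G$. The coincident cases are $H_2$ or $H_3$ equal to $\langle g,z^3\rangle$, $\langle gz^3\rangle$ or $K$ containing $gz^3$ or $g$. These are finitely many and are checked by hand, for instance $H_1\cap(\langle g,z^3\rangle\cdot H_3)$ with $H_3=\langle g',z^k\rangle$.

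The main obstacle is exactly this last Condition~2 case analysis. The danger is a configuration like that of Lemma~\ref{lem:cent6forbidden}, where $\langle gz\rangle\cap\langle g,z^3\rangle=\langle gz^3\rangle$ is not central. We must verify that no witness arises, i.e.\ that whenever $H_2\supseteq\langle gz^3\rangle$ the product $H_2H_3$ meets $H_1$ only in $(H_1\cap H_2)H_3$. This is where the exclusion of $\langle g\rangle$ and $\langle gz^2\rangle$ from the list is used.
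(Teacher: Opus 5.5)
Your route is correct and genuinely different from the paper's, and it closes more completely than you realise. Put $\tilde g:=gz^3$. Every standing hypothesis of the section holds with $\tilde g$ in place of $g$: $\tilde g$ is an involution, $\langle \tilde g,z\rangle=K$, $|\tilde g^G|=|g^G|\ge 3$, and the condition on subgroups of $K$ does not involve $g$. Moreover $\langle\tilde g,z^3\rangle=\langle g,z^3\rangle$ and $\langle\tilde g\rangle=\langle gz^3\rangle$. The point you missed is that $\langle \tilde g z^2\rangle=\langle gz^5\rangle=\langle (gz)^{-1}\rangle=\langle gz\rangle$. Hence the list in Lemma~\ref{lem:gz} is literally the list of Lemma~\ref{lem:nogzi} written for $\tilde g$, and the statement is an immediate corollary of Lemma~\ref{lem:nogzi} applied to $gz^3$.

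In particular, your last step is not merely a mirror of the $\langle gz^2\rangle$-step; it is that step verbatim. So the hand-checked Condition~2 analysis you call the main obstacle never needs doing. As written, that sketch would in any case need repair. Your list of coincident configurations omits the case $Z(G)H_2=Z(G)H_3\ne Z(G)H_1$, for example $H_2=\langle g'z\rangle$, $H_3=\langle g',z^3\rangle$ with $g'\in g^G\setminus\{g\}$. It also lists $\langle gz^3\rangle$ and $K$, which cannot occur because both are comparable with $H_1$.

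The paper takes a longer path. It starts from the part $\{K,\langle g,z^3\rangle,\langle z\rangle,\langle z^2\rangle,\langle z^3\rangle\}$ of the list of Lemma~\ref{lem:nogzi}, adjoins $(G:\langle gz\rangle)$ and then $(G:\langle gz^3\rangle)$, and each time verifies Conditions 2--4 of Lemma~\ref{lem:intransconditions2} by direct case analysis, with an appeal to Lemma~\ref{lem:cycbintest} along the way. That is self-contained but about two pages. Your symmetry argument replaces it with one line. It also explains why the lists of Lemmas~\ref{lem:nogzi} and~\ref{lem:gz} are swapped by $g\leftrightarrow gz^3$, and why $\mathcal{S}_3$ in Lemma~\ref{lem:cent6class} is invariant under this swap.

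One small slip: your first sentence says $\langle gz\rangle$ meets its distinct conjugates trivially. In fact it meets them in $\langle z^2\rangle$, as your next sentence correctly states.
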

\begin{proof}
    Set $A=(G:\langle gz\rangle)$ and $$B=(G:\langle g,z\rangle)\sqcup(G:\langle g,z^3\rangle)\sqcup(G:\langle z^2\rangle)\sqcup(G:\langle z^3\rangle)\sqcup(G:\langle z\rangle).$$ The action of $G$ on $B$ is binary by Lemma~\ref{lem:nogzi}. Additionally, any two incomparable $A$-stabilisers intersect at $\langle z^2\rangle$, and  if $L_1$ is contained in the intersection of incomparable $B$-stabilisers, then $L_1=\langle z^i\rangle$ for some $i$. Thus $$L_1\cdot \langle gz\rangle\cap L_1\cdot\langle g'z\rangle=\langle z^{gcd(2,i)}\rangle=L_1\cdot (\langle gz\rangle \cap\langle g'z\rangle)$$ whenever $g'\in g^G\setminus \{g\}$, thus Condition 3 of Lemma~\ref{lem:intransconditions2} holds. Similarly, if $L_2$ is contained in the intersections of incomparable $B$-stabilisers then $L_2=\langle z^j\rangle$ for some $j$, hence

    $$(L_1\cdot\langle gz\rangle)\cap (L_2\cdot\langle gz\rangle)=\begin{cases}
        \langle g,z\rangle\cap\langle gz\rangle=\langle gz\rangle&\text{if exactly one of $i,j$ is odd,}\\
        \langle g,z\rangle\cap\langle g,z\rangle=\langle g,z\rangle&\text{if both of $i,j$ are odd,}\\
        \langle gz\rangle\cap\langle gz\rangle=\langle gz\rangle&\text{if both of $i,j$ are even.}
    \end{cases}$$
     On the other hand
     $$(L_1\cap L_2)\cdot\langle gz\rangle=\langle z^{\lcm(i,j)},gz\rangle=\begin{cases}
        \langle gz\rangle&\text{if at least one of $i,j$ is even,}\\
        \langle g,z\rangle&\text{if both of $i,j$ are odd,}
    \end{cases}$$
    and thus Condition 4 of Lemma~\ref{lem:intransconditions2} also holds. It remains to check Condition 2.

    Take $H_1:=\langle gz\rangle$, and let $H_2$ and $H_3$ be $(A\sqcup B)$- and $B$-stabilisers, respectively, such that no two are comparable. Suppose $H_3$ is central. Then $H_3\in \{\langle z\rangle,\langle z^3\rangle\}$ since $\langle z^2\rangle\leq H_1$. Thus there is some $g'\in g^G\setminus\{g\}$ such that either $H_2\in \{\langle g',z^3\rangle,\langle g'z\rangle\}$, or $H_3=\langle z\rangle$ and $H_2=\langle g,z^3\rangle$. In the former case $$H_1\cap (H_2\cdot H_3)=\begin{cases}
        \langle z^2\rangle&\text{if $H_2=\langle g'z\rangle$ or $H_3=\langle z\rangle$}\\1&\text{otherwise}
    \end{cases}= H_1\cap((H_1\cap H_2)\cdot H_3)$$ and in the latter case $H_1\cap (H_2\cdot H_3)=H_1                        =H_1\cap((H_1\cap H_2)\cdot H_3).$ A similar argument shows that if $H_2$ is central then $H_1\cap(H_2\cdot H_3)=H_1\cap((H_1\cap H_2)\cdot H_3)$, so we assume that neither is central.
                        
    Since neither $H_2$ nor $H_3$ is central, we deduce that $H_2\in \langle gz\rangle^G\cup\langle g,z\rangle^G\cup\langle g,z^3\rangle ^G$ and $H_3\in\langle g,z\rangle^G\cup\langle g,z^3\rangle^G$, so we may write $H_3=\langle g_3,z^i\rangle$ where $g_3\in g^G$, and $i\in\{1,3\}$. Suppose that $H_2=\langle g_2z\rangle$ for some $g_2\in g^G\setminus\{g\}$. If $g_3=g_2$ then $$H_1\cap((H_1\cap H_2)\cdot H_3)=H_1\cap(\langle z^2\rangle \cdot \langle g_3,z^i\rangle)=H_1\cap\langle g_3,z\rangle=
        \langle z^2\rangle,
    $$
    and $H_1\cap (H_2\cdot H_3)=\langle gz\rangle\cap\langle g_2,z\rangle=\langle z^2\rangle$. On the other hand, if $g_3=g$, then $H_1\cap((H_1\cap H_2)\cdot H_3)=H_1$, thus $H_1\cap (H_2\cdot H_3)\subseteq H_1\cap((H_1\cap H_2)\cdot H_3)$. Finally, if $g_3\not\in\{g,g_2\}$, then $H_1\cap (H_2\cdot H_3)\subseteq H_1\cap((H_1\cap H_2)\cdot H_3)$ by Lemma~\ref{lem:intrans}. Thus we may write $H_2=\langle g_2,z^j\rangle$ where $g_2\in g^G$, and $j\in\{1,3\}$. Since $H_2$ and $H_3$ are incomparable, $g_2\ne g_3$. If no two of $g_1,g_2,g_3$ are equal, then $H_1\cap (H_2\cdot H_3)\subseteq H_1\cap((H_1\cap H_2)\cdot H_3)$ by Lemma~\ref{lem:intrans}; it remains to consider the situation that either $g_2=g$ or $g_3=g$. Suppose first that $g_2=g$. Then $H_2=\langle g,z^3\rangle$, and so $$H_1\cap((H_1\cap H_2)\cdot H_3)=H_1\cap (\langle gz^3\rangle\cdot\langle g_3,z^i\rangle)=H_1\cap (\langle g,z^3\rangle\cdot\langle g_3,z^i\rangle)=H_1\cap(H_2\cdot H_3).                      $$
    Finally, if $g_3=g$ and $g_2\ne g$, then $H_3=\langle g,z^3\rangle$, and so \begin{align*}
        H_1\cap((H_1\cap H_2)\cdot H_3)&=\begin{cases}
            H_1\cap (1\cdot\langle g,z^3\rangle)=\langle gz^3\rangle&\text{if $j=3$}\\
            H_1\cap(\langle z^2\rangle\cdot \langle g,z^3\rangle)=H_1&\text{if $j=1$}
        \end{cases}\\&=H_1\cap (\langle g_2,z^j\rangle\cdot\langle g,z^3\rangle)\\&=H_1\cap(H_2\cdot H_3).
    \end{align*}                      Therefore, Condition 2 of Corollary~\ref{cor:intransconditions} holds, and so the action of $G$ on $A\sqcup B$ is binary.

    Finally, set $A':=(G: \langle gz^3\rangle)$, and $B'=A\sqcup B$. Any two incomparable $A'$-stabilisers intersect trivially. If $L_1$ is contained in the intersection of two incomparable $B'$-stabilisers then either $L_1$ is central, or $L_1$ is a conjugate of $\langle gz^3\rangle$. In the former case, $$L_1\cdot \langle gz^3\rangle\cap L_1\cdot\langle g'z^3\rangle=L_1$$ for any $g'\in g^G\setminus\{g\}$, and in the latter case $$L_1\cdot \langle gz^3\rangle\cap L_1\cdot\langle g'z^3\rangle=\{1,g''z^3, gz^3,g''g\}\cap \{1,g''z^3, g'z^3,g''g'\}=L_1$$ for any $g',g''\in g^G$ with $g'\ne g$ (using that the action on $(G:\langle gz^3\rangle)$ is binary together with Lemma~\ref{lem:cycbintest} to see that $g''g'\ne gz^3$ and $g''g\ne g'z^3$). Consequently, Condition 3 of Lemma~\ref{lem:intransconditions2} holds. Let $L_2$ be contained in the intersection of two incomparable $B'$ stabilisers and let $H=\langle gz^3\rangle$. Observe that Condition 4 of Lemma~\ref{lem:intransconditions2} always holds if either $L_1$ or $L_2$ is trivial, so we may assume otherwise. Suppose first that $L_1=\langle z^i\rangle$ and $L_2=\langle z^j\rangle$ for some $i,j\in\{1,2,3\}$. Then
    $$(L_1\cdot\langle gz^3\rangle)\cap (L_2\cdot\langle gz^3\rangle)=\langle gz^3,z^i\rangle\cap \langle gz^3,z^j\rangle=\begin{cases}
        \langle g,z\rangle&\text{if $i=j=1$,}\\
        \langle g,z^3\rangle&\text{if one of $i,j$ is 3 and the other divides 3,}\\
        \langle gz\rangle&\text{if one of $i,j$ is 2 and the other divides 2,}\\
        \langle gz^3\rangle &\text{otherwise,}
    \end{cases}$$
    and 
    $$(L_1\cap L_2)\cdot \langle gz^3\rangle=\langle gz^3,z^{\lcm(i,j)}\rangle=\begin{cases}
        \langle g,z\rangle&\text{if $i=j=1$,}\\
        \langle g,z^3\rangle&\text{if one of $i,j$ is 3 and the other divides 3,}\\
        \langle gz\rangle&\text{if one of $i,j$ is 2 and the other divides 2,}\\
        \langle gz^3\rangle &\text{otherwise,}
    \end{cases}$$
    hence Condition 4 holds in this case. Suppose now that exactly one of $L_1$ and $L_2$ is conjugate to $\langle gz^3\rangle$, without loss of generality $L_1=\langle g'z^3\rangle$ for $g'\in g^G$ and $L_2=\langle z^j\rangle$ for some $j$. Then $$(L_1\cdot\langle gz^3\rangle)\cap (L_2\cdot\langle gz^3\rangle)=\{1,g'z^3, gz^3,g'g\}\cap \langle gz^3,z^j\rangle=
    \langle gz^3\rangle\subseteq (L_1\cap L_2)\cdot \langle gz^3\rangle.$$ Finally, if $L_1=\langle g'z^3\rangle$ and $L_2=\langle g''z^3\rangle$ for $g',g''\in g^G$, then 
    $$(L_1\cdot\langle gz^3\rangle)\cap (L_2\cdot\langle gz^3\rangle)=(\langle g'z^3\rangle\cdot \langle gz^3\rangle)\cap( \langle g''z^3\rangle\cdot\langle gz^3\rangle)=\begin{cases}
        \langle g'z^3\rangle\cdot \langle gz^3\rangle&\text{if $g'=g''$,}\\
        \langle gz^3\rangle &\text{otherwise}.
    \end{cases}$$
    Clearly the above is equal to $(L_1\cap L_2)\cdot \langle gz^3\rangle,$ hence Condition 4 of Lemma~\ref{lem:intransconditions2} holds for $A'\sqcup B'$. It remains to check Condition 2.

    Take $H_1=\langle gz^3\rangle$ and let $H_2$ and $H_3$ be $(A'\sqcup B')$- and $B'$-stabilisers, respectively such that no two are comparable. Suppose that $K\in\{H_2,H_3\}$ is central and let $K'$ be the unique element of $\{H_2,H_3\}\setminus\{K\}$. If $gz^3\in H_1\cap (H_2\cdot H_3)$, then $K'$ must be one of $\langle g,z\rangle$, $\langle gz^3\rangle$, $\langle g,z^3\rangle$, $\langle gz\rangle$, a contradiction since each such group contains $H_1$. Therefore, $H_1\cap (H_2\cdot H_3)=1$ so we may assume that neither $H_2$ nor $H_3$ is central.     Now, $$\langle gz^3\rangle\leq\langle g,z^3\rangle,\langle gz\rangle\leq \langle g,z\rangle,$$ so $Z(G)H_2\ne Z(G)H_1$ and $Z(G)H_3\ne Z(G)H_1$, and if $Z(G)H_2= Z(G)H_3$, then $\{H_2,H_3\}=\{\langle g',z^3\rangle,\langle g'z\rangle\}$ for some $g'\in g^G\setminus\{g\}$. But then $H_1\cap(H_2\cdot H_3)=1 $, so we assume $Z(G)H_2\ne Z(G)H_3$. Thus $H_1\cap(H_2\cdot H_3)\subseteq H_1\cap((H_1\cap H_2)\cdot H_3)$ by Lemma~\ref{lem:intrans}, hence the action of $G$ on $A'\sqcup B'$ is binary,

Therefore, any action of $G$ all of whose stabilisers lie in $$\{\{1\},\langle g,z\rangle^G,\langle g,z^3\rangle^G,\langle gz\rangle^G,\langle gz^3\rangle^G,\langle z\rangle^G,\langle z^2\rangle^G,\langle z^3\rangle^G,\{G\}\}$$ is binary by Lemmas~\ref{GGlem2.5} and~\ref{GGlem2.7}., as desired.
                                   \end{proof}

\begin{lemma}\label{lem:ggz}
    If $\mathcal{S}(\Omega)\subseteq\left\{\{1\},\langle g\rangle^G,\langle gz\rangle^G,\langle gz^2\rangle^G,\langle gz^3\rangle^G,\langle z^2\rangle^G,\{G\}\right\}$ then the action of $G$ on $\Omega$ is binary.
\end{lemma}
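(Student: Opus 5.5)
We build the action up one orbit type at a time, exactly as in Lemmas~\ref{lem:nogzi} and~\ref{lem:gz}, applying Corollary~\ref{cor:intransconditions} or Lemma~\ref{lem:intransconditions2} at each stage. Lemmas~\ref{GGlem2.5},~\ref{GGlem2.6} and~\ref{GGlem2.7} then reduce the statement to the single $G$-set
$$(G:\langle g\rangle)\sqcup(G:\langle gz\rangle)\sqcup(G:\langle gz^2\rangle)\sqcup(G:\langle gz^3\rangle)\sqcup(G:\langle z^2\rangle).$$
The key structural facts are as follows. Each stabiliser $\langle gz^k\rangle$ is binary, since $K$ is purely binary. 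Any two distinct conjugates $g'\neq g''$ of $g$ generate subgroups whose images modulo $Z(G)$ meet trivially. And the only non-central subgroups present are of the form $\langle g'z^k\rangle$, each contained in $\langle g',z\rangle$, which is binary.

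\textbf{First stage.} Set $A=(G:\langle g\rangle)$ and $B=(G:\langle gz^3\rangle)$, both of prime order $2$. Conditions 1 and 3 of Corollary~\ref{cor:intransconditions} are immediate, since pairwise intersections are trivial. For Condition 2, take mutually incomparable $H_1,H_2,H_3$. If their images modulo $Z(G)$ are distinct, we apply Lemma~\ref{lem:intrans} with $i,j\in\{0,3\}$. If two of them share the same $Z(G)$-image $\langle g',z\rangle$, we compute directly: for example $H_1=\langle g\rangle$, $H_2=\langle g'\rangle$, $H_3=\langle g'z^3\rangle$ gives $H_2H_3=\langle g',z^3\rangle$, and this meets $H_1$ trivially.

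\textbf{Second stage.} Adjoin the orbits with stabilisers $\langle gz\rangle$ and $\langle gz^2\rangle$, of orders $6$ and $3$; these are the places where Lemma~\ref{lem:intransconditions2} is needed. For $A=(G:\langle gz^k\rangle)$ with $k\in\{1,2\}$, incomparable $A$-stabilisers intersect in $N=\langle z^2\rangle$. The subgroups $L$ lying in an intersection of two incomparable $B$-stabilisers are either trivial or $\langle z^2\rangle$; this uses that $\langle g'\rangle$ and $\langle g'z^3\rangle$ meet trivially. So Conditions 3 and 4 of Lemma~\ref{lem:intransconditions2} reduce to parity computations of the same shape as in Lemma~\ref{lem:nogzi}, and these should be easy.

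\textbf{Main obstacle: Condition 2.} We must show $H_1\cap(H_2H_3)\subseteq H_1\cap((H_1\cap H_2)H_3)$ when two of the $H_i$ lie in the same $\langle g',z\rangle$. Here Lemma~\ref{lem:intrans} is unavailable. The approach is to reduce modulo $Z(G)$ as in the proof of Lemma~\ref{lem:intrans}:
\begin{itemize}
\item Any product $h_2h_3$ landing in $H_1$ must have non-central parts cancelling, which forces $g$-components to coincide.
\item The problem then becomes a finite check inside the abelian group $\langle g',z\rangle\cong C_2\times C_6$, carried out case by case.
\end{itemize}
A typical dangerous configuration is $H_1=\langle gz\rangle$, $H_2=\langle g'z^j\rangle$, $H_3=\langle g'z^k\rangle$ with $j\neq k$. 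Here $H_2H_3\cap Z(G)$ can be as large as $\langle z\rangle$ or $\langle z^3\rangle$. We must check that its intersection with $H_1\cap Z(G)=\langle z^2\rangle$ is already accounted for, which it is because $\langle z^2\rangle\leq H_1\cap H_2$ whenever $H_2$ contains $z^2$, and is otherwise trivial.

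\textbf{Final stage.} Adjoin $(G:\langle z^2\rangle)$ via Corollary~\ref{cor:intransconditions}, taking $A=(G:\langle z^2\rangle)$ with $N=\langle z^2\rangle$. Incomparable stabilisers among the previous orbits intersect in a subgroup of $\langle z^2\rangle$, so Condition 3 holds. Condition 2 then only involves $H_1=\langle z^2\rangle$ together with two $\langle g'z^k\rangle$-type subgroups, and it follows by the same mod-$Z(G)$ cancellation argument.
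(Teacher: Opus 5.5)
Your outline is correct and uses the paper's machinery. You glue one orbit at a time with Corollary~\ref{cor:intransconditions} or Lemma~\ref{lem:intransconditions2}, invoke Lemma~\ref{lem:intrans} when $Z(G)H_1,Z(G)H_2,Z(G)H_3$ are distinct, and compute by hand otherwise. What differs is the order of gluing.

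The paper starts from $(G:\langle gz^2\rangle)\sqcup(G:\langle g\rangle)\sqcup(G:\langle z^2\rangle)$, which is binary by Lemma~\ref{lem:nogzi}. It adjoins $(G:\langle gz\rangle)$ via the corollary and adjoins $(G:\langle gz^3\rangle)$ last. At that last step the new stabilisers meet pairwise trivially, while old ones such as $\langle gz\rangle,\langle g'z\rangle$ meet in $\langle z^2\rangle\not\leq 1$. So Condition~3 of the corollary fails there, and the paper has to verify Conditions~3 and~4 of Lemma~\ref{lem:intransconditions2}.

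You build upward from the subgroups with the smallest pairwise intersections: $N=1$, then $N=\langle z^2\rangle$ for each order-$6$ step, then $N=\langle z^2\rangle$ again. At each step, incomparable old stabilisers meet inside the new $N$, so Corollary~\ref{cor:intransconditions} suffices throughout. Your remark that Lemma~\ref{lem:intransconditions2} is needed in the second stage is therefore unnecessary. The price of your route is four gluing steps whose Condition~2 checks you must do from scratch, where the paper inherits part of that work from Lemma~\ref{lem:nogzi}.

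Two corrections for when you write out those checks.

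\emph{First correction.} $gz^2$ has order $6$, not $3$. In fact $\langle gz^2\rangle=\langle g\rangle\times\langle z^2\rangle\geq\langle g\rangle$, just as $\langle gz\rangle=\langle gz^3\rangle\times\langle z^2\rangle\geq\langle gz^3\rangle$. These comparabilities are what eliminate most configurations.

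\emph{Second correction.} Your justification in the ``dangerous configuration'' is incomplete, because $z^2$ can lie in $H_2H_3$ via $H_3$ rather than $H_2$. For instance, if $\langle gz^2\rangle$ is adjoined before $\langle gz\rangle$, take $H_1=\langle gz\rangle$, $H_2=\langle g'z^3\rangle$, $H_3=\langle g'z^2\rangle$. Then $H_2H_3=\langle g',z\rangle$, so $H_1\cap(H_2H_3)=\langle z^2\rangle$, even though $z^2\notin H_2$. Symmetrically, if $\langle gz\rangle$ is adjoined first, take $H_1=\langle gz^2\rangle$, $H_2=\langle g'\rangle$, $H_3=\langle g'z\rangle$. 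The required inclusion still holds, since $(H_1\cap H_2)H_3\supseteq H_3$ and so $z^2\in H_1\cap H_3$ is covered. The clean statement is that $z^2\in H_2H_3$ if and only if $z^2\in H_2\cup H_3$, because $\langle z^2\rangle$ is the Sylow $3$-subgroup of $\langle g',z\rangle$. In either case $z^2\in H_1\cap((H_1\cap H_2)H_3)$.
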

\begin{proof}
    Set $A_0=(G:\langle gz\rangle)$ and $$B_0=(G:\langle gz^2\rangle)\sqcup(G:\langle g\rangle)\sqcup(G:\langle z^2\rangle).$$ The action of $G$ on $B_0$ is binary by Lemma~\ref{lem:nogzi}. Any two incomparable $A_0$-stabilisers intersect at $\langle z^2\rangle$, and incomparable $B_0$ stabilisers intersect in subgroups of $\langle z^2\rangle$ so Conditions 1 and 3 of Corollary~\ref{cor:intransconditions} hold; it remains to check Condition 2.

    Take $H_1=\langle gz\rangle$, and let $H_2$ and $H_3$ be $(A_0\sqcup B_0)$- and $B_0$-stabilisers, respectively such that no two are comparable. Neither $H_2$ nor $H_3$ is central since $z^2\in\langle gz\rangle$. Thus we have $H_2\in\{\langle g_2z^2\rangle,\langle g_2z\rangle,\langle g_2\rangle\}$ and $H_3\in\{\langle g_3z^2\rangle,\langle g_3\rangle\}$ for conjugates $g_2,g_3$ of $g$.
    
    If $g_3=g$ we see that $$H_1\cap (H_2\cdot H_3)=\begin{cases}\langle z^2\rangle&\text{ if $H_3=\langle gz^2\rangle$ or $H_2\ne\langle g_2\rangle$}\\1&\text{ otherwise}\end{cases},$$ and $$H_1\cap((H_1\cap H_2)\cdot H_3)=\begin{cases}\langle z^2\rangle&\text{ if $H_3=\langle gz^2\rangle$ or $H_2\ne\langle g_2\rangle$}\\1&\text{ otherwise}\end{cases}$$    so we assume $g_3\ne g$. Similarly, if $g_2=g$ then $H_2\in\{\langle g\rangle,\langle gz^2\rangle\}$, and a similar calculation gives $H_1\cap (H_2\cdot H_3)=H_1\cap ((H_1\cap H_2)\cdot H_3)$, so we assume $g_2\ne g$. Finally, if $g_2=g_3$, then $H_2=\langle g_2z\rangle$, giving $H_1\cap (H_2\cdot H_3)=\langle z^2\rangle=H_1\cap ((H_1\cap H_2)\cdot H_3)$. Otherwise, $Z(G)H_1\ne Z(G)H_2$, $Z(G)H_1\ne Z(G)H_3$, and $Z(G)H_2\ne Z(G)H_3$, and so $H_1\cap(H_2\cdot H_3)\subseteq H_1\cap ((H_1\cap H_2)\cdot H_3)$ by Lemma~\ref{lem:intrans}. Therefore, Condition 2 of Corollary~\ref{cor:intransconditions} is satisfied, and the action of $G$ on $A_0\sqcup B_0$ is binary.
    
    Finally, set $A_1=(G:\langle gz^3\rangle)$ and $B_1:=A_0\sqcup B_0$. Any two incomparable $A_1$-stabilisers intersect trivially; Conditions 3 and 4 of Lemma~\ref{lem:intransconditions2} hold by an argument similar to that which appears in the proof of Lemma~\ref{lem:gz}, so we check Condition 2.

    Take $H_1=\langle gz^3\rangle$, and let $H_2$ and $H_3$ be $(A_1\sqcup B_1)$- and $B_1$-stabilisers, respectively such that no two are comparable. If one of $H_2$ or $H_3$ is central then the other must be a conjugate of $\langle g\rangle$ or $\langle gz^3\rangle$, and so $H_1\cap (H_2\cdot H_3)=1$, so we assume that neither is central. Thus we have $H_2\in\{\langle g_2z^3\rangle,\langle g_2z^2\rangle,\langle g_2z\rangle,\langle g_2\rangle\}$ and $H_3\in\{\langle g_3z^2\rangle,\langle g_3z\rangle,\langle g_3\rangle\}$ for conjugates $g_2,g_3\in g^G$.
    
    Since none of the $H_i$ are comparable, $H_2,H_3\not\in\{\langle gz^3\rangle,\langle gz\rangle\}$. Thus if one of $g_2,g_3$ equals $g$ and $gz^3\in H_2\cdot H_3$, then $gz^3\in\{gg'z^{i},g'gz^i\}$ for some integer $i$ and $g'\in \{g_2,g_3\}$. But then $g'\in Z(G)$, a contradiction, so $H_1\cap(H_2\cdot H_3)=1$. Thus we may assume that $g\not\in\{g_2,g_3\}$. We can similarly show that $g_2\ne g_3$. Therefore, $H_1\cap(H_2\cdot H_3)\subseteq H_1\cap((H_1\cap H_2)\cdot H_3)$ by Lemma~\ref{lem:intrans}. 
    
    Therefore, the action of $G$ on $A_1\sqcup B_1$ is binary by Lemma~\ref{lem:intransconditions2}, and the result now follows from Lemmas~\ref{GGlem2.5} and~\ref{GGlem2.7}.
\end{proof}
We now have enough to prove our final technical lemma.
\begin{lemma}\label{lem:cent6class}
Let $G$ be a group with $|Z(G)|=6$ acting on a set $\Omega$. Suppose that $K=\langle g,z\rangle$ is an abelian purely binary subgroup of $G$ of order 12, where $g$ is an involution and $\langle z\rangle=Z(G)$. Finally, assume that no two subgroups of $K$ are $G$-conjugate, and that $|g^G|\geq 3$. The action of $G$ on $\Omega$ is binary if and only if $\mathcal{S}(\Omega)$ is contained in at least one of the following:\begin{itemize}
    \item $\mathcal{S}_1:=\left\{\{1\},\langle g,z\rangle^G,\langle g,z^3\rangle^G,\langle g\rangle^G,\langle gz^2\rangle^G,\langle z\rangle^G,\langle z^2\rangle^G,\langle z^3\rangle^G,\{G\}\right\}$;
    
    \item $\mathcal{S}_2:=\left\{\{1\},\langle g,z\rangle^G,\langle g,z^3\rangle^G,\langle gz\rangle^G,\langle gz^3\rangle^G,\langle z\rangle^G,\langle z^2\rangle^G,\langle z^3\rangle^G,\{G\}\right\};
$

    \item $\mathcal{S}_3:=\left\{\{1\},\langle g\rangle^G,\langle gz\rangle^G,\langle gz^2\rangle^G,\langle gz^3\rangle^G,\langle z^2\rangle^G,\{G\}\right\}$.
\end{itemize}
\end{lemma}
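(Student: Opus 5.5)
The reverse implication is already in hand. If $\mathcal{S}(\Omega)\subseteq\mathcal{S}_1$, $\mathcal{S}_2$ or $\mathcal{S}_3$, the action is binary by Lemma~\ref{lem:nogzi}, Lemma~\ref{lem:gz} or Lemma~\ref{lem:ggz}, respectively. All the work is therefore in the forward implication.

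Suppose the action on $\Omega$ is binary. Every orbit is a binary action, and $K$ is purely binary, so each stabiliser in $\mathcal{S}(\Omega)$ is conjugate to a subgroup of $K$. Since $K\cong 2\times 6$ is abelian, its subgroups are
\[
1,\ \langle z\rangle,\ \langle z^2\rangle,\ \langle z^3\rangle,\ \langle g\rangle,\ \langle gz\rangle,\ \langle gz^2\rangle,\ \langle gz^3\rangle,\ \langle g,z^2\rangle=\langle gz\rangle,\ \langle g,z^3\rangle,\ \langle g,z\rangle .
\]
There are no other subgroups: the cyclic subgroups of order $6$ are $\langle gz\rangle$ and $\langle z\rangle$, and the subgroups of order $2$ are $\langle g\rangle,\langle gz^3\rangle,\langle z^3\rangle$. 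Together with $G$, these are precisely the elements of $\mathcal{S}_1\cup\mathcal{S}_2\cup\mathcal{S}_3$. The task is then a finite combinatorial one: show that any set of these classes not contained in a single $\mathcal{S}_i$ contains a forbidden pair. I will list the obstructions as pairs of classes that cannot occur together.

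The main tools are Lemma~\ref{lem:cent6forbidden} and Lemma~\ref{lem:cycstabs}. The hypotheses of Lemma~\ref{lem:cent6forbidden} hold because no two subgroups of $K$ are conjugate, so $g$ is not conjugate to $gz^3$.
\begin{itemize}
\item Apply Lemma~\ref{lem:cent6forbidden} with $\langle g,z^i\rangle$ taken as $\langle g\rangle$ or $\langle gz^2\rangle$ (the case $i=0$, $j\in\{0,2\}$ after relabelling), paired with $\langle gz\rangle$ or $\langle gz^3\rangle$. This forbids $\langle g,z\rangle,\langle g,z^3\rangle,\langle z\rangle,\langle z^3\rangle$ whenever one class from $\{\langle g\rangle,\langle gz^2\rangle\}$ and one from $\{\langle gz\rangle,\langle gz^3\rangle\}$ are both present. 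Note that $\langle gz\rangle\cap\langle g\rangle=1$.
\item Apply Lemma~\ref{lem:cycstabs} with $p=2$ and suitable $(i,j)$, such as $(0,3)$ or $(1,0)$. This forbids the central subgroups and the groups $\langle g,C\rangle$ once two ``twisted'' cyclic classes are present whose ratio $z^{i-j}$ is not absorbed.
\end{itemize}
The first tool should yield that if a class from $\{\langle g\rangle,\langle gz^2\rangle\}$ and one from $\{\langle gz\rangle,\langle gz^3\rangle\}$ both occur, then everything outside $\mathcal{S}_3$ is excluded. Otherwise the non-central cyclic stabilisers all lie on one side, and the candidate set sits inside $\mathcal{S}_1$ or $\mathcal{S}_2$, apart from $\langle z^2\rangle$, which is in every $\mathcal{S}_i$.

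The main obstacle is checking that these two lemmas cover every bad configuration. The delicate mixed cases are $\langle g\rangle$ together with $\langle gz^2\rangle$ when central subgroups are present, and $\langle gz\rangle$ together with $\langle gz^3\rangle$. Wherever a needed pair is not covered, I will produce an explicit triple $(H_1,H_2,H_3)$ violating Statement 3 of Lemma~\ref{lem:basiccriteria}, as in the proofs of Lemmas~\ref{lem:p^2intrans2} and~\ref{lem:cycstabs}. The pattern is to take $H_1,H_2$ whose product contains an element of $H_1$ while $H_1\cap H_2$ is trivial or central, and to choose $H_3$ built from a different conjugate $g'$ of $g$; here $|g^G|\ge 3$ guarantees the distinct conjugates needed.
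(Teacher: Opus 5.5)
Your proposal is correct and is essentially the paper's proof: the reverse direction is Lemmas~\ref{lem:nogzi}, \ref{lem:gz} and~\ref{lem:ggz}, and in the forward direction a set not contained in $\mathcal{S}_1$ or $\mathcal{S}_2$ must contain one class from $\{\langle g\rangle^G,\langle gz^2\rangle^G\}$ and one from $\{\langle gz\rangle^G,\langle gz^3\rangle^G\}$, whereupon Lemma~\ref{lem:cent6forbidden} (writing $\langle gz^2\rangle=\langle g,z^2\rangle$, the intersection being central) gives $\mathcal{S}(\Omega)\subseteq\mathcal{S}_3$. Lemma~\ref{lem:cycstabs} and the extra triples are unnecessary, because the \emph{delicate} cases you flag lie inside $\mathcal{S}_1$ or $\mathcal{S}_2$ and are binary; also note that $\langle g,z^2\rangle=\langle gz^2\rangle$, not $\langle gz\rangle$, so there are three cyclic subgroups of order $6$.
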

\begin{proof}
    That each of these sets yield binary actions is just Lemmas~\ref{lem:nogzi},~\ref{lem:gz}, and~\ref{lem:ggz}---we shall show that there are no others.

    Suppose the action of $G$ on $\Omega$ is binary, and that $\mathcal{S}(\Omega)$ is neither a subset of $\mathcal{S}_1$ nor of $\mathcal{S}_2$. Then $\mathcal{S}(\Omega)$ contains a pair of the form $\{\langle gz^i\rangle^G,\langle gz^{j}\rangle^G\}$ where $i$ is even and $j$ is odd. Thus $\langle gz^i\rangle=\langle g,z^i\rangle$, and $\langle gz^i\rangle\cap \langle gz^{j}\rangle\leq Z(G)$. Therefore, $\mathcal{S}(\Omega)\subseteq \mathcal{S}_3$ by Lemma~\ref{lem:cent6forbidden}, hence the result.
    \end{proof}

\subsection{Proof of the main result}

We shall start by considering the simple groups, before investigating the more complicated situation when $Z(G)\ne 1$.\vspace{12pt}
\begin{proof}[Proof of Theorem~\ref{thm:main}]
    Define $\mathcal{S}(G)$ to be the set of all conjugacy classes of stabilisers $H$ such that the action of $G$ on $(G:H)$ is binary. By~Lemma~\ref{GGlem2.5}, if the action of $G$ on $\Omega$ is binary, then the group induced by $G$ on any single orbit is binary, so $\mathcal{S}(\Omega)\subseteq\mathcal{S}(G)$.
    
    Suppose first that $G$ is simple. Then $G$ has at most one non-trivial transitive binary action by Theorem~\ref{thm:maintrans}, hence the claim follows from Lemmas~\ref{GGlem2.6} and~\ref{GGlem2.7} in this case.

    Suppose next that $Z(G)\ne 1$, but that $G/Z(G)$ has no faithful transitive non-trivial binary actions. Then, by Theorem~\ref{thm:maintrans}, $\mathcal{S}(G)$ consists of $\{G\}$ and all (conjugacy classes of) subgroups of $Z(G)$ ($G/L$ has no faithful transitive binary actions for any $L\leq Z(G)$ when $G$ is a sporadic cover). Since $G$ is sporadic, $Z(G)$ is cyclic of order either $2,3,4,6$, or $12$, so by Lemma~\ref{lem:centralstabs}, the action of $G$ on any set $\Omega$ with $\mathcal{S}(\Omega)\subseteq \mathcal{S}(G)\setminus\{\{G\}\}$ is binary. The claim now follows in this case by~Lemma~\ref{GGlem2.7}. 

    The groups it remains to consider are the covers of $\mathrm{J}_2$, $\mathrm{Co}_1$, $\mathbb{B}$, $\mathrm{McL}$, and $\mathrm{Fi}_{22}$. We consider each separately.

    \textbf{Janko's group $\mathrm{J}_2$:} The universal cover $G=2.\mathrm{J}_2$ has exactly two faithful transitive non-trivial binary actions by Theorem~\ref{thm:maintrans}. These actions have stabilisers $\langle g\rangle $ and $\langle h\rangle$ where $g\in\mathrm{2c}$ and $h\in \mathrm{3a}$. Moreover, since $\mathrm{J}_2$ does not have any non-trivial transitive binary actions, the only transitive non-faithful binary actions of $G$ have stabiliser either $Z(G)$ or $G$. Therefore, if $\Omega$ is such that $\mathrm{RC}(G,\Omega)=2$, then $\mathcal{S}(\Omega)\subseteq \{\{1\},Z(G)^G,\langle g\rangle^G,\langle h\rangle^G,\{G\}\}=\mathcal{S}(G)$. Thus it suffices to show that the action of $G$ on $(G:Z(G))\sqcup (G:\langle g\rangle)\sqcup (G:\langle h\rangle)$ is binary.

    Set $A:=(G:Z(G))$ and $B:=(G:\langle g\rangle)\sqcup (G:\langle h\rangle)$. We compute using the \gap~Character Table Library~\cite{CTblLib1.3.11} that $n_G(\mathrm{2c},\mathrm{3a},\mathrm{2c})=n_G(\mathrm{3a},\mathrm{3a},\mathrm{2c})=0$, and so the action of $G$ on $B$ is binary by Corollary~\ref{cor:intranscharcalc}. Moreover, any two $B$ stabilisers meet trivially, so Conditions 1 and 3 of Corollary~\ref{cor:intransconditions} are satisfied. Let $H_1= Z(G)=\langle z\rangle$, and let $H_2$ and $H_3$ be $(A\sqcup B)$- and $B$-stabilisers, respectively, such that no two are comparable (thus $H_2\ne Z(G)$). Suppose $z\in H_2\cdot H_3$ and write $z=xy$ where $x$ and $y$ are generators of $H_2$ and $H_3$ respectively. Then $y=zx^{-1}$, and so $x,y$ must both have order 2, but $z\cdot\mathrm{2c}=\mathrm{2b}$~\cite{atlas}, a contradiction. Thus $H_1\cap (H_2\cdot H_3)=1$, hence all conditions of Corollary~\ref{cor:intransconditions} are satisfied. Therefore, the action of $G$ on $A\sqcup B$ is binary, as was to be shown.

    \textbf{Conway's group $\mathrm{Co}_1$:} The universal cover $G=2.\mathrm{Co}_1$ has a unique faithful transitive non-trivial binary action by Theorem~\ref{thm:maintrans}. The stabiliser of this action is of the form $\langle h\rangle$ where $h\in\mathrm{3a}$. Moreover, $G/Z(G)$ has no faithful transitive non-trivial binary actions, so the action of $G$ on a set $\Omega$ is binary only if $\mathcal{S}(\Omega)\subseteq\{\{1\},Z(G)^G,\langle h\rangle^G,\{G\}\}=\mathcal{S}(G)$. 

    Now, $n_G(\mathrm{2a,3a,2a})=n_G(\mathrm{3a,3a,2a})=0$,
    thus, since $Z(G)$ has prime order (generated by $z\in \mathrm{2a}$), and $\mathrm{3a}$ is real, the action of $G$ on $(G:Z(G))\sqcup(G:\langle h\rangle)$ is binary by Corollary~\ref{cor:intranscharcalc}, as claimed.

    \textbf{Baby monster group $\mathbb{B}$:} The universal cover $G=2.\mathbb{B}$ has a unique faithful transitive non-trivial binary action by Theorem~\ref{thm:maintrans}, as does the simple quotient $G/Z(G)$. Thus, if $g\in\mathrm{2b}$ then $\langle g,z\rangle$ is purely binary of order 4. The result now follows from  Lemma~\ref{lem:p^2intrans2} and the fact that $g$ is conjugate to $gz$.
    
    \textbf{McLaughlin's group $\mathrm{McL}$:} The faithful transitive non-trivial binary actions of the universal cover $G=3.\mathrm{McL}$ have stabilisers of the form $\langle h\rangle$ and $\langle hz\rangle$ where $h\in\mathrm{3c}$ and $ z\in Z(G)$ by Theorem~\ref{thm:maintrans}. Moreover, the images in the simple quotient of both classes of stabilisers give the stabiliser of the unique faithful transitive non-trivial binary action of $\mathrm{McL}$. Thus, $\langle h,z\rangle$ is purely binary of order 9. Therefore, since $\mathrm{3c}$ is real (and $h$ is not conjugate to $hz$), the result follows from  Lemma~\ref{lem:p^2intrans3}.
    
    \textbf{Fischer's group $\mathrm{Fi}_{22}$:} Suppose first that $2\mid |Z(G)|$. The group $G=Z(G).\mathrm{Fi}_{22}$ has exactly two faithful transitive non-trivial binary actions by Theorem~\ref{thm:maintrans}. Let $z\in Z(G)$. If $z\in Z(G)$ has order 2 or 6 then the images in the quotient $G/\langle z\rangle$ of stabilisers from both classes is a stabiliser of the unique faithful transitive non-trivial binary action of $G/\langle z\rangle$. And if $|Z(G)|=6$ and $z$ has order 3 then the two classes yielding binary actions descend to the corresponding two classes in $G/\langle z\rangle$. Consequently, when $2\mid |Z(G)|$ we have that $\langle g,Z(G)\rangle$ is purely binary for $g\in\mathrm{2b}$. The claim now follows from Lemma~\ref{lem:p^2intrans2} when $|Z(G)|=2$ and from Lemma~\ref{lem:cent6class} when $|Z(G)|=6$ (that no two distinct subgroups of $\langle g,Z(G)\rangle$ are conjugate is easily verified from information in the $\mathbb{ATLAS}$~\cite{atlas}). 

    Suppose now that $|Z(G)|=3$. Then both $G$ and $G/Z(G)$ have unique faithful transitive non-trivial binary actions, with the image of a corresponding stabiliser in $G$ being such a stabiliser in $G/Z(G)$. Therefore, if $g\in\mathrm{2a}$ and $\langle z\rangle =Z(G)$ then $\mathcal{S}(G)=\{\{1\},Z(G)^G,\langle g\rangle^G,\langle g,z\rangle^G,\{G\}\}$. Thus it suffices to show that if $G$ acts on $\Omega$ with $\mathcal{S}(\Omega)=\mathcal{S}(G)$, then $\mathrm{RC}(G,\Omega)=2$. First, the action of $G$ on $(G:\langle g,z\rangle)\sqcup(G:\langle z\rangle)$ is binary as the conditions of Corollary~\ref{cor:intransconditions} are satisfied vacuously. Now it remains only to show that the action of $G$ on $A\sqcup B$ is binary where $A=(G:\langle g,z\rangle)\sqcup(G:\langle z\rangle)$ and $B=(G:\langle g\rangle)$, as then the result will follow from Lemmas~\ref{GGlem2.5},~\ref{GGlem2.6} and~\ref{GGlem2.7}. Clearly, Conditions 1 and 3 of Corollary~\ref{cor:intransconditions} are satisfied, so we check Condition 2. Set $H_3=\langle g\rangle$, and let $H_1$ and $H_2$ be $A$- and $(A\sqcup B)$-stabilisers, respectively so that no two $H_i$ are comparable. Thus $H_2\in\langle g\rangle^G\cup\langle g,z\rangle^G$ and  $Z(G)H_i\ne Z(G)H_j$ for each $i\ne j$, whence $H_1\cap (H_2\cdot H_3)\subseteq H_1\cap ((H_1\cap H_2)\cdot H_3)$ by Lemma~\ref{lem:intrans}. Therefore, the action of $G$ on $A\sqcup B$ is binary as was to be shown.
\end{proof}

\bibliographystyle{plain}\bibliography{sporbibs}
\end{document}